\providecommand{\U}[1]{\protect\rule{.1in}{.1in}}
\newtheorem{theorem}{Theorem}[section]
\newtheorem{corollary}[theorem]{Corollary}
\newtheorem{definition}[theorem]{Definition}
\newtheorem{example}[theorem]{Example}
\newtheorem{lemma}[theorem]{Lemma}
\newtheorem{notation}[theorem]{Notation}
\newtheorem{proposition}[theorem]{Proposition}
\newtheorem{remark}[theorem]{Remark}
\newtheorem{ass}{Assumption}
\def \pt{/\!/}
\numberwithin{equation}{section}
\newcommand{\GraphicsDirectory}{./svg/}
\newcommand{\executeiffilenewer}[3]{%
\ifnum\pdfstrcmp{\pdffilemoddate{#1}}%
{\pdffilemoddate{#2}}>0%
{\immediate\write18{#3}}\fi%
}
\newcommand{%
\executeiffilenewer{\GraphicsDirectory.svg}{\GraphicsDirectory.pdf}%
{inkscape -z -D --file=\GraphicsDirectory.svg --export-pdf=\GraphicsDirectory.pdf --export-latex}%
\input{\GraphicsDirectory.pdf_tex}%
}[1]{%
\executeiffilenewer{\GraphicsDirectory#1.svg}{\GraphicsDirectory#1.pdf}%
{inkscape -z -D --file=\GraphicsDirectory#1.svg --export-pdf=\GraphicsDirectory#1.pdf --export-latex}%
\input{\GraphicsDirectory#1.pdf_tex}%
}
\begin{document}
\title{On truncated logarithms of flows on a Riemannian manifold}
\date{\today}
\author{Bruce K. Driver}

\begin{abstract}
This paper gives quantitative global estimates between a time dependent flow
on a Riemannian manifold $\left(  M\right)  $ and the flow of a vector field
constructed by truncating the formal Magnus expansion for the logarithm of the
flow. As a corollary, we also find quantitative estimates between the
composition of the flows of two given time independent vector fields on $M$
and the flow of a truncated version of the Baker-Cambel-Hausdorff-Dynkin
expansion associated to the two given vector fields.

\end{abstract}
\subjclass[2010]{34C40 (primary), 34A45, 53C20}
\keywords{Magnus expansion; Strichartz formula, Baker-Cambel-Hausdorff-Dynkin formula; Free nilpotent Lie groups}
\maketitle
\tableofcontents

\section{Introduction}\label{sec.1}

For the purposes of this paper, let $M$ be a connected manifold without
boundary, $\Gamma\left(  TM\right)  \ $be the space of smooth vector fields on
$M,$ $g$ be a Riemannian metric on $M,$ $d=d_{g}$ be the induced length metric
on $M$ (see Notation \ref{not.2.1}), $\nabla=\nabla^{g}$ be the associated
Levi-Civita covariant derivative, and $R=R^{g}$ be the curvature tensor of
$\nabla$ (see Definition \ref{def.2.5}).

\begin{definition}
[Complete vector fields]\label{def.1.1}Let $J=\left[  0,T\right]  $ (or
possibly some other interval) and $J\ni t\rightarrow Y_{t}\in\Gamma\left(
TM\right)  $ be a smoothly varying time dependent vector field. We say that
$Y$ is \textbf{complete }provided for every $s\in J$ and $m\in M,$ there
exists a solution, $\sigma:J\rightarrow M$ solving the ordinary differential
equation (ODE for short),%
\begin{equation}
\dot{\sigma}\left(  t\right)  =Y_{t}\left(  \sigma\left(  t\right)  \right)
\text{ with }\sigma\left(  s\right)  =m, \label{e.1.1}%
\end{equation}
where $\dot{\sigma}\left(  0\right)  $ and $\dot{\sigma}\left(  T\right)  $
are to be interpreted as the appropriate one-sided derivative. [See Corollary
\ref{cor.2.12} below for some necessary conditions on $\left(  M,g\right)  $
and $Y_{\cdot}$ which imply that $Y$ is complete.]
\end{definition}

\begin{definition}
[Flows]\label{def.1.2}If $J=\left[  0,T\right]  \ni t\rightarrow Y_{t}%
\in\Gamma\left(  TM\right)  $ is a smoothly varying time dependent complete
vector field, let $\mu_{t,s}^{Y}\left(  m\right)  :=\sigma\left(  t\right)  $
where $\sigma\left(  \cdot\right)  $ is the solution to Eq. (\ref{e.1.1}).
Thus the \textbf{flow associated to }$Y,$ $\mu_{t,s}^{Y}:M\rightarrow M$ for
$s,t\in J,$ satisfies,%
\[
\frac{d}{dt}\mu_{t,s}^{Y}\left(  m\right)  =Y_{t}\circ\mu_{t,s}^{Y}\text{ with
}\mu_{s,s}^{Y}=Id_{M}.
\]
When $Y_{t}=Y$ is independent of $t,$ we denote $\mu_{t,0}^{Y}$ by $e^{tY}$ so
that $\mu_{t,s}^{Y}=e^{\left(  t-s\right)  Y}$ for all $s,t\in\mathbb{R}.$
\end{definition}

The \textbf{logarithm problem} in this context is the question of finding
vector fields, $Z_{t}\in\Gamma\left(  TM\right)  ,$ so that $\mu_{t,0}%
^{Y}=e^{Z_{t}}.$ This problem seems to have first been formally studied by
Magnus \cite{Magnus1954} in the context of linear differential equations
although the special case encoded in the Baker-Cambel-Hausdorff-Dynkin formula
is much older. For a short derivation of Magnus result see \cite{Arnal2018},
and for an extensive review of the Magnus' expansion and its many
generalizations and applications see the survey article, \cite{Blanes2009}.
Although it is not within the author's ability to give a systematic review of
the many uses of Magnus' idea, in order to understand the breadth of
applications let me give a sampling references involving quantum physics,
control theory, geometric numerical integration, and stochastic analysis. An
early reference in quantum physics is \cite{Wilcox1967}. An example in control
theory is \cite{Kawski2000} who is discussing taking logarithms of the
Neumann-Dyson series and their non-linear extensions described by K.T. Chen
\cite{ChenKT1957a,ChenKT1958a} and Fliess \cite{Fliess1981}. The following
references,
\cite{Cai2016,DAmbrosio2014,Hairer2003,Hochbruck2010,Lundervold2011a,Lundervold2015a,Munthe-Kaas1995}
along with the survey articles \cite{Budd1999,Iserles2011} and the monographs
\cite{Hairer2006c,Blanes2016}, give only a sporadic sampling of the extensive
literature in geometric numerical integration theory. For references from
stochastic analysis, see
\cite{BenArous1989,BenArous1988,Castell1993,Castell1996,Inahama2010a,Inahama2017,Takanobu1988,Takanobu1990}
which pertain to approximating stochastic flows and see
\cite{Sussmann1986,Sussmann1988} for a couple of example involving stochastic
control theory. Hopefully the reader sees from this sampling of references how
ubiquitous the \textquotedblleft logarithm problem\textquotedblright\ has become.

A key starting point for this paper and many of the references above is
Strichartz's \cite[Eq. (G.C-B-H-D)]{Strichartz1987a} (and also see
\cite{Bialynicki-Birula1969,Mielnik1970}) formal series solution,
\begin{equation}
Z_{t}\sim\sum_{m=1}^{\infty}\sum_{\sigma\in P_{m}}\left(  \frac{\left(
-1\right)  ^{e\left(  \sigma\right)  }}{m^{2}\binom{m-1}{e\left(
\sigma\right)  }}\right)  \int_{\Delta_{m}\left(  t\right)  }\left(
-1\right)  ^{m}\operatorname{ad}_{Y_{\tau_{\sigma\left(  m\right)  }}}%
\dots\operatorname{ad}_{Y_{\sigma\left(  2\right)  }}Y_{\tau_{\sigma\left(
1\right)  }}d\mathbf{\tau,}\label{e.1.2}%
\end{equation}
to the logarithm problem, i.e. $Z_{t}\in\Gamma\left(  TM\right)  $
\textquotedblleft represented\textquotedblright\ by the series above formally
solves $\mu_{t,0}^{Y}=e^{Z_{t}}.$ In Eq. (\ref{e.1.2}), $P_{m}$ is the set of
permutations of $\left\{  1,2,\dots,m\right\}  ,$%
\[
\Delta_{m}\left(  t\right)  =\left\{  0\leq\tau_{1}\leq\tau_{2}\leq\dots
\leq\tau_{m}\leq t\right\}  ,\text{ and}%
\]%
\[
e\left(  \sigma\right)  :=\#\left\{  j<m:\sigma\left(  j\right)
>\sigma\left(  j+1\right)  \right\}
\]
is the number of \textquotedblleft errors\textquotedblright\ in the ordering
of $\sigma\left(  1\right)  ,\dots,\sigma\left(  m\right)  .$

If $M$ is a Lie group and $Y_{t}$ is a family of left invariant vector fields
on $M$ then the expansion in Eq. (\ref{e.1.2}) will converge when $t$ is
sufficiently close to $0$ and the resulting sum will solve the logarithm
problem in this context. There is a rather vast literature exploring when such
series expansions actually converge, see for example
\cite{Moan2008,Biagi2014,Lakos2017,Curry2018} just to give a very thin sample.
In the general context of arbitrary time dependent vector fields, the
expansion in Eq. (\ref{e.1.2}) will typically not converge. In this paper, our
goal is not to discuss convergence of the series but rather to estimate the
errors made by truncating the expansion in Eq. (\ref{e.1.2}).

For $n\in\mathbb{N},$ let $Z_{t}^{\left(  n\right)  }$ denote the series
expansion in Eq. (\ref{e.1.2}) where the first sum, $\sum_{m=1}^{\infty},$ is
truncated to $\sum_{m=1}^{n}.$ Roughly speaking, the main object of this paper
is (under certain added hypothesis on $Y_{t})$ to estimate the distance
between $\mu_{t,0}^{Y}$ and $e^{Z_{t}^{\left(  n\right)  }}.$ The rest of this
introduction will be devoted to summarizing the main results of this paper and
in particular to stating Theorems \ref{thm.1.30} and \ref{thm.1.32} below.

\subsection{Basic flow estimates\label{sec.1.1}}

\begin{notation}
\label{not.1.3}If $\rho_{m}\geq0$ for all $m\in M,$ we let
\[
\rho_{M}:=\sup_{m\in M}\rho_{m}.
\]
If $J$ is a compact subinterval of $\mathbb{R}$ and $M\times J\ni\left(
m,t\right)  \rightarrow\rho_{m}\left(  t\right)  \geq0$ is a continuous
function we let,%
\[
\left\vert \rho\right\vert _{J}^{\ast}:=\int_{J}\rho_{M}\left(  t\right)
dt=\int_{J}\sup_{m\in M}\rho_{m}\left(  t\right)  dt.
\]
When $J=\left[  0,t\right]  $ for some $t>0$ we will simply write $\left\vert
\rho\right\vert _{t}^{\ast}$ for $\left\vert \rho\right\vert _{\left[
0,t\right]  }^{\ast}.$ Note that if $J\ni t\rightarrow\rho\left(  t\right)
\geq0$ does not depend on $m\in M,$ then
\[
\rho_{t}^{\ast}=\int_{J}\rho\left(  t\right)  dt=\left\Vert \rho\right\Vert
_{L^{1}\left(  J,m\right)  }%
\]
where $m$ is Lebesgue measure on $\mathbb{R}.$
\end{notation}

\begin{notation}
\label{not.1.4}For $X\in\Gamma\left(  TM\right)  ,$ $m\in M,$ and $v,w\in
T_{m}M,$ let
\[
\nabla_{v\otimes w}^{2}X:=\nabla_{v}\left(  \nabla_{W}X\right)  -\nabla
_{\nabla_{v}W}X
\]
where $W\in\Gamma\left(  TM\right)  $ is chosen so that $W\left(  m\right)
=w.$
\end{notation}

See Definition \ref{def.2.5} and Remark \ref{rem.2.6} below for an alternative
but equivalent definition of $\nabla^{2}X$ as well as the verification that
$\nabla_{v\otimes w}^{2}X$ is well defined bilinear form on $T_{m}M\times
T_{m}M.$

\begin{notation}
[Tensor Norms]\label{not.1.5}If $X\in\Gamma\left(  TM\right)  $ and $m\in M,$
let
\begin{align*}
\left\vert X\right\vert _{m}  &  :=\left\vert X\left(  m\right)  \right\vert
_{g}\\
\left\vert \nabla X\right\vert _{m}  &  :=\sup_{\left\vert v_{m}\right\vert
=1}\left\vert \nabla_{v_{m}}X\right\vert _{g},\\
\left\vert \nabla^{2}X\right\vert _{m}  &  =\sup_{\left\vert v_{m}\right\vert
=1=\left\vert w_{m}\right\vert }\left\vert \nabla_{v_{m}\otimes w_{m}}%
^{2}X\right\vert _{g},\\
\left\vert R\left(  X,\cdot\right)  \right\vert _{m}  &  :=\sup_{\left\vert
v_{m}\right\vert =1=\left\vert w_{m}\right\vert }\left\vert R\left(  X\left(
m\right)  ,v_{m}\right)  w_{m}\right\vert _{g},\text{ and}\\
H_{m}\left(  X\right)   &  :=\left\vert \nabla^{2}X\right\vert _{m}+\left\vert
R\left(  X,\bullet\right)  \right\vert _{m}.
\end{align*}

\end{notation}

Let us give a few examples of how this notation will be used.

\begin{example}
\label{ex.1.6}Suppose that $J\ni t\rightarrow X_{t}\in\Gamma\left(  TM\right)
$ is a continuously varying time dependent vector field, then%
\begin{align*}
\left\vert X_{t}\right\vert _{M}  &  :=\sup_{m\in M}\left\vert X_{t}%
\right\vert _{m},\text{\quad\ }\left\vert X_{\cdot}\right\vert _{J}^{\ast
}:=\int_{J}\sup_{m\in M}\left\vert X_{t}\right\vert _{m}dt,\\
\left\vert \nabla^{2}X_{t}\right\vert _{M}  &  :=\sup_{m\in M}\left\vert
\nabla^{2}X_{t}\right\vert _{m},\text{\quad\ }\left\vert \nabla^{2}X_{\cdot
}\right\vert _{J}^{\ast}:=\int_{J}\sup_{m\in M}\left\vert \nabla^{2}%
X_{t}\right\vert _{m}dt,\\
H_{M}\left(  X_{t}\right)   &  =\sup_{m\in M}\left[  \left\vert \nabla
^{2}X_{t}\right\vert _{m}+\left\vert R\left(  X_{t},\bullet\right)
\right\vert _{m}\right]  \leq\left\vert \nabla^{2}X_{t}\right\vert
_{M}+\left\vert R\left(  X_{t},\bullet\right)  \right\vert _{M},
\end{align*}
and%
\[
H\left(  X_{\cdot}\right)  _{J}^{\ast}=\int_{J}\sup_{m\in M}H_{m}\left(
X_{t}\right)  dt\leq\left\vert \nabla^{2}X_{\cdot}\right\vert _{J}^{\ast
}+\left\vert R\left(  X_{\cdot},\cdot\right)  \right\vert _{J}^{\ast}.
\]

\end{example}

The next theorem is a combination of Theorem \ref{thm.2.30} and Corollary
\ref{cor.2.31} below.

\begin{theorem}
\label{thm.1.7}Let $J=\left[  0,T\right]  \ni t\rightarrow X_{t,}Y_{t}%
\in\Gamma\left(  TM\right)  $ be two smooth complete time dependent vector
fields on $M$ and $\mu^{X}$ and $\mu^{Y}$ be their corresponding flows. Then
for $m\in M$ and $t>0$ (for notational simplicity) we have the following
estimates,%
\begin{align*}
d\left(  \mu_{t,0}^{X}\left(  m\right)  ,\mu_{t,0}^{Y}\left(  m\right)
\right)   &  \leq\int_{0}^{t}e^{\int_{s}^{t}\left\vert \nabla X_{\sigma
}\right\vert _{\mu_{\sigma,s}^{X}\left(  m\right)  }d\sigma}\cdot\left\vert
Y_{s}-X_{s}\right\vert _{\mu_{s,0}^{Y}\left(  m\right)  }~ds\\
&  \leq e^{\left\vert \nabla X_{\cdot}\right\vert _{t}^{\ast}}\left\vert
Y_{\cdot}-X_{\cdot}\right\vert _{t}^{\ast},
\end{align*}%
\begin{align*}
d\left(  \mu_{t,0}^{Y}\left(  m\right)  ,m\right)   &  \leq\int_{0}%
^{t}\left\vert Y_{s}\right\vert _{\mu_{s,0}^{Y}\left(  m\right)  }%
~ds\leq\left\vert Y\right\vert _{t}^{\ast},\text{ and}\\
d\left(  \mu_{t,0}^{Y}\left(  m\right)  ,m\right)   &  \leq\int_{0}^{t}%
e^{\int_{s}^{t}\left\vert \nabla Y_{\sigma}\right\vert _{\mu_{\sigma,s}%
^{Y}\left(  m\right)  }d\sigma}\cdot\left\vert Y_{s}\right\vert _{m}~ds\leq
e^{\left\vert \nabla Y\right\vert _{t}^{\ast}}\int_{0}^{t}\left\vert
Y_{s}\left(  m\right)  \right\vert ds.
\end{align*}

\end{theorem}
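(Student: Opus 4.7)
The plan is to bound each of the three distances by the Riemannian length of an explicit smooth interpolating curve between the two endpoints, which dominates $d$ by the definition of the length metric. Everything then reduces to one auxiliary lemma---a Gronwall-type estimate on the norm of the differential of a flow---together with three short path computations, one for each inequality.

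The auxiliary estimate I would prove first is: for $v \in T_p M$ and any complete flow $\mu^X$,
\[
\bigl|(\mu_{\tau,s}^X)_\ast v\bigr| \;\leq\; |v|\,\exp\!\Bigl(\int_s^\tau |\nabla X_\rho|_{\mu_{\rho,s}^X(p)}\,d\rho\Bigr).
\]
To prove this, set $V(\tau) := (\mu_{\tau,s}^X)_\ast v$, viewed as a vector field along $\gamma(\tau) := \mu_{\tau,s}^X(p)$. Using the two-parameter variation $(\tau,\epsilon) \mapsto \mu_{\tau,s}^X(\exp_p \epsilon v)$ and the torsion-freeness of $\nabla$, one obtains the Jacobi-like identity $\tfrac{\nabla V}{d\tau} = \nabla_{V(\tau)} X_\tau$. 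Then $\tfrac{d}{d\tau}|V|^2 = 2\langle V, \nabla_V X_\tau\rangle$ together with Cauchy--Schwarz and the definition of $|\nabla X_\tau|$ gives $\tfrac{d}{d\tau}|V| \leq |\nabla X_\tau|_{\gamma(\tau)}\,|V|$, whence Gronwall yields the displayed bound.

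Granting this, the first estimate is proved by considering $\sigma(s) := \mu_{t,s}^X(\mu_{s,0}^Y(m))$ for $s \in [0,t]$, which interpolates between $\mu_{t,0}^X(m)$ at $s=0$ and $\mu_{t,0}^Y(m)$ at $s=t$. Differentiating the identity $\mu_{t,s}^X \circ \mu_{s,\tau}^X = \mu_{t,\tau}^X$ in $s$ gives $\partial_s \mu_{t,s}^X = -(\mu_{t,s}^X)_\ast X_s$, and a straightforward chain rule then produces
\[
\dot\sigma(s) \;=\; (\mu_{t,s}^X)_\ast\bigl[(Y_s - X_s)\bigl(\mu_{s,0}^Y(m)\bigr)\bigr].
\]
Substituting this into the Jacobi bound, integrating, and applying $d(\mu_{t,0}^X(m),\mu_{t,0}^Y(m)) \leq \int_0^t |\dot\sigma(s)|\,ds$ yields the pointwise inequality; the coarser $e^{|\nabla X_\cdot|_t^\ast}|Y_\cdot - X_\cdot|_t^\ast$ form is immediate by replacing each factor by its supremum over $M$. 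The bound (2) is the special case of (1) with $X \equiv 0$, and is in fact the length of the flow line $s \mapsto \mu_{s,0}^Y(m)$ itself. For (3), I would use the reverse-time path $\tilde\sigma(s) := \mu_{t,s}^Y(m)$ on $[0,t]$, which runs from $\mu_{t,0}^Y(m)$ to $m$ with $\dot{\tilde\sigma}(s) = -(\mu_{t,s}^Y)_\ast Y_s(m)$; applying the Jacobi bound with base point $m$ produces exactly the claimed estimate.

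The main technical obstacle is the careful derivation of the identity $\tfrac{\nabla V}{d\tau} = \nabla_V X_\tau$ in the time-dependent setting, which requires identifying $V(\tau)$ with a genuine variation and invoking symmetry of the Levi-Civita connection with $X$ regarded as a vector field along the variation; once this is in hand, the three estimates follow by essentially the same Gronwall-plus-length template. The completeness of $X$ and $Y$ on $[0,T]$ is used to guarantee that the variations and flows appearing in the construction are defined on the full interval, so that the interpolating paths $\sigma$ and $\tilde\sigma$ make sense globally on $M$.
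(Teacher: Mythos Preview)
Your proposal is correct and follows essentially the same route as the paper. The paper's proof (Theorem~2.30 and Corollary~2.31) uses the identical interpolating path $\Theta_s=\mu_{t,s}^X\circ\mu_{s,0}^Y$, the same derivative computation $\Theta_s'=(\mu_{t,s}^X)_\ast(Y_s-X_s)\circ\mu_{s,0}^Y$, and the same Jacobian bound coming from the identity $\tfrac{\nabla}{d\tau}\nu_{\tau\ast}v=\nabla_{\nu_{\tau\ast}v}X_\tau$ plus Gronwall (Proposition~2.26 and Corollary~2.27); the second and third estimates are then obtained, exactly as you suggest, by specializing to $X\equiv 0$ or $Y\equiv 0$, the latter giving precisely your path $\tilde\sigma(s)=\mu_{t,s}^Y(m)$.
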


We are also interested in estimating the distance between the differentials,
$\left(  \mu_{t,0}^{X}\right)  _{\ast}$ and $\left(  \mu_{t,0}^{Y}\right)
_{\ast},$ of $\mu_{t,0}^{X}$ and $\mu_{t,0}^{Y}.$ To do so we endow $TM$ with
its \textquotedblleft natural\textquotedblright\ Riemannian metric induced
from the Riemannian metric, $g,$ on $M$ (see Definition \ref{def.5.1} of
Section \ref{sec.5} below) and let $d^{TM}$ be the induced length metric on
$TM.$ The next theorem is a combination of Theorem \ref{thm.7.2} and Corollary
\ref{cor.7.3} below.

\begin{theorem}
\label{thm.1.8}If $J=\left[  0,T\right]  \ni t\rightarrow X_{t,}Y_{t}\in
\Gamma\left(  TM\right)  $ are smooth complete (see Definition \ref{def.1.1})
time dependent vector fields on $M$ and $\mu^{X}$ and $\mu^{Y}$ be their
corresponding flows, then
\begin{align*}
\sup_{v\in TM:\left\vert v\right\vert =1}d^{TM}  &  \left(  \left(  \mu
_{t,0}^{X}\right)  _{\ast}v,\left(  \mu_{t,0}^{Y}\right)  _{\ast}v\right) \\
&  \leq e^{2\left\vert \nabla X\right\vert _{t}^{\ast}+\left\vert \nabla
Y\right\vert _{t}^{\ast}}\cdot\left(  \left(  1+H\left(  X_{\cdot}\right)
_{t}^{\ast}\right)  \left\vert Y-X\right\vert _{t}^{\ast}+\left\vert
\nabla\left[  Y-X\right]  \right\vert _{t}^{\ast}\right)  ,
\end{align*}
and
\[
\sup_{v\in TM:\left\vert v\right\vert =1}d^{TM}\left(  \left(  \mu_{t,0}%
^{Y}\right)  _{\ast}v,v\right)  \leq e^{\left\vert \nabla Y\right\vert
_{t}^{\ast}}\cdot\left(  \left\vert Y\right\vert _{t}^{\ast}+\left\vert \nabla
Y\right\vert _{t}^{\ast}\right)  .
\]

\end{theorem}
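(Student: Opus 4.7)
The plan is to reduce Theorem~\ref{thm.1.8} to Theorem~\ref{thm.1.7} applied on the Riemannian manifold $(TM, g^{TM})$. The crucial observation is that the differential of a flow on $M$ is itself a flow on $TM$: if $\tilde Y_t \in \Gamma(T(TM))$ denotes the complete (tangent) lift of $Y_t \in \Gamma(TM)$, then the flow of $\tilde Y$ on $TM$ is exactly the map $v \mapsto (\mu^Y_{t,0})_* v$. Thus both left-hand sides in Theorem~\ref{thm.1.8} are flow distances on $(TM, g^{TM})$, and the task reduces to converting the $TM$-norms of $\tilde X, \tilde Y$ that appear when Theorem~\ref{thm.1.7} is invoked into the $M$-norms of $X, Y$ in the desired conclusion.

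First, I would work in the horizontal--vertical splitting $T_v(TM) = H_v \oplus V_v$ induced by $\nabla$. In this splitting the complete lift takes the explicit form
\[
\tilde Y(v) = (Y(\pi v))^H + (\nabla_v Y)^V,
\]
where $H$ and $V$ denote horizontal and vertical lifts and $\pi \colon TM \to M$ is the projection. Since $g^{TM}$ is defined so that $H_v$ and $V_v$ are orthogonal isometric copies of $T_{\pi v} M$, one immediately obtains the pointwise bounds
\[
|\tilde Y|_v \leq |Y|_{\pi v} + |\nabla Y|_{\pi v}\,|v|_g \quad \text{and} \quad |\tilde Y - \tilde X|_v \leq |Y-X|_{\pi v} + |\nabla(Y-X)|_{\pi v}\,|v|_g.
\]

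Second, I would compute the Levi--Civita covariant derivative $\nabla^{TM}\tilde Y$ on $(TM, g^{TM})$. Here curvature of $(M,g)$ enters, because the horizontal distribution is not integrable and commutators of horizontal lifts produce $R$. The standard Sasaki-connection calculus yields an estimate of the schematic form
\[
|\nabla^{TM} \tilde Y|_v \lesssim |\nabla Y|_{\pi v} + \bigl(|\nabla^2 Y|_{\pi v} + |R(Y,\cdot)|_{\pi v}\bigr)\,|v|_g = |\nabla Y|_{\pi v} + H_{\pi v}(Y)\,|v|_g,
\]
which is precisely the point where the compound quantity $H(Y)$ is forced to appear.

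Third, I would control $|v|_g$ as the vector is transported by the flow. The Jacobi-type ODE satisfied by $(\mu^Y_{s,0})_* v$ (namely $D/ds = \nabla Y$) gives the Gronwall bound $|(\mu^Y_{s,0})_* v|_g \leq e^{|\nabla Y|_s^*}|v|_g$, and similarly for $X$. Substituting this $|v|$-growth together with the two pointwise bounds above into the two forms of Theorem~\ref{thm.1.7} applied to $\tilde X, \tilde Y$ at the point $v$ with $|v|=1$, and then integrating in time and regrouping the resulting exponential factors, produces the two stated inequalities with the prefactor $\exp(2|\nabla X|_t^* + |\nabla Y|_t^*)$ and the coefficient $(1 + H(X)_t^*)$ multiplying $|Y-X|_t^*$.

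The main obstacle is the second step: the careful computation and bounding of $\nabla^{TM}\tilde Y$ on $(TM, g^{TM})$ via the horizontal--vertical calculus, and the verification that the curvature contribution combines with $|\nabla^2 Y|$ to give exactly $H(Y)$. Once that computation, the elementary bound for $|\tilde Y|$ itself, and the Jacobi bound for $|v|$ along the flow are in hand, the rest is a mechanical application of Theorem~\ref{thm.1.7} on $TM$.
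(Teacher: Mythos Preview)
Your reduction is conceptually attractive, and the identification of $(\mu^{Y}_{t,0})_{\ast}$ with the flow on $TM$ of the complete lift $\tilde Y_t$ is correct; so is the splitting $\tilde Y(v)=(Y)^H+(\nabla_v Y)^V$ and the resulting bound on $|\tilde Y-\tilde X|_v$. The gap is in the last step, where you assert that feeding everything into Theorem~\ref{thm.1.7} and ``regrouping the exponential factors'' reproduces the stated constants. It does not. In Theorem~\ref{thm.1.7} the quantity $|\nabla^{TM}\tilde X_\sigma|$ sits in the \emph{exponent} of the Gronwall factor, and by your own estimate $|\nabla^{TM}\tilde X_\sigma|_w$ contains a term $H_{\pi w}(X_\sigma)\,|w|_g$. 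Along the interpolating path one has $|w|\le e^{|\nabla X|_t^\ast+|\nabla Y|_t^\ast}$, so the bound you actually obtain carries a factor of the form $\exp\!\big(H(X)_t^\ast\cdot e^{|\nabla X|_t^\ast+|\nabla Y|_t^\ast}\big)$ rather than the linear factor $(1+H(X)_t^\ast)$. There is also a second, smaller issue: the Levi--Civita connection of the Sasaki metric produces, in $\bar\nabla_{A^H}\tilde Y$, an additional horizontal term $\tfrac12\,R(v,\nabla_v Y)A$, so $|\nabla^{TM}\tilde Y|_v$ picks up a contribution $\tfrac12|R|\,|\nabla Y|\,|v|^2$ involving the full curvature norm, which is absent from $H(Y)$ and from the statement.

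The paper avoids both problems by not invoking Theorem~\ref{thm.1.7} on $TM$ at all. It keeps the same interpolator $\Theta_s=\mu^X_{t,s}\circ\mu^Y_{s,0}$ but estimates the $d^{TM}$--length of $s\mapsto\Theta_{s\ast}v$ via Proposition~\ref{pro.5.17}, i.e.\ separately bounding $|\Theta_s'|_M$ (Eq.~(\ref{e.2.61})) and $\big|\tfrac{\nabla}{ds}\Theta_{s\ast}\big|_M$ (Proposition~\ref{pro.7.1}). The latter is computed with $M$--covariant differentiation and the product/chain rules of Proposition~\ref{pro.5.27}, so the only curvature that appears is the $R(X,\cdot)$--term coming from the bound $|\nabla\mu^X_{t,s\ast}|_M\le e^{2|\nabla X|_t^\ast}H(X)_t^\ast$ of Corollary~\ref{cor.6.2}; this is what keeps $H(X)_t^\ast$ multiplicative rather than exponential. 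If you want to rescue your route you would have to bypass the black-box use of Theorem~\ref{thm.1.7} on $TM$ and instead bound $|(\mu^X_{t,s\ast})_{\ast}|_w$ directly by $|\mu^X_{t,s\ast}|_M+|\nabla\mu^X_{t,s\ast}|_M\,|w|$ (cf.\ Theorem~\ref{thm.5.30}); but once you do that you are essentially carrying out the paper's computation in different notation.
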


The next proposition starts to indicate how the two previous theorems fit into
the logarithm approximation theorem.

\begin{proposition}
\label{pro.1.9}Suppose that $\left[  0,T\right]  \ni t\rightarrow X_{t}%
\in\Gamma\left(  TM\right)  $ is a complete time dependent vector field and
$\left[  0,T\right]  \ni t\rightarrow Z_{t}\in\Gamma\left(  TM\right)  $ is
another time dependent vector field such that $Z_{t}$ is complete for each
fixed $t$ and $Z_{0}\equiv0.$ Then
\begin{equation}
d\left(  \mu_{t,0}^{X}\left(  m\right)  ,e^{Z_{t}}\left(  m\right)  \right)
\leq\int_{0}^{t}e^{\int_{s}^{t}\left\vert \nabla X_{\sigma}\right\vert
_{\mu_{\sigma,s}^{X}\left(  m\right)  }d\sigma}\cdot\left\vert W_{s}^{Z}%
-X_{s}\right\vert _{e^{Z_{s}}\left(  m\right)  }~ds \label{e.1.3}%
\end{equation}
where
\[
W_{t}^{Z}:=\int_{0}^{1}e_{\ast}^{sZ_{t}}\dot{Z}_{t}\circ e^{-sZ_{t}}%
ds=\int_{0}^{1}\operatorname{Ad}_{e^{sZ_{t}}}\dot{Z}_{t}~ds\in\Gamma\left(
TM\right)  .
\]

\end{proposition}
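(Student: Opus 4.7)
The plan is to realize $t\mapsto e^{Z_{t}}(m)$ as the flow of a time-dependent vector field and then apply Theorem~\ref{thm.1.7} directly. The natural candidate for this vector field is $W^{Z}$, and the heart of the argument is the identity
\[
\frac{d}{dt}e^{Z_{t}}(m)=W_{t}^{Z}\left(e^{Z_{t}}(m)\right).
\]
Combined with the initial condition $e^{Z_{0}}(m)=m$ (from $Z_{0}\equiv 0$) and uniqueness of ODE solutions, this identifies $t\mapsto e^{Z_{t}}(m)$ with the flow $\mu_{t,0}^{W^{Z}}(m)$, and (\ref{e.1.3}) is then exactly the first estimate of Theorem~\ref{thm.1.7} applied with $Y=W^{Z}$.

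To establish the displayed identity I would consider the two-parameter map $\psi(s,t):=e^{sZ_{t}}(m)$, which by the definition of the flow of (the $t$-frozen vector field) $Z_{t}$ satisfies $\partial_{s}\psi=Z_{t}\circ\psi$ with $\psi(0,t)=m$. Differentiating this ODE in $t$ and setting $y(s):=\partial_{t}\psi(s,t)$ yields, in local coordinates, the linear inhomogeneous equation
\[
\partial_{s}y^{i}=\dot{Z}_{t}^{i}(\psi)+\left(\partial_{j}Z_{t}^{i}\right)(\psi)\,y^{j},\qquad y(0)=0,
\]
whose homogeneous part is solved by the fundamental matrix $D(s)=(e^{sZ_{t}})_{\ast}|_{m}$. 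Variation of parameters then produces
\[
\partial_{t}e^{Z_{t}}(m)=y(1)=(e^{Z_{t}})_{\ast}\int_{0}^{1}(e^{-rZ_{t}})_{\ast}\dot{Z}_{t}\left(e^{rZ_{t}}(m)\right)dr,
\]
and the substitution $s=1-r$ together with $(e^{Z_{t}})_{\ast}\circ(e^{-(1-s)Z_{t}})_{\ast}=(e^{sZ_{t}})_{\ast}$ rewrites the integrand as $(e^{sZ_{t}})_{\ast}\dot{Z}_{t}\circ e^{-sZ_{t}}$ evaluated at $e^{Z_{t}}(m)$, which is precisely $W_{t}^{Z}\left(e^{Z_{t}}(m)\right)$.

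With the identification $\mu_{t,0}^{W^{Z}}(m)=e^{Z_{t}}(m)$ in hand, the bound (\ref{e.1.3}) follows by applying the first inequality of Theorem~\ref{thm.1.7} with $Y:=W^{Z}$. One small technical wrinkle is that Theorem~\ref{thm.1.7} presupposes that $Y$ is globally complete, which is not assumed for $W^{Z}$ here; however the Gronwall-type argument behind that estimate is pointwise at the starting point $m$ and only needs both flows to exist on $[0,T]$ starting from $m$, and this is furnished by the explicit formula $\mu_{t,0}^{W^{Z}}(m)=e^{Z_{t}}(m)$. I expect the main obstacle to be carrying out the derivation of $\partial_{t}e^{Z_{t}}$ cleanly on a general (non-group) manifold, keeping careful track of base points and correctly identifying the differentials $(e^{sZ_{t}})_{\ast}$ as the fundamental solution of the linearized flow equation along $s\mapsto e^{sZ_{t}}(m)$; once that calculation is in hand the proposition is an immediate consequence of Theorem~\ref{thm.1.7}.
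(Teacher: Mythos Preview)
Your proposal is correct and follows essentially the same route as the paper: identify $t\mapsto e^{Z_t}$ as the flow of $W^Z$ via the identity $\frac{d}{dt}e^{Z_t}=W_t^Z\circ e^{Z_t}$, then invoke Theorem~\ref{thm.1.7} with $Y=W^Z$. The paper simply cites this identity as Corollary~\ref{cor.2.24} (proved later via Theorem~\ref{thm.2.22}), whereas you derive it directly by differentiating the flow in its parameter and applying variation of parameters; the arguments are equivalent.
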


\begin{proof}
By Corollary \ref{cor.2.24} below, which states,
\[
\frac{d}{dt}e^{Z_{t}}=W_{t}^{Z}\circ e^{Z_{t}}\text{ with }e^{Z_{0}}=Id
\]
and so $\mu_{t,0}^{W^{Z}}=e^{Z_{t}}$ for all $t\in\left[  0,T\right]  .$ Thus
the estimate in Eq. (\ref{e.1.3}) follows by applying Theorem \ref{thm.1.7}
with $Y_{t}=W_{t}^{Z}.$
\end{proof}

Because of Proposition \ref{pro.1.9}, in order to find good approximate
logarithms for the flow, $\mu^{X},$ we should choose $Z_{t}\in\Gamma\left(
TM\right)  $ so that $Z_{0}=0$ and $\left\vert W_{s}^{Z}-X_{s}\right\vert
_{e^{Z_{s}}\left(  m\right)  }$ is small. Ideally we would like to choose $Z$
so that $W_{s}^{Z}=X_{s}$ but this is not possible in general. However,
formally solving the equation $W_{s}^{Z}=X_{s}$ for $Z$ would lead to the
expansion in Eq. (\ref{e.1.2}). In order to get precise estimates we are now
going to make more assumptions (in the spirit of control theory) on what we
allow for our choice of $X_{t}.$ These additional assumptions and necessary
notations will be explained in the next subsection.

\subsection{Free niltpotent Lie groups and dynamical systems\label{sec.1.2}}

\begin{definition}
[Tensor Algebras]\label{def.1.10}Let $T\left(  \mathbb{R}^{d}\right)
:=\oplus_{k=0}^{\infty}\left[  \mathbb{R}^{d}\right]  ^{\otimes k}$ be the
tensor algebra over $\mathbb{R}^{d}$ so the general element of $\omega\in
T\left(  \mathbb{R}^{d}\right)  $ is of the form
\[
\omega=\sum_{k=0}^{\infty}\omega_{k}\text{ with }\omega_{k}\in\left(
\mathbb{R}^{d}\right)  ^{\otimes k}\text{ for }k\in\mathbb{N}_{0}%
\]
where we assume $\omega_{k}=0$ for all but finitely many $k.$ Multiplication
is the tensor product and associated to this multiplication is the Lie
bracket,%
\begin{equation}
\left[  A,B\right]  _{\otimes}:=A\otimes B-B\otimes A\text{ for all }A,B\in
T\left(  \mathbb{R}^{d}\right)  . \label{e.1.4}%
\end{equation}

\end{definition}

\begin{definition}
[Free Lie Algebra]\label{not.1.11}The \textbf{free Lie algebra over
}$\mathbb{R}^{d}$ will be taken to be the Lie-subalgebra, $F\left(
\mathbb{R}^{d}\right)  ,$ of $\left(  T\left(  \mathbb{R}^{d}\right)  ,\left[
\cdot,\cdot\right]  _{\otimes}\right)  $ generated by $\mathbb{R}^{d}.$
\end{definition}

\begin{remark}
\label{rem.1.12}If $\left(  \mathfrak{g},\left[  \cdot,\cdot\right]  \right)
$ is a Lie algebra and $V\subset\mathfrak{g}$ is a subspace, then using
Jacobi's identity one easily shows that Lie sub-algebra $\left(
\operatorname*{Lie}\left(  V\right)  \right)  $ of $\mathfrak{g}$ generated by
$V$ may be described as;%
\[
\operatorname*{Lie}\left(  V\right)  =\operatorname*{span}\cup_{k=1}^{\infty
}\left\{  \operatorname{ad}_{v_{1}}\dots\operatorname{ad}_{v_{k-1}}v_{k}%
:v_{1},\dots,v_{k}\in V\right\}  ,
\]
where $\operatorname{ad}_{A}B:=\left[  A,B\right]  $ for all $A,B\in
\mathfrak{g}.$ As a consequence of this remark it follows that $F\left(
\mathbb{R}^{d}\right)  $ is a $\mathbb{N}_{0}$-graded algebra with
\[
F\left(  \mathbb{R}^{d}\right)  =\oplus_{k=0}^{\infty}F_{k}\left(
\mathbb{R}^{d}\right)  \text{ where }F_{k}\left(  \mathbb{R}^{d}\right)
=F\left(  \mathbb{R}^{d}\right)  \cap\left[  \mathbb{R}^{d}\right]  ^{\otimes
k}\subset F\left(  \mathbb{R}^{d}\right)  .
\]
According to this grading, if $A\in F\left(  \mathbb{R}^{d}\right)  $ we let
$A_{k}\in F_{k}\left(  \mathbb{R}^{d}\right)  $ denote the projection of $A$
into $F_{k}\left(  \mathbb{R}^{d}\right)  .$
\end{remark}

See \cite{ReutenauerBook}, for general background information on free Lie
algebras. The spaces $T\left(  \mathbb{R}^{d}\right)  $ and $F\left(
\mathbb{R}^{d}\right)  $ are infinite dimensional. We are going to be most
interested in the finite dimensional truncated versions of these algebras.

\begin{definition}
[Truncated Tensor Algebras]\label{def.1.13}Given $\kappa\in\mathbb{N},$ let
\[
T^{\left(  \kappa\right)  }\left(  \mathbb{R}^{d}\right)  :=\oplus
_{k=0}^{\kappa}\left[  \mathbb{R}^{d}\right]  ^{\otimes k}\subset T\left(
\mathbb{R}^{d}\right)
\]
which is algebra under the multiplication rule,%
\[
AB=\sum_{k=0}^{\kappa}\left(  AB\right)  _{k}=\sum_{k=0}^{\kappa}\sum
_{j=0}^{k}A_{j}\otimes B_{k-j}~\text{ }\forall~A,B\in T^{\left(
\kappa\right)  }\left(  \mathbb{R}^{d}\right)
\]
and a Lie algebra under the bracket operation, $\left[  A,B\right]  :=AB-BA$
for all $A,B\in T^{\left(  \kappa\right)  }\left(  \mathbb{R}^{d}\right)  .$
\end{definition}

\begin{notation}
\label{not.1.14}Let $\pi_{\leq\kappa}:T\left(  \mathbb{R}^{d}\right)
\rightarrow T^{\left(  \kappa\right)  }\left(  \mathbb{R}^{d}\right)  $ and
$\pi_{>\kappa}:=I_{T\left(  \mathbb{R}^{d}\right)  }-\pi_{\leq\kappa}:T\left(
\mathbb{R}^{d}\right)  \rightarrow\oplus_{k=\kappa+1}^{\infty}\left[
\mathbb{R}^{d}\right]  ^{\otimes k}$ be the projections associated to the
direct sum decomposition,
\[
T\left(  \mathbb{R}^{d}\right)  =T^{\left(  \kappa\right)  }\left(
\mathbb{R}^{d}\right)  \oplus\left(  \oplus_{k=\kappa+1}^{\infty}\left[
\mathbb{R}^{d}\right]  ^{\otimes k}\right)  .
\]
Further let
\begin{equation}
\mathfrak{g}^{\left(  \kappa\right)  }=\oplus_{k=1}^{\kappa}\left[
\mathbb{R}^{d}\right]  ^{\otimes k} \label{e.1.6}%
\end{equation}
which is a two sided ideal as well as a Lie sub-algebra of $T^{\left(
\kappa\right)  }\left(  \mathbb{R}^{d}\right)  .$
\end{notation}

With this notation the multiplication and Lie bracket on $T^{\left(
\kappa\right)  }\left(  \mathbb{R}^{d}\right)  $ may be described as,%
\[
AB=\pi_{\leq\kappa}\left(  A\otimes B\right)  \text{ and }\left[  A,B\right]
=\pi_{\leq\kappa}\left[  A,B\right]  _{\otimes}.
\]

\begin{notation}
[Induced Inner product]\label{not.1.15}The usual dot product on $\mathbb{R}%
^{d}$ induces an inner product, $\left\langle \cdot,\cdot,\right\rangle $ on
$T^{\left(  \kappa\right)  }\left(  \mathbb{R}^{d}\right)  $ uniquely
determined by requiring $T^{\left(  \kappa\right)  }\left(  \mathbb{R}%
^{d}\right)  :=\oplus_{k=0}^{\kappa}\left[  \mathbb{R}^{d}\right]  ^{\otimes
k}$ to be an orthogonal direct sum decomposition, $\left\langle
1,1\right\rangle =1$ for $1\in\left[  \mathbb{R}^{d}\right]  ^{\otimes0},$
and
\[
\left\langle v_{1}v_{2}\dots v_{k},w_{1}w_{2}\dots w_{k}\right\rangle
=\left\langle v_{1},w_{1}\right\rangle \left\langle v_{2},w_{2}\right\rangle
\dots\left\langle v_{k},w_{k}\right\rangle
\]
for any $v_{j},w_{j}\in\mathbb{R}^{d}$ and $1\leq k\leq\kappa.$ We let
$\left\vert A\right\vert :=\sqrt{\left\langle A,A\right\rangle }$ denote the
associated Hilbertian norm of $A\in T^{\left(  \kappa\right)  }\left(
\mathbb{R}^{d}\right)  .$
\end{notation}

Often, it turns out to be more convenient (see Proposition \ref{pro.3.24}
below) to measure the size of $A\in\mathfrak{g}^{\left(  \kappa\right)  }$
using the following \textquotedblleft homogeneous norms.\textquotedblright\

\begin{definition}
[Homogeneous norms]\label{def.1.16}For $A\in\mathfrak{g}^{\left(
\kappa\right)  }\subset T^{\left(  \kappa\right)  }\left(  \mathbb{R}%
^{d}\right)  ,$ let
\[
N\left(  A\right)  :=\max_{1\leq k\leq\kappa}\left\vert A_{k}\right\vert
^{1/k}%
\]
and for $f\in C\left(  \left[  0,t\right]  ,\mathfrak{g}^{\left(
\kappa\right)  }\right)  $ let
\[
N_{t}^{\ast}\left(  f\right)  :=\max_{1\leq k\leq\kappa}\left\vert
f_{k}\right\vert _{t}^{\ast1/k}=\max_{1\leq k\leq\kappa}\left(  \int_{0}%
^{t}\left\vert f_{k}\left(  \tau\right)  \right\vert d\tau\right)  ^{1/k}%
\]
be the \textbf{homogeneous }$L^{1}$\textbf{-norm of }$f$\textbf{. [}Note that
$N\left(  A\right)  $ is the best constant such that $\left\vert
A_{k}\right\vert \leq N\left(  A\right)  ^{k}$ for $1\leq k\leq\kappa.]$
\end{definition}

Let us observe that for $t\in\mathbb{R},$
\begin{equation}
N\left(  tA\right)  =\max_{1\leq k\leq\kappa}\left[  \left\vert t\right\vert
^{1/k}\left\vert A_{k}\right\vert ^{1/k}\right]  \leq\max_{1\leq k\leq\kappa
}\left[  \left\vert t\right\vert ^{1/k}\right]  \cdot N\left(  A\right)
\leq\left(  1\vee\left\vert t\right\vert \right)  \cdot N\left(  A\right)
\label{e.1.7}%
\end{equation}
and if $\delta_{t}:T^{\left(  \kappa\right)  }\left(  \mathbb{R}^{d}\right)
\rightarrow T^{\left(  \kappa\right)  }\left(  \mathbb{R}^{d}\right)  $ is the
dilation operator defined by $\delta_{t}\left(  A\right)  =\sum_{k=0}^{\kappa
}t^{k}A_{k},$ then%
\begin{equation}
N\left(  \delta_{t}A\right)  =\max_{1\leq k\leq\kappa}\left[  \left\vert
t^{k}A_{k}\right\vert ^{1/k}\right]  =\left\vert t\right\vert N\left(
A\right)  . \label{e.1.8}%
\end{equation}

\begin{definition}
[Free Nilpotent Lie Algebra]\label{not.1.17}The \textbf{step }$\kappa$\textbf{
free Nilpotent Lie algebra} on $\mathbb{R}^{d}$ may then be realized as the
Lie sub-algebra, $F^{\left(  \kappa\right)  }\left(  \mathbb{R}^{d}\right)  ,$
of $\mathfrak{g}^{\left(  \kappa\right)  }$ generated by $\mathbb{R}%
^{d}\subset T^{\left(  \kappa\right)  }\left(  \mathbb{R}^{d}\right)  .$
\end{definition}

Again, a simple consequence of Remark \ref{rem.1.12} is that, as vector
spaces, $F^{\left(  \kappa\right)  }\left(  \mathbb{R}^{d}\right)  =\pi
_{\leq\kappa}\left(  F\left(  \mathbb{R}^{d}\right)  \right)  $ and
$F^{\left(  \kappa\right)  }\left(  \mathbb{R}^{d}\right)  $ is graded as%
\[
F^{\left(  \kappa\right)  }\left(  \mathbb{R}^{d}\right)  =\oplus
_{k=0}^{\kappa}F_{k}^{\left(  \kappa\right)  }\left(  \mathbb{R}^{d}\right)
\]
where
\[
F_{k}^{\left(  \kappa\right)  }\left(  \mathbb{R}^{d}\right)  :=F^{\left(
\kappa\right)  }\left(  \mathbb{R}^{d}\right)  \cap\left[  \mathbb{R}%
^{d}\right]  ^{\otimes k}\subset F^{\left(  \kappa\right)  }\left(
\mathbb{R}^{d}\right)  \text{ for }1\leq k\leq\kappa.
\]

The set,
\begin{equation}
G^{\left(  \kappa\right)  }\left(  \mathbb{R}^{d}\right)  :=1+\mathfrak{g}%
^{\left(  \kappa\right)  }\subset T^{\left(  \kappa\right)  }\left(
\mathbb{R}^{d}\right)  , \label{e.1.9}%
\end{equation}
forms a group under the multiplication rule of $T^{\left(  \kappa\right)
}\left(  \mathbb{R}^{d}\right)  $ which is a Lie group with Lie algebra,
$\operatorname*{Lie}\left(  G^{\left(  \kappa\right)  }\right)  =\mathfrak{g}%
^{\left(  \kappa\right)  }.$ Moreover, the exponential map,
\[
\mathfrak{g}^{\left(  \kappa\right)  }\ni\xi\rightarrow e^{\xi}=\sum
_{k=0}^{\kappa}\frac{\xi^{k}}{k!}\in G^{\left(  \kappa\right)  }\left(
\mathbb{R}^{d}\right)  ,
\]
is a diffeomorphism whose inverse is given by%
\begin{equation}
\log\left(  1+\xi\right)  =\sum_{k=1}^{\kappa}\frac{\left(  -1\right)  ^{k+1}%
}{k}\xi^{k}. \label{e.1.10}%
\end{equation}
[See Section \ref{sec.3} for more details.] We will mostly only use the
following subgroup of $G^{\left(  \kappa\right)  }\left(  \mathbb{R}%
^{d}\right)  .$

\begin{definition}
[Free Nilpotent Lie Groups]\label{not.1.18}For $\kappa\in\mathbb{N},$ let
$G_{\text{geo}}^{\left(  \kappa\right)  }\left(  \mathbb{R}^{d}\right)
\subset G^{\left(  \kappa\right)  }$ be the simply connected Lie subgroup of
$G^{\left(  \kappa\right)  }=1\oplus_{k=1}^{\kappa}\left[  \mathbb{R}%
^{d}\right]  ^{\otimes k}$ whose Lie algebra is $F^{\left(  \kappa\right)
}\left(  \mathbb{R}^{d}\right)  .$ This subgroup is a step-$\kappa$ (free)
nilpotent Lie group which we refer to as the \textbf{geometric sub-group }of
$G^{\left(  \kappa\right)  }.$
\end{definition}

It is well known as a consequence of the Baker-Campel-Dynken-Hausdorff formula
(see Proposition \ref{pro.3.12} of Section \ref{sec.3}) that the exponential
map restricted to $F^{\left(  \kappa\right)  }\left(  \mathbb{R}^{d}\right)
,$%
\[
F^{\left(  \kappa\right)  }\left(  \mathbb{R}^{d}\right)  \ni\xi\rightarrow
e^{\xi}=\sum_{k=0}^{\kappa}\frac{\xi^{k}}{k!}\in G_{\text{geo}}^{\left(
\kappa\right)  }\left(  \mathbb{R}^{d}\right)  ,
\]
is again diffeomorphism.

\begin{notation}
\label{not.1.19}Let $\mathrm{LD}\left(  C^{\infty}\left(  M,\mathbb{R}\right)
\right)  $ denote the algebra of smooth linear differential operators from
$C^{\infty}\left(  M,\mathbb{R}\right)  .$
\end{notation}

As usual we view the smooth vector fields, $\Gamma\left(  TM\right)  ,$ on $M$
as a subspace of $\mathrm{LD}\left(  C^{\infty}\left(  M,\mathbb{R}\right)
\right)  .$

\begin{definition}
[Dynamical systems]\label{def.1.20}A $d$\textbf{-dimensional dynamical system}
on $M$ is a linear map, $\mathbb{R}^{d}\ni w\rightarrow V_{w}\in\Gamma\left(
TM\right)  .$
\end{definition}

A $d$-dimensional dynamical system on $M$ is completely determined by knowing
$\left\{  V_{e_{j}}\right\}  _{j=1}^{d}\subset\Gamma\left(  TM\right)  $ where
$\left\{  e_{j}\right\}  _{j=1}^{d}$ is the standard basis for $\mathbb{R}%
^{d}.$ The tensor algebra, $T\left(  \mathbb{R}^{d}\right)  ,$ of Definition
\ref{def.1.10} satisfies the following universal property; if $V:\mathbb{R}%
^{d}\rightarrow\mathcal{A}$ is a linear map, $\mathcal{A}$ is another
associative algebra with identity, then $V$ extends uniquely to an algebra
homomorphism from $T\left(  \mathbb{R}^{d}\right)  $ to $\mathcal{A}$ which we
still denote by $V.$ The extension is uniquely determined by $V_{1}%
=1_{\mathcal{A}}$ and $V_{v_{1}\otimes\dots\otimes v_{k}}=V_{v_{1}}\dots
V_{v_{k}}$ for all $v_{i}\in\mathbb{R}^{d}$ and $k\in\mathbb{N}.$ The
following example is of primary importance to this paper.

\begin{example}
\label{ex.1.21}Every $d$-dimensional dynamical system on $M,$ $\mathbb{R}%
^{d}\ni w\rightarrow V_{w}\in\Gamma\left(  TM\right)  \subset\mathrm{LD}%
\left(  C^{\infty}\left(  M,\mathbb{R}\right)  \right)  ,$ extends to an
algebra homomorphism from $T\left(  \mathbb{R}^{d}\right)  $ to $\mathrm{LD}%
\left(  C^{\infty}\left(  M,\mathbb{R}\right)  \right)  .$ We will still
denote this extension by $V.$ Because of Remark \ref{rem.1.12}, it is easy to
see that $V\left(  F\left(  \mathbb{R}^{d}\right)  \right)  \subset
\Gamma\left(  TM\right)  $ and $V|_{F\left(  \mathbb{R}^{d}\right)  }:F\left(
\mathbb{R}^{d}\right)  \rightarrow\Gamma\left(  TM\right)  $ is a Lie algebra homomorphism.
\end{example}

\begin{notation}
[Extension of $V$ to $F^{\left(  \kappa\right)  }\left(  \mathbb{R}%
^{d}\right)  $]\label{not.1.22}The restriction, $V|_{F^{\left(  \kappa\right)
}\left(  \mathbb{R}^{d}\right)  },$ of $V$ to the subspace $F^{\left(
\kappa\right)  }\left(  \mathbb{R}^{d}\right)  $ of $F\left(  \mathbb{R}%
^{d}\right)  $ will be denoted by $V^{\left(  \kappa\right)  }:F^{\left(
\kappa\right)  }\left(  \mathbb{R}^{d}\right)  \rightarrow\Gamma\left(
TM\right)  .$
\end{notation}

\begin{remark}
\label{rem.1.23}It is \textbf{not} generally true that $V^{\left(
\kappa\right)  }:=V|_{F^{\left(  \kappa\right)  }\left(  \mathbb{R}%
^{d}\right)  }:F^{\left(  \kappa\right)  }\left(  \mathbb{R}^{d}\right)
\rightarrow\Gamma\left(  TM\right)  $ is a Lie algebra homomorphism. In order
for this to be true we must require that $\operatorname{ad}_{V_{a_{\kappa}}%
}\dots\operatorname{ad}_{V_{a_{1}}}V_{a_{0}}=0$ for all $\left\{
a_{j}\right\}  _{j=0}^{\kappa}\subset\mathbb{R}^{d},$ i.e. $\left\{
V_{a}:a\in\mathbb{R}^{d}\right\}  $ should generate a \textbf{step-}$\kappa
$\textbf{ nilpotent Lie sub-algebra} of $\Gamma\left(  TM\right)  .$
\end{remark}

\begin{definition}
[Dynamical System Norms]\label{def.1.24}If $V$ is a dynamical system and
$\kappa\in\mathbb{N},$ we let%
\begin{align}
\left\vert V^{\left(  \kappa\right)  }\right\vert _{M}  &  :=\left\{
\left\vert V_{A}\right\vert _{M}:A\in F^{\left(  \kappa\right)  }\left(
\mathbb{R}^{d}\right)  \text{ with }\left\vert A\right\vert =1\right\}
,\label{e.1.11}\\
\left\vert \nabla V^{\left(  \kappa\right)  }\right\vert _{M}  &  :=\left\{
\left\vert \nabla V_{A}\right\vert _{M}:A\in F^{\left(  \kappa\right)
}\left(  \mathbb{R}^{d}\right)  \text{ with }\left\vert A\right\vert
=1\right\}  ,\label{e.1.12}\\
\left\vert \nabla^{2}V^{\left(  \kappa\right)  }\right\vert _{M}  &
:=\left\{  \left\vert \nabla^{2}V_{A}\right\vert _{M}:A\in F^{\left(
\kappa\right)  }\left(  \mathbb{R}^{d}\right)  \text{ with }\left\vert
A\right\vert =1\right\}  ,\text{ and}\label{e.1.13}\\
H_{M}\left(  V^{\left(  \kappa\right)  }\right)   &  :=\sup\left\{
H_{M}\left(  V_{A}\right)  :A\in F^{\left(  \kappa\right)  }\left(
\mathbb{R}^{d}\right)  \text{ with }\left\vert A\right\vert =1\right\}
\label{e.1.14}%
\end{align}
where we allow for the possibility that any of these expressions might be
infinite. [Recall that $H_{M}\left(  V_{A}\right)  $ is defined in Notation
\ref{not.1.5} and Example \ref{ex.1.6}.]
\end{definition}

\subsection{Approximate logarithm theorems\label{sec.1.3}}

\begin{definition}
[See Definition \ref{def.3.6}]\label{def.1.25}For $\xi\in C^{1}\left(  \left[
0,T\right]  ,F^{\left(  \kappa\right)  }\left(  \mathbb{R}^{d}\right)
\right)  ,$ let $g^{\xi}\in C^{1}\left(  \left[  0,T\right]  ,G_{geo}\right)
$ denote the solution to the ODE,%
\begin{equation}
\dot{g}^{\xi}\left(  t\right)  =g^{\xi}\left(  t\right)  \dot{\xi}\left(
t\right)  \text{ with }g^{\xi}\left(  0\right)  =1 \label{e.1.15}%
\end{equation}
and
\begin{equation}
C^{\xi}\left(  t\right)  :=\log\left(  g^{\xi}\left(  t\right)  \right)
=\sum_{k=1}^{\kappa}\frac{\left(  -1\right)  ^{k+1}}{k}\left(  g^{\xi}\left(
t\right)  -1\right)  ^{k}\in F^{\left(  \kappa\right)  }\left(  \mathbb{R}%
^{d}\right)  . \label{e.1.16}%
\end{equation}

\end{definition}

\begin{notation}
\label{not.1.26}For $f,g\in C^{1}\left(  M,M\right)  ,$ let
\begin{align*}
d_{M}\left(  f,g\right)   &  :=\sup_{m\in M}d\left(  f\left(  m\right)
,g\left(  m\right)  \right)  \text{ and }\\
d_{M}^{TM}\left(  f_{\ast},g_{\ast}\right)   &  :=\sup_{v\in TM:\left\vert
v\right\vert =1}d^{TM}\left(  f_{\ast}v,g_{\ast}v\right)
\end{align*}
where again $d^{TM}$ is defined in Section \ref{sec.5} below.
\end{notation}

\begin{definition}
[$\kappa$-complete]\label{def.1.27}We say that a dynamical system,
$\mathbb{R}^{d}\ni w\rightarrow V_{w}\in\Gamma\left(  TM\right)  ,$ is
$\kappa$\textbf{-complete }if for any $\xi\in C^{1}\left(  \left[  0,T\right]
,F^{\left(  \kappa\right)  }\left(  \mathbb{R}^{d}\right)  \right)  $ the time
dependent vector-field, $\left[  0,T\right]  \ni t\rightarrow V_{\dot{\xi
}\left(  t\right)  }\in\Gamma\left(  TM\right)  ,$ is complete as defined in
Definition \ref{def.1.1}.
\end{definition}

\begin{ass}
\label{ass.1}Unless otherwise stated, the dynamical system $V:\mathbb{R}%
^{d}\rightarrow\Gamma\left(  TM\right)  $ is assumed to be $\kappa$-complete.
\end{ass}

The next two theorems are the main theorems of this paper. The first theorem
is a combination of Theorem \ref{thm.4.12}, Eq. (\ref{e.4.18}), and Corollary
\ref{cor.4.16}. To simplify the statements we first introduce the following notation.

\begin{notation}
\label{not.1.28}For $\lambda\geq0$ and $m,n\in\mathbb{N}$ with $m<n,$ let
\begin{align*}
Q_{[m,n]}\left(  \lambda\right)   &  :=\max\left\{  \lambda^{k}:k\in
\mathbb{N}\cap\left[  m,n\right]  \right\}  =\max\left\{  \lambda^{m}%
,\lambda^{n}\right\}  \text{ and}\\
Q_{(m,n]}\left(  \lambda\right)   &  =Q_{[m+1,n]}\left(  \lambda\right)
:=\max\left\{  \lambda^{k}:k\in\mathbb{N}\cap(m,n]\right\}  =\max\left\{
\lambda^{m+1},\lambda^{n}\right\}  .
\end{align*}

\end{notation}

\begin{notation}
\label{not.1.29}Given two functions, $f\left(  x\right)  $ and $g\left(
x\right)  ,$ depending on some parameters indicated by $x,$ we write $f\left(
x\right)  \lesssim g\left(  x\right)  $ if there exists a constant, $C\left(
\kappa\right)  ,$ only possibly depending on $\kappa$ so that $f\left(
x\right)  \leq C\left(  \kappa\right)  g\left(  x\right)  $ for the allowed
values of $x.$ Similarly we write $f\left(  x\right)  \asymp g\left(
x\right)  $ if both $f\left(  x\right)  \lesssim g\left(  x\right)  $ and
$g\left(  x\right)  \lesssim f\left(  x\right)  $ hold.
\end{notation}

\begin{theorem}
\label{thm.1.30}There is a constant $c\left(  \kappa\right)  <\infty$ such
that%
\begin{align*}
d_{M}  &  \left(  \mu_{T,0}^{V_{\dot{\xi}}},e^{V_{\log\left(  g^{\xi}\left(
T\right)  \right)  }}\right) \\
&  \lesssim\left\vert V^{\left(  \kappa\right)  }\right\vert _{M}\left\vert
\nabla V^{\left(  \kappa\right)  }\right\vert _{M}e^{c\left(  \kappa\right)
\left\vert \nabla V^{\left(  \kappa\right)  }\right\vert _{M}Q_{\left[
1,\kappa\right]  }\left(  N_{T}^{\ast}\left(  \dot{\xi}\right)  \right)
}Q_{(\kappa,\kappa+1]}\left(  N_{T}^{\ast}\left(  \dot{\xi}\right)  \right)
\end{align*}
for every $\xi\in C^{1}\left(  \left[  0,T\right]  ,F^{\left(  \kappa\right)
}\left(  \mathbb{R}^{d}\right)  \right)  .$ Moreover, if $A,B\in F^{\left(
\kappa\right)  }\left(  \mathbb{R}^{d}\right)  ,$ then%
\[
d_{M}\left(  e^{V_{B}},Id_{M}\right)  \leq\left\vert V^{\left(  \kappa\right)
}\right\vert \left\vert B\right\vert \leq\left\vert V^{\left(  \kappa\right)
}\right\vert Q_{\left[  1,\kappa\right]  }\left(  N\left(  B\right)  \right)
\]
and
\begin{align*}
d_{M}  &  \left(  e^{V_{B}}\circ e^{V_{A}},e^{V_{\log\left(  e^{A}%
e^{B}\right)  }}\right) \\
&  \lesssim\mathcal{K}_{0}N\left(  A\right)  N\left(  B\right)  Q_{\left[
\kappa-1,2\kappa-2\right]  }\left(  N\left(  A\right)  +N\left(  B\right)
\right)
\end{align*}
where
\[
\mathcal{K}_{0}:=\left\vert V^{\left(  \kappa\right)  }\right\vert
_{M}\left\vert \nabla V^{\left(  \kappa\right)  }\right\vert _{M}e^{c\left(
\kappa\right)  \left\vert \nabla V^{\left(  \kappa\right)  }\right\vert
_{M}Q_{\left[  1,\kappa\right]  }\left(  N\left(  A\right)  +N\left(
B\right)  \right)  }.
\]

\end{theorem}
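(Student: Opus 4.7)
The plan is to apply Proposition~\ref{pro.1.9} with $X_t := V_{\dot\xi(t)}$ and $Z_t := V_{C^\xi(t)}$. Since $C^\xi(0) = \log(1) = 0$, the hypothesis $Z_0 = 0$ is satisfied and $e^{Z_T} = e^{V_{\log(g^\xi(T))}}$, so the main estimate reduces to controlling the integrand $|W_s^Z - V_{\dot\xi(s)}|_M$ and the exponential prefactor $\int_s^T |\nabla V_{\dot\xi(\sigma)}|_M\,d\sigma$. The prefactor is straightforward: by linearity of $V$, $|\nabla V_{\dot\xi(\sigma)}|_M \le |\nabla V^{(\kappa)}|_M\cdot |\dot\xi(\sigma)|$, and the homogeneous-norm inequality $|\dot\xi_k|_T^{\ast} \le [N_T^{\ast}(\dot\xi)]^k$ yields $\int_0^T |\dot\xi|\,d\sigma \lesssim Q_{[1,\kappa]}(N_T^{\ast}(\dot\xi))$, producing the exponential factor in the statement.

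For the error term I would exploit that $V \colon F(\mathbb{R}^d) \to \Gamma(TM)$ is a Lie algebra homomorphism (Example~\ref{ex.1.21}), which gives
\[
W_s^Z = \int_0^1 \operatorname{Ad}_{e^{s' V_{C^\xi(s)}}} V_{\dot C^\xi(s)}\,ds' = V_{D_s},\quad D_s := \int_0^1 e^{-s'\operatorname{ad}_{C^\xi(s)}}\dot C^\xi(s)\,ds' \in F(\mathbb{R}^d),
\]
where all brackets are computed in the \emph{untruncated} tensor algebra. The key observation is that on the truncated free nilpotent group $G_{\text{geo}}^{(\kappa)}$ the ODE $\dot g^\xi = g^\xi \dot\xi$ with $g^\xi = e^{C^\xi}$ forces, via the derivative-of-$\exp$ identity, $\pi_{\leq\kappa}(D_s) = \dot\xi(s)$. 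Hence $W_s^Z - V_{\dot\xi(s)} = V_{\pi_{>\kappa}(D_s)}$, and the right side is $V$ applied to an element supported purely in homogeneous degrees strictly greater than $\kappa$, built from iterated commutators of $C^\xi(s),\dot C^\xi(s) \in F^{(\kappa)}$.

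The hardest step will be bounding $|V_W|_M$ for such high-degree Lie polynomials $W$ using only $|V^{(\kappa)}|_M$ and $|\nabla V^{(\kappa)}|_M$; the main tool is the torsion-free identity $[V_A, V_B] = \nabla_{V_A} V_B - \nabla_{V_B} V_A$, which peels off an outer commutator at the cost of a factor of $|\nabla V^{(\kappa)}|_M$ times an inductively controlled norm. Iterating this through the Magnus series $D_s = \sum_{n \ge 0}\frac{(-1)^n}{(n+1)!}(\operatorname{ad}_{C^\xi})^n \dot C^\xi$, while tracking the grading via $N(\dot C^\xi) \lesssim N(\dot\xi)$ and $N(C^\xi) \lesssim Q_{[1,\kappa]}(N_T^{\ast}(\dot\xi))$, is expected to give the leading $Q_{(\kappa,\kappa+1]}(N_T^{\ast}(\dot\xi))$ scaling, since the minimum degree of the retained high-degree terms is exactly $\kappa+1$, and the full tail $\sum_{n\ge 1}$ will be absorbed into a Gronwall-type exponential that merges with the prefactor from the first paragraph; the factor $|V^{(\kappa)}|_M|\nabla V^{(\kappa)}|_M$ records the single outermost commutator.

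The secondary estimates are cleaner. The bound $d_M(e^{V_B}, \mathrm{Id}_M) \le |V^{(\kappa)}|_M |B|$ is immediate from Theorem~\ref{thm.1.7} applied to $Y \equiv V_B$, combined with $|V_B|_M \le |V^{(\kappa)}|_M |B|$, and the inequality $|B| \le Q_{[1,\kappa]}(N(B))$ follows from $|B_k| \le N(B)^k$ and summation. For the BCHD bound, I would apply the first estimate to a mollified piecewise path $\xi$ on $[0,2]$ with $\dot\xi = A$ on $[0,1]$ and $\dot\xi = B$ on $[1,2]$: flow concatenation gives $\mu_{2,0}^{V_{\dot\xi}} = e^{V_B}\circ e^{V_A}$, the truncated-group ODE yields $g^\xi(2) = e^A e^B$, and on $[0,1]$ the error vanishes identically because $C^\xi(t)=tA$ forces $[C^\xi,\dot C^\xi] = 0$. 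All contribution therefore comes from $[1,2]$, where $C^\xi$ and $\dot C^\xi$ are BCHD-type perturbations containing at least one $A$-factor and one $B$-factor at every retained degree, which extracts the $N(A)N(B)$ prefactor; combined with $N_2^{\ast}(\dot\xi) \asymp N(A)+N(B)$ and careful degree bookkeeping bounded above by $2\kappa$ (since $C^\xi,\dot C^\xi \in F^{(\kappa)}$), this produces the $Q_{[\kappa-1, 2\kappa-2]}(N(A)+N(B))$ scaling.
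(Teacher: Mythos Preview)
Your overall architecture is the paper's: apply Proposition~\ref{pro.1.9} with $X_t=V_{\dot\xi(t)}$ and $Z_t=V_{C^\xi(t)}$, identify the error $W_s^Z-V_{\dot\xi(s)}$ as $V$ applied to something of degree $>\kappa$, and then specialize to a mollified two-piece path for the BCHD estimate (with vanishing error on $[0,1]$ because $[C^\xi,\dot\xi]=0$ there). The secondary estimates and the $N(A)N(B)$ extraction are handled exactly as in Corollaries~\ref{cor.2.31} and~\ref{cor.4.16}.

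The gap is in the main error bound. The identity $W_s^Z=V_{D_s}$ with $D_s=\sum_{n\ge 0}\tfrac{(-1)^n}{(n+1)!}(\operatorname{ad}_{C^\xi})^n\dot C^\xi$ in the \emph{untruncated} free Lie algebra is only formal: the series has nonzero terms in every degree, so $D_s\notin F(\mathbb{R}^d)$ and $V_{D_s}$ is undefined. More seriously, your peeling scheme does not close. Writing $[V_C,V_B]=\nabla_{V_C}V_B-\nabla_{V_B}V_C$ for $B=(\operatorname{ad}_{C^\xi})^{n-1}\dot C^\xi$ produces a term $\nabla_{V_C}V_B$ requiring $|\nabla V_B|_M$, not $|V_B|_M$; iterating forces control of $|\nabla^k V_{\dot C^\xi}|_M$ for all $k$, which is \emph{not} available from $|V^{(\kappa)}|_M$ and $|\nabla V^{(\kappa)}|_M$ alone, and no Gronwall argument repairs this. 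The paper's key device is Theorem~\ref{thm.4.1}, a finite integration-by-parts identity
\[
\int_0^1\operatorname{Ad}_{e^{sV_A}}V_B\,ds
= V_{\psi(-\operatorname{ad}_A)B}
+\int_0^1(s-1)\,\operatorname{Ad}_{e^{sV_A}}V_{\pi_{>\kappa}[A,\psi((s-1)\operatorname{ad}_A)B]_\otimes}\,ds,
\]
which keeps the infinite tail \emph{inside} $\operatorname{Ad}_{e^{sV_A}}$ (bounded by $e^{s|\nabla V_A|_M}$ via Corollary~\ref{cor.2.28}, supplying the exponential) and leaves only a \emph{finite} sum of single brackets $[V_{A_m},V_{B'_n}]$ with $m,n\le\kappa$. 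Each of these needs exactly one use of the torsion-free identity, yielding the factor $\mathcal{C}^0(V^{(\kappa)})\lesssim|V^{(\kappa)}|_M|\nabla V^{(\kappa)}|_M$ (Eq.~\eqref{e.4.18}); see Lemma~\ref{lem.4.7} and Theorem~\ref{thm.4.11}. That is the missing idea.
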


\begin{remark}
[Dialating Theorem \ref{thm.1.30}]\label{rem.1.31}If we define the dilation
homomorphism, $\delta_{\lambda}:T^{\left(  \kappa\right)  }\left(
\mathbb{R}^{d}\right)  \rightarrow T^{\left(  \kappa\right)  }\left(
\mathbb{R}^{d}\right)  ,$ where $\delta_{\lambda}A=\sum_{k=0}^{\kappa}%
\lambda^{k}A_{k}$ for $\lambda>0$ and $A\in T^{\left(  \kappa\right)  }\left(
\mathbb{R}^{d}\right)  ,$ then $N_{T}^{\ast}\left(  \lambda\dot{\xi}\right)
=\lambda N_{T}^{\ast}\left(  \dot{\xi}\right)  $ and hence it follows from
Theorem \ref{thm.1.30} that%
\[
d_{M}\left(  \mu_{T,0}^{V_{\delta_{\lambda}\dot{\xi}}},e^{V_{\log\left(
g^{\delta_{\lambda}\xi}\left(  T\right)  \right)  }}\right)  =O\left(
\lambda^{\kappa+1}\right)  \text{ and }\lambda\rightarrow0.
\]
If is also easy to verify, 1) $N\left(  \delta_{\lambda}A\right)  =\lambda
N\left(  A\right)  $ for all $A\in F^{\left(  \kappa\right)  }\left(
\mathbb{R}^{d}\right)  ,$ 2) $g^{\delta_{\lambda}\xi}=\delta_{\lambda}\left(
g^{\xi}\right)  ,$%
\[
\text{3)~}\log\left(  g^{\delta_{\lambda}\xi}\right)  =\log\left(
\delta_{\lambda}\left(  g^{\xi}\right)  \right)  =\delta_{\lambda}\log\left(
g^{\xi}\right)  ,
\]
and 4) $\delta_{\lambda}\dot{\xi}\left(  t\right)  =\lambda\dot{\xi}\left(
t\right)  $ in the special case where $\xi\left(  t\right)  \in\mathbb{R}%
^{d}\subset F^{\left(  \kappa\right)  }\left(  \mathbb{R}^{d}\right)  .$
\end{remark}

The next theorem (which is a combination of Theorem \ref{thm.8.4}, Eq.
(\ref{e.4.19}), and Corollary \ref{cor.8.5}) is an analogue of Theorem
\ref{thm.1.30} for the differentials of $\mu_{T,0}^{V_{\dot{\xi}}}$ of
$e^{V_{\log\left(  g^{\xi}\left(  T\right)  \right)  }}.$

\begin{theorem}
\label{thm.1.32}If $\xi\in C^{1}\left(  \left[  0,T\right]  ,F^{\left(
\kappa\right)  }\left(  \mathbb{R}^{d}\right)  \right)  ,$ then%
\[
d_{M}^{TM}\left(  \mu_{T,0\ast}^{V_{\dot{\xi}}},e_{\ast}^{V_{\log\left(
g^{\xi}\left(  T\right)  \right)  }}\right)  \leq\mathcal{K}\cdot
Q_{(\kappa,2\kappa]}\left(  N_{T}^{\ast}\left(  \dot{\xi}\right)  \right)  ,
\]
where
\[
\mathcal{K=K}\left(  T,\left\vert V^{\left(  \kappa\right)  }\right\vert
_{M},\left\vert \nabla V^{\left(  \kappa\right)  }\right\vert _{M},\left\vert
\nabla^{2}V^{\left(  \kappa\right)  }\right\vert _{M},\left\vert R\left\langle
V_{\cdot},\bullet\right\rangle \right\vert _{M},N_{T}^{\ast}\left(  \dot{\xi
}\right)  \right)
\]
is a (fairly complicated) increasing function of each of its arguments.
Moreover, if $A,B\in F^{\left(  \kappa\right)  }\left(  \mathbb{R}^{d}\right)
,$ then%
\[
d_{M}^{TM}\left(  e^{V_{B}},Id_{M}\right)  \leq\left\vert V^{\left(
\kappa\right)  }\right\vert \left\vert B\right\vert \leq\left\vert V^{\left(
\kappa\right)  }\right\vert Q_{\left[  1,\kappa\right]  }\left(  N\left(
B\right)  \right)
\]
and%
\begin{align*}
d_{M}^{TM}  &  \left(  \left[  e^{V_{B}}\circ e^{V_{A}}\right]  _{\ast
},e_{\ast}^{V_{\log\left(  e^{A}e^{B}\right)  }}\right) \\
&  \leq\mathcal{K}_{1}\cdot N\left(  A\right)  N\left(  B\right)
Q_{(\kappa-1,2\left(  \kappa-1\right)  ]}\left(  N\left(  A\right)  +N\left(
B\right)  \right)  .
\end{align*}
where
\[
\mathcal{K}_{1}=\mathcal{K}_{1}\left(  \left\vert V^{\left(  \kappa\right)
}\right\vert _{M},\left\vert \nabla V^{\left(  \kappa\right)  }\right\vert
_{M},H_{M}\left(  V^{\left(  \kappa\right)  }\right)  ,N\left(  A\right)  \vee
N\left(  B\right)  \right)  .
\]

\end{theorem}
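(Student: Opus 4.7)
The plan is to mirror the proof strategy of Theorem \ref{thm.1.30}, but with Theorem \ref{thm.1.8} replacing Theorem \ref{thm.1.7}, and with additional bookkeeping for the second-order covariant derivative of the Magnus tail. Set $X_{t}:=V_{\dot{\xi}(t)}$ and $Z_{t}:=V_{C^{\xi}(t)}$, where $C^{\xi}(t)=\log g^{\xi}(t)\in F^{(\kappa)}(\mathbb{R}^{d})$. By Corollary \ref{cor.2.24}, $e^{Z_{t}}=\mu_{t,0}^{W^{Z}}$ with $W^{Z}_{t}=\int_{0}^{1}\operatorname{Ad}_{e^{sZ_{t}}}\dot{Z}_{t}\,ds$, so applying Theorem \ref{thm.1.8} to the pair $(X,W^{Z})$ yields
\[
d_{M}^{TM}\!\left(\mu^{V_{\dot{\xi}}}_{T,0\ast},e^{V_{\log g^{\xi}(T)}}_{\ast}\right)\leq e^{2|\nabla X|^{\ast}_{T}+|\nabla W^{Z}|^{\ast}_{T}}\Bigl((1+H(X)^{\ast}_{T})|W^{Z}-X|^{\ast}_{T}+|\nabla(W^{Z}-X)|^{\ast}_{T}\Bigr).
\]
Thus it suffices to estimate $|\nabla X|^{\ast}$, $|\nabla W^{Z}|^{\ast}$, $H(X)^{\ast}$, $|W^{Z}-X|^{\ast}$, and $|\nabla(W^{Z}-X)|^{\ast}$ in terms of the norms of $V^{(\kappa)}$ and of $N_{T}^{\ast}(\dot{\xi})$.

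The crucial algebraic input is that inside the group $G_{\text{geo}}^{(\kappa)}$, differentiating $g^{\xi}=e^{C^{\xi}}$ against the defining ODE $\dot{g}^{\xi}=g^{\xi}\dot{\xi}$ forces the identity $W^{C^{\xi}}(t)=\dot{\xi}(t)$ exactly in $F^{(\kappa)}(\mathbb{R}^{d})$. Since $V|_{F(\mathbb{R}^{d})}$ is a genuine Lie algebra homomorphism (Example \ref{ex.1.21}), expanding $W^{Z}_{t}=\int_{0}^{1}e^{s\operatorname{ad}_{Z_{t}}}\dot{Z}_{t}\,ds$ as a Taylor series in $s$ and subtracting the polynomial part of degree matching the nilpotency length of $F^{(\kappa)}$ gives
\[
W^{Z}_{t}-V_{\dot{\xi}(t)}=\int_{0}^{1}\!\frac{(1-s)^{N(\kappa)}}{N(\kappa)!}\,(\operatorname{ad}_{Z_{t}})^{N(\kappa)+1}\dot{Z}_{t}\,ds+\cdots,
\]
a sum of iterated brackets $[V_{A_{1}},[V_{A_{2}},\ldots,V_{A_{m}}]]$ in which the total nominal $\dot{\xi}$-degree exceeds $\kappa$. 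Each such bracket, pointwise on $M$, is bounded by a product of factors of $|V^{(\kappa)}|_{M}$ and $|\nabla V^{(\kappa)}|_{M}$ times $\prod_{j}|A_{j}|$; integrating in $t$ and translating to the homogeneous norm produces the factor $Q_{(\kappa,2\kappa]}(N_{T}^{\ast}(\dot{\xi}))$, the upper endpoint $2\kappa$ arising because both $Z_{t}$ and $\dot{Z}_{t}$ can independently reach homogeneous degree $\kappa$.

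The derivative $|\nabla(W^{Z}-V_{\dot{\xi}})|^{\ast}_{T}$ is where $H_{M}(V^{(\kappa)})$ enters. To bound $\nabla$ of an iterated bracket we apply $\nabla$ to $[U,W]=\nabla_{U}W-\nabla_{W}U$, which a priori involves $\nabla^{2}U$ and $\nabla^{2}W$; each subsequent $\nabla$ applied to a bracket further down the iteration produces, via $\nabla_{X}\nabla_{Y}-\nabla_{Y}\nabla_{X}=R(X,Y)+\nabla_{[X,Y]}$, either a curvature term $R(V_{\cdot},\bullet)$ absorbed into $H_{M}(V^{(\kappa)})$ or an additional bracket that we can fold back into the induction. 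A clean induction on the bracket length shows that $|\nabla[V_{A_{1}},\ldots,V_{A_{m}}]|_{M}$ is bounded by $|\nabla V^{(\kappa)}|_{M}$, $|\nabla^{2}V^{(\kappa)}|_{M}$, and $|R\langle V_{\cdot},\bullet\rangle|_{M}$ raised to powers of $m$, times $\prod|A_{j}|$. Substituting these estimates back into the Theorem \ref{thm.1.8} bound produces the main inequality with $\mathcal{K}$ as the claimed increasing function of its arguments. The diameter bound $d^{TM}_{M}(e^{V_{B}},Id_{M})\leq|V^{(\kappa)}|\,|B|$ is the second inequality in Theorem \ref{thm.1.8} specialized to $Y_{t}\equiv V_{B}$ on $[0,1]$, with the two contributions absorbed into the combined $V^{(\kappa)}$ norm.

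For the BCH-type corollary, apply the first part of the theorem to the concatenated ramp path $\xi(t)=tA$ on $[0,1]$ and $\xi(t)=A+(t-1)B$ on $[1,2]$ (smoothed at $t=1$ and passed to the limit). Then $\dot{\xi}$ equals $A$ on the first half and $B$ on the second, so $\mu^{V_{\dot{\xi}}}_{2,0}=e^{V_{B}}\circ e^{V_{A}}$ and $g^{\xi}(2)=e^{A}e^{B}$. The bound $N_{T}^{\ast}(\dot{\xi})\lesssim N(A)+N(B)$ is immediate from the definition. The sharpening of the exponent range from $(\kappa,2\kappa]$ to $(\kappa-1,2(\kappa-1)]$ comes from observing that every nonvanishing term of the Magnus tail for this piecewise-constant $\dot{\xi}$ is an iterated bracket mixing $A$ and $B$, hence contains at least one factor of $N(A)$ and one of $N(B)$, which we extract before estimating the remainder in the homogeneous norm. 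The principal obstacle throughout is the bookkeeping in the third paragraph: producing a uniform bound on $|\nabla|$ of arbitrarily deep iterated brackets using only the three quantities $|\nabla V^{(\kappa)}|_{M}$, $|\nabla^{2}V^{(\kappa)}|_{M}$, and $|R\langle V_{\cdot},\bullet\rangle|_{M}$, while simultaneously respecting the step-$\kappa$ truncation to preserve the degree-counting that yields the correct $Q$-exponent range.
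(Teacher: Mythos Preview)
Your overall architecture matches the paper: apply Theorem~\ref{thm.1.8} with $X_t=V_{\dot\xi(t)}$, $Y_t=W_t^{C^\xi}$, and control $|U^\xi|^\ast_T$, $|\nabla U^\xi|^\ast_T$, $|\nabla W^{C^\xi}|^\ast_T$, $H(V_{\dot\xi})^\ast_T$ separately. The gap is in your representation of $U^\xi=W^{C^\xi}-V_{\dot\xi}$. Your displayed Taylor remainder is wrong: the integral remainder for $s\mapsto\operatorname{Ad}_{e^{sZ_t}}\dot Z_t$ retains the factor $\operatorname{Ad}_{e^{sZ_t}}$ inside the integral (after $N$ steps one has $\frac{1}{N!}\int_0^1(s-1)^N\operatorname{Ad}_{e^{sZ_t}}\bigl[(\operatorname{ad}_{Z_t})^{N+1}\dot Z_t\bigr]\,ds$), and since $\operatorname{Ad}_{e^{sV_{C}}}$ does not expand as a finite sum of iterated vector-field brackets on $M$, the description of $U^\xi$ as ``a sum of iterated brackets $[V_{A_1},[V_{A_2},\ldots]]$'' is not available. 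The paper's Lemma~\ref{lem.4.7} (via Theorem~\ref{thm.4.1}) gives the correct structure: $U_t^\xi=\int_0^1(s-1)\operatorname{Ad}_{e^{sV_{C(t)}}}V_{\pi_{>\kappa}[C(t),u(s,\operatorname{ad}_{C(t)})\dot\xi(t)]_\otimes}\,ds$, an $\operatorname{Ad}$-conjugate of $V$ applied to a \emph{single} tensor commutator.

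This distinction is decisive for the $\nabla$-estimate. Your induction on bracket length does not close: from $\nabla_v[U,W]=\nabla^2_{v\otimes U}W+\nabla_{\nabla_vU}W-\nabla^2_{v\otimes W}U-\nabla_{\nabla_vW}U$ one already needs $|\nabla^2 W|$ of the shorter bracket, which cascades to $|\nabla^k V^{(\kappa)}|_M$ with $k$ growing with the length, whereas only $k\le 2$ is assumed. The paper avoids the cascade because the inner field is always a \emph{single} bracket $[V_{C_m},V_{(\cdot)_n}]$, so only $\mathcal C^1_{m,n}(V^{(\kappa)})$ is needed (Eq.~\eqref{e.4.19}); the outer $\operatorname{Ad}_{e^{sV_C}}$ is then differentiated via Proposition~\ref{pro.6.6} and Proposition~\ref{pro.8.1}, which bound $|\nabla[\operatorname{Ad}_{e^X}Z]|$ in terms of $|Z|$, $|\nabla Z|$, and $|\nabla e^X_\ast|$. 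Curvature enters through the ODE for $\nabla e^{tX}_\ast$ (Proposition~\ref{pro.6.1}) and is packaged as $H_M(V^{(\kappa)})$, not through the commutator identity you invoke. The BCH refinement likewise goes through Proposition~\ref{pro.4.15}, which isolates the $N(A)N(B)$ factor at the single-commutator level rather than via a generic mixing argument.
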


This paper separates into two parts. The first part consisting of Sections
\ref{sec.2} --\ref{sec.4} which develops the results needed to prove Theorem
\ref{thm.1.30} estimating error between the flow $\mu_{T,0}^{V_{\dot{\xi}}}$
and $e^{V_{\log\left(  g^{\xi}\left(  T\right)  \right)  }}.$ The second part
of the paper consists of Sections \ref{sec.5} -- \ref{sec.8} where the tools
are developed to estimate the error between the differentials of $\mu
_{T,0}^{V_{\dot{\xi}}}$ and $e^{V_{\log\left(  g^{\xi}\left(  T\right)
\right)  }}$ given in Theorem \ref{thm.1.32}. The computations in the second
part are necessarily more complicated and this is where curvature of $M$
enters the scene. Lastly, the Appendix \ref{sec.9} gathers some basic Gronwall
type estimates used in the body of this paper.

\subsection{Acknowledgments}

The author is very grateful to Masha Gordina for many illuminating
conversations on this work and to her hospitality and to that of the
mathematics department at the University of Connecticut where this work was
started while I was on sabbatical in the Fall of 2017.

\section{Geometric notation and background\label{sec.2}}

\subsection{Riemannian distance\label{sec.2.1}}

Given $-\infty<a<b<\infty,$ a path, $\sigma\in C\left(  \left[  a,b\right]
\rightarrow M\right)  $ is said to be \textbf{absolutely continuous} provided
for any chart $x$ on $M$ and closed positive length subinterval,
$J\subset\left[  a,b\right]  $ such that $\sigma\left(  J\right)
\subset\mathcal{D}\left(  x\right)  $ ($\mathcal{D}\left(  x\right)  $ is the
domain of $x)$ we have then $x\circ\sigma|_{J}:J\rightarrow\mathbb{R}^{d}$ is
absolutely continuous.

\begin{notation}
\label{not.2.1}For $-\infty<a<b<\infty,$ let $AC\left(  \left[  a,b\right]
\rightarrow M\right)  $ denote the \textbf{absolutely continuous paths} from
$\left[  a,b\right]  $ to $M$. Moreover, if $p,q\in M,$ let
\[
AC_{p,q}\left(  \left[  a,b\right]  \rightarrow M\right)  :=\left\{  \sigma\in
AC\left(  \left[  a,b\right]  \rightarrow M\right)  :\sigma\left(  a\right)
=p\text{ and }\sigma\left(  b\right)  =q\right\}  .
\]
The\textbf{ length}, $\ell_{M}\left(  \sigma\right)  ,$ of a path in
$\sigma\in AC\left(  \left[  a,b\right]  \rightarrow M\right)  $ is defined
by
\[
\ell_{M}\left(  \sigma\right)  :=\int_{a}^{b}\left\vert \dot{\sigma}\left(
t\right)  \right\vert dt
\]
and (as usual) the \textbf{distance }between $m,m^{\prime}\in M$ is defined by%
\[
d\left(  m,m^{\prime}\right)  :=\inf\left\{  \ell_{M}\left(  \sigma\right)
:\sigma\in AC_{m,m^{\prime}}\left(  \left[  0,1\right]  \rightarrow M\right)
\right\}  .
\]
Given $v\in TM,$ let $\sigma_{v}\left(  t\right)  $ be the geodesic in $M$
such that $\dot{\sigma}_{v}\left(  0\right)  =v,$ $\exp\left(  v\right)
=\sigma_{v}\left(  1\right)  \in M$ for those $v\in TM$ such that $\sigma
_{v}\left(  1\right)  $ exists, and for $m\in M$ we let $\exp_{m}%
:=\exp|_{T_{m}M}:T_{m}M\rightarrow M.$
\end{notation}

Throughout this paper we will use the following geometric notations.

\begin{notation}
[Metric vector bundles and connections]\label{not.2.2}Let $\left(  M,g\right)
$ be a Riemannian manifold, $\pi:E\rightarrow M$ be a real Hermitian vector
bundle over $M$ (with fiber dimension, $D)$ with the fiber metric denoted by,
$\left\langle \cdot,\cdot\right\rangle _{E}.$ We further assume that $E$ is
equipped with a metric compatible covariant derivative, $\nabla=\nabla^{E}$.
[Typically we are interested in the setting where $E=TM$ in which case we
always take $\nabla=\nabla^{TM}$ to be the Levi-Civita covariant derivative on
$TM.]$ Further,

\begin{enumerate}
\item let $E_{m}:=\pi^{-1}\left(  \left\{  m\right\}  \right)  $ be the fiber
over $m$ which is isomorphic to $\mathbb{R}^{D},$

\item let $\pt_{t}^{\nabla}\left(  \sigma\right)  :E_{\sigma\left(  a\right)
}\rightarrow E_{\sigma\left(  t\right)  }$ denote parallel translation along a
curve $\sigma\in C^{1}\left(  \left[  a,b\right]  \rightarrow M\right)  $ or
more generally along $\sigma\in AC\left(  \left[  a,b\right]  \rightarrow
M\right)  $ -- the space of $M$-valued absolutely continuous paths on $\left[
a,b\right]  ,$ and

\item if $\xi\left(  t\right)  \in E_{\sigma\left(  t\right)  }$ for
$t\in\left[  a,b\right]  ,$ let
\[
\nabla_{t}\xi\left(  t\right)  =\frac{\nabla\xi}{dt}\left(  t\right)
:=\pt_{t}\left(  \sigma\right)  \frac{d}{dt}\left[  \pt_{t}\left(
\sigma\right)  ^{-1}\xi\left(  t\right)  \right]  .
\]

\end{enumerate}
\end{notation}

By assumption, for every $m\in M,$ there exists an open neighborhood $\left(
W\right)  $ of $m$ and a smooth function $W\times\mathbb{R}^{D}\ni\left(
m,\alpha\right)  \rightarrow u\left(  m\right)  \alpha\in E$ such that
$u\left(  m\right)  :\mathbb{R}^{D}\rightarrow E_{m}$ is an isometric
isomorphism of inner product spaces. We refer to $\left(  u,W\right)  $ as
\textbf{a (local) orthogonal frame of }$E.$ We also let $SO\left(
\mathbb{R}^{D}\right)  $ be the group of $D\times D$ real orthogonal matrices
with determinant equal to $1$ and let $so\left(  \mathbb{R}^{D}\right)  $ be
its Lie algebra of $D\times D$ real skew-symmetric matrices.

\begin{remark}
[Local model for $E$]\label{rem.2.3}As described just above, after choosing a
local orthogonal frame, we may identify (locally) $E$ with the trivial bundle
$W\times\mathbb{R}^{D}$ where $W$ is an open subset of $W.$ In this local
model we have;

\begin{enumerate}
\item $\pi\left(  m,\alpha\right)  =m$ for all $m\in W$ and $\alpha
\in\mathbb{R}^{d}.$

\item $\left\langle \left(  m,\alpha\right)  ,\left(  m,\beta\right)
\right\rangle =\alpha\cdot\beta$ for all $m\in W$ and $\alpha,\beta
\in\mathbb{R}^{d}.$

\item There exists and $so\left(  \mathbb{R}^{D}\right)  $-valued one form,
$\Gamma,$ such that if $S\left(  m\right)  =\left(  m,\alpha\left(  m\right)
\right)  $ is a section of $E$ and $v\in T_{m}W,$ then
\[
\nabla_{v}S=\left(  m,d\alpha\left(  v_{m}\right)  +\Gamma\left(
v_{m}\right)  \alpha\left(  m\right)  \right)  .
\]

\item If $\sigma\in C^{1}\left(  \left[  \alpha,\beta\right]  \rightarrow
W\right)  ,$ then $\pt_{t}\left(  \sigma\right)  \left(  \sigma\left(
a\right)  ,\alpha\right)  =\left(  \sigma\left(  t\right)  ,g\left(  t\right)
\alpha\right)  $ where $g\left(  t\right)  \in SO\left(  \mathbb{R}%
^{D}\right)  $ is the solution to the ordinary differential equation,
\[
\dot{g}\left(  t\right)  +\Gamma\left(  \dot{\sigma}\left(  t\right)  \right)
g\left(  t\right)  =0\text{ with }g\left(  a\right)  =I_{\mathbb{R}^{D}}.
\]

\item If $\xi\left(  t\right)  =\left(  \sigma\left(  t\right)  ,\alpha\left(
t\right)  \right)  $ is a $C^{1}$-path in $E,$ then%
\begin{equation}
\frac{\nabla\xi}{dt}\left(  t\right)  =\left(  \sigma\left(  t\right)
,\dot{\alpha}\left(  t\right)  +\Gamma\left(  \dot{\sigma}\left(  t\right)
\right)  \alpha\left(  t\right)  \right)  . \label{e.2.1}%
\end{equation}

\end{enumerate}

For completeness, here is the verification of Eq. (\ref{e.2.1});%
\begin{align*}
\frac{d}{dt}\left[  \pt_{t}\left(  \sigma\right)  ^{-1}\xi\left(  t\right)
\right]   &  =\frac{d}{dt}\left(  \sigma\left(  a\right)  ,g\left(  t\right)
^{-1}\alpha\left(  t\right)  \right) \\
&  =\left(  \sigma\left(  a\right)  ,g\left(  t\right)  ^{-1}\dot{\alpha
}\left(  t\right)  -g\left(  t\right)  ^{-1}\dot{g}\left(  t\right)  g\left(
t\right)  ^{-1}\alpha\left(  t\right)  \right) \\
&  =\left(  \sigma\left(  a\right)  ,g\left(  t\right)  ^{-1}\dot{\alpha
}\left(  t\right)  +g\left(  t\right)  ^{-1}\Gamma\left(  \dot{\sigma}\left(
t\right)  \right)  \alpha\left(  t\right)  \right) \\
&  =\pt_{t}\left(  \sigma\right)  ^{-1}\left(  \sigma\left(  t\right)
,\dot{\alpha}\left(  t\right)  +\Gamma\left(  \dot{\sigma}\left(  t\right)
\right)  \alpha\left(  t\right)  \right)  .
\end{align*}

\end{remark}

The next elementary lemma illustrates how the structures in Notation
\ref{not.2.2} fit together.

\begin{lemma}
\label{lem.2.4}If $\xi:\left[  a,b\right]  \rightarrow E$ is a $C^{1}$-curve
and $\sigma:=\pi\circ\xi\in C^{1}\left(  \left[  a,b\right]  ,M\right)  ,$
then
\begin{equation}
\left\vert \left\vert \xi\left(  b\right)  \right\vert -\left\vert \xi\left(
a\right)  \right\vert \right\vert \leq\left\vert \pt_{b}\left(  \sigma\right)
^{-1}\xi\left(  b\right)  -\xi\left(  a\right)  \right\vert \leq\int_{a}%
^{b}\left\vert \frac{\nabla}{dt}\xi\left(  t\right)  \right\vert dt.
\label{e.2.2}%
\end{equation}

\end{lemma}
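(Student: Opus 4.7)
The plan is to pull everything back into the single fixed fiber $E_{\sigma(a)}$ by parallel translation, where the problem reduces to the fundamental theorem of calculus in a finite-dimensional inner product space. Concretely, I would introduce the curve $\eta(t) := \pt_t(\sigma)^{-1} \xi(t) \in E_{\sigma(a)}$, which is $C^1$ since both $\pt_t(\sigma)^{-1}$ and $\xi$ are. By definition of the covariant derivative along a curve (Notation \ref{not.2.2}(3)),
\[
\dot{\eta}(t) = \frac{d}{dt}\left[\pt_t(\sigma)^{-1}\xi(t)\right] = \pt_t(\sigma)^{-1} \frac{\nabla \xi}{dt}(t),
\]
so because parallel translation is a fiber isometry (a consequence of $\nabla$ being metric compatible) we get $|\dot{\eta}(t)| = \left|\frac{\nabla \xi}{dt}(t)\right|$ for all $t \in [a,b]$.

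For the second (main) inequality of Eq.~\eqref{e.2.2}, I would just apply the fundamental theorem of calculus to $\eta$ inside the fixed inner product space $E_{\sigma(a)}$:
\[
\pt_b(\sigma)^{-1}\xi(b) - \xi(a) = \eta(b) - \eta(a) = \int_a^b \dot{\eta}(t)\,dt,
\]
and then the standard $L^1$-triangle inequality for Banach-valued integrals gives
\[
\left|\pt_b(\sigma)^{-1}\xi(b) - \xi(a)\right| \leq \int_a^b |\dot{\eta}(t)|\,dt = \int_a^b \left|\frac{\nabla \xi}{dt}(t)\right|\,dt.
\]

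For the first inequality, the isometry property of parallel translation gives $|\pt_b(\sigma)^{-1}\xi(b)| = |\xi(b)|$, and then the reverse triangle inequality in the single fiber $E_{\sigma(a)}$ yields
\[
\bigl||\xi(b)| - |\xi(a)|\bigr| = \bigl||\pt_b(\sigma)^{-1}\xi(b)| - |\xi(a)|\bigr| \leq \left|\pt_b(\sigma)^{-1}\xi(b) - \xi(a)\right|.
\]

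There is no real obstacle here; the only substantive point is that one must use metric compatibility of $\nabla^E$ twice — once to see that parallel translation preserves the norm of $\dot{\eta}$, and once to see it preserves the norm of $\xi(b)$. Everything else is bookkeeping and the standard estimate $|\int f\,dt| \leq \int |f|\,dt$ in a finite-dimensional Hilbert space.
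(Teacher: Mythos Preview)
Your proof is correct and essentially identical to the paper's: both pull back to the fixed fiber $E_{\sigma(a)}$ via parallel translation, apply the fundamental theorem of calculus to $\pt_t(\sigma)^{-1}\xi(t)$, and then use the triangle and reverse triangle inequalities together with the isometry of parallel transport. The only cosmetic difference is that you name the pulled-back curve $\eta(t)$ explicitly, whereas the paper works with the expression $\pt_t(\sigma)^{-1}\xi(t)$ directly.
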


\begin{proof}
By the metric compatibility of $\nabla,$ $\left\vert \xi\left(  b\right)
\right\vert =\left\vert \pt_{b}\left(  \sigma\right)  ^{-1}\xi\left(
b\right)  \right\vert $ and therefore%
\[
\left\vert \left\vert \xi\left(  b\right)  \right\vert -\left\vert \xi\left(
a\right)  \right\vert \right\vert =\left\vert \left\vert \pt_{b}\left(
\sigma\right)  ^{-1}\xi\left(  b\right)  \right\vert -\left\vert \xi\left(
a\right)  \right\vert \right\vert \leq\left\vert \pt_{b}\left(  \sigma\right)
^{-1}\xi\left(  b\right)  -\xi\left(  a\right)  \right\vert
\]
which proves the first inequality in Eq. (\ref{e.2.2}). By the fundamental
theorem of calculus and the definition of $\frac{\nabla}{dt},$%
\begin{align*}
\pt_{b}\left(  \sigma\right)  ^{-1}\xi\left(  b\right)  -\xi\left(  a\right)
&  =\int_{a}^{b}\frac{d}{dt}\left[  \pt_{t}\left(  \sigma\right)  ^{-1}%
\xi\left(  t\right)  \right]  dt\\
&  =\int_{a}^{b}\pt_{t}\left(  \sigma\right)  ^{-1}\frac{\nabla}{dt}\xi\left(
t\right)  dt.
\end{align*}
The second inequality in Eq. (\ref{e.2.2}) now follows from this identity and
the triangle inequality for vector valued integrals,
\[
\left\vert \int_{a}^{b}\pt_{t}\left(  \sigma\right)  ^{-1}\frac{\nabla}{dt}%
\xi\left(  t\right)  dt\right\vert \leq\int_{a}^{b}\left\vert \pt_{t}\left(
\sigma\right)  ^{-1}\frac{\nabla}{dt}\xi\left(  t\right)  \right\vert
dt=\int_{a}^{b}\left\vert \frac{\nabla}{dt}\xi\left(  t\right)  \right\vert
dt.
\]

\end{proof}

\begin{definition}
\label{def.2.5}If $X\in\Gamma\left(  TM\right)  $ and $v_{m},w_{m}\in T_{m}M,$
let%
\[
\nabla_{v_{m}\otimes w_{m}}^{2}X:=\frac{d}{dt}|_{0}\left[  \pt_{t}\left(
\sigma\right)  ^{-1}\left(  \nabla_{\pt_{t}\left(  \sigma\right)  w_{m}%
}X\right)  \right]
\]
where $\sigma\left(  t\right)  \in M$ is chosen so that $\dot{\sigma}\left(
0\right)  =v_{m}.$ In this notation the curvature tensor may be defined by%
\[
R\left(  v_{m},w_{m}\right)  \xi_{m}=\nabla_{v_{m}\otimes w_{m}}^{2}%
X-\nabla_{w_{m}\otimes v_{m}}^{2}X,
\]
where $X\in\Gamma\left(  TM\right)  $ is any vector field such that $X\left(
m\right)  =\xi_{m}\in T_{m}M.$
\end{definition}

\begin{remark}
\label{rem.2.6}If $W,X\in\Gamma\left(  TM\right)  $ and $v_{m}=\dot{\sigma
}\left(  0\right)  \in T_{m}M,$ then
\begin{align}
\nabla_{v_{m}}\nabla_{W}X  &  =\frac{d}{dt}|_{0}\pt_{t}\left(  \sigma\right)
^{-1}\left(  \nabla_{W}X\right)  \left(  \sigma\left(  t\right)  \right)
\nonumber\\
&  =\frac{d}{dt}|_{0}\left[  \pt_{t}\left(  \sigma\right)  ^{-1}%
\nabla_{W\left(  \sigma\left(  t\right)  \right)  }X\right] \nonumber\\
&  =\frac{d}{dt}|_{0}\left[  \pt_{t}\left(  \sigma\right)  ^{-1}%
\nabla_{\pt_{t}\left(  \sigma\right)  \left[  \pt_{t}\left(  \sigma\right)
^{-1}W\left(  \sigma\left(  t\right)  \right)  \right]  }X\right] \nonumber\\
&  =\frac{d}{dt}|_{0}\left[  \pt_{t}\left(  \sigma\right)  ^{-1}%
\nabla_{\pt_{t}\left(  \sigma\right)  W\left(  m\right)  }X\right]  +\frac
{d}{dt}|_{0}\left[  \nabla_{\left[  \pt_{t}\left(  \sigma\right)
^{-1}W\left(  \sigma\left(  t\right)  \right)  \right]  }X\right] \nonumber\\
&  =\nabla_{v_{m}\otimes W\left(  m\right)  }^{2}X+\nabla_{\nabla_{v_{m}}W}X.
\label{e.2.3}%
\end{align}
This shows two things; 1) that $\nabla_{v_{m}\otimes w_{m}}^{2}X$ is
independent of the choice of curve, $\sigma\left(  t\right)  $ such that
$\dot{\sigma}\left(  0\right)  =v_{m}$ since
\[
\nabla_{v_{m}\otimes W\left(  m\right)  }^{2}X=\nabla_{v_{m}}\nabla
_{W}X-\nabla_{\nabla_{v_{m}}W}X,
\]
and 2) that with this definition of $\nabla^{2}X$ the natural product rule
derived in Eq. (\ref{e.2.3}) holds.
\end{remark}

\begin{definition}
\label{def.2.7}For $f\in C^{1}\left(  M,M\right)  $ let $f_{\ast
}:TM\rightarrow TM$ be the differential of $f,$
\begin{align*}
\left\vert f_{\ast}\right\vert _{m}  &  :=\sup_{v\in T_{m}M:\left\vert
v\right\vert =1}\left\vert f_{\ast}v\right\vert \text{ for each }m\in M,\text{
and}\\
\left\vert f_{\ast}\right\vert _{M}  &  :=\sup_{m\in M}\left\vert f_{\ast
}\right\vert _{m}=\sup_{v\in TM:\left\vert v\right\vert =1}\left\vert f_{\ast
}v\right\vert .
\end{align*}

\end{definition}

\begin{definition}
\label{def.2.8}We say $f\in C\left(  M,M\right)  $ is \textbf{Lipschitz} if
there exists $K=K\left(  f\right)  <\infty$ such that
\begin{equation}
d\left(  f\left(  m\right)  ,f\left(  m^{\prime}\right)  \right)  \leq
Kd\left(  m,m^{\prime}\right)  \text{ }\forall~m,m^{\prime}\in M.
\label{e.2.4}%
\end{equation}
The best smallest $K\in\left[  0,\infty\right]  $ such that Eq. (\ref{e.2.4})
holds is denoted by $\operatorname{Lip}\left(  f\right)  ,$ i.e.
\[
\operatorname{Lip}\left(  f\right)  :=\sup_{m\neq m^{\prime}}\frac{d\left(
f\left(  m\right)  ,f\left(  m^{\prime}\right)  \right)  }{d\left(
m,m^{\prime}\right)  }.
\]
We will write $\operatorname{Lip}\left(  f\right)  =\infty$ if $f$ is not Lipschitz.
\end{definition}

\begin{lemma}
\label{lem.2.9}If $f\in C^{1}\left(  M,M\right)  $ then $\operatorname{Lip}%
\left(  f\right)  =\left\vert f_{\ast}\right\vert _{M}$.
\end{lemma}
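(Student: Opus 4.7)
The plan is to prove the two inequalities $\operatorname{Lip}(f) \leq |f_*|_M$ and $|f_*|_M \leq \operatorname{Lip}(f)$ separately.

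For the upper bound on $\operatorname{Lip}(f)$, fix $m, m' \in M$ and let $\sigma \in AC_{m,m'}([0,1] \to M)$. Because $f \in C^1(M,M)$, the composition $f \circ \sigma$ is absolutely continuous in $M$, and the chain rule gives $\tfrac{d}{dt}(f \circ \sigma)(t) = f_*(\dot\sigma(t))$ for a.e.\ $t$. Hence
\begin{equation*}
\ell_M(f\circ\sigma) = \int_0^1 |f_*(\dot\sigma(t))|\, dt \leq |f_*|_M \int_0^1 |\dot\sigma(t)|\, dt = |f_*|_M\, \ell_M(\sigma).
\end{equation*}
By the definition of $d$ in Notation \ref{not.2.1}, $d(f(m), f(m')) \leq \ell_M(f \circ \sigma)$, so taking the infimum over $\sigma$ yields $d(f(m), f(m')) \leq |f_*|_M \cdot d(m, m')$, i.e.\ $\operatorname{Lip}(f) \leq |f_*|_M$.

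For the reverse inequality, fix $m \in M$ and a unit vector $v \in T_m M$. Choose the geodesic $\sigma_v(t) = \exp_m(tv)$, which satisfies $\dot\sigma_v(0) = v$ and is defined for small $t$. Standard Riemannian geometry (the local isometry of $\exp_m$ at the origin) gives
\begin{equation*}
\lim_{t\downarrow 0} \frac{d(\sigma_v(t), m)}{t} = |v| = 1 \quad\text{and}\quad \lim_{t\downarrow 0} \frac{d(f(\sigma_v(t)), f(m))}{t} = |f_*(v)|,
\end{equation*}
the latter because $f \circ \sigma_v$ is a $C^1$ curve with initial velocity $f_*(v)$. Combining these limits with the Lipschitz bound $d(f(\sigma_v(t)), f(m)) \leq \operatorname{Lip}(f)\, d(\sigma_v(t), m)$ and dividing by $t$ before passing to the limit gives $|f_*(v)| \leq \operatorname{Lip}(f)$. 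Taking the supremum over $m$ and unit $v \in T_m M$ yields $|f_*|_M \leq \operatorname{Lip}(f)$.

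The only nonroutine point is the asymptotic $d(f(\sigma_v(t)), f(m)) = |f_*(v)|\, t + o(t)$, which is the main obstacle if one wants a fully self-contained argument; it is handled by noting that $f \circ \sigma_v$ is $C^1$, so from the upper bound already proved one has $d(f(\sigma_v(t)), f(m)) \leq \int_0^t |f_*(\dot\sigma_v(s))|\, ds = |f_*(v)|\, t + o(t)$, while the matching lower bound follows either from working in normal coordinates centered at $f(m)$ or from the elementary observation that any absolutely continuous curve from $f(m)$ to $f(\sigma_v(t))$ of length less than the injectivity radius at $f(m)$ has length bounded below by $|f_*(v)|\, t - o(t)$. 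With these two asymptotics in hand, the inequality above is immediate.
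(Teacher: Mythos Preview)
Your proof is correct and follows the same two-inequality strategy as the paper. The first direction is identical. For the reverse inequality the paper makes the asymptotic $d(f(\sigma_v(t)),f(m))/t\to|f_*v|$ explicit by writing $f(\sigma_v(t))=\exp_{f(m)}(\gamma(t))$ and using $|\gamma(t)|=d(f(m),f(\sigma_v(t)))$ for small $t$; this is precisely your ``normal coordinates at $f(m)$'' option, so the arguments coincide. Your alternative phrasing of the lower bound (``any absolutely continuous curve \ldots has length bounded below by $|f_*(v)|t-o(t)$'') is a restatement of the conclusion rather than an independent justification, so you should rely on the normal-coordinates route alone.
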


\begin{proof}
Let $m,m^{\prime}\in M$ and $\sigma\in AC\left(  \left[  0,1\right]
,M\right)  $ such that $\sigma\left(  0\right)  =m$ and $\sigma\left(
1\right)  =m^{\prime}.$ Then $f\circ\sigma\in AC\left(  \left[  0,1\right]
,M\right)  $ and $\frac{d}{dt}f\left(  \sigma\left(  t\right)  \right)
=f_{\ast}\dot{\sigma}\left(  t\right)  $ for a.e. $t$ and therefore,%
\begin{align*}
d\left(  f\left(  m\right)  ,f\left(  m^{\prime}\right)  \right)   &  \leq
\ell\left(  f\circ\sigma\right)  =\int_{0}^{1}\left\vert f_{\ast}\dot{\sigma
}\left(  t\right)  \right\vert dt\\
&  \leq\int_{0}^{1}\left\vert f_{\ast}\right\vert _{M}\left\vert \dot{\sigma
}\left(  t\right)  \right\vert dt=\left\vert f_{\ast}\right\vert _{M}\ell
_{M}\left(  \sigma\right)  .
\end{align*}
Taking the infimum of this inequality over all $\sigma\in AC_{m,m^{\prime}%
}\left(  \left[  0,1\right]  ,M\right)  $ then shows
\[
d\left(  f\left(  m\right)  ,f\left(  m^{\prime}\right)  \right)
\leq\left\vert f_{\ast}\right\vert _{M}d\left(  m,m^{\prime}\right)
\]
which implies $\operatorname{Lip}\left(  f\right)  \leq\left\vert f_{\ast
}\right\vert _{M}.$

For the opposite inequality let $m\in M,$ $v\in T_{m}M$ with $\left\vert
v\right\vert =1,$ and let $\sigma_{v}\left(  t\right)  :=\exp_{m}\left(
tv\right)  $ for $t$ near $0.$ Further let $\gamma\left(  t\right)  $ be the
smooth curve in $T_{f\left(  m\right)  }M$ satisfying $\gamma\left(  0\right)
=0_{f\left(  m\right)  }$ and $f\left(  \sigma_{v}\left(  t\right)  \right)
=\exp_{f\left(  m\right)  }\left(  \gamma\left(  t\right)  \right)  .$ It then
follows that
\[
\dot{\gamma}\left(  0\right)  =\left(  \exp_{f\left(  m\right)  }\right)
_{\ast}\dot{\gamma}\left(  0\right)  =f_{\ast}\dot{\sigma}_{v}\left(
0\right)  =f_{\ast}v\text{ }%
\]
and for $t$ sufficiently close to $0\in\mathbb{R},$ that%
\[
\left\vert \gamma\left(  t\right)  \right\vert =d\left(  f\left(  m\right)
,f\left(  \sigma_{v}\left(  t\right)  \right)  \right)  \leq\operatorname{Lip}%
\left(  f\right)  d\left(  m,\sigma_{v}\left(  t\right)  \right)
=\operatorname{Lip}\left(  f\right)  \left\vert v\right\vert \left\vert
t\right\vert =\operatorname{Lip}\left(  f\right)  \left\vert t\right\vert .
\]
Since
\[
\lim_{t\rightarrow0}\frac{1}{t}\gamma\left(  t\right)  =\lim_{t\rightarrow
0}\frac{1}{t}\left[  \gamma\left(  t\right)  -\gamma\left(  0\right)  \right]
=\dot{\gamma}\left(  0\right)  =f_{\ast}v
\]
we may conclude that
\[
\left\vert f_{\ast}v\right\vert =\lim_{t\rightarrow0}\left\vert \frac{1}%
{t}\gamma\left(  t\right)  \right\vert \leq\operatorname{Lip}\left(  f\right)
.
\]
As $v\in TM$ was arbitrary, it follows that $\left\vert f_{\ast}\right\vert
_{M}\leq\operatorname{Lip}\left(  f\right)  .$
\end{proof}

\begin{lemma}
\label{lem.2.10}If $X\in\Gamma\left(  TM\right)  $ satisfies, $\left\vert
\nabla X\right\vert _{M}<\infty,$ then
\begin{equation}
\left\vert \left\vert X\left(  p\right)  \right\vert -\left\vert X\left(
m\right)  \right\vert \right\vert \leq\left\vert \nabla X\right\vert _{M}\cdot
d\left(  p,m\right)  \text{ }\forall~m,p\in M, \label{e.2.5}%
\end{equation}
i.e. $\operatorname{Lip}\left(  \left\vert X\left(  \cdot\right)  \right\vert
\right)  \leq\left\vert \nabla X\right\vert _{M}.$
\end{lemma}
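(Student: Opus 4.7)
The plan is to apply Lemma \ref{lem.2.4} to the lifted curve $\xi(t) := X(\sigma(t)) \in T_{\sigma(t)}M$, where $\sigma$ is an almost-minimizing path joining $m$ to $p$ in $M$. Since $\pi \circ \xi = \sigma$ and $\xi$ is $C^1$ whenever $\sigma$ is, Lemma \ref{lem.2.4} gives
\[
\bigl|\,|X(\sigma(b))| - |X(\sigma(a))|\,\bigr| \leq \int_a^b \Bigl|\tfrac{\nabla}{dt}\xi(t)\Bigr|\, dt.
\]

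The next step is to identify $\tfrac{\nabla}{dt}\xi(t)$. By the definition of the induced covariant derivative along a curve together with the standard identity $\tfrac{\nabla}{dt} X(\sigma(t)) = \nabla_{\dot\sigma(t)} X$, we get $\bigl|\tfrac{\nabla}{dt}\xi(t)\bigr| = |\nabla_{\dot\sigma(t)} X|_{\sigma(t)}$. From the definition of $|\nabla X|_m$ in Notation \ref{not.1.5} (as a supremum over unit tangent vectors) this is bounded by $|\nabla X|_{\sigma(t)} \cdot |\dot\sigma(t)| \leq |\nabla X|_M \cdot |\dot\sigma(t)|$. Substituting back yields
\[
\bigl|\,|X(p)| - |X(m)|\,\bigr| \leq |\nabla X|_M \int_a^b |\dot\sigma(t)|\, dt = |\nabla X|_M \cdot \ell_M(\sigma)
\]
for any $C^1$ path $\sigma$ joining $m$ to $p$.

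Finally, I would take the infimum over all such paths (extended to the $AC$-class via a standard approximation argument, or directly by noting that the estimate on $\frac{\nabla}{dt}\xi$ goes through for absolutely continuous $\sigma$) to get the length-metric bound $||X(p)| - |X(m)|| \leq |\nabla X|_M \cdot d(p,m)$ from Notation \ref{not.2.1}. The only mild technical point — and the closest thing to an obstacle — is ensuring the estimate of Lemma \ref{lem.2.4} applies for merely absolutely continuous $\sigma$ rather than $C^1$ $\sigma$; this is routine since $\ell_M$ is defined as an $L^1$ integral of $|\dot\sigma|$ anyway, and the $AC_{m,p}$-infimum equals the $C^1$-infimum in the definition of $d(m,p)$ on a smooth Riemannian manifold.
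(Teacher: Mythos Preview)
Your proof is correct and follows essentially the same approach as the paper: apply Lemma~\ref{lem.2.4} to $\xi(t)=X(\sigma(t))$, use $\tfrac{\nabla}{dt}\xi(t)=\nabla_{\dot\sigma(t)}X$ together with the bound $|\nabla_{\dot\sigma(t)}X|\le |\nabla X|_M\,|\dot\sigma(t)|$, and then take the infimum over paths joining $m$ to $p$. The paper works directly with $C^1$ paths and does not comment on the $AC$ issue, but otherwise the arguments are the same.
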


\begin{proof}
Let $\sigma\in C^{1}\left(  \left[  0,1\right]  ,M\right)  $ satisfy
$\sigma\left(  0\right)  =m$ and $\sigma\left(  1\right)  =p$ and define
$\xi\left(  t\right)  =X\left(  \sigma\left(  t\right)  \right)  $ and note
that
\[
\left\vert \frac{\nabla}{dt}\xi\left(  t\right)  \right\vert =\left\vert
\nabla_{\dot{\sigma}\left(  t\right)  }X\right\vert \leq\left\vert \nabla
X\right\vert _{M}\left\vert \dot{\sigma}\left(  t\right)  \right\vert .
\]
Therefore by Lemma \ref{lem.2.4},
\begin{equation}
\left\vert \left\vert X\left(  p\right)  \right\vert -\left\vert X\left(
m\right)  \right\vert \right\vert \leq\int_{0}^{1}\left\vert \frac{\nabla}%
{dt}\xi\left(  t\right)  \right\vert dt\leq\int_{0}^{1}\left\vert \nabla
X\right\vert _{M}\left\vert \dot{\sigma}\left(  t\right)  \right\vert
dt=\left\vert \nabla X\right\vert _{M}\cdot\ell\left(  \sigma\right)  .
\label{e.2.6}%
\end{equation}
Taking the infimum of the last term in this inequality over all paths joining
$m$ to $p$ gives Eq. (\ref{e.2.5}).
\end{proof}

\begin{theorem}
[Distance estimates]\label{thm.2.11}Suppose that $\left[  0,T\right]  \ni
t\rightarrow Y_{t}\in\Gamma\left(  TM\right)  $ smoothly varying time
dependent vector field and $\left(  a,b\right)  \subset\left[  0,T\right]  $
and $\sigma:\left(  a,b\right)  \rightarrow M$ solves
\[
\dot{\sigma}\left(  t\right)  =Y_{t}\left(  \sigma\left(  t\right)  \right)
\text{ for all }t\in\left(  a,b\right)
\]
where $\dot{\sigma}\left(  a\right)  $ and $\dot{\sigma}\left(  b\right)  $
are interpreted as appropriate one sided derivatives. Then for any
$s,t\in\left(  a,b\right)  ,$
\begin{align*}
d\left(  \sigma\left(  t\right)  ,\sigma\left(  s\right)  \right)   &
\leq\left\vert Y\right\vert _{J\left(  s,t\right)  }^{\ast}\leq\left\vert
Y\right\vert _{T}^{\ast}\text{ and}\\
d\left(  \sigma\left(  t\right)  ,\sigma\left(  s\right)  \right)   &  \leq
e^{\left\vert \nabla Y\right\vert _{J\left(  s,t\right)  }^{\ast}}\left\vert
Y_{\cdot}\left(  m\right)  \right\vert _{J\left(  s,t\right)  }^{\ast}\leq
e^{\left\vert \nabla Y\right\vert _{T}^{\ast}}\cdot\left\vert Y_{\cdot}\left(
m\right)  \right\vert _{T}^{\ast}.\text{ }%
\end{align*}

\end{theorem}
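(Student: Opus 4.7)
The plan is to prove the two displayed inequalities in sequence, with the first one being essentially immediate and the second requiring Gronwall.

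For the first bound, I would observe that since $d$ is the length metric, $d(\sigma(t),\sigma(s))$ is bounded above by the length of the restriction $\sigma|_{J(s,t)}$ (assuming without loss of generality $s \le t$), and this length is
\[
\ell_M(\sigma|_{J(s,t)}) = \int_s^t |\dot\sigma(\tau)|\,d\tau = \int_s^t |Y_\tau(\sigma(\tau))|\,d\tau \le \int_s^t \sup_{m\in M}|Y_\tau|_m\,d\tau = |Y|^{\ast}_{J(s,t)},
\]
which in turn is dominated by $|Y|^{\ast}_T$. Here $m$ is presumably $\sigma(s)$; I will write $m:=\sigma(s)$ throughout the second inequality.

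For the second bound, the idea is to track the length function $L(\tau) := \int_s^\tau |Y_u(\sigma(u))|\,du$, which satisfies $d(\sigma(\tau),m) \le L(\tau)$ by the first estimate applied on $[s,\tau]$. Then by Lemma \ref{lem.2.10} applied pointwise in $\tau$,
\[
|Y_\tau(\sigma(\tau))| \le |Y_\tau(m)| + |\nabla Y_\tau|_M\cdot d(\sigma(\tau),m) \le |Y_\tau(m)| + |\nabla Y_\tau|_M\cdot L(\tau).
\]
Differentiating, $L'(\tau) \le |Y_\tau(m)| + |\nabla Y_\tau|_M \, L(\tau)$ with $L(s)=0$, so a standard Gronwall argument (the integrating factor is $\exp(-\int_s^\tau |\nabla Y_u|_M\,du)$) gives
\[
L(t) \le \int_s^t e^{\int_u^t |\nabla Y_\tau|_M\,d\tau}\,|Y_u(m)|\,du \le e^{|\nabla Y|^{\ast}_{J(s,t)}}\,|Y_\cdot(m)|^{\ast}_{J(s,t)},
\]
and since $d(\sigma(t),\sigma(s)) \le L(t)$, the claimed inequality follows, with the final estimate by $e^{|\nabla Y|^{\ast}_T}|Y_\cdot(m)|^{\ast}_T$ being monotonicity.

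I expect essentially no obstacle; the only subtle point is recognizing that $m$ in $|Y_\cdot(m)|^{\ast}_{J(s,t)}$ is the basepoint $\sigma(s)$ and that by Notation \ref{not.1.3} this stands for $\int_{J(s,t)}|Y_\tau(m)|\,d\tau$ without any supremum (since for fixed $m$ the integrand is independent of the running variable appearing in the definition of $\rho_M$). A minor formality is that Gronwall must be applied on $[s,t]$ when $s\le t$ and symmetrically on $[t,s]$ when $t\le s$; the notation $J(s,t)$ is the interval with endpoints $s$ and $t$, which handles both cases uniformly.
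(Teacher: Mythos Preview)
Your proposal is correct and follows essentially the same approach as the paper: bound the distance by the arc-length of $\sigma|_{J(s,t)}$, use the Lipschitz estimate on $|Y_\tau(\cdot)|$ (the paper invokes Eq.~(\ref{e.2.6}) directly in terms of arc-length, you invoke Lemma~\ref{lem.2.10} in terms of $d$ and then bound $d$ by the length function $L$), and apply Gronwall to the resulting differential inequality for the length function. The only cosmetic difference is that the paper shifts the time variable to $[0,t-s]$ before applying Gronwall, whereas you apply it directly on $J(s,t)$; your reading of $m=\sigma(s)$ and of the notation $|Y_\cdot(m)|^\ast_{J(s,t)}$ is also correct.
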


\begin{proof}
Without loss of generality we may assume that $s\leq t.$ Since $d\left(
\sigma\left(  t\right)  ,\sigma\left(  s\right)  \right)  $ is no more than
the length of $\sigma|_{\left[  s,t\right]  }$ we immediately find,%
\[
d\left(  \sigma\left(  t\right)  ,\sigma\left(  s\right)  \right)  \leq
\int_{s}^{t}\left\vert \dot{\sigma}\left(  \tau\right)  \right\vert d\tau
=\int_{s}^{t}\left\vert Y_{\tau}\left(  \sigma\left(  \tau\right)  \right)
\right\vert d\tau\left\vert Y\right\vert _{J\left(  s,t\right)  }^{\ast}%
\leq\left\vert Y\right\vert _{T}^{\ast}%
\]
which gives the first inequality. To prove the second inequality we use the
estimate in Eq. (\ref{e.2.6}) with $X=Y_{t}$ to find,
\begin{align}
\left\vert \dot{\sigma}\left(  t\right)  \right\vert  &  =\left\vert
Y_{t}\left(  \sigma\left(  t\right)  \right)  \right\vert \leq\left\vert
Y_{t}\left(  \sigma\left(  s\right)  \right)  \right\vert +\left\vert \nabla
Y_{t}\right\vert _{M}\ell\left(  \sigma|_{\left[  s,t\right]  }\right)
\nonumber\\
&  =\left\vert Y_{t}\left(  \sigma\left(  s\right)  \right)  \right\vert
+\left\vert \nabla Y_{t}\right\vert _{M}\int_{s}^{t}\left\vert \dot{\sigma
}\left(  r\right)  \right\vert dr. \label{e.2.7}%
\end{align}
If we define%
\[
\psi\left(  \tau\right)  :=\int_{s}^{s+\tau}\left\vert \dot{\sigma}\left(
r\right)  \right\vert dr\text{ for }0\leq\tau\leq b-s,
\]
then the inequality in Eq. (\ref{e.2.7}) may be rewritten as,%
\[
\dot{\psi}\left(  \tau\right)  =\left\vert \dot{\sigma}\left(  s+\tau\right)
\right\vert \leq\left\vert Y_{s+\tau}\left(  m\right)  \right\vert +\left\vert
\nabla Y_{s+\tau}\right\vert _{M}\psi\left(  \tau\right)  \text{ with }%
\psi\left(  0\right)  =0.
\]
By Gronwall's inequality (see Proposition \ref{pro.9.1}) and a simple change
of variables we find,%
\begin{align*}
\int_{s}^{s+\tau}\left\vert \dot{\sigma}\left(  r\right)  \right\vert
dr=\psi\left(  \tau\right)  \leq &  \int_{0}^{\tau}e^{\int_{r}^{\tau
}\left\vert \nabla Y_{s+\tau}\right\vert _{M}ds}\cdot\left\vert Y_{s+r}\left(
m\right)  \right\vert dr\\
&  =\int_{0}^{\tau}e^{\int_{s+r}^{s+\tau}\left\vert \nabla Y_{\sigma
}\right\vert _{M}d\sigma}\cdot\left\vert Y_{s+r}\left(  m\right)  \right\vert
dr.
\end{align*}
Choosing $\tau$ so that $s+\tau=t$ and making another translational change of
variables yields%
\begin{align*}
d\left(  \sigma\left(  t\right)  ,\sigma\left(  s\right)  \right)   &
\leq\int_{s}^{t}\left\vert \dot{\sigma}\left(  r\right)  \right\vert
dr\leq\int_{0}^{t-s}e^{\int_{s+r}^{t}\left\vert \nabla Y_{\sigma}\right\vert
_{M}d\sigma}\cdot\left\vert Y_{s+r}\left(  m\right)  \right\vert dr\\
&  =\int_{s}^{t}e^{\int_{\tau}^{t}\left\vert \nabla Y_{\sigma}\right\vert
_{M}ds}\cdot\left\vert Y_{\tau}\left(  m\right)  \right\vert d\tau\leq
e^{\left\vert \nabla Y\right\vert _{J\left(  s,t\right)  }^{\ast}}\left\vert
Y_{\cdot}\left(  m\right)  \right\vert _{J\left(  s,t\right)  }^{\ast}%
\end{align*}
which gives the second inequality.
\end{proof}

\begin{corollary}
\label{cor.2.12}If $\left(  M,g\right)  $ is a complete Riemannian manifold
and either $\left\vert Y\right\vert _{T}^{\ast}<\infty$ or $\left\vert \nabla
Y\right\vert _{T}^{\ast}<\infty,$ then $Y$ is complete.
\end{corollary}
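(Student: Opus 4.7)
The plan is to argue by contradiction using Theorem \ref{thm.2.11} together with the Hopf--Rinow theorem. Fix $s\in J$ and $m\in M$, and appeal to standard local existence theory for ODEs on smooth manifolds to produce a unique maximal solution $\sigma:J_0\to M$ of Eq.~(\ref{e.1.1}), where $J_0\subset J$ is a (relatively open) subinterval containing $s$. Writing $J_0=(a,b)\cap J$, the goal is to show $J_0=J$; by the symmetric time-reversal argument it suffices to rule out $b<T$.

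First I would use Theorem \ref{thm.2.11} to trap $\sigma$ in a compact set. In the case $|Y|_T^*<\infty$ we get directly $d(\sigma(t),m)\leq|Y|_T^*$, while in the case $|\nabla Y|_T^*<\infty$ we use the second estimate of Theorem \ref{thm.2.11},
\[
d(\sigma(t),m)\leq e^{|\nabla Y|_T^*}\cdot|Y_\cdot(m)|_T^*,
\]
noting that $|Y_\cdot(m)|_T^*=\int_0^T|Y_\tau(m)|\,d\tau$ is automatically finite since $\tau\mapsto Y_\tau(m)$ is continuous on the compact interval $[0,T]$. Either way, there exists $R<\infty$ with $\sigma(t)\in\overline{B_R(m)}$ for all $t\in[s,b)$, and by Hopf--Rinow the closed ball $\overline{B_R(m)}$ is compact in $M$.

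Next I would show that $\lim_{t\uparrow b}\sigma(t)$ exists in $M$. The point is that $|\dot\sigma(\tau)|=|Y_\tau(\sigma(\tau))|$ is integrable on $[s,b)$: in the case $|Y|_T^*<\infty$ this is immediate from $|\dot\sigma(\tau)|\leq|Y_\tau|_M$; in the case $|\nabla Y|_T^*<\infty$ I would combine Lemma \ref{lem.2.10} with the previous step to obtain
\[
|\dot\sigma(\tau)|\leq|Y_\tau(m)|+|\nabla Y_\tau|_M\cdot d(\sigma(\tau),m)\leq|Y_\tau(m)|+R\,|\nabla Y_\tau|_M,
\]
whose integral on $[s,b)$ is finite. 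Since $d(\sigma(t),\sigma(t'))\leq\int_t^{t'}|\dot\sigma(r)|\,dr$, the curve $\sigma$ is Cauchy as $t\uparrow b$; compactness of $\overline{B_R(m)}$ then forces the existence of a limit point $m^*:=\lim_{t\uparrow b}\sigma(t)\in M$.

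Finally, I would apply local existence at the initial condition $(b,m^*)$ to obtain a solution of Eq.~(\ref{e.1.1}) on some neighborhood of $b$ in $J$; concatenation with $\sigma$ (with smoothness at the junction following from the ODE itself and uniqueness) yields a solution on a strictly larger interval than $J_0$, contradicting maximality. Hence $b=T$, and by the symmetric argument $a=0$, so $\sigma$ is defined on all of $J$ and $Y$ is complete. The only real content is the a priori bound on $\sigma$ supplied by Theorem \ref{thm.2.11}; the rest is the standard maximal-interval/extension argument, and nothing here looks difficult.
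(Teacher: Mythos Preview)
Your proposal is correct and follows the same overall strategy as the paper: use Theorem~\ref{thm.2.11} to trap the maximal solution in a closed metric ball, invoke Hopf--Rinow for compactness, show the solution has a limit at the endpoint, then extend via local existence to contradict maximality.

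The only noteworthy difference is in how you establish that $\sigma$ is Cauchy as $t\uparrow b$. You split into the two cases and, in the case $|\nabla Y|_T^*<\infty$, re-invoke Lemma~\ref{lem.2.10} to bound $|\dot\sigma(\tau)|\leq |Y_\tau(m)|+R\,|\nabla Y_\tau|_M$ and hence get integrability. The paper instead exploits compactness of $K=\overline{B_R(m)}$ directly: since $(t,p)\mapsto Y_t(p)$ is continuous on the compact set $[0,T]\times K$, one gets a uniform bound $|\dot\sigma(t)|\leq C_K:=\max_{[0,T]\times K}|Y_s(p)|<\infty$, which yields the stronger Lipschitz estimate $d(\sigma(t),\sigma(t'))\leq C_K|t-t'|$ and handles both cases at once. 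Your route is perfectly valid; the paper's is slightly more economical in that it avoids the case split for this step. The paper also spells out the verification that the one-sided derivative of $\sigma$ at $b$ equals $Y_b(\sigma(b))$, which you (reasonably) subsume under ``smoothness at the junction following from the ODE itself.''
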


\begin{proof}
Suppose that $s\in\left[  0,T\right]  $ and $m\in M$ are given and that
$\sigma:\left(  a,b\right)  \rightarrow M$ is a maximal solution to the ODE,%
\[
\dot{\sigma}\left(  t\right)  =Y_{t}\left(  \sigma\left(  t\right)  \right)
\text{ with }\sigma\left(  s\right)  =m.
\]
In order to handle both cases at once, let $R:=\left\vert Y\right\vert
_{T}^{\ast}$ or $R=e^{\left\vert \nabla Y\right\vert _{T}^{\ast}}\left\vert
Y_{\cdot}\left(  m\right)  \right\vert _{T}^{\ast}$ so that according to
Theorem \ref{thm.2.11}, $\sigma\left(  t\right)  \in K:=\overline{B\left(
0,R\right)  }$ for all $t\in\left(  a,b\right)  .$ Since $R<\infty$ and $M$ is
complete we know that $K$ is compact and hence%
\[
\left\vert \sigma^{\prime}\left(  t\right)  \right\vert =\left\vert
Y_{t}\left(  \sigma\left(  t\right)  \right)  \right\vert \leq C_{K}%
:=\max_{0\leq s\leq T~\&~m\in K}\left\vert Y_{s}\left(  m\right)  \right\vert
<\infty.
\]
Thus it follows that
\[
d\left(  \sigma\left(  t\right)  ,\sigma\left(  s\right)  \right)  \leq
C_{K}\left\vert t-s\right\vert \text{ for }s,t\in\left(  a,b\right)  .
\]
From this we conclude that $\,\lim_{t\uparrow b}\sigma\left(  t\right)  $
exists as $\left\{  \sigma\left(  t\right)  :t\uparrow\tau\right\}  $ is a
Cauchy and $\left(  M,g\right)  $ is complete and similarly, $\lim
_{t\downarrow a}\sigma\left(  t\right)  $ exists and we may extend $\sigma$ to
a continuous function on $\left[  a,b\right]  .$

We now claim that the one sided derivatives of $\sigma\left(  t\right)  $ at
$t=a$ and $t=b$ exist and are given by $Y_{a}\left(  \sigma\left(  a\right)
\right)  $ and $Y_{b}\left(  \sigma\left(  b\right)  \right)  $ respectively.
Indeed, if $\lim_{t\uparrow b}\sigma\left(  t\right)  =p=:\sigma\left(
b\right)  $ and $f\in C^{\infty}\left(  M\right)  ,$ then for $a<t<b$%
\begin{align*}
f\left(  \sigma\left(  b\right)  \right)  -f\left(  \sigma\left(  t\right)
\right)   &  =\lim_{\tau\uparrow b}f\left(  \sigma\left(  \tau\right)
\right)  -f\left(  \sigma\left(  t\right)  \right) \\
&  =\lim_{\tau\uparrow b}\int_{t}^{\tau}df\left(  \sigma^{\prime}\left(
r\right)  \right)  dr\\
&  =\lim_{\tau\uparrow b}\int_{t}^{\tau}\left(  Y_{r}f\right)  \left(
\sigma\left(  r\right)  \right)  dr=\int_{t}^{b}\left(  Y_{r}f\right)  \left(
\sigma\left(  r\right)  \right)  dr
\end{align*}
and hence%
\[
\lim_{t\uparrow b}\frac{f\left(  \sigma\left(  b\right)  \right)  -f\left(
\sigma\left(  t\right)  \right)  }{b-t}=\lim_{t\uparrow b}\frac{1}{b-t}%
\int_{t}^{b}\left(  Y_{r}f\right)  \left(  \sigma\left(  r\right)  \right)
dr=\left(  Y_{b}f\right)  \left(  \sigma\left(  b\right)  \right)  .
\]
Since this holds for all $f\in C^{\infty}\left(  M\right)  ,$ it follows that
$\sigma$ has a left derivative at $b$ given by $Y_{b}\left(  \sigma\left(
b\right)  \right)  .$ Similarly, one shows the right derivative of
$\sigma\left(  t\right)  $ exists at $t=a$ and is given by $Y_{a}\left(
\sigma\left(  a\right)  \right)  .$

To complete the proof, for the sake of contradiction, suppose that $b<T.$ By
local existence of ODEs we may find $\gamma:\left(  b-\varepsilon
,b+\varepsilon\right)  \rightarrow M$ such that
\[
\dot{\gamma}\left(  t\right)  =Y_{t}\left(  \gamma\left(  t\right)  \right)
\text{ with }\gamma\left(  b\right)  =p=\sigma\left(  b\right)  .
\]
The path
\[
\tilde{\sigma}\left(  t\right)  :=\left\{
\begin{array}
[c]{ccc}%
\sigma\left(  t\right)  & \text{if} & 0\leq t\leq b\\
\gamma\left(  t\right)  & \text{if} & b\leq t<b+\varepsilon
\end{array}
\right.
\]
then satisfies the ODE on a longer time interval which violates the maximality
of the solution and so we in fact must have $b=T.$ Similarly, one shows that
$a$ must be $0$ as well and hence $Y$ is complete.
\end{proof}

\begin{corollary}
[$\left\vert \nabla X\right\vert _{M}<\infty$ growth implications]%
\label{cor.2.13}If $X\in\Gamma\left(  TM\right)  $ is a complete time
independent vector field, then for all $m\in M,$%
\begin{align}
d\left(  e^{X}\left(  m\right)  ,m\right)   &  \leq\left\vert X\right\vert
_{M},\text{ and }\label{e.2.8}\\
d\left(  e^{X}\left(  m\right)  ,m\right)   &  \leq\left\vert X\left(
m\right)  \right\vert \cdot e^{\left\vert \nabla X\right\vert _{M}}.
\label{e.2.9}%
\end{align}

\end{corollary}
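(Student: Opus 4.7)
The plan is to deduce both estimates as direct specializations of Theorem \ref{thm.2.11} to the time-independent setting. Fix $m\in M$ and set $Y_t := X$ for all $t\in[0,1]$, so $Y$ is a (trivially) smoothly varying complete time-dependent vector field on the interval $J=[0,1]$. Let $\sigma:[0,1]\to M$ be the integral curve solving $\dot{\sigma}(t)=X(\sigma(t))$ with $\sigma(0)=m$; by Definition \ref{def.1.2} we then have $\sigma(1)=e^{X}(m)$.

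For the first estimate, I apply the bound $d(\sigma(t),\sigma(s))\leq|Y|^{\ast}_{J(s,t)}$ of Theorem \ref{thm.2.11} with $s=0$ and $t=1$. Since $Y_t=X$ is constant in $t$, Notation \ref{not.1.3} gives
\[
|Y|^{\ast}_{1}=\int_{0}^{1}\sup_{p\in M}|X(p)|\,dt=|X|_{M},
\]
which yields $d(e^{X}(m),m)\leq|X|_{M}$, i.e.\ Eq.~(\ref{e.2.8}).

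For the second estimate, I apply the Gronwall-style bound
\[
d(\sigma(t),\sigma(s))\leq e^{|\nabla Y|^{\ast}_{J(s,t)}}\,|Y_{\cdot}(m)|^{\ast}_{J(s,t)}
\]
of Theorem \ref{thm.2.11}, again with $s=0$, $t=1$. The time-independence of $X$ gives $|\nabla Y|^{\ast}_{1}=\int_{0}^{1}|\nabla X|_{M}\,dt=|\nabla X|_{M}$ and $|Y_{\cdot}(m)|^{\ast}_{1}=\int_{0}^{1}|X(m)|\,dt=|X(m)|$, so the estimate becomes $d(e^{X}(m),m)\leq|X(m)|\cdot e^{|\nabla X|_{M}}$, which is Eq.~(\ref{e.2.9}).

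There is no real obstacle here; the only thing to notice is that the assumption $|\nabla X|_{M}<\infty$ is not needed for the conclusion (the right-hand side of Eq.~(\ref{e.2.9}) is simply vacuous when $|\nabla X|_{M}=\infty$), and that completeness of $X$ guarantees the integral curve $\sigma$ exists on all of $[0,1]$ so the application of Theorem \ref{thm.2.11} is legitimate.
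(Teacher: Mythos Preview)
Your proof is correct and follows exactly the same approach as the paper's own proof: specialize Theorem \ref{thm.2.11} with $Y_t=X$ and $\sigma(t)=e^{tX}(m)$, then evaluate at $s=0$, $t=1$. You have simply written out the integral computations that the paper leaves implicit.
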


\begin{proof}
This follows immediately from Theorem \ref{thm.2.11} with $Y_{t}=X$ for all
$t$ and $\sigma\left(  t\right)  =e^{tX}\left(  m\right)  .$ The inequalities
in the theorem are applied with $t=1$ and $s=0.$
\end{proof}

\subsection{Flows\label{sec.2.2}}

The next theorem recalls some basic properties of flows associated to complete
time dependent vector fields.

\begin{theorem}
\label{thm.2.14}Suppose that $J=\left[  0,T\right]  \ni t\rightarrow Y_{t}%
\in\Gamma\left(  TM\right)  $ is a smoothly varying complete vector field on
$M$ and for fixed $s\in J$ and $m\in M,$ $J\ni t\rightarrow\mu_{t,s}\left(
m\right)  $ is the unique solution to the ODE,%
\[
\frac{d}{dt}\mu_{t,s}\left(  m\right)  =Y_{t}\left(  \mu_{t,s}\left(
m\right)  \right)  \text{ with }\mu_{s,s}\left(  m\right)  =m.
\]
Then;

\begin{enumerate}
\item $J\times J\times M\ni\left(  t,s,m\right)  \rightarrow\mu_{t,s}\left(
m\right)  \in M$ is smooth.

\item For all $r,s,t\in J,$%
\begin{equation}
\mu_{t,s}\circ\mu_{s,r}=\mu_{t,r}. \label{e.2.10}%
\end{equation}

\item For all $s,t\in J,$ $\mu_{t,s}\in\mathrm{Diff}\left(  M\right)  ,$
$\mu_{t,s}^{-1}=\mu_{s,t},$ the map $\left(  s,t,m\right)  \rightarrow
\mu_{t,s}^{-1}\left(  m\right)  $ is smooth and
\begin{equation}
\frac{d}{dt}\mu_{t,s}^{-1}=\dot{\mu}_{s,t}=-\left(  \mu_{s,t}\right)  _{\ast
}Y_{t}=-\left(  \mu_{t,s}^{-1}\right)  _{\ast}Y_{t}. \label{e.2.11}%
\end{equation}

\item For all $s,t\in J,$
\begin{equation}
\mu_{t,s}=\mu_{t,0}\circ\mu_{s,0}^{-1}. \label{e.2.12}%
\end{equation}

\end{enumerate}
\end{theorem}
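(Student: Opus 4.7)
The plan is to derive all four items from the standard local existence/uniqueness and smooth-dependence theory for ODEs together with the completeness assumption. The only substantive step is item~(1); items (2)--(4) will fall out as formal consequences of uniqueness of solutions plus the chain rule.

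For item~(1) I would \emph{autonomize} the system by passing to the time-augmented vector field $\tilde{Y}$ on $J\times M$ defined by $\tilde{Y}(s,m) = (\partial_s,\,Y_s(m))$. Its local flow has the form $\tilde{\mu}_r(s,m) = (s+r,\,\mu_{s+r,s}(m))$, so the classical smooth-dependence-on-initial-conditions theorem applied to the autonomous field $\tilde{Y}$ delivers joint smoothness of $(t,s,m)\mapsto \mu_{t,s}(m)$ on any open set where it is defined. Completeness of $Y$ ensures the flow exists on all of $J\times J\times M$, and since smoothness is a local property the locally defined smooth pieces patch into a globally smooth map.

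For item~(2) I would argue by uniqueness. Fix $r\in J$, $m\in M$ and consider the curves $\alpha(t) := \mu_{t,s}(\mu_{s,r}(m))$ and $\beta(t) := \mu_{t,r}(m)$; both satisfy $\dot{\gamma}(t) = Y_t(\gamma(t))$ and both equal $\mu_{s,r}(m)$ at $t=s$, so $\alpha\equiv\beta$, which is exactly \eqref{e.2.10}. For item~(3), specialising \eqref{e.2.10} with $r=t$ and with the roles of $s,t$ swapped yields
\[
\mu_{t,s}\circ\mu_{s,t} = \mu_{t,t} = \operatorname{Id}_M = \mu_{s,s} = \mu_{s,t}\circ\mu_{t,s},
\]
so $\mu_{t,s}$ is a diffeomorphism with inverse $\mu_{s,t}$, and joint smoothness is inherited from item~(1). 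To obtain \eqref{e.2.11} I would differentiate the identity $\mu_{s,t}(\mu_{t,s}(m)) = m$ in $t$, using the chain rule and the defining ODE for $\mu_{t,s}$, to get
\[
\tfrac{\partial}{\partial t}\mu_{s,t}\bigl(\mu_{t,s}(m)\bigr) + (\mu_{s,t})_{\ast}\,Y_t\bigl(\mu_{t,s}(m)\bigr) = 0;
\]
substituting $p = \mu_{t,s}(m)$ then reads off $\dot{\mu}_{s,t}(p) = -(\mu_{s,t})_{\ast}Y_t(p)$. Finally, \eqref{e.2.12} is immediate: take $s \to 0$, $r \to s$ in \eqref{e.2.10} and invoke $\mu_{0,s} = \mu_{s,0}^{-1}$ from item~(3). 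The only genuine obstacle in the whole plan is the global-smoothness bookkeeping in item~(1); everything else is a routine application of ODE uniqueness and the chain rule.
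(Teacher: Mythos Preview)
Your proposal is correct and follows essentially the same approach as the paper's proof: autonomize the system on $J\times M$ to obtain joint smoothness from standard ODE theory, then use uniqueness for the flow identity, specialise it to get the inverse, and differentiate the identity $\mu_{t,s}\circ\mu_{s,t}=\mathrm{Id}$ (you differentiate the equivalent form $\mu_{s,t}\circ\mu_{t,s}=\mathrm{Id}$) to obtain \eqref{e.2.11}. The only cosmetic difference is that the paper writes the augmented variable as $(\sigma(\tau),T(\tau))$ with $T(\tau)=\tau+s$ rather than your $\tilde{Y}$ on $J\times M$, but this is the same autonomization.
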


\begin{proof}
We take each item in turn.

\begin{enumerate}
\item The smoothness of $\left(  t,s,m\right)  \rightarrow\mu_{t,s}\left(
m\right)  $ is a basic consequence of the fact that ODE's depend smoothly on
parameters and initial conditions. For example $\sigma\left(  \tau\right)
=\mu_{T\left(  \tau\right)  ,s}\left(  m\right)  $ and $T\left(  \tau\right)
=\tau+s,$ then
\[
\frac{d}{d\tau}\left(
\begin{array}
[c]{c}%
\sigma\left(  \tau\right) \\
T\left(  \tau\right)
\end{array}
\right)  =\left(
\begin{array}
[c]{c}%
Y_{\tau}\left(  \sigma\left(  \tau\right)  \right) \\
1
\end{array}
\right)  \text{ with }\left(
\begin{array}
[c]{c}%
\sigma\left(  0\right) \\
T\left(  0\right)
\end{array}
\right)  =\left(
\begin{array}
[c]{c}%
m\\
T\left(  0\right)  =s
\end{array}
\right)
\]
and hence $\sigma\left(  \tau,m,s\right)  =\mu_{\tau+s,s}\left(  m\right)  $
depends smoothly on $\left(  \tau,m,s\right)  $ and hence $\mu$ depends
smoothly on all of its variables.

\item To prove Eq. (\ref{e.2.10}) simply notice that $t\rightarrow\mu
_{t,s}\circ\mu_{s,r}$ and $t\rightarrow\mu_{t,r}$ both satisfy the
differential equation,%
\[
\frac{d}{dt}\nu_{t}=Y_{t}\circ\nu_{t}\text{ with }\nu_{s}=\mu_{s,r}%
\]

\item Taking $r=t$ in Eq. (\ref{e.2.10}) gives,%
\begin{equation}
\mu_{t,s}\circ\mu_{s,t}=\mu_{t,t}=Id_{M}\text{ for all }s,t\in J.
\label{e.2.13}%
\end{equation}
By interchanging $s$ and $t$ in the above equation may also be written as
\begin{equation}
\mu_{s,t}\circ\mu_{t,s}=Id_{M}\text{ for all }s,t\in J. \label{e.2.14}%
\end{equation}
From these last two equations we see that $\mu_{t,s}\in\mathrm{Diff}\left(
M\right)  $ for all $s,t\in J$ and moreover that $\mu_{t,s}^{-1}=\mu_{s,t}$
which also shows the map $\left(  s,t,m\right)  \rightarrow\mu_{t,s}%
^{-1}\left(  m\right)  $ is smooth. To prove Eq. (\ref{e.2.11}) we
differentiate Eq. (\ref{e.2.13}) with respect to $t$ to find,%
\begin{align*}
0  &  =\dot{\mu}_{t,s}\circ\mu_{s,t}+\left(  \mu_{t,s}\right)  _{\ast}\dot
{\mu}_{s,t}=Y_{t}\circ\mu_{t,s}\circ\mu_{s,t}+\left(  \mu_{t,s}\right)
_{\ast}\dot{\mu}_{s,t}\\
&  =Y_{t}+\left(  \mu_{t,s}\right)  _{\ast}\dot{\mu}_{s,t}%
\end{align*}
and hence
\[
\dot{\mu}_{s,t}=-\left(  \mu_{t,s}\right)  _{\ast}^{-1}Y_{t}=-\left(
\mu_{s,t}\right)  _{\ast}Y_{t}.
\]

\item This one is easily deduced by what has already been proved;%
\[
\mu_{t,s}=\mu_{t,0}\circ\mu_{0,s}=\mu_{t,0}\circ\mu_{s,0}^{-1}.
\]

\end{enumerate}
\end{proof}

\subsection{$\mathrm{Diff}\left(  M\right)  $-Adjoint Action\label{sec.2.3}}

\begin{definition}
[Adjoint actions and Lie derivatives]\label{def.2.16}If $f\in\mathrm{Diff}%
\left(  M\right)  $ and $Y\in\Gamma\left(  TM\right)  ,$ let
$\operatorname{Ad}_{f}Y=f_{\ast}Y\circ f^{-1}\in\Gamma\left(  TM\right)  ,$
i.e. $\operatorname{Ad}_{f}Y$ is the vector field defined by%
\begin{equation}
\left(  \operatorname{Ad}_{f}Y\right)  \left(  m\right)  =f_{\ast}Y\left(
f^{-1}\left(  m\right)  \right)  \in T_{m}M\text{ }\forall~m\in M.
\label{e.2.17}%
\end{equation}
If $X,Y\in\Gamma\left(  TM\right)  ,$ then the \textbf{Lie derivative }of $Y$
with respect to $X$ is
\begin{equation}
L_{X}Y:=\frac{d}{dt}|_{0}\operatorname{Ad}_{e^{-tX}}Y=\frac{d}{dt}|_{0}%
e_{\ast}^{-tX}Y\circ e^{tX}. \label{e.2.18}%
\end{equation}
We further let
\begin{equation}
\operatorname{ad}_{X}:=\frac{d}{dt}|_{0}\operatorname{Ad}_{e^{tX}}=-L_{X}.
\label{e.2.19}%
\end{equation}

\end{definition}

\begin{remark}
\label{rem.2.17}The following identities are well known and easy to prove.

\begin{enumerate}
\item If $f,g\in\mathrm{Diff}\left(  M\right)  $ then $\operatorname{Ad}%
_{f\circ g}=\operatorname{Ad}_{f}\operatorname{Ad}_{g}.$

\item The Lie derivative, $L_{X}Y,$ is again a vector field on $M$ which may
also be computed using
\[
L_{X}Y=\left[  X,Y\right]  =XY-YX.
\]

\item If $X,Y\in\Gamma\left(  TM\right)  $ and $f\in\mathrm{Diff}\left(
M\right)  ,$ then%
\begin{equation}
\operatorname{Ad}_{f}\left[  X\,Y\right]  =\left[  \operatorname{Ad}%
_{f}X,\operatorname{Ad}_{f}Y\right]  . \label{e.2.20}%
\end{equation}

\end{enumerate}

For example, to verify Eq. (\ref{e.2.20}), observe that $\operatorname{Ad}%
_{f}Y$ is the unique vector field on $M$ such that
\begin{equation}
f_{\ast}Y=\left(  \operatorname{Ad}_{f}Y\right)  \circ f, \label{e.2.21}%
\end{equation}
i.e. such that $Y$ and $\operatorname{Ad}_{f}Y$ are \textquotedblleft%
$f$-related.\textquotedblright\ Since the commutator of two $f$-related vector
fields are $f$-related, it follows that $\left[  X,Y\right]  $ and $\left[
\operatorname{Ad}_{f}X,\operatorname{Ad}_{f}Y\right]  $ are $f$-related, i.e.
\[
f_{\ast}\left[  X,Y\right]  =\left[  \operatorname{Ad}_{f}X,\operatorname{Ad}%
_{f}Y\right]  \circ f\implies\operatorname{Ad}_{f}\left[  X\,Y\right]
=f_{\ast}\left[  X,Y\right]  \circ f^{-1}=\left[  \operatorname{Ad}%
_{f}X,\operatorname{Ad}_{f}Y\right]  .
\]

\end{remark}

\begin{proposition}
[Adjoint flow equations]\label{pro.2.18}Suppose $Y\in\Gamma\left(  TM\right)
$ and $\nu_{t}\in\mathrm{Diff}\left(  M\right)  $ is smoothly varying in $t.$
If we define%
\[
W_{t}:=\dot{\nu}_{t}\circ\nu_{t}^{-1}\in\Gamma\left(  TM\right)  \text{ and
}\tilde{W}_{t}=\left(  \nu_{t}^{-1}\right)  _{\ast}\dot{\nu}_{t}\in
\Gamma\left(  TM\right)  ,
\]
then the \textbf{adjoint flows} of $Y$, $\operatorname{Ad}_{\nu_{t}}Y$ and
$\operatorname{Ad}_{\nu_{t}^{-1}}Y,$ satisfy%
\begin{equation}
\frac{d}{dt}\operatorname{Ad}_{\nu_{t}}Y=\left[  \operatorname{Ad}_{\nu_{t}%
}Y,W_{t}\right]  =\operatorname{Ad}_{\nu_{t}}\left[  Y,\tilde{W}_{t}\right]
\label{e.2.22}%
\end{equation}
and%
\begin{equation}
\frac{d}{dt}\operatorname{Ad}_{\nu_{t}^{-1}}Y=\left[  \tilde{W}_{t}%
,\operatorname{Ad}_{\nu_{t}^{-1}}Y\right]  =\operatorname{Ad}_{\nu_{t}^{-1}%
}\left[  W_{t},Y\right]  . \label{e.2.23}%
\end{equation}

\end{proposition}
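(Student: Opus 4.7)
The key preliminary observation is that the two ``instantaneous velocities'' of $\nu_t$ are related by
\[
\tilde{W}_t = \operatorname{Ad}_{\nu_t^{-1}} W_t, \qquad \text{equivalently}, \qquad W_t = \operatorname{Ad}_{\nu_t}\tilde{W}_t,
\]
which follows by evaluating $\tilde{W}_t(q) = (\nu_t^{-1})_{*,\nu_t(q)}\dot{\nu}_t(q)$ and using $\dot{\nu}_t(q) = W_t(\nu_t(q))$. Because $\operatorname{Ad}_{\nu_t}$ is a Lie algebra homomorphism (Eq.~(\ref{e.2.20})), the second equality in each of (\ref{e.2.22}) and (\ref{e.2.23}) follows immediately from the first by bracketing through $\operatorname{Ad}_{\nu_t}$ (respectively $\operatorname{Ad}_{\nu_t^{-1}}$). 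So the task reduces to proving the first equality on each line.

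For (\ref{e.2.22}), my plan is to exploit the composition rule $\operatorname{Ad}_{\nu_{t+s}} = \operatorname{Ad}_{g_s} \circ \operatorname{Ad}_{\nu_t}$ with $g_s := \nu_{t+s}\circ\nu_t^{-1}$. Here $g_0 = \operatorname{Id}_M$ and, viewed as a vector field, $\dot{g}_0 = \dot{\nu}_t \circ \nu_t^{-1} = W_t$. Setting $Z := \operatorname{Ad}_{\nu_t} Y$, the problem collapses to the Lie-derivative-type identity
\[
\frac{d}{ds}\Big|_{s=0} \operatorname{Ad}_{g_s} Z = [Z, W_t].
\]
To establish this I would test against an arbitrary $f \in C^\infty(M)$, writing $(\operatorname{Ad}_{g_s} Z)f = Z(f\circ g_s)\circ g_s^{-1}$. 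Differentiating at $s=0$ produces two contributions: (i) the factor $f \circ g_s$ differentiates to $W_t f$ since $g_0 = \operatorname{Id}$ and $\dot{g}_0 = W_t$, contributing $Z(W_t f)$; (ii) the argument $g_s^{-1}(m)$ differentiates to $-W_t(m)$, contributing $-W_t(Zf)(m)$. Adding gives $Z(W_t f)(m) - W_t(Z f)(m) = [Z, W_t]f(m)$ pointwise, which is the required formula since $f$ was arbitrary.

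Equation (\ref{e.2.23}) can then be obtained by differentiating the operator identity $\operatorname{Ad}_{\nu_t^{-1}}\operatorname{Ad}_{\nu_t} = \operatorname{Id}$ and applying (\ref{e.2.22}) to the (fixed) vector field $\operatorname{Ad}_{\nu_t^{-1}} Y$: one gets
\[
\frac{d}{dt}\operatorname{Ad}_{\nu_t^{-1}} Y = -\operatorname{Ad}_{\nu_t^{-1}}\bigl[\operatorname{Ad}_{\nu_t}\operatorname{Ad}_{\nu_t^{-1}} Y,\, W_t\bigr] = -\operatorname{Ad}_{\nu_t^{-1}}[Y, W_t] = \operatorname{Ad}_{\nu_t^{-1}}[W_t, Y],
\]
and the homomorphism property together with $\operatorname{Ad}_{\nu_t^{-1}} W_t = \tilde{W}_t$ then yields the $[\tilde{W}_t, \operatorname{Ad}_{\nu_t^{-1}} Y]$ form. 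The main technical check underlying the whole argument is step (ii) above, namely the identity $\frac{d}{ds}|_{s=0} g_s^{-1}(m) = -\dot{g}_0(m)$ for a smooth family of diffeomorphisms with $g_0 = \operatorname{Id}$; this follows from differentiating $g_s(g_s^{-1}(m)) = m$ and using $(g_0)_* = \operatorname{Id}$. Everything else is the chain rule on $M$, the composition law for Ad, and the Lie-homomorphism identity (\ref{e.2.20}) already recorded in the excerpt.
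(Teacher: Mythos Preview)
Your proof is correct. For (\ref{e.2.22}) your argument and the paper's are essentially the same computation organized differently: both test against $\varphi\in C^\infty(M)$ and use the product rule, but where the paper differentiates the $f$-relatedness identity $(Y_t\varphi)\circ\nu_t = Y(\varphi\circ\nu_t)$ directly in $t$ (with $Y_t:=\operatorname{Ad}_{\nu_t}Y$), you first factor $\operatorname{Ad}_{\nu_{t+s}}=\operatorname{Ad}_{g_s}\operatorname{Ad}_{\nu_t}$ to reduce to an increment $g_s$ with $g_0=\operatorname{Id}$; the two derivative contributions you isolate match one-for-one with the two terms the paper obtains on the left-hand side. For (\ref{e.2.23}) the routes diverge a bit more: the paper computes $\frac{d}{dt}\nu_t^{-1}=-\tilde W_t\circ\nu_t^{-1}$ (via Theorem~\ref{thm.2.14}) and then simply re-applies (\ref{e.2.22}) with $\nu_t$ replaced by $\nu_t^{-1}$, whereas you differentiate the operator identity $\operatorname{Ad}_{\nu_t^{-1}}\operatorname{Ad}_{\nu_t}=\operatorname{Id}$ and substitute. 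Your route sidesteps the explicit velocity of $\nu_t^{-1}$ at the cost of a small operator-algebra step; the paper's is slightly more hands-on. Both reach the conclusion with comparable effort.
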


\begin{proof}
Let $Y_{t}:=\operatorname{Ad}_{\nu_{t}}Y$ for $t\in\mathbb{R}$ (as in Eq.
(\ref{e.2.21})) so that,
\[
Y_{t}\circ\nu_{t}=\left(  \operatorname{Ad}_{\nu_{t}}Y\right)  \circ\nu
_{t}=\nu_{t\ast}Y.
\]
Hence, if $\varphi\in C^{\infty}\left(  M,\mathbb{R}\right)  ,$ then
\[
\left(  Y_{t}\varphi\right)  \circ\nu_{t}=\left(  \nu_{t\ast}Y_{t}\right)
\varphi=Y\left(  \varphi\circ\nu_{t}\right)
\]
and differentiating this equation in $t$ gives%
\[
\left(  \dot{Y}_{t}\varphi\right)  \circ\nu_{t}+\left(  W_{t}Y_{t}%
\varphi\right)  \circ\nu_{t}=Y\left(  \left(  W_{t}\varphi\right)  \circ
\nu_{t}\right)  =\left(  \nu_{t\ast}Y\right)  \left(  W_{t}\varphi\right)
=\left(  Y_{t}W_{t}\varphi\right)  \circ\nu_{t}.
\]
The last equation is equivalent to the first equality in Eq. (\ref{e.2.22}).
Since $\operatorname{Ad}_{\nu_{t}}\tilde{W}_{t}=W_{t},$%
\[
\left[  Y_{t},W_{t}\right]  =\left[  \operatorname{Ad}_{\nu_{t}}%
Y,\operatorname{Ad}_{\nu_{t}}\tilde{W}_{t}\right]  =\operatorname{Ad}_{\nu
_{t}}\left[  Y,\tilde{W}_{t}\right]
\]
which gives the second equality in Eq. (\ref{e.2.22}).

As, by Theorem \ref{thm.2.14},%
\[
\frac{d}{dt}\nu_{t}^{-1}=-\nu_{t\ast}^{-1}W_{t}=-\nu_{t\ast}^{-1}W_{t}\circ
\nu_{t}\circ\nu_{t}^{-1}=-\tilde{W}_{t}\circ\nu_{t}^{-1},
\]
it follows from Eq. (\ref{e.2.22}) that%
\begin{align*}
\frac{d}{dt}\left(  \operatorname{Ad}_{\nu_{t}^{-1}}Y\right)   &  =\left[
\operatorname{Ad}_{\nu_{t}^{-1}}Y,-\tilde{W}_{t}\right]  =\left[  \tilde
{W}_{t},\operatorname{Ad}_{\nu_{t}^{-1}}Y\right]  \text{ and}\\
\frac{d}{dt}\left(  \operatorname{Ad}_{\nu_{t}^{-1}}Y\right)   &
=\operatorname{Ad}_{\nu_{t}^{-1}}\left[  Y,-W_{t}\right]  =\operatorname{Ad}%
_{\nu_{t}^{-1}}\left[  W_{t},Y\right]  ,
\end{align*}
which proves both equalities Eq. (\ref{e.2.23}).
\end{proof}

\begin{corollary}
\label{cor.2.19}If $Y,X\in\Gamma\left(  TM\right)  $ with $X$ being complete,
then
\[
\frac{d}{dt}\operatorname{Ad}_{e^{tX}}Y=\operatorname{Ad}_{e^{tX}%
}\operatorname{ad}_{X}Y=\operatorname{ad}_{X}\operatorname{Ad}_{e^{tX}}Y
\]
where $\operatorname{ad}_{X}=-L_{X}.$
\end{corollary}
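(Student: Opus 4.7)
The plan is to deduce the corollary directly from Proposition \ref{pro.2.18} by specializing to $\nu_{t}=e^{tX}$. First I would compute the two auxiliary vector fields $W_{t}=\dot{\nu}_{t}\circ\nu_{t}^{-1}$ and $\tilde{W}_{t}=(\nu_{t}^{-1})_{\ast}\dot{\nu}_{t}$ appearing in the proposition. Since $\nu_{t}=e^{tX}$ is the flow of $X,$ we have $\dot{\nu}_{t}=X\circ e^{tX},$ so
\[
W_{t}=X\circ e^{tX}\circ e^{-tX}=X.
\]
For $\tilde{W}_{t},$ the key observation is that $X$ is invariant under its own flow: from $e^{(s+t)X}=e^{tX}\circ e^{sX}$ one differentiates in $s$ at $s=0$ to obtain $(e^{tX})_{\ast}X=X\circ e^{tX},$ which is the statement that $X$ and $X$ are $e^{tX}$-related, i.e.\ $\operatorname{Ad}_{e^{tX}}X=X.$ Consequently
\[
\tilde{W}_{t}=(e^{-tX})_{\ast}(X\circ e^{tX})=(e^{-tX})_{\ast}(e^{tX})_{\ast}X=X.
\]

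Next I would substitute $W_{t}=\tilde{W}_{t}=X$ into Eq.~(\ref{e.2.22}) of Proposition \ref{pro.2.18}, which immediately yields
\[
\frac{d}{dt}\operatorname{Ad}_{e^{tX}}Y=\bigl[\operatorname{Ad}_{e^{tX}}Y,X\bigr]=\operatorname{Ad}_{e^{tX}}\bigl[Y,X\bigr].
\]
Finally, I would translate these brackets into the notation of the corollary. By Remark \ref{rem.2.17}(2), $L_{X}Z=[X,Z],$ and by definition $\operatorname{ad}_{X}=-L_{X},$ so $\operatorname{ad}_{X}Z=[Z,X]$ for every $Z\in\Gamma(TM).$ Applying this with $Z=\operatorname{Ad}_{e^{tX}}Y$ turns the first bracket into $\operatorname{ad}_{X}\operatorname{Ad}_{e^{tX}}Y,$ and applying it with $Z=Y$ turns the second into $\operatorname{Ad}_{e^{tX}}\operatorname{ad}_{X}Y,$ completing the proof.

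There is really no serious obstacle here; the only thing one has to verify carefully is the identity $W_{t}=\tilde{W}_{t}=X,$ which boils down to the self-invariance of $X$ under its flow. As a sanity check, the equality of the two right-hand sides, $\operatorname{Ad}_{e^{tX}}\operatorname{ad}_{X}Y=\operatorname{ad}_{X}\operatorname{Ad}_{e^{tX}}Y,$ is consistent with Eq.~(\ref{e.2.20}) applied to $f=e^{tX}$ together with $\operatorname{Ad}_{e^{tX}}X=X,$ so no independent verification is needed.
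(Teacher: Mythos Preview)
Your proof is correct and follows exactly the same approach as the paper: specialize Proposition~\ref{pro.2.18} to $\nu_{t}=e^{tX}$, observe that $W_{t}=\tilde{W}_{t}=X$ via $\operatorname{Ad}_{e^{tX}}X=X$, and read off the result from Eq.~(\ref{e.2.22}). You have supplied more detail than the paper (the explicit computation of $W_{t}$ and $\tilde{W}_{t}$ and the translation to $\operatorname{ad}_{X}$ notation), but the argument is identical.
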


\begin{proof}
The result follows directly from Eq. (\ref{e.2.22}) by taking $\nu_{t}=e^{tX}$
and noting that $W_{t}=X=\tilde{W}_{t}$ in this case as $\operatorname{Ad}%
_{e^{tX}}X=X$ for all $t\in\mathbb{R}.$
\end{proof}

\subsection{Vector field differentiation of flows\label{sec.2.4}}

\begin{definition}
[Differentiating $\mu^{X}$ in $X$]\label{def.2.20}Let $X_{t},Y_{t}\in
\Gamma\left(  TM\right)  $ be smoothly varying time dependent vector fields on
$M.$ We say $\mu^{X}$ \textbf{is differentiable relative to} $Y$ if there
exists $\left\{  X_{t}^{\varepsilon}\right\}  _{\varepsilon,t}\subset
\Gamma\left(  TM\right)  $ such that $\left(  t,\varepsilon,m\right)
\rightarrow X_{t}^{\varepsilon}\left(  m\right)  \in TM$ is smooth and
$X_{\cdot}^{\varepsilon}$ is complete for $\varepsilon$ near $0,$ $X_{t}%
^{0}=X_{t},$ and $\frac{d}{d\varepsilon}|_{0}X_{t}^{\varepsilon}=Y_{t}.$ If
all of this holds we let
\begin{equation}
\partial_{Y}\mu_{t,s}^{X}:=\frac{d}{d\varepsilon}|_{0}\mu_{t,s}%
^{X^{\varepsilon}}. \label{e.2.24}%
\end{equation}

\end{definition}

\begin{theorem}
\label{thm.2.21}If $X_{t},Y_{t}\in\Gamma\left(  TM\right)  $ are as in
Definition \ref{def.2.20} so that $\partial_{Y}\mu_{t,s}^{X}=\frac
{d}{d\varepsilon}|_{0}\mu_{t,s}^{X^{\varepsilon}}$ exists, then%
\begin{align}
\partial_{Y}\mu_{t,s}^{X}  &  =\int_{s}^{t}\mu_{t,\tau\ast}^{X}\left[
Y_{\tau}\circ\mu_{\tau,s}^{X}\right]  d\tau\label{e.2.25}\\
&  =\left(  \int_{s}^{t}\operatorname{Ad}_{\mu_{t,\tau}^{X}}Y_{\tau}%
d\tau\right)  \circ\mu_{t,s}^{X}\label{e.2.26}\\
&  =\left(  \mu_{t,s}^{X}\right)  _{\ast}\int_{s}^{t}\operatorname{Ad}%
_{\mu_{s,\tau}^{X}}Y_{\tau}d\tau. \label{e.2.27}%
\end{align}

\end{theorem}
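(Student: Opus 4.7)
The plan is to verify first that the three formulas \eqref{e.2.25}, \eqref{e.2.26}, \eqref{e.2.27} are equivalent as a purely algebraic matter, and then to establish \eqref{e.2.25} itself via a Duhamel-type interpolation. For the equivalence, I would factor the flow property $\mu_{t,s}^{X}=\mu_{t,\tau}^{X}\circ\mu_{\tau,s}^{X}$ from Eq. \eqref{e.2.10} out of the integrand in \eqref{e.2.25} and apply the characterizing identity $f_{\ast}Y=(\operatorname{Ad}_{f}Y)\circ f$ from Eq. \eqref{e.2.21} with $f=\mu_{t,\tau}^{X}$ to obtain \eqref{e.2.26}. To pass from \eqref{e.2.26} to \eqref{e.2.27} I would use $\operatorname{Ad}_{\mu_{t,\tau}^{X}}=\operatorname{Ad}_{\mu_{t,s}^{X}}\operatorname{Ad}_{\mu_{s,\tau}^{X}}$ (Remark \ref{rem.2.17}(1) combined with Eq. \eqref{e.2.10}) to pull the $t$-independent factor $\operatorname{Ad}_{\mu_{t,s}^{X}}$ out of the integral and then invoke Eq. \eqref{e.2.21} a second time to convert $\operatorname{Ad}_{\mu_{t,s}^{X}}(\cdot)\circ\mu_{t,s}^{X}$ into $(\mu_{t,s}^{X})_{\ast}(\cdot)$.

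The heart of the proof is then \eqref{e.2.25}. For this I would introduce, for each fixed $m\in M$, the interpolating curve
\[
\sigma\left(  \tau,\varepsilon\right)  :=\mu_{t,\tau}^{X}\left(  \mu
_{\tau,s}^{X^{\varepsilon}}\left(  m\right)  \right)  ,\qquad s\leq\tau\leq t,
\]
which satisfies $\sigma(s,\varepsilon)=\mu_{t,s}^{X}(m)$ (independent of $\varepsilon$) and $\sigma(t,\varepsilon)=\mu_{t,s}^{X^{\varepsilon}}(m)$. Differentiating in $\tau$ using the chain rule, together with Eq. \eqref{e.2.11} in the form $\frac{d}{d\tau}\mu_{t,\tau}^{X}=-(\mu_{t,\tau}^{X})_{\ast}X_{\tau}$ and the defining ODE $\frac{d}{d\tau}\mu_{\tau,s}^{X^{\varepsilon}}=X_{\tau}^{\varepsilon}\circ\mu_{\tau,s}^{X^{\varepsilon}}$, produces an exact cancellation:
\[
\partial_{\tau}\sigma\left(  \tau,\varepsilon\right)  =\left(  \mu_{t,\tau
}^{X}\right)  _{\ast}\left[  X_{\tau}^{\varepsilon}-X_{\tau}\right]  \left(
\mu_{\tau,s}^{X^{\varepsilon}}\left(  m\right)  \right)  .
\]
Integrating in $\tau$ from $s$ to $t$ gives an exact (nonlinear) formula expressing $\mu_{t,s}^{X^{\varepsilon}}(m)$ relative to $\mu_{t,s}^{X}(m)$. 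Differentiating at $\varepsilon=0$ then annihilates the factor $X_{\tau}^{\varepsilon}-X_{\tau}$ everywhere except where it is itself differentiated (producing $Y_{\tau}$), and the remaining evaluation point collapses to $\mu_{\tau,s}^{X}(m)$; the result is exactly the right-hand side of \eqref{e.2.25}.

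The only real technical point is the justification of interchanging $\partial_{\varepsilon}$ with $\partial_{\tau}$ and with the $\tau$-integral at $\varepsilon=0$; this follows from the joint smoothness of $(\tau,\varepsilon,m)\mapsto\mu_{\tau,s}^{X^{\varepsilon}}(m)$, which is standard smooth dependence on parameters for the augmented ODE system built into Definition \ref{def.2.20} (and is a straightforward application of part (1) of Theorem \ref{thm.2.14} to the joint flow of $X^{\varepsilon}$ in the extended variable $(m,\varepsilon)$). Once this is in hand, all remaining manipulations are purely algebraic identities from Theorem \ref{thm.2.14} and Remark \ref{rem.2.17}, so I expect no further obstacle.
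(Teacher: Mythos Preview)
Your argument is correct and proceeds by a genuinely different route from the paper's. The paper defines the pulled-back variation $V_{t,s}:=(\mu_{s,t}^{X})_{\ast}\partial_{Y}\mu_{t,s}^{X}$, differentiates the flow equation $\frac{d}{dt}[f\circ\mu_{t,s}^{X^{\varepsilon}}]=(X_{t}^{\varepsilon}f)\circ\mu_{t,s}^{X^{\varepsilon}}$ in $\varepsilon$ at $0$ (using test functions $f$), derives the ODE $\dot{V}_{t,s}=\operatorname{Ad}_{\mu_{s,t}^{X}}Y_{t}$ in the end-time $t$, and integrates to obtain \eqref{e.2.27} first, deducing \eqref{e.2.25}--\eqref{e.2.26} afterward. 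Your Duhamel interpolator $\sigma(\tau,\varepsilon)=\mu_{t,\tau}^{X}\circ\mu_{\tau,s}^{X^{\varepsilon}}$ instead varies an \emph{intermediate} time $\tau$ with $t$ fixed; it is exactly the $\Theta$-interpolator of Eq.~\eqref{e.2.55} (used later for the distance estimates of Theorem~\ref{thm.2.30}) specialized to $Y=X^{\varepsilon}$. The key fact making your argument work is that $\sigma(\tau,0)\equiv\mu_{t,s}^{X}(m)$ is \emph{constant} in $\tau$, so $\partial_{\varepsilon}|_{0}\sigma(\tau,\varepsilon)$ takes values in the single tangent space $T_{\mu_{t,s}^{X}(m)}M$ and can be integrated in $\tau$ directly. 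Your approach thus unifies the derivative formula with the interpolation technique of Section~\ref{sec.2.6}; the paper's approach is marginally more elementary in that it never needs to differentiate under an integral sign. One expository caveat: your phrase ``integrating in $\tau$ gives an exact (nonlinear) formula'' is imprecise on a manifold---you should either differentiate in $\varepsilon$ first (landing in the fixed tangent space) and then integrate in $\tau$, or equivalently pass through test functions via $f(\sigma(t,\varepsilon))-f(\sigma(s,\varepsilon))=\int_{s}^{t}df(\partial_{\tau}\sigma)\,d\tau$ before taking $\partial_{\varepsilon}|_{0}$; either fix is routine.
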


\begin{proof}
Let $V_{t,s}:=\left(  \mu_{s,t}^{X}\right)  _{\ast}\partial_{Y}\mu_{t,s}%
^{X}\in\Gamma\left(  TM\right)  $ so that
\begin{equation}
\partial_{Y}\mu_{t,s}^{X}=\frac{d}{d\varepsilon}|_{0}\mu_{t,s}^{X^{\varepsilon
}}=\left(  \mu_{t,s}^{X}\right)  _{\ast}V_{t,s} \label{e.2.28}%
\end{equation}
Notice that Eq. (\ref{e.2.28}) is equivalent to, for all $f\in C^{\infty
}\left(  M\right)  $,
\begin{align}
\frac{d}{d\varepsilon}|_{0}\left[  f\circ\mu_{t,s}^{X^{\varepsilon}}\right]
&  =df\left(  \frac{d}{d\varepsilon}|_{0}\mu_{t,s}^{X^{\varepsilon}}\right)
=df\left(  \partial_{Y}\mu_{t,s}^{X}\right) \nonumber\\
&  =df\left(  \left(  \mu_{t,s}^{X}\right)  _{\ast}V_{t,s}\right)
=V_{t,s}\left[  f\circ\mu_{t,s}^{X}\right]  . \label{e.2.29}%
\end{align}
So, on one hand,%
\begin{align}
\frac{d}{d\varepsilon}|_{0}\frac{d}{dt}\left[  f\circ\mu_{t,s}^{X^{\varepsilon
}}\right]   &  =\frac{d}{dt}\frac{d}{d\varepsilon}|_{0}\left[  f\circ\mu
_{t,s}^{X^{\varepsilon}}\right]  =\frac{d}{dt}V_{t,s}\left[  f\circ\mu
_{t,s}^{X}\right] \nonumber\\
&  =\dot{V}_{t,s}\left[  f\circ\mu_{t,s}^{X}\right]  +V_{t,s}\left[
X_{t}f\circ\mu_{t,s}^{X}\right]  . \label{e.2.30}%
\end{align}
On the other hand,
\[
\frac{d}{dt}\left[  f\circ\mu_{t,s}^{X^{\varepsilon}}\right]  =\left(
X_{t}^{\varepsilon}f\right)  \circ\mu_{t,s}^{X^{\varepsilon}}%
\]
and differentiating this equation in $\varepsilon$ implies while using Eq.
(\ref{e.2.29}) with $f$ replaced by $X_{t}f$ implies,%
\begin{align}
\frac{d}{d\varepsilon}|_{0}\frac{d}{dt}\left[  f\circ\mu_{t,s}^{X^{\varepsilon
}}\right]   &  =Y_{t}f\circ\mu_{t,s}^{X}+\frac{d}{d\varepsilon}|_{0}\left[
\left(  X_{t}f\right)  \circ\mu_{t,s}^{X^{\varepsilon}}\right] \nonumber\\
&  =Y_{t}f\circ\mu_{t,s}^{X}+V_{t,s}\left[  X_{t}f\circ\mu_{t,s}^{X}\right]  .
\label{e.2.31}%
\end{align}
Comparing Eqs. (\ref{e.2.30}) and (\ref{e.2.31}) shows,%
\[
\left(  \mu_{t,s\ast}^{X}\dot{V}_{t,s}\right)  f=\dot{V}_{t,s}\left[
f\circ\mu_{t,s}^{X}\right]  =Y_{t}f\circ\mu_{t,s}^{X}=\left(  Y_{t}\circ
\mu_{t,s}^{X}\right)  f\text{ ~}\forall~f\in C^{\infty}\left(  M\right)
\]
which implies,
\begin{equation}
\dot{V}_{t,s}=\mu_{s,t\ast}^{X}Y_{t}\circ\mu_{t,s}^{X}. \label{e.2.32}%
\end{equation}
Since $\mu_{s,s}^{X}=Id_{M}$ we know that $\partial_{Y}\mu_{s,s}^{X}=0$ and
hence $V_{s,s}=0$ and so integrating Eq. (\ref{e.2.32}) implies,
\[
V_{t,s}=\int_{s}^{t}\mu_{s,\tau\ast}^{X}Y_{t}\circ\mu_{\tau,s}^{X}d\tau
=\int_{s}^{t}\operatorname{Ad}_{\mu_{s,\tau}^{X}}Y_{\tau}d\tau.
\]
This equality along with Eq. (\ref{e.2.28}) proves Eq. (\ref{e.2.27}). The
proofs of Eqs. (\ref{e.2.25}) and (\ref{e.2.26}) now easily follows since
\begin{align*}
\left(  \mu_{t,s}^{X}\right)  _{\ast}V_{t,s}  &  =\int_{s}^{t}\left(
\mu_{t,s}^{X}\right)  _{\ast}\mu_{s,\tau\ast}^{X}Y_{\tau}\circ\mu_{\tau,s}%
^{X}d\tau\\
&  =\int_{s}^{t}\mu_{t,\tau\ast}^{X}Y_{\tau}\circ\mu_{\tau,s}^{X}d\tau\\
&  =\int_{s}^{t}\mu_{t,\tau\ast}^{X}Y_{\tau}\circ\mu_{\tau,t}^{X}\circ
\mu_{t,\tau}^{X}\circ\mu_{\tau,s}^{X}d\tau=\left(  \int_{s}^{t}%
\operatorname{Ad}_{\mu_{t,\tau}^{X}}Y_{\tau}d\tau\right)  \circ\mu_{t,s}^{X}.
\end{align*}

\end{proof}

The following theorem is an important special case of Theorem \ref{thm.2.21}.

\begin{theorem}
[Differential of $e^{tX}$ in $X$]\label{thm.2.22}Suppose that $M$ is a smooth
manifold and for each $\sigma\in\mathbb{R},$ $\left\{  X^{\varepsilon
}\right\}  \subset\Gamma\left(  TM\right)  $ is a smooth varying one parameter
family of complete vector fields on $M$ and let $X:=X^{0}$ and $Y:=\frac
{d}{d\varepsilon}|_{0}X^{\varepsilon}.$ Then%
\begin{align}
\partial_{Y}e^{tX}  &  =\frac{d}{d\varepsilon}|_{0}e^{tX^{\varepsilon}%
}=e_{\ast}^{tX}\int_{0}^{t}e_{\ast}^{-\tau X}Y\circ e^{\tau X}d\tau
\label{e.2.33}\\
&  =\int_{0}^{t}e_{\ast}^{\left(  t-\tau\right)  X}Y\circ e^{\tau X}%
d\tau\label{e.2.34}\\
&  =\left[  \int_{0}^{t}\operatorname{Ad}_{e^{\tau X}}Yd\tau\right]  \circ
e^{tX}. \label{e.2.35}%
\end{align}

\end{theorem}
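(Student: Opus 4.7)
The plan is to deduce Theorem \ref{thm.2.22} as the specialization of Theorem \ref{thm.2.21} to time-independent vector fields. Concretely, I would view the given data $\{X^{\varepsilon}\}$ as a family of time-independent vector fields, i.e.\ set $X_{t}:=X^{0}=X$ and $Y_{t}:=Y=\tfrac{d}{d\varepsilon}|_{0}X^{\varepsilon}$, with the associated $\varepsilon$-family $X_{t}^{\varepsilon}:=X^{\varepsilon}$. Then by the definition of the flow of a time-independent vector field (Definition \ref{def.1.2}), $\mu_{t,s}^{X^{\varepsilon}}=e^{(t-s)X^{\varepsilon}}$, and in particular $\mu_{t,0}^{X}=e^{tX}$ and $\mu_{t,\tau}^{X}=e^{(t-\tau)X}$.

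First I would verify that the hypotheses of Theorem \ref{thm.2.21}, as formalized in Definition \ref{def.2.20}, are met. Since $\{X^{\varepsilon}\}$ is a smoothly varying family of complete vector fields (for $\varepsilon$ near $0$), the map $(t,\varepsilon,m)\mapsto X_{t}^{\varepsilon}(m)$ is smooth in all variables, $X_{t}^{0}=X_{t}$, and $\tfrac{d}{d\varepsilon}|_{0}X_{t}^{\varepsilon}=Y=Y_{t}$, so Definition \ref{def.2.20} applies and $\partial_{Y}\mu_{t,s}^{X}=\tfrac{d}{d\varepsilon}|_{0}\mu_{t,s}^{X^{\varepsilon}}$ is well defined. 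Taking $s=0$ gives in particular that $\partial_{Y}e^{tX}=\tfrac{d}{d\varepsilon}|_{0}e^{tX^{\varepsilon}}$ is well defined.

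Next I would substitute $s=0$, $\mu_{t,0}^{X}=e^{tX}$, $\mu_{\tau,0}^{X}=e^{\tau X}$, $\mu_{0,\tau}^{X}=e^{-\tau X}$, and $\mu_{t,\tau}^{X}=e^{(t-\tau)X}$ into the three identities (\ref{e.2.25})--(\ref{e.2.27}) of Theorem \ref{thm.2.21}. Equation (\ref{e.2.25}) directly yields
\[
\partial_{Y}e^{tX}=\int_{0}^{t}e_{\ast}^{(t-\tau)X}\,[Y\circ e^{\tau X}]\,d\tau,
\]
which is (\ref{e.2.34}). Equation (\ref{e.2.27}) gives
\[
\partial_{Y}e^{tX}=e_{\ast}^{tX}\int_{0}^{t}\operatorname{Ad}_{e^{-\tau X}}Y\,d\tau=e_{\ast}^{tX}\int_{0}^{t}e_{\ast}^{-\tau X}Y\circ e^{\tau X}\,d\tau,
\]
which is (\ref{e.2.33}). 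Finally, equation (\ref{e.2.26}) gives
\[
\partial_{Y}e^{tX}=\Bigl(\int_{0}^{t}\operatorname{Ad}_{e^{(t-\tau)X}}Y\,d\tau\Bigr)\circ e^{tX},
\]
and a change of variable $\tau\mapsto t-\tau$ in the integral converts this into (\ref{e.2.35}).

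The main obstacle, such as it is, is essentially bookkeeping: carefully tracking the substitutions $\mu_{t,\tau}^{X}=e^{(t-\tau)X}$ versus $\mu_{s,\tau}^{X}=e^{-\tau X}$ (when $s=0$), and performing the change of variable in the last formula to match the statement. No new analytic input is needed beyond Theorem \ref{thm.2.21}; everything else is a direct specialization of the general flow-differentiation identities to the autonomous case.
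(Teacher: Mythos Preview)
Your proposal is correct and is exactly the approach the paper takes: the paper explicitly states that Theorem \ref{thm.2.22} ``is an important special case of Theorem \ref{thm.2.21}'' and provides no further proof, so your specialization of (\ref{e.2.25})--(\ref{e.2.27}) with $\mu_{t,s}^{X}=e^{(t-s)X}$ and the change of variable for (\ref{e.2.35}) is precisely what is intended.
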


\begin{notation}
\label{not.2.23}To each smooth path, $t\rightarrow Z_{t}\in\Gamma\left(
TM\right)  ,$ of complete vector fields, let
\begin{equation}
W_{t}^{Z}:=\int_{0}^{1}e_{\ast}^{sZ_{t}}\dot{Z}_{t}\circ e^{-sZ_{t}}%
ds=\int_{0}^{1}\operatorname{Ad}_{e^{sZ_{t}}}\dot{Z}_{t}~ds \label{e.2.39}%
\end{equation}

\end{notation}

\begin{corollary}
\label{cor.2.24}If $t\rightarrow Z_{t}\in\Gamma\left(  TM\right)  $ is a
smooth path of complete vector fields, then
\begin{equation}
\frac{d}{dt}e^{Z_{t}}=W_{t}^{Z}\circ e^{Z_{t}}. \label{e.2.40}%
\end{equation}

\end{corollary}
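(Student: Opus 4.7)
The plan is to reduce the statement to a direct application of Theorem~\ref{thm.2.22}, which already computes the derivative of $e^{tX}$ with respect to a smooth family of vector fields $X^{\varepsilon}$. The only subtlety is to identify the one-parameter family and the parameter value correctly, so that the formula produced by Theorem~\ref{thm.2.22} matches the definition of $W_t^Z$ in Notation~\ref{not.2.23}.

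First I would fix $t \in \mathbb{R}$ and apply the chain rule by writing
\[
\frac{d}{dt} e^{Z_t} = \frac{d}{d\varepsilon}\Big|_0 e^{Z_{t+\varepsilon}}.
\]
Now set $X^{\varepsilon} := Z_{t+\varepsilon}$, so $X^0 = Z_t =: X$ and $\tfrac{d}{d\varepsilon}|_0 X^{\varepsilon} = \dot{Z}_t =: Y$. Because $Z_{\cdot}$ is a smooth path of complete vector fields, the family $\{X^{\varepsilon}\}$ satisfies the hypotheses of Theorem~\ref{thm.2.22}. The small reparameterization point is that we want to evaluate $e^{1\cdot X^{\varepsilon}}$ rather than $e^{t\cdot X^{\varepsilon}}$, so I would invoke Theorem~\ref{thm.2.22} with the formal parameter there set equal to $1$.

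Applying Eq.~(\ref{e.2.35}) (with $t=1$, $X = Z_t$, $Y = \dot{Z}_t$) then gives
\[
\frac{d}{d\varepsilon}\Big|_0 e^{X^{\varepsilon}}
= \left[\int_0^1 \operatorname{Ad}_{e^{\tau Z_t}} \dot{Z}_t \, d\tau\right] \circ e^{Z_t}.
\]
By Notation~\ref{not.2.23}, the bracketed integral is exactly $W_t^Z$, so combining with the chain rule identity above yields $\frac{d}{dt} e^{Z_t} = W_t^Z \circ e^{Z_t}$, which is Eq.~(\ref{e.2.40}).

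The only real verification required is that Theorem~\ref{thm.2.22} indeed applies with the reparameterization $\varepsilon \mapsto Z_{t+\varepsilon}$: smoothness of $(\varepsilon, m) \mapsto X^{\varepsilon}(m)$ is immediate from the smoothness of $t \mapsto Z_t$, and completeness of $X^{\varepsilon}$ for $\varepsilon$ near $0$ is part of the standing hypothesis. Thus there is no genuine obstacle; the result is essentially a rebranding of Theorem~\ref{thm.2.22} in a form convenient for the applications to Proposition~\ref{pro.1.9} and the logarithm problem.
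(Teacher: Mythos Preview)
Your proof is correct and essentially identical to the paper's: both apply Theorem~\ref{thm.2.22} at parameter value $1$ to the family $X^{\varepsilon}=Z_{t+\varepsilon}$ (the paper writes $X_s=Z_{t+s}$) and read off Eq.~(\ref{e.2.35}) to obtain $W_t^Z\circ e^{Z_t}$. The only difference is that you spell out the chain-rule step and the verification of hypotheses more explicitly.
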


\begin{proof}
Theorem \ref{thm.2.22} with $t=1$ and $X_{s}=Z_{t+s},$ gives%
\begin{equation}
\frac{d}{dt}e^{Z_{t}}=\left[  \int_{0}^{1}\operatorname{Ad}_{e^{\tau X_{0}}%
}X_{0}^{\prime}d\tau\right]  \circ e^{X_{0}}=\left[  \int_{0}^{1}%
\operatorname{Ad}_{e^{\tau Z_{t}}}\dot{Z}_{t}d\tau\right]  \circ e^{Z_{t}%
}.\nonumber
\end{equation}

\end{proof}

\subsection{Jacobian formulas and estimates for flows\label{sec.2.5}}

\begin{notation}
\label{not.2.25}Let $\mathbb{R}\ni t\rightarrow W_{t}\in\Gamma\left(
TM\right)  $ be a smoothly varying time dependent vector field and suppose
that $\mathbb{R}\times M\ni\left(  t,m\right)  \rightarrow\nu_{t}\left(
m\right)  \in M$ is in $C^{\infty}\left(  \mathbb{R}\times M,M\right)  $
satisfies the ordinary differential equation,
\begin{equation}
\dot{\nu}_{t}=W_{t}\circ\nu_{t}. \label{e.2.43}%
\end{equation}

\end{notation}

Notice that if $W_{\left(  \cdot\right)  }$ is complete, then $\nu_{t}%
=\mu_{t,s}^{W}\circ\nu_{s}$ for any $s\in\mathbb{R}.$ The general goal of this
section is to find estimates on $\nu_{t},$ $\nu_{t\ast},$ and $\nabla
\nu_{t\ast}$ (see Definition \ref{def.5.26} below) expressed in terms of the
geometry of $M$ and $W_{t}.$ The next key proposition records the ordinary
differential equation satisfied by $\nu_{t\ast}.$

\begin{proposition}
\label{pro.2.26}If $W_{t}\in\Gamma\left(  TM\right)  $ and $\nu_{t}\in
C^{\infty}\left(  M,M\right)  $ are as in Notation \ref{not.2.25}, then
\begin{equation}
\frac{\nabla}{dt}\nu_{t\ast}v=\nabla_{\nu_{t\ast}v}W_{t}\text{ }\forall~v\in
TM. \label{e.2.44}%
\end{equation}

\end{proposition}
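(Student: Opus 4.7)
The plan is to recognize $\nu_{t\ast}v$ as one of the two partial velocities of a two–parameter map in $M$, and then invoke the symmetry of mixed covariant derivatives for the Levi-Civita connection (equivalently, the vanishing of torsion).

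Concretely, fix $m\in M$ and $v\in T_mM$, and choose a smooth curve $\sigma\colon(-\varepsilon,\varepsilon)\to M$ with $\sigma(0)=m$ and $\dot\sigma(0)=v$. Define the two-parameter family
\[
F(t,s):=\nu_t(\sigma(s))\in M.
\]
Then
\[
\partial_s F(t,s)=\nu_{t\ast}\dot\sigma(s),\qquad
\partial_t F(t,s)=\dot\nu_t(\sigma(s))=W_t(\nu_t(\sigma(s)))=W_t(F(t,s)),
\]
and in particular $\partial_s F(t,0)=\nu_{t\ast}v$. The identity $\partial_t F=W_t\circ F$ makes the $t$-velocity intrinsic to $W_t$, which is exactly what we need.

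Applying the torsion-free property $\frac{\nabla}{dt}\partial_s F=\frac{\nabla}{ds}\partial_t F$ of the Levi-Civita connection at $s=0$ gives
\[
\frac{\nabla}{dt}\nu_{t\ast}v
=\frac{\nabla}{dt}\partial_sF(t,0)
=\frac{\nabla}{ds}\Big|_{s=0}\partial_tF(t,s)
=\frac{\nabla}{ds}\Big|_{s=0}W_t\bigl(F(t,s)\bigr).
\]
The last covariant derivative is the covariant derivative of $W_t$ along the $s$-curve $s\mapsto F(t,s)=\nu_t(\sigma(s))$, whose velocity at $s=0$ is $\partial_sF(t,0)=\nu_{t\ast}v$. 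By the definition of $\nabla W_t$ this equals $\nabla_{\nu_{t\ast}v}W_t$, yielding exactly \eqref{e.2.44}.

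The only conceptual point is the use of symmetry of mixed covariant partial derivatives; this is standard for the Levi-Civita connection on $TM$ (vanishing torsion) and can be checked in local coordinates or deduced from the product-rule computation in Remark~\ref{rem.2.6}. Once that is granted, the argument is a single line of rewriting, with no estimates or completeness hypotheses needed on $W$ beyond smoothness of $\nu_t$.
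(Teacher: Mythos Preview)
Your proof is correct and follows essentially the same approach as the paper: both introduce a curve $\sigma$ with $\dot\sigma(0)=v$, use the symmetry $\frac{\nabla}{dt}\frac{d}{ds}=\frac{\nabla}{ds}\frac{d}{dt}$ of the Levi-Civita connection applied to the two-parameter map $(t,s)\mapsto\nu_t(\sigma(s))$, and then identify the result as $\nabla_{\nu_{t\ast}v}W_t$. Your version is slightly more explicit in naming the torsion-free property and introducing the map $F$, but the argument is the same.
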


\begin{proof}
If $\sigma\left(  s\right)  $ is a curve in $M$ so that $\sigma^{\prime
}\left(  0\right)  =\frac{d}{ds}|_{0}\sigma\left(  s\right)  =v,$ then%
\begin{align*}
\frac{\nabla}{dt}\nu_{t\ast}v  &  =\frac{\nabla}{dt}\frac{d}{ds}|_{0}\nu
_{t}\left(  \sigma\left(  s\right)  \right)  =\frac{\nabla}{ds}|_{0}\frac
{d}{dt}\nu_{t}\left(  \sigma\left(  s\right)  \right) \\
&  =\frac{\nabla}{ds}|_{0}W_{t}\left(  \nu_{t}\left(  \sigma\left(  s\right)
\right)  \right)  =\nabla_{\nu_{t\ast}v}W_{t}.
\end{align*}

\end{proof}

\begin{corollary}
\label{cor.2.27}If $W_{t}\in\Gamma\left(  TM\right)  $ and $\nu_{t}\in
C^{\infty}\left(  M,M\right)  $ are as in Notation \ref{not.2.25}, then
\begin{equation}
\left\vert \nu_{t\ast}\right\vert _{m}\leq\left\vert \nu_{s\ast}\right\vert
_{m}\cdot e^{\int_{J\left(  s,t\right)  }\left\vert \nabla W_{\tau}\right\vert
_{\nu_{\tau}\left(  m\right)  }d\tau}\leq\left\vert \nu_{s\ast}\right\vert
_{m}\cdot e^{\left\vert \nabla W_{\cdot}\right\vert _{J}^{\ast}}
\label{e.2.45}%
\end{equation}
and in particular,%
\begin{equation}
\operatorname{Lip}\left(  \nu_{t}\right)  =\left\vert \nu_{t\ast}\right\vert
_{M}\leq\left\vert \nu_{s\ast}\right\vert _{M}\cdot e^{\left\vert \nabla
W_{\cdot}\right\vert _{J\left(  s,t\right)  }^{\ast}}. \label{e.2.46}%
\end{equation}
We also have the following time derivative estimates,%
\begin{equation}
\left\vert \frac{\nabla}{dt}\nu_{t\ast}\right\vert _{m}\leq\left\vert
\nu_{s\ast}\right\vert _{m}\left\vert \nabla W_{t}\right\vert _{\nu_{t}\left(
m\right)  }\cdot e^{\int_{J\left(  s,t\right)  }\left\vert \nabla W_{\tau
}\right\vert _{\nu_{\tau}\left(  m\right)  }d\tau} \label{e.2.47}%
\end{equation}
and
\begin{equation}
\left\vert \frac{\nabla}{dt}\nu_{t\ast}\right\vert _{M}\leq\left\vert
\nu_{s\ast}\right\vert _{M}\left\vert \nabla W_{t}\right\vert _{M}\cdot
e^{\left\vert \nabla W_{\cdot}\right\vert _{J}^{\ast}}. \label{e.2.48}%
\end{equation}

\end{corollary}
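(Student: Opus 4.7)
The plan is to derive the operator-norm estimate (\ref{e.2.45}) from the covariant ODE in Proposition \ref{pro.2.26} by a direct Gronwall argument applied to the scalar function $\phi_v(t):=\left\vert \nu_{t\ast}v\right\vert$ for a fixed unit $v\in T_m M$. First I would set $\xi(t):=\nu_{t\ast}v\in T_{\nu_t(m)}M$, a lift over the base curve $t\mapsto\nu_t(m)$, and combine Lemma \ref{lem.2.4} with Eq. (\ref{e.2.44}) to obtain, for $s\leq t$,
\[
\left\vert \phi_v(t)-\phi_v(s)\right\vert \leq\int_s^t\left\vert \tfrac{\nabla}{d\tau}\nu_{\tau\ast}v\right\vert d\tau=\int_s^t\left\vert \nabla_{\nu_{\tau\ast}v}W_\tau\right\vert d\tau\leq\int_s^t\left\vert \nabla W_\tau\right\vert_{\nu_\tau(m)}\phi_v(\tau)\,d\tau.
\]
From this integral inequality and the integral form of Gronwall (Proposition \ref{pro.9.1}) I obtain
\[
\phi_v(t)\leq\phi_v(s)\cdot\exp\!\left(\int_{J(s,t)}\left\vert \nabla W_\tau\right\vert_{\nu_\tau(m)}d\tau\right),
\]
and using $\phi_v(s)\leq\left\vert \nu_{s\ast}\right\vert_m$ (since $|v|=1$) and taking the supremum over unit $v\in T_m M$ yields the first inequality of (\ref{e.2.45}). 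The second inequality of (\ref{e.2.45}) follows by bounding $\left\vert \nabla W_\tau\right\vert_{\nu_\tau(m)}\leq\left\vert \nabla W_\tau\right\vert_M$ and integrating.

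For (\ref{e.2.46}), I note $\operatorname{Lip}(\nu_t)=\left\vert \nu_{t\ast}\right\vert_M$ by Lemma \ref{lem.2.9}, and taking the supremum over $m\in M$ in (\ref{e.2.45}) — which is valid because the exponential factor, once upper bounded by $e^{\left\vert \nabla W_\cdot\right\vert_{J(s,t)}^\ast}$, no longer depends on $m$ — produces the claimed estimate.

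For the time-derivative bound (\ref{e.2.47}), I would apply Eq. (\ref{e.2.44}) pointwise: for any unit $v\in T_m M$,
\[
\left\vert \tfrac{\nabla}{dt}\nu_{t\ast}v\right\vert =\left\vert \nabla_{\nu_{t\ast}v}W_t\right\vert \leq\left\vert \nabla W_t\right\vert_{\nu_t(m)}\cdot\left\vert \nu_{t\ast}v\right\vert \leq\left\vert \nabla W_t\right\vert_{\nu_t(m)}\cdot\left\vert \nu_{t\ast}\right\vert_m.
\]
Substituting the bound on $\left\vert \nu_{t\ast}\right\vert_m$ from (\ref{e.2.45}) and taking the supremum over unit $v$ gives (\ref{e.2.47}), and the global estimate (\ref{e.2.48}) then follows by the same reduction $\left\vert \nabla W_t\right\vert_{\nu_t(m)}\leq\left\vert \nabla W_t\right\vert_M$ and taking the supremum over $m$.

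There is no real obstacle here; the only mild care-point is the passage from the covariant ODE on tangent vectors to a scalar inequality for the norm, which is precisely what Lemma \ref{lem.2.4} was designed to handle. Once that is in place, everything reduces to scalar Gronwall and bookkeeping of suprema.
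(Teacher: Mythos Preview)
Your proposal is correct and follows essentially the same route as the paper: the paper invokes the packaged covariant Gronwall inequality (Corollary~\ref{cor.9.3}) applied to the ODE $\frac{\nabla}{dt}\nu_{t\ast}v=H_t\nu_{t\ast}v$ with $H_tv:=\nabla_vW_t$, whereas you unpack that step by hand via Lemma~\ref{lem.2.4} and then apply scalar Gronwall. One small quibble: Proposition~\ref{pro.9.1} is stated in differential, not integral, form---either cite Corollary~\ref{cor.9.2} instead, or note that your integral inequality implies $\phi_v$ is Lipschitz with $|\dot\phi_v(t)|\le |\nabla W_t|_{\nu_t(m)}\phi_v(t)$ a.e., after which Proposition~\ref{pro.9.1} applies directly.
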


\begin{proof}
If we define $H_{t}v:=\nabla_{v}W_{t}$ for all $v\in TM,$ then Proposition
\ref{pro.2.26} states, for any $v\in TM,$ that%
\begin{equation}
\frac{\nabla}{dt}\nu_{t\ast}v=H_{t}\nu_{t\ast}v. \label{e.2.49}%
\end{equation}
Therefore by the geometric Bellman-Gronwall's inequality in Corollary
\ref{cor.9.3} (with $G\equiv0)$,%
\[
\left\vert \nu_{t\ast}v\right\vert \leq e^{\int_{J\left(  s,t\right)
}\left\Vert H_{\tau}\right\Vert _{op}d\tau}\left\vert \nu_{s\ast}v\right\vert
=e^{\int_{J\left(  s,t\right)  }\left\vert \nabla W_{\tau}\right\vert
_{\nu_{\tau}\left(  m\right)  }d\tau}\left\vert \nu_{s\ast}v\right\vert
\]
which proves Eqs. (\ref{e.2.45}) and (\ref{e.2.46}). By Eq. (\ref{e.2.49}),%
\begin{align*}
\left\vert \frac{\nabla}{dt}\nu_{t\ast}v\right\vert  &  \leq\left\Vert
H_{t}\right\Vert _{op}\cdot\left\vert \nu_{t\ast}v\right\vert =\left\vert
\nabla W_{t}\right\vert _{\nu_{t}\left(  m\right)  }\cdot\left\vert \nu
_{t\ast}v\right\vert \\
&  \leq\left\vert \nabla W_{t}\right\vert _{\nu_{t}\left(  m\right)
}\left\vert \nu_{s\ast}v\right\vert \cdot e^{\int_{J\left(  s,t\right)
}\left\vert \nabla W_{\tau}\right\vert _{\nu_{\tau}\left(  m\right)  }d\tau
}\leq\left\vert \nabla W_{t}\right\vert _{\nu_{t}\left(  m\right)  }\left\vert
\nu_{s\ast}v\right\vert \cdot e^{\left\vert \nabla W_{\cdot}\right\vert
_{J\left(  s,t\right)  }^{\ast}}.
\end{align*}
Taking the supremum of this inequality over $v\in T_{m}M$ with $\left\vert
v\right\vert =1$ gives the estimate in Eq. (\ref{e.2.47}) which then easily
implies Eq. (\ref{e.2.48}).
\end{proof}

The following corollary records the results in Proposition \ref{pro.2.26} and
Corollary \ref{cor.2.27} when $W_{t}=X\in\Gamma\left(  TM\right)  $ is a
complete vector field and $\nu_{t}=e^{tX}.$

\begin{corollary}
\label{cor.2.28}If $X\in\Gamma\left(  TM\right)  $ is a complete vector field
and $t\in\mathbb{R}$, then%
\begin{equation}
\frac{\nabla}{dt}e_{\ast}^{tX}v_{m}=\nabla_{e_{\ast}^{tX}v_{m}}X\text{ with
}e_{\ast}^{0X}v_{m}=v_{m}, \label{e.2.52}%
\end{equation}%
\begin{align*}
\left\vert e_{\ast}^{tX}\right\vert _{m}  &  \leq e^{\int_{J\left(
0,t\right)  }\left\vert \nabla X\right\vert _{e^{\tau X}\left(  m\right)
}d\tau}\leq e^{\left\vert t\right\vert \left\vert \nabla X\right\vert _{M}},\\
\left\vert \frac{\nabla}{dt}e_{\ast}^{tX}\right\vert _{m}  &  \leq\left\vert
\nabla X\right\vert _{e^{tX}\left(  m\right)  }\cdot e^{\int_{J\left(
0,t\right)  }\left\vert \nabla X\right\vert _{e^{\tau X}\left(  m\right)
}d\tau},\\
\operatorname{Lip}\left(  e^{tX}\right)   &  =\left\vert e_{\ast}%
^{tX}\right\vert _{M}\leq e^{\left\vert t\right\vert \left\vert \nabla
X\right\vert _{M}},\text{ and}\\
\left\vert \frac{\nabla}{dt}e_{\ast}^{tX}\right\vert _{M}  &  \leq\left\vert
\nabla X\right\vert _{M}\cdot e^{\left\vert t\right\vert \left\vert \nabla
X\right\vert _{M}}.
\end{align*}

\end{corollary}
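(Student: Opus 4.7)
The plan is to observe that Corollary \ref{cor.2.28} is the direct specialization of Proposition \ref{pro.2.26} and Corollary \ref{cor.2.27} to the time-independent case $W_t \equiv X$, taking $\nu_t = e^{tX}$ and $s = 0$. Since $X$ is complete, Definition \ref{def.1.2} gives $\frac{d}{dt} e^{tX} = X \circ e^{tX}$ with $e^{0X} = \mathrm{Id}_M$, so the hypotheses of Notation \ref{not.2.25} are satisfied with this choice. Note in particular that $\nu_{0\ast} = \mathrm{Id}_{TM}$, hence $|\nu_{0\ast}|_m = 1$ for every $m \in M$.

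First, I would apply Proposition \ref{pro.2.26} directly: substituting $W_t = X$ and $\nu_t = e^{tX}$ into Eq. (\ref{e.2.44}) yields
\[
\frac{\nabla}{dt} e^{tX}_\ast v_m = \nabla_{e^{tX}_\ast v_m} X,
\]
and the initial condition $e^{0X}_\ast v_m = v_m$ follows from $e^{0X} = \mathrm{Id}_M$. This gives the first line of the corollary.

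Next, the pointwise and operator-norm bounds on $e^{tX}_\ast$ and $\frac{\nabla}{dt} e^{tX}_\ast$ follow from Eqs. (\ref{e.2.45})--(\ref{e.2.48}) of Corollary \ref{cor.2.27} taken at $s = 0$, using $|\nu_{0\ast}|_m = 1 = |\nu_{0\ast}|_M$. Since $X$ is time-independent, $|\nabla W_\tau|_{\nu_\tau(m)} = |\nabla X|_{e^{\tau X}(m)}$, and the further simplification
\[
\int_{J(0,t)} |\nabla X|_{e^{\tau X}(m)} \, d\tau \le |t| \cdot |\nabla X|_M
\]
turns the pointwise integrals into the cleaner bounds $e^{|t| |\nabla X|_M}$ and $|\nabla X|_M \cdot e^{|t| |\nabla X|_M}$. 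Finally, $\operatorname{Lip}(e^{tX}) = |e^{tX}_\ast|_M$ is exactly the content of Lemma \ref{lem.2.9} applied to $f = e^{tX}$ (which is smooth, hence $C^1$, since $X$ is complete).

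There is essentially no obstacle here: every ingredient has already been established in the preceding subsections, and the corollary is a bookkeeping specialization. The only mild care needed is to verify the time-independence substitutions (so that $|\nabla W_\cdot|_{J(0,t)}^\ast$ collapses to $|t| \cdot |\nabla X|_M$) and to handle both signs of $t$ via the interval $J(0,t)$ so that the one-sided statements cover $t < 0$ as well.
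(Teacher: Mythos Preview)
Your proposal is correct and matches the paper's approach exactly: the paper states (without further proof) that the corollary ``records the results in Proposition \ref{pro.2.26} and Corollary \ref{cor.2.27} when $W_t = X$ is a complete time-independent vector field and $\nu_t = e^{tX}$,'' and you have supplied precisely the bookkeeping specialization (taking $s=0$, using $|\nu_{0\ast}|_m = 1$, and collapsing the time integrals via $|\nabla W_\tau| = |\nabla X|$) together with the invocation of Lemma \ref{lem.2.9} for the Lipschitz identity.
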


\begin{corollary}
\label{cor.2.29}If $\mathbb{R}\ni t\rightarrow W_{t}\in\Gamma\left(
TM\right)  $ is a complete time dependent vector field and $Z\in\Gamma\left(
TM\right)  ,$ then
\begin{align}
\left\vert Ad_{\mu_{t,s}^{W}}Z\right\vert _{m}  &  \leq e^{\int_{J\left(
s,t\right)  }\left\vert \nabla W_{\tau}\right\vert _{\mu_{\tau,t}^{W}\left(
m\right)  }d\tau}\cdot\left\vert Z\right\vert _{\mu_{s,t}^{W}\left(  m\right)
}\nonumber\\
&  \leq e^{\int_{J\left(  s,t\right)  }\left\vert \nabla W_{\tau}\right\vert
_{M}d\tau}\cdot\left\vert Z\right\vert _{M}. \label{e.2.53}%
\end{align}
As a special case, if $X\in\Gamma\left(  TM\right)  $ is complete, then
\begin{equation}
\left\vert Ad_{e^{X}}Z\right\vert _{m}\leq e^{\int_{0}^{1}\left\vert \nabla
X\right\vert _{e^{-\tau X}\left(  m\right)  }d\tau}\cdot\left\vert
Z\right\vert _{e^{-X}\left(  m\right)  }\leq e^{\left\vert \nabla X\right\vert
_{M}}\cdot\left\vert Z\right\vert _{M}. \label{e.2.54}%
\end{equation}

\end{corollary}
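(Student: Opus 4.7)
The plan is to combine the definition of $\mathrm{Ad}_f$ with the operator-norm bound for $\nu_{t*}$ already established in Corollary \ref{cor.2.27}, exploiting that $\mathrm{Ad}_{\mu_{t,s}^W}$ is essentially the differential of a flow evaluated at a shifted base point.

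First I would set $\nu_\tau := \mu_{\tau,s}^W$, so that $\nu_s = Id_M$ and $\dot\nu_\tau = W_\tau \circ \nu_\tau$, putting us in the setting of Notation \ref{not.2.25}. By the definition of $\mathrm{Ad}$ in Eq. (\ref{e.2.17}),
\[
(\mathrm{Ad}_{\mu_{t,s}^W} Z)(m) = (\mu_{t,s}^W)_* Z\!\left((\mu_{t,s}^W)^{-1}(m)\right) = (\mu_{t,s}^W)_* Z\!\left(\mu_{s,t}^W(m)\right),
\]
using $(\mu_{t,s}^W)^{-1} = \mu_{s,t}^W$ from Theorem \ref{thm.2.14}. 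Let $p := \mu_{s,t}^W(m)$; then the semigroup identity Eq. (\ref{e.2.10}) gives $\nu_t(p) = \mu_{t,s}^W \circ \mu_{s,t}^W(m) = m$, so $\nu_{t*}$ maps $T_p M \to T_m M$ and
\[
|(\mathrm{Ad}_{\mu_{t,s}^W} Z)(m)|_m = |\nu_{t*} Z(p)|_m \leq |\nu_{t*}|_p \cdot |Z(p)|.
\]

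Next I would apply Corollary \ref{cor.2.27} with base point $p$ in place of $m$. Since $\nu_s = Id_M$, we have $|\nu_{s*}|_p = 1$, and the corollary yields
\[
|\nu_{t*}|_p \leq e^{\int_{J(s,t)} |\nabla W_\tau|_{\nu_\tau(p)}\, d\tau}.
\]
Using the semigroup law once more, $\nu_\tau(p) = \mu_{\tau,s}^W \circ \mu_{s,t}^W(m) = \mu_{\tau,t}^W(m)$, which is exactly the base point appearing in Eq. (\ref{e.2.53}). Combining and recalling $|Z(p)| = |Z|_{\mu_{s,t}^W(m)}$ gives the first inequality of Eq. (\ref{e.2.53}), and the second follows by the pointwise estimates $|\nabla W_\tau|_{\mu_{\tau,t}^W(m)} \leq |\nabla W_\tau|_M$ and $|Z|_{\mu_{s,t}^W(m)} \leq |Z|_M$.

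For the special case Eq. (\ref{e.2.54}), I would specialize to $W_\tau \equiv X$, $s = 0$, $t = 1$, so that $\mu_{\tau,t}^W(m) = e^{(\tau-1)X}(m)$. Substituting into the first bound and performing the change of variables $\tau \mapsto 1-\tau$ in the exponent converts the integrand to $|\nabla X|_{e^{-\tau X}(m)}$, giving Eq. (\ref{e.2.54}); the coarse bound again follows by replacing pointwise norms by their suprema. No step presents a real obstacle: the only thing to be careful about is keeping track of which point of $M$ is the base point of the tangent space, which is handled cleanly by the substitution $p = \mu_{s,t}^W(m)$.
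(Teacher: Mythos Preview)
Your proof is correct and follows essentially the same route as the paper: you set $\nu_\tau=\mu_{\tau,s}^W$, write $\mathrm{Ad}_{\mu_{t,s}^W}Z(m)=\nu_{t*}Z(\nu_t^{-1}(m))$, bound $|\nu_{t*}|$ at the shifted base point via Corollary~\ref{cor.2.27}, and identify $\nu_\tau(\nu_t^{-1}(m))=\mu_{\tau,t}^W(m)$. The special case is handled identically, by taking $W_\tau\equiv X$, $s=0$, $t=1$ and the change of variables $\tau\mapsto 1-\tau$ in the exponent.
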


\begin{proof}
Let $\nu_{t}:=\mu_{t,s}^{W},$ then $\nu_{t}^{-1}=\mu_{s,t}^{W},$ $\nu
_{s}=Id_{M},$ and
\begin{align*}
\left\vert \left(  Ad_{\mu_{t,s}^{W}}Z\right)  \left(  m\right)  \right\vert
=  &  \left\vert \left(  \nu_{t}\right)  _{\ast}Z\left(  \nu_{t}^{-1}\left(
m\right)  \right)  \right\vert \leq\left\vert \left(  \nu_{t}\right)  _{\ast
}\right\vert _{\nu_{t}^{-1}\left(  m\right)  }\cdot\left\vert Z\right\vert
_{\nu_{t}^{-1}\left(  m\right)  }\\
\leq &  \left\vert \nu_{s\ast}\right\vert _{\nu_{t}^{-1}\left(  m\right)
}\cdot e^{\int_{J\left(  s,t\right)  }\left\vert \nabla W_{\tau}\right\vert
_{\nu_{\tau}\left(  \nu_{t}^{-1}\left(  m\right)  \right)  }d\tau}%
\cdot\left\vert Z\right\vert _{\nu_{t}^{-1}\left(  m\right)  }\\
&  =e^{\int_{J\left(  s,t\right)  }\left\vert \nabla W_{\tau}\right\vert
_{\mu_{\tau,t}^{W}\left(  m\right)  }d\tau}\cdot\left\vert Z\right\vert
_{\mu_{s,t}^{W}\left(  m\right)  }.
\end{align*}
For the second assertion we take Eq. (\ref{e.2.53}) with $s=0,$ $t=1,$ and
$W_{t}=X$ for all $t$ to find,%
\[
\left\vert Ad_{e^{X}}Z\right\vert _{m}\leq e^{\int_{0}^{1}\left\vert \nabla
X\right\vert _{e^{\left(  \tau-1\right)  X}\left(  m\right)  }d\tau}%
\cdot\left\vert Z\right\vert _{e^{-X}\left(  m\right)  }=e^{\int_{0}%
^{1}\left\vert \nabla X\right\vert _{e^{-\tau X}\left(  m\right)  }d\tau}%
\cdot\left\vert Z\right\vert _{e^{-X}\left(  m\right)  }%
\]

\end{proof}

\subsection{Distance estimates for flows\label{sec.2.6}}

This subsection is devoted to estimating the distance between two flows,
$\mu^{X}$ and $\mu^{Y}.$ A key observation in the proofs to follow is, given
$t\in\left[  0,T\right]  ,$ that
\begin{equation}
\left[  0,t\right]  \ni s\rightarrow\Theta_{s}\left(  m\right)  :=\mu
_{t,s}^{X}\circ\mu_{s,0}^{Y}\left(  m\right)  \label{e.2.55}%
\end{equation}
is a natural path in $M$ which interpolates between $\Theta_{0}\left(
m\right)  =\mu_{t,0}^{X}\left(  m\right)  $ at $s=0$ and $\Theta_{t}\left(
m\right)  =\mu_{t,0}^{Y}\left(  m\right)  $ at $s=t.$

\begin{theorem}
\label{thm.2.30}Let $J=\left[  0,T\right]  \ni t\rightarrow X_{t,}Y_{t}%
\in\Gamma\left(  TM\right)  $ be two smooth complete time dependent vector
fields on $M$ and $\mu^{X}$ and $\mu^{Y}$ be their corresponding flows. Then
for $t>0$ (for notational simplicity)
\begin{equation}
d\left(  \mu_{t,0}^{X}\left(  m\right)  ,\mu_{t,0}^{Y}\left(  m\right)
\right)  \leq\int_{0}^{t}e^{\int_{s}^{t}\left\vert \nabla X_{\sigma
}\right\vert _{\mu_{\sigma,s}^{X}\left(  m\right)  }d\sigma}\cdot\left\vert
Y_{s}-X_{s}\right\vert _{\mu_{s,0}^{Y}\left(  m\right)  }~ds \label{e.2.56}%
\end{equation}
and in particular,%
\begin{equation}
d_{M}\left(  \mu_{t,0}^{X},\mu_{t,0}^{Y}\right)  \leq e^{\left\vert \nabla
X\right\vert _{t}^{\ast}}\cdot\left\vert Y-X\right\vert _{t}^{\ast}.
\label{e.2.57}%
\end{equation}

\end{theorem}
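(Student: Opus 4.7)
The plan is to interpolate between the two flows via the path
\[
\Theta_s(m) := \mu_{t,s}^{X} \circ \mu_{s,0}^{Y}(m), \qquad s \in [0,t],
\]
singled out by the author in Eq.~(\ref{e.2.55}). Since $\Theta_0(m)=\mu_{t,0}^{X}(m)$ and $\Theta_t(m)=\mu_{t,0}^{Y}(m)$, the Riemannian distance between these endpoints is dominated by the length of the curve $s\mapsto\Theta_s(m)$, so the proof reduces to computing $\tfrac{d}{ds}\Theta_s(m)$ and estimating its norm.

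To differentiate $\Theta_s$ in $s$ by the chain rule, I need $\tfrac{d}{ds}\mu_{t,s}^{X}$ with $t$ held fixed. Writing $\mu_{t,s}^{X}=(\mu_{s,t}^{X})^{-1}$ and applying Eq.~(\ref{e.2.11}) of Theorem~\ref{thm.2.14} (with the roles of $s$ and $t$ exchanged so that $s$ is the moving variable) gives $\tfrac{d}{ds}\mu_{t,s}^{X}=-(\mu_{t,s}^{X})_{*}X_{s}$. Combined with $\tfrac{d}{ds}\mu_{s,0}^{Y}(m)=Y_{s}(\mu_{s,0}^{Y}(m))$ from the defining ODE of $\mu^{Y}$, the two chain-rule contributions fit together with a fortunate cancellation,
\[
\frac{d}{ds}\Theta_s(m) \;=\; (\mu_{t,s}^{X})_{*}\bigl[Y_s-X_s\bigr]\bigl(\mu_{s,0}^{Y}(m)\bigr).
\]

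Integrating yields
\[
d\bigl(\mu_{t,0}^{X}(m),\mu_{t,0}^{Y}(m)\bigr) \;\leq\; \int_{0}^{t} \bigl|(\mu_{t,s}^{X})_{*}\bigr|_{\mu_{s,0}^{Y}(m)}\cdot \bigl|Y_s-X_s\bigr|_{\mu_{s,0}^{Y}(m)}\,ds.
\]
To control the operator-norm factor, I would invoke Corollary~\ref{cor.2.27} applied to $\nu_{\tau}:=\mu_{\tau,s}^{X}$ (so that $W_{\tau}=X_{\tau}$ and $\nu_{s}=\mathrm{Id}_{M}$), obtaining $|(\mu_{t,s}^{X})_{*}|_{q}\leq e^{\int_{s}^{t}|\nabla X_{\sigma}|_{\mu_{\sigma,s}^{X}(q)}d\sigma}$; setting $q=\mu_{s,0}^{Y}(m)$ reproduces Eq.~(\ref{e.2.56}) exactly. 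For Eq.~(\ref{e.2.57}), take the supremum over $m$, bound $|\nabla X_{\sigma}|_{\mu_{\sigma,s}^{X}(\cdot)}\leq|\nabla X_{\sigma}|_{M}$ and $|Y_s-X_s|_{\mu_{s,0}^{Y}(m)}\leq|Y_s-X_s|_{M}$ in the integrand; the inner exponent is then $\int_{s}^{t}|\nabla X_{\sigma}|_{M}\,d\sigma\leq|\nabla X|_{t}^{\ast}$, which is independent of $s$ and pulls out of the remaining $s$-integral $|Y-X|_{t}^{\ast}$.

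The only delicate step is getting the sign in $\tfrac{d}{ds}\mu_{t,s}^{X}$ correct (one must use Eq.~(\ref{e.2.11}) rather than the defining ODE of $\mu^{X}$) and noticing the cancellation of the $X_{s}$ terms, so that the pointwise error integrand depends on $X$ and $Y$ only through the difference $Y-X$. Once that observation is in place, both estimates follow mechanically from Corollary~\ref{cor.2.27} together with the standard triangle inequality for the length of a smooth curve in $M$.
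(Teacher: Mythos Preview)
Your proposal is correct and follows essentially the same approach as the paper's own proof: the same interpolating path $\Theta_s=\mu_{t,s}^{X}\circ\mu_{s,0}^{Y}$, the same derivative computation $\Theta_s'=(\mu_{t,s}^{X})_{*}[Y_s-X_s]\circ\mu_{s,0}^{Y}$ via Theorem~\ref{thm.2.14}, and the same Jacobian bound from Corollary~\ref{cor.2.27}. One minor remark: with $q=\mu_{s,0}^{Y}(m)$ the exponent you obtain is $\int_s^t|\nabla X_\sigma|_{\mu_{\sigma,s}^{X}(q)}\,d\sigma$, whereas Eq.~(\ref{e.2.56}) as printed has base point $m$ rather than $q$; this appears to be a typo in the statement (the paper's own Eq.~(\ref{e.2.61}) exhibits the same slip), and your version is the one actually delivered by the argument.
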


\begin{proof}
Fix $t\in\left[  0,T\right]  .$ If $\Theta_{s}\left(  m\right)  $ is as in Eq.
(\ref{e.2.55}), then
\begin{equation}
d\left(  \mu_{t,0}^{X}\left(  m\right)  ,\mu_{t,0}^{Y}\left(  m\right)
\right)  \leq\int_{0}^{t}\left\vert \Theta_{s}^{\prime}\left(  m\right)
\right\vert ds. \label{e.2.58}%
\end{equation}
Making use of Theorem \ref{thm.2.14} we find,%
\begin{align}
\Theta_{s}^{\prime}\left(  m\right)   &  =\left(  \frac{d}{ds}\mu_{t,s}%
^{X}\right)  \circ\mu_{s,0}^{Y}\left(  m\right)  +\left(  \mu_{t,s}%
^{X}\right)  _{\ast}\left(  \frac{d}{ds}\mu_{s,0}^{Y}\left(  m\right)  \right)
\nonumber\\
&  =\left(  \mu_{t,s}^{X}\right)  _{\ast}\left[  -X_{s}+Y_{s}\right]  \circ
\mu_{s,0}^{Y}\left(  m\right)  . \label{e.2.59}%
\end{align}
The Jacobian estimate in Corollary \ref{cor.2.27} with $\nu_{t}=\mu_{t,s}^{X}$
states that,%
\begin{equation}
\left\vert \mu_{t,s\ast}^{X}\right\vert _{m}\leq e^{\int_{s}^{t}\left\vert
\nabla X_{\sigma}\right\vert _{\mu_{\sigma,s}^{X}\left(  m\right)  }d\sigma
}\leq e^{\int_{s}^{t}\left\vert \nabla X_{\sigma}\right\vert _{M}d\sigma}.
\label{e.2.60}%
\end{equation}
By this Jacobian estimate and Eq. (\ref{e.2.59}), we find that%
\begin{align}
\left\vert \Theta_{s}^{\prime}\left(  m\right)  \right\vert  &  \leq\left\vert
\left(  \mu_{t,s}^{X}\right)  _{\ast}\right\vert _{\mu_{s,0}^{Y}\left(
m\right)  }\left\vert Y_{s}-X_{s}\right\vert _{\mu_{s,0}^{Y}\left(  m\right)
}\nonumber\\
&  \leq e^{\int_{s}^{t}\left\vert \nabla X_{\sigma}\right\vert _{\mu
_{\sigma,s}^{X}\left(  m\right)  }d\sigma}\cdot\left\vert Y_{s}-X_{s}%
\right\vert _{\mu_{s,0}^{Y}\left(  m\right)  }\leq e^{\left\vert \nabla
X\right\vert _{t}^{\ast}}\left\vert Y_{s}-X_{s}\right\vert _{M} \label{e.2.61}%
\end{align}
which then substituted back into Eq. (\ref{e.2.58}) completes the proof.
\end{proof}

\begin{corollary}
\label{cor.2.31}Let $J=\left[  0,T\right]  \ni t\rightarrow Y_{t}\in
\Gamma\left(  TM\right)  $ be a smooth complete time dependent vector field on
$M$ and $\mu^{Y}$ be the corresponding flow. Then for $t>0$
\begin{equation}
d\left(  m,\mu_{t,0}^{Y}\left(  m\right)  \right)  \leq\int_{0}^{t}\left\vert
Y_{s}\right\vert _{\mu_{s,0}^{Y}\left(  m\right)  }~ds\leq\left\vert
Y\right\vert _{t}^{\ast} \label{e.2.62}%
\end{equation}
and
\begin{equation}
d\left(  \mu_{t,0}^{Y}\left(  m\right)  ,m\right)  \leq\int_{0}^{t}e^{\int%
_{s}^{t}\left\vert \nabla Y_{\sigma}\right\vert _{\mu_{\sigma,s}^{Y}\left(
m\right)  }d\sigma}\cdot\left\vert Y_{s}\right\vert _{m}~ds\leq e^{\left\vert
\nabla Y\right\vert _{t}^{\ast}}\int_{0}^{t}\left\vert Y_{s}\left(  m\right)
\right\vert ds. \label{e.2.63}%
\end{equation}

\end{corollary}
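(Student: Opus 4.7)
The plan is to derive both displayed estimates as direct specializations of Theorem \ref{thm.2.30}, since Corollary \ref{cor.2.31} is essentially the degenerate case of that theorem in which one of the two vector fields vanishes identically.

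For the estimate in \eqref{e.2.62}, I would apply Theorem \ref{thm.2.30} with the vector field $X_{t}$ replaced by the identically zero vector field on $M$. Then the associated flow is the identity, so $\mu_{t,0}^{X}(m)=m$; we have $|\nabla X_{\sigma}|\equiv 0$, so the exponential factor $e^{\int_{s}^{t}|\nabla X_{\sigma}|_{\mu_{\sigma,s}^{X}(m)}d\sigma}$ collapses to $1$; and $Y_{s}-X_{s}=Y_{s}$. Substituting into Eq.~\eqref{e.2.56} gives
\[
d\bigl(m,\mu_{t,0}^{Y}(m)\bigr)\leq\int_{0}^{t}\bigl|Y_{s}\bigr|_{\mu_{s,0}^{Y}(m)}\,ds,
\]
and the trivial bound $|Y_{s}|_{\mu_{s,0}^{Y}(m)}\leq|Y_{s}|_{M}$ followed by integration yields the final $|Y|_{t}^{\ast}$ bound.

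For the estimate in \eqref{e.2.63}, I would re-apply Theorem \ref{thm.2.30}, but now with the roles of the two flows interchanged: in the theorem, relabel $X\leftrightarrow Y$, and then set the (new) $X$ to be the zero vector field. Using the symmetry $d(p,q)=d(q,p)$ of the Riemannian distance, this specialization reads
\[
d\bigl(\mu_{t,0}^{Y}(m),m\bigr)\leq\int_{0}^{t}e^{\int_{s}^{t}|\nabla Y_{\sigma}|_{\mu_{\sigma,s}^{Y}(m)}d\sigma}\cdot\bigl|Y_{s}\bigr|_{m}\,ds,
\]
because now the point at which $Y_{s}$ is evaluated comes from the trivial flow (so it is just $m$), and the exponential factor picks up $|\nabla Y|$ rather than $|\nabla X|$. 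Bounding $|\nabla Y_{\sigma}|_{\mu_{\sigma,s}^{Y}(m)}\leq|\nabla Y_{\sigma}|_{M}$ inside the inner integral and pulling the resulting constant $e^{|\nabla Y|_{t}^{\ast}}$ outside the outer integral gives the rightmost inequality.

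There is really no obstacle here: both inequalities are obtained by a single substitution into the already-proved Theorem \ref{thm.2.30}, together with the symmetry of $d$ and monotonicity of the integrals. The only minor point to verify is that the zero vector field is trivially complete and smooth, so the hypotheses of Theorem \ref{thm.2.30} are satisfied in each specialization.
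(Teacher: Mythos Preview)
Your proposal is correct and matches the paper's own proof essentially verbatim: the paper derives \eqref{e.2.62} by taking $X_\cdot\equiv 0$ in Theorem~\ref{thm.2.30}, and derives \eqref{e.2.63} by taking $Y_\cdot\equiv 0$ and then relabeling $X$ as $Y$, which is exactly your swap-and-specialize maneuver. The paper also notes in passing that one can alternatively cite Theorem~\ref{thm.2.11} directly, but your route via Theorem~\ref{thm.2.30} is the one actually written out.
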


\begin{proof}
This corollary easily follows from Theorem \ref{thm.2.11}. Alternatively,
taking $X_{\cdot}\equiv0$ in Theorem \ref{thm.2.30} gives Eq. (\ref{e.2.62})
while taking $Y_{\cdot}\equiv0$ shows%
\[
d\left(  \mu_{t,0}^{X}\left(  m\right)  ,m\right)  \leq\int_{0}^{t}e^{\int%
_{s}^{t}\left\vert \nabla X_{\sigma}\right\vert _{\mu_{\sigma,s}^{X}\left(
m\right)  }d\sigma}\cdot\left\vert X_{s}\right\vert _{m}~ds\leq e^{\left\vert
\nabla X\right\vert _{t}^{\ast}}\int_{0}^{t}\left\vert X_{s}\left(  m\right)
\right\vert ds.
\]
Equation (\ref{e.2.63}) now follows by relabeling $X$ to $Y.$
\end{proof}

\section{Nilpotent Lie Algebras (Group) Results\label{sec.3}}

Suppose that $\mathcal{A}$ is a non-commutative associative algebra with unit,
$1,$ over $\mathbb{R}$ such that: 1) $\dim_{\mathbb{R}}\mathcal{A}<\infty,$ 2)
$\mathcal{A}=\mathbb{R}\cdot1\oplus\mathfrak{g}$ where $\mathfrak{g}$ is a
sub-algebra of $\mathcal{A}$ without unit, and 3) there exists $\kappa
\in\mathbb{N}$ such that $\xi_{1}\dots\xi_{\kappa+1}=0$ whenever $\xi
_{1},\dots,\xi_{\kappa+1}\in\mathfrak{g.}$ We make $\mathcal{A}$ into a Lie
algebra using the commutator, $\left[  \xi,\eta\right]  :=\xi\eta-\eta\xi$ for
all $\xi,\eta\in\mathcal{A},$ as the Lie bracket. Note that $\mathfrak{g}$ is
a Lie-subalgebra of $\mathcal{A}$ and as usual we let $\operatorname{ad}_{\xi
}:\mathcal{A}\rightarrow\mathcal{A}$ be the linear operator defined by
$\operatorname{ad}_{\xi}\eta=\left[  \xi,\eta\right]  $. See Example
\ref{sec.3.2} below for the key example of this setup that is used in the bulk
of this paper.

\subsection{Calculus and functional calculus on $\mathcal{A}$\label{sec.3.1}}

\begin{definition}
\label{def.3.1}Let $\mathcal{H}_{0}$ denote the germs of functions which are
analytic in a neighborhood of $0\in\mathbb{C}$ and for $f\left(  z\right)
=\sum_{k=0}^{\infty}a_{k}z^{k}\in\mathcal{H}_{0}$ and $\xi\in\mathfrak{g},$
let
\[
f\left(  \xi\right)  :=\sum_{k=0}^{\infty}a_{k}\xi^{k}=\sum_{k=0}^{\kappa
}a_{k}\xi^{k}\in\mathcal{A}%
\]
and
\[
f\left(  \operatorname{ad}_{\xi}\right)  :=\sum_{k=0}^{\infty}a_{k}%
\operatorname{ad}_{\xi}^{k}=\sum_{k=0}^{\kappa-1}a_{k}\operatorname{ad}_{\xi
}^{k}:\mathcal{A}\rightarrow\mathcal{A}.
\]

\end{definition}

In most of the results below, we describe properties of $f\left(  \xi\right)
$ for $f\in\mathcal{H}_{0}$ with the understanding that similar results hold
equally as well for $f\left(  \operatorname{ad}_{\xi}\right)  .$

\begin{proposition}
\label{pro.3.2}For each fixed $\xi\in\mathfrak{g},$ the map
\begin{equation}
\mathcal{H}_{0}\ni f\rightarrow f\left(  \xi\right)  \in\mathcal{A}
\label{e.3.1}%
\end{equation}
is an algebra homomorphism and for each fixed $f\in\mathcal{A}_{0},$ the map
\[
\mathfrak{g}\ni\xi\rightarrow f\left(  \xi\right)  \in\mathcal{A}%
\]
is smooth, i.e. it is infinitely continuously differentiable. Moreover, if
$J:=\left(  a,b\right)  \ni t\rightarrow\xi\left(  t\right)  \in\mathfrak{g}$
is differentiable with $\left[  \xi\left(  t\right)  ,\xi\left(  s\right)
\right]  =0$ for $s,t\in J,$ then
\begin{equation}
\frac{d}{dt}f\left(  \xi\left(  t\right)  \right)  =f^{\prime}\left(
\xi\left(  t\right)  \right)  \dot{\xi}\left(  t\right)  =\dot{\xi}\left(
t\right)  f^{\prime}\left(  \xi\left(  t\right)  \right)  \text{~}%
\forall~\text{ }t\in J. \label{e.3.2}%
\end{equation}

\end{proposition}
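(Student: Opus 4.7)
The proof naturally splits along the three assertions. I will treat each in order, exploiting the crucial nilpotency condition $\xi^{\kappa+1}=0$ for $\xi\in\mathfrak{g}$, which turns every apparent infinite series into a finite polynomial of degree at most $\kappa$.

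For the algebra homomorphism property, I plan to observe that the map $f\mapsto f(\xi)$ is manifestly $\mathbb{R}$-linear from the definition, so only multiplicativity needs justification. Given $f(z)=\sum a_kz^k$ and $h(z)=\sum b_kz^k$ in $\mathcal{H}_0$, the Cauchy product $(fh)(z)=\sum_n c_n z^n$ with $c_n=\sum_{k+j=n}a_kb_j$ holds as formal power series. Since powers of the single element $\xi$ commute with each other, the usual rearrangement gives
\[
f(\xi)\,h(\xi)=\sum_{k,j=0}^{\kappa}a_kb_j\,\xi^{k+j}=\sum_{n=0}^{2\kappa}c_n\,\xi^{n}=\sum_{n=0}^{\kappa}c_n\,\xi^{n}=(fh)(\xi),
\]
where the third equality uses $\xi^{n}=0$ for $n>\kappa$. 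The identical argument applied inside the algebra $\mathrm{End}(\mathcal{A})$ to the operator $\operatorname{ad}_\xi$ gives the analogous statement for $f(\operatorname{ad}_\xi)$, noting only that $\operatorname{ad}_\xi^{\kappa}=0$ on $\mathcal{A}$ (one extra bracket of nilpotency is lost because $\mathcal{A}=\mathbb{R}\cdot 1\oplus\mathfrak{g}$ and $\operatorname{ad}_\xi$ kills $1$).

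For smoothness, I will note that after choosing a basis of $\mathfrak{g}$, each map $\xi\mapsto\xi^k$ is polynomial in the coordinates of $\xi$ (it is a $k$-fold product in $\mathcal{A}$, which is a polynomial operation). Since $f(\xi)$ is a finite $\mathbb{R}$-linear combination of these polynomial maps, it is smooth.

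The derivative formula is the only step requiring a real computation. The hypothesis $[\xi(t),\xi(s)]=0$ for all $s,t\in J$ implies, by differentiating in $s$ at any point, that $[\xi(t),\dot\xi(s)]=0$ for all $s,t$; in particular $\dot\xi(t)$ commutes with every power of $\xi(s)$ for every $s,t$. The ordinary Leibniz rule in $\mathcal{A}$ yields
\[
\frac{d}{dt}\,\xi(t)^{k}=\sum_{j=0}^{k-1}\xi(t)^{j}\,\dot\xi(t)\,\xi(t)^{k-1-j},
\]
and using the commutativity of $\dot\xi(t)$ with powers of $\xi(t)$ this collapses to $k\,\xi(t)^{k-1}\dot\xi(t)=k\,\dot\xi(t)\,\xi(t)^{k-1}$. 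Summing over $k$ against $a_k$ (again a finite sum up to $k=\kappa$) gives
\[
\frac{d}{dt}f(\xi(t))=\sum_{k=1}^{\kappa}a_k\,k\,\xi(t)^{k-1}\dot\xi(t)=f'(\xi(t))\,\dot\xi(t)=\dot\xi(t)\,f'(\xi(t)),
\]
which is (\ref{e.3.2}). The only subtlety, and the point most likely to trip one up, is the rigorous use of commutativity to collapse the Leibniz sum; everything else is the formal observation that nilpotency truncates all series to honest polynomials in finitely many noncommuting but, for a fixed curve, self-commuting elements.
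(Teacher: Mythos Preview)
Your proof is correct and follows essentially the same approach as the paper: the homomorphism property via the Cauchy product and nilpotent truncation, smoothness via polynomiality of $\xi\mapsto\xi^k$, and the chain rule by the Leibniz expansion of $\frac{d}{dt}\xi(t)^k$ collapsed using commutativity. You are slightly more explicit than the paper in two places---justifying $[\xi(t),\dot\xi(t)]=0$ by differentiating the hypothesis in $s$, and noting why $\operatorname{ad}_\xi^\kappa=0$---but the structure and content of the argument are the same.
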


\begin{proof}
The standard fact that the map in Eq. (\ref{e.3.1}) is an algebra homomorphism
is easily seen to be a direct consequence of the multiplication rules for
power series. The smoothness of $f:\mathfrak{g}\rightarrow\mathcal{A}$ is a
consequence of the fact that $f\left(  \xi\right)  $ is a finite linear
combination of the smooth multi-linear maps, $\mathfrak{g\ni\xi}\rightarrow
\xi^{k}\in\mathcal{A},$ for each $k\in\left\{  0,1,2,\dots,\kappa\right\}  .$
For arbitrary $\xi,\eta\in\mathfrak{g}$ we have
\[
\partial_{\eta}\xi^{k}=\sum_{j=0}^{k-1}\xi^{j}\eta\xi^{k-1-j}%
\]
which simplifies to
\[
\partial_{\eta}\xi^{k}=k\eta\xi^{k-1}=k\xi^{k-1}\eta\text{ when }\left[
\xi,\eta\right]  =0.
\]

With these observations the proof of Eq. (\ref{e.3.2}) is a consequence of the
following simple computation,
\begin{align*}
\frac{d}{dt}f\left(  \xi\left(  t\right)  \right)   &  =\sum_{k=0}^{\kappa
}a_{k}\frac{d}{dt}\xi\left(  t\right)  ^{k}=\sum_{k=0}^{\kappa}a_{k}k\dot{\xi
}\left(  t\right)  \xi\left(  t\right)  ^{k-1}\\
&  =\sum_{k=0}^{\infty}a_{k}k\dot{\xi}\left(  t\right)  \xi\left(  t\right)
^{k-1}=f^{\prime}\left(  \xi\left(  t\right)  \right)  \dot{\xi}\left(
t\right)  =\dot{\xi}\left(  t\right)  f^{\prime}\left(  \xi\left(  t\right)
\right)  .
\end{align*}

\end{proof}

For our purposes, the functions, $e^{z},$ $\left(  1+z\right)  ^{-1}%
,\log\left(  1+z\right)  ,$%
\begin{align}
\psi\left(  z\right)   &  :=\frac{e^{z}-1}{z}=\sum_{k=1}^{\infty}\frac
{z^{k-1}}{k!}=\sum_{k=0}^{\infty}\frac{z^{k}}{\left(  k+1\right)
!},\label{e.3.3}\\
\psi_{-}\left(  z\right)   &  :=\frac{1}{\psi\left(  -z\right)  }=\frac
{z}{1-e^{-z}}=\frac{e^{z}\cdot z}{e^{z}-1},\text{ and}\label{e.3.4}\\
\mathcal{L}\left(  z\right)   &  =\psi_{-}\left(  \log\left(  1+z\right)
\right)  :=\frac{\left(  1+z\right)  \log\left(  1+z\right)  }{z}%
\label{e.3.5}\\
&  =1+\sum_{j=2}^{\infty}\frac{\left(  -1\right)  ^{j}}{j\cdot\left(
j-1\right)  }z^{j-1} \label{e.3.6}%
\end{align}
are the most important functions in $\mathcal{H}_{0}.$

\begin{lemma}
\label{lem.3.3}The subset,
\[
G:=1+\mathfrak{g}=\left\{  1+\xi:\xi\in\mathfrak{g}\right\}  ,
\]
equipped with the algebra multiplication law forms a group where the inverse
operation is given by
\[
\left(  1+\xi\right)  ^{-1}=\frac{1}{1+\xi}.
\]

\end{lemma}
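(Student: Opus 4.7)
The plan is to verify the group axioms directly, relying on the fact that $\mathfrak{g}$ is an ideal-like subalgebra and on the functional calculus machinery of Proposition~\ref{pro.3.2}.

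First I would check closure under multiplication. For any $\xi,\eta\in\mathfrak{g}$,
\[
(1+\xi)(1+\eta) = 1 + \xi + \eta + \xi\eta,
\]
and since $\mathfrak{g}$ is a subalgebra we have $\xi+\eta+\xi\eta\in\mathfrak{g}$, so the product lies in $G$. Associativity is inherited from associativity in $\mathcal{A}$, and the element $1 = 1 + 0 \in G$ is clearly a two-sided identity.

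The only nontrivial point is the existence of inverses. Here I would invoke the functional calculus. The germ $f(z) := (1+z)^{-1} = \sum_{k=0}^{\infty}(-1)^{k}z^{k}$ lies in $\mathcal{H}_{0}$, and by Definition~\ref{def.3.1}, for $\xi\in\mathfrak{g}$ the element
\[
f(\xi) = \sum_{k=0}^{\kappa}(-1)^{k}\xi^{k} = 1 + \sum_{k=1}^{\kappa}(-1)^{k}\xi^{k}
\]
is a well-defined element of $\mathcal{A}$, and in fact of $G$ since $\sum_{k=1}^{\kappa}(-1)^{k}\xi^{k}\in\mathfrak{g}$ (because $\mathfrak{g}$ is a subalgebra containing $\xi$). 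Now applying Proposition~\ref{pro.3.2}, which asserts that $\mathcal{H}_0 \ni f \mapsto f(\xi)\in\mathcal{A}$ is an algebra homomorphism, to the identity $(1+z)\cdot (1+z)^{-1} = 1 = (1+z)^{-1}\cdot(1+z)$ in $\mathcal{H}_0$, yields
\[
(1+\xi)\,f(\xi) = 1 = f(\xi)\,(1+\xi)
\]
in $\mathcal{A}$. Hence $f(\xi) = (1+\xi)^{-1}$ is a two-sided inverse inside $G$.

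The main (and really only) obstacle is justifying that the formal power series inverse makes sense and actually inverts $1+\xi$; the nilpotency hypothesis $\xi_{1}\cdots\xi_{\kappa+1}=0$ ensures the series terminates at $k=\kappa$ so that $f(\xi)$ is a bona fide element of $\mathcal{A}$, and the algebra-homomorphism property of the functional calculus converts the formal identity $(1+z)^{-1}(1+z)=1$ into an actual identity in $\mathcal{A}$. With this in hand, the group axioms for $G$ follow immediately.
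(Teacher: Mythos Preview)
Your proof is correct. The paper does not actually supply a proof of Lemma~\ref{lem.3.3}; it simply states the result and moves on, so your argument is filling in details the author left to the reader. Your use of Proposition~\ref{pro.3.2} to convert the identity $(1+z)(1+z)^{-1}=1$ in $\mathcal{H}_0$ into $(1+\xi)f(\xi)=1$ in $\mathcal{A}$ is exactly the intended mechanism, and your verification of closure, associativity, and identity is routine and correct.
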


The following corollary follows directly from Proposition \ref{pro.3.2}.

\begin{corollary}
\label{cor.3.4}The three map, $\mathfrak{g\ni\xi}\rightarrow e^{\xi}\in G,$
$\mathfrak{g}\ni\xi\rightarrow\left(  1+\xi\right)  ^{-1}\in G,$ and
$\mathfrak{g}\ni\xi\rightarrow\log\left(  1+\xi\right)  \in\mathfrak{g}$ are
smooth and these maps satisfy the following natural identities.

\begin{enumerate}
\item For all $\xi\in\mathfrak{g},$%
\begin{align*}
\frac{d}{dt}e^{t\xi}  &  =\xi e^{t\xi},~\text{and}\\
\frac{d}{dt}\log\left(  1+t\xi\right)   &  =\xi\frac{1}{1+\xi}=\frac{\xi
}{1+\xi}.
\end{align*}
Moreover generally, if $t\rightarrow\xi\left(  t\right)  \in\mathfrak{g}$ is
differentiable near $t_{0}\in\mathbb{R}$ and $\left[  \xi\left(  t\right)
,\xi\left(  s\right)  \right]  =0$ for $s$ and $t$ near $t_{0},$ then
\begin{align*}
\frac{d}{dt}e^{\xi\left(  t\right)  }  &  =\dot{\xi}\left(  t\right)
e^{\xi\left(  t\right)  }\text{ and }\\
\frac{d}{dt}\log\left(  1+\xi\left(  t\right)  \right)   &  =\frac{\dot{\xi
}\left(  t\right)  }{1+\xi\left(  t\right)  }=\dot{\xi}\left(  t\right)
\left(  1+\xi\left(  t\right)  \right)  ^{-1}.
\end{align*}

\item For all $s,t\in\mathbb{R},$ $e^{t\xi}e^{s\xi}=e^{\left(  t+s\right)
\xi}$ and $e^{-t\xi}=\left[  e^{t\xi}\right]  ^{-1}.$
\end{enumerate}
\end{corollary}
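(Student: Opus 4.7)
The plan is to derive everything as a direct specialization of Proposition \ref{pro.3.2}, since the three relevant functions $e^z$, $(1+z)^{-1}$, and $\log(1+z)$ all lie in $\mathcal{H}_0$ and the nilpotency assumption $\mathfrak{g}^{\kappa+1}=\{0\}$ turns each of them into a polynomial in $\xi$ when evaluated on $\mathfrak{g}$.

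First I would dispose of smoothness all at once: for any $f\in\mathcal{H}_0$, the functional calculus map $\mathfrak{g}\ni\xi\mapsto f(\xi)\in\mathcal{A}$ is smooth by the last assertion of Proposition \ref{pro.3.2}, and this applies verbatim to $f(z)=e^z$, $f(z)=(1+z)^{-1}$, and $f(z)=\log(1+z)$. Next, for the derivative formulas in item (1), I would invoke Eq.~(\ref{e.3.2}) applied to the curve $\xi(t)$, using that $\dot{\xi}(t)$ commutes with $\xi(t)$ by the standing hypothesis $[\xi(t),\xi(s)]=0$ for $s,t$ near $t_0$. Taking $f(z)=e^z$ gives $f'(z)=e^z$ and hence $\frac{d}{dt}e^{\xi(t)}=\dot{\xi}(t)\,e^{\xi(t)}$; taking $f(z)=\log(1+z)$ gives $f'(z)=(1+z)^{-1}$ and hence $\frac{d}{dt}\log(1+\xi(t))=\dot{\xi}(t)(1+\xi(t))^{-1}$. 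The special cases with $\xi(t)=t\xi$ then follow instantly since $[t\xi,s\xi]=0$.

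For item (2), I would use that $\mathcal{H}_0\ni f\mapsto f(\xi)\in\mathcal{A}$ is an algebra homomorphism (again from Proposition \ref{pro.3.2}): applying it to the identity $e^{(t+s)z}=e^{tz}\cdot e^{sz}$ of germs immediately yields $e^{(t+s)\xi}=e^{t\xi}e^{s\xi}$. Taking $s=-t$ then gives $e^{t\xi}e^{-t\xi}=e^0=1$, so $e^{-t\xi}=[e^{t\xi}]^{-1}$, which in particular lives in $G=1+\mathfrak{g}$ as guaranteed by Lemma~\ref{lem.3.3}. Alternatively, one could argue by ODE uniqueness: both $t\mapsto e^{(t+s)\xi}$ and $t\mapsto e^{t\xi}e^{s\xi}$ satisfy $\dot{y}(t)=\xi\,y(t)$ with $y(0)=e^{s\xi}$, and standard uniqueness in the finite-dimensional algebra $\mathcal{A}$ forces them to coincide.

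There is essentially no obstacle here: the whole corollary is a bookkeeping exercise on top of Proposition \ref{pro.3.2}. The only point worth flagging is that the derivative formula for $\log(1+\xi(t))$ genuinely requires the commutativity hypothesis $[\xi(t),\xi(s)]=0$, because $f'$ need not commute with $\dot{\xi}(t)$ otherwise; this is exactly the hypothesis placed in the general statement of (1) and is automatic for the scalar-parameter families $t\xi$.
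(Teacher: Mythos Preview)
Your proposal is correct and takes essentially the same approach as the paper: the paper simply declares that the corollary ``follows directly from Proposition~\ref{pro.3.2}'' without giving any further argument, and your write-up fills in precisely those details (smoothness from the polynomial nature of $f(\xi)$, the derivative formulas from Eq.~(\ref{e.3.2}), and the semigroup law from the algebra-homomorphism property).
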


\begin{proposition}
\label{pro.3.5}The map,%
\[
\mathfrak{g\ni\xi}\rightarrow e^{\xi}\in G,
\]
is a diffeomorphism and the map,%
\[
G\ni g=1+\xi\rightarrow\log\left(  g\right)  =\log\left(  1+\xi\right)
\in\mathfrak{g,}%
\]
is its inverse map.
\end{proposition}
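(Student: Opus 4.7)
The plan is to verify that $\exp$ and $\log$, which are both smooth maps by Corollary~\ref{cor.3.4}, are inverse to each other. Since both maps involve only finite sums (at most $\kappa+1$ nonzero terms in each), one could attempt a direct algebraic verification, but it is cleaner to use the one-parameter family arguments afforded by the calculus developed in Proposition~\ref{pro.3.2} and Corollary~\ref{cor.3.4}. Note that $e^{\xi}=1+\sum_{k=1}^{\kappa}\xi^{k}/k!\in 1+\mathfrak{g}=G$, so $\exp$ does land in $G$, and similarly $\log(1+\eta)\in\mathfrak{g}$, so the statement makes sense.

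First I would verify $\log\circ\exp=\mathrm{id}_{\mathfrak{g}}$. Fix $\xi\in\mathfrak{g}$ and define $\gamma:\mathbb{R}\rightarrow\mathfrak{g}$ by $\gamma(t):=\log(e^{t\xi})=\log(1+(e^{t\xi}-1))$. The key observation is that the one-parameter family $w(t):=e^{t\xi}-1$ takes values in $\mathfrak{g}$ and consists of polynomials in the single element $\xi$, so $w(s)$ and $w(t)$ commute for all $s,t$. Applying Eq.~(\ref{e.3.2}) with $f(z)=\log(1+z)$ (so $f^{\prime}(z)=(1+z)^{-1}$) gives
\[
\dot{\gamma}(t)=(1+w(t))^{-1}\dot{w}(t)=e^{-t\xi}\cdot\xi e^{t\xi}=\xi,
\]
where the last equality uses $\xi e^{t\xi}=e^{t\xi}\xi$ and $e^{-t\xi}e^{t\xi}=1$ from Corollary~\ref{cor.3.4}. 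Since $\gamma(0)=\log(1)=0$, I conclude $\gamma(t)=t\xi$, and evaluating at $t=1$ gives $\log(e^{\xi})=\xi$.

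For the reverse identity $\exp\circ\log=\mathrm{id}_{G}$, fix $\eta\in\mathfrak{g}$ and define $F:\mathbb{R}\rightarrow\mathcal{A}$ by $F(t):=e^{\log(1+t\eta)}(1+t\eta)^{-1}$. The family $v(t):=\log(1+t\eta)$ again consists of polynomials in the single element $\eta$, so $v(s),v(t),e^{v(t)},(1+t\eta)^{-1}$ all pairwise commute. Using Corollary~\ref{cor.3.4} I get $\dot{v}(t)=\eta(1+t\eta)^{-1}$ and then $\frac{d}{dt}e^{v(t)}=\dot{v}(t)e^{v(t)}=(1+t\eta)^{-1}\eta\,e^{v(t)}$. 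A short product-rule computation, exploiting the commutativity noted above, yields
\[
\dot{F}(t)=e^{v(t)}\eta(1+t\eta)^{-2}-e^{v(t)}\eta(1+t\eta)^{-2}=0.
\]
Since $F(0)=e^{0}\cdot 1=1$, it follows that $F(t)\equiv 1$, i.e.\ $e^{\log(1+t\eta)}=1+t\eta$ for every $t$; taking $t=1$ completes the inverse identity.

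The two inverse relations together imply that $\exp:\mathfrak{g}\rightarrow G$ is a bijection whose inverse is the smooth map $\log$, hence $\exp$ is a diffeomorphism. The only technical point to be careful about is keeping the commutativity hypotheses of Proposition~\ref{pro.3.2} in force at each differentiation, but this is essentially automatic since each one-parameter family used above lives in the commutative subalgebra generated by a single element of $\mathfrak{g}$; I do not anticipate a genuine obstacle.
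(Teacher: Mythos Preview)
Your proof is correct and follows essentially the same approach as the paper: both arguments use one-parameter families $t\mapsto e^{t\xi}$ and $t\mapsto 1+t\eta$ together with the differentiation rules of Proposition~\ref{pro.3.2} and Corollary~\ref{cor.3.4}. The only cosmetic difference is in the second half, where the paper verifies that $g(t):=e^{\log(1+t\xi)}$ and $1+t\xi$ satisfy the same first-order ODE and invokes uniqueness, whereas you show directly that the ratio $F(t)=e^{\log(1+t\eta)}(1+t\eta)^{-1}$ has vanishing derivative; these are equivalent maneuvers.
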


\begin{proof}
By Corollary \ref{cor.3.4},%
\[
\frac{d}{dt}\log\left(  e^{t\xi}\right)  =\left[  e^{t\xi}\right]  ^{-1}%
\frac{d}{dt}e^{t\xi}=e^{-t\xi}e^{t\xi}\xi=\xi,
\]
from which it follows that $\log\left(  e^{t\xi}\right)  =\log\left(
1\right)  +t\xi=t\xi.$ Taking $t=1$ in this identity shows $\log\left(
e^{\xi}\right)  =\xi.$

Similarly, by Corollary \ref{cor.3.4}, if we let $g\left(  t\right)
:=e^{\log\left(  1+t\xi\right)  }\in G,$ then
\[
\dot{g}\left(  t\right)  =\frac{d}{dt}e^{\log\left(  1+t\xi\right)  }%
=e^{\log\left(  1+t\xi\right)  }\cdot\frac{d}{dt}\log\left(  1+t\xi\right)
=g\left(  t\right)  \frac{\xi}{1+t\xi}\text{ with }g\left(  0\right)  =1.
\]
Since $t\rightarrow\left(  1+t\xi\right)  $ satisfies the same equation as
$g\left(  t\right)  ,$ by uniqueness of solutions we conclude that $g\left(
t\right)  =1+t\xi$ for all $t\in\mathbb{R}$ and in particular taking $t=1$
shows%
\[
e^{\log\left(  1+\xi\right)  }=g\left(  1\right)  =1+\xi.
\]

\end{proof}

For $k\in G,$ let $L_{k}\in\mathrm{Diff}\left(  G\right)  $ be defined by
$L_{k}g=kg$ for all $g\in G.$ For $\xi\in\mathfrak{g},$ let $G\ni
g\rightarrow\tilde{\xi}\left(  g\right)  :=L_{g\ast}\xi\in T_{g}G$ be the left
invariant vector field on $G$ associated to $\xi\in\mathfrak{g.}$ If
$f:G+1+\mathfrak{g}\cong\mathfrak{g}\rightarrow\mathbb{R}$ is a smooth
function, then
\[
\left(  \tilde{\xi}f\right)  \left(  g\right)  =\frac{d}{dt}|_{0}f\left(
ge^{t\xi}\right)  =\left(  \partial_{g\xi}f\right)  \left(  g\right)
=f^{\prime}\left(  g\right)  g\xi.
\]
Thus if $\xi,\eta\in\mathfrak{g},$
\[
\left(  \tilde{\eta}\tilde{\xi}f\right)  \left(  g\right)  =f^{\prime\prime
}\left(  g\right)  \left[  g\eta\otimes g\xi\right]  +f^{\prime}\left(
g\right)  g\eta\xi
\]
and since $f^{\prime\prime}\left(  g\right)  $ is symmetric,
\[
\left(  \left[  \tilde{\eta},\tilde{\xi}\right]  f\right)  \left(  g\right)
=f^{\prime}\left(  g\right)  g\left(  \eta\xi-\xi\eta\right)  =\left(  \left(
\eta\xi-\xi\eta\right)  ^{\sim}f\right)  \left(  g\right)  .
\]
Therefore the standard left invariant vector-field Lie algebra associated to
$G$ has bracket,%
\[
\left[  \eta,\xi\right]  =\left[  \tilde{\eta},\tilde{\xi}\right]  \left(
1\right)  =\eta\xi-\xi\eta
\]
which is the same as the Lie algebra associated to the algebra multiplication law.

\begin{definition}
\label{def.3.6}For $\xi\in C^{1}\left(  \left[  0,T\right]  ,\mathfrak{g}%
\right)  ,$ let $g^{\xi}\left(  t\right)  \in G$ denote the unique solution to
the linear differential equation,%
\begin{equation}
\dot{g}^{\xi}\left(  t\right)  =g^{\xi}\left(  t\right)  \dot{\xi}\left(
t\right)  =\widetilde{\xi\left(  t\right)  }\left(  g\left(  t\right)
\right)  \text{ with }g^{\xi}\left(  0\right)  =1\in G \label{e.3.7}%
\end{equation}
and further let
\begin{equation}
C^{\xi}\left(  t\right)  =\log\left(  g^{\xi}\left(  t\right)  \right)  .
\label{e.3.8}%
\end{equation}

\end{definition}

\begin{remark}
\label{rem.3.7}If $\xi\in C^{1}\left(  \left[  0,T\right]  ,\mathfrak{g}%
\right)  ,$ then $t\rightarrow\widetilde{\dot{\xi}\left(  t\right)  }\in
\Gamma\left(  TG\right)  $ is a $C^{0}$-varying vector field on $G$ with
associated flow, $\mu_{t,s}^{\widetilde{\dot{\xi}}},$ which satisfies
$L_{k}\circ\mu_{t,s}^{\widetilde{\dot{\xi}}}=\mu_{t,s}^{\widetilde{\dot{\xi}}%
}\circ L_{k}.$ Applying this equation to$1\in G$ shows
\[
\mu_{t,s}^{\widetilde{\dot{\xi}}}\left(  k\right)  =k\cdot\mu_{t,s}%
^{\widetilde{\dot{\xi}}}\left(  1\right)  \in G\text{ for all }k\in G
\]
where $\mu_{t,s}^{\widetilde{\dot{\xi}}}\left(  1\right)  \in G$ satisfies the
ODE,
\[
\frac{d}{dt}\mu_{t,s}^{\widetilde{\dot{\xi}}}\left(  1\right)
=\widetilde{\dot{\xi}\left(  t\right)  }\circ\mu_{t,s}^{\widetilde{\dot{\xi}}%
}\left(  1\right)  =\mu_{t,s}^{\widetilde{\dot{\xi}}}\left(  1\right)
\dot{\xi}\left(  t\right)  \text{ with }\mu_{s,s}^{\widetilde{\dot{\xi}}%
}\left(  1\right)  =1.
\]
As $g^{\xi}\left(  s\right)  ^{-1}g^{\xi}\left(  t\right)  $ satisfies this
same differential equation, it follows that
\[
\mu_{t,s}^{\widetilde{\dot{\xi}}}\left(  k\right)  =kg^{\xi}\left(  s\right)
^{-1}g^{\xi}\left(  t\right)  =R_{g^{\xi}\left(  s\right)  ^{-1}g^{\xi}\left(
t\right)  }k\text{ }\forall~s,t\in\left[  0,T\right]  .
\]
In particular if $\xi\in\mathfrak{g}$ is constant, then $e^{t\tilde{\xi}%
}=R_{e^{t\xi}},$ i.e.
\[
e^{t\tilde{\xi}}\left(  k\right)  =ke^{t\xi}\text{ for all }k\in G
\]

\end{remark}

\begin{proposition}
\label{pro.3.8}For $\xi,\eta\in\mathfrak{g},$
\[
\partial_{\xi}e^{\eta}=e^{\eta}\int_{0}^{1}\operatorname{Ad}_{e^{-t\eta}}\xi
dt=\left[  \int_{0}^{1}\operatorname{Ad}_{e^{t\eta}}\xi dt\right]  e^{\eta}.
\]
Consequently if $C\left(  t\right)  \in\mathfrak{g}$ is a smooth curve then
\begin{align*}
\frac{d}{dt}e^{C\left(  t\right)  }  &  =\left[  \int_{0}^{1}\operatorname{Ad}%
_{e^{sC\left(  t\right)  }}\dot{C}\left(  t\right)  ds\right]  e^{C\left(
t\right)  }\\
&  =e^{C\left(  t\right)  }\left[  \int_{0}^{1}\operatorname{Ad}%
_{e^{-sC\left(  t\right)  }}\dot{C}\left(  t\right)  ds\right]
\end{align*}

\end{proposition}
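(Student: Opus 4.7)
The plan is to reduce the identity to a linear ODE in $\mathcal{A}$ that can be solved explicitly by variation of parameters. I would introduce the auxiliary two-parameter family $F(s,t) := e^{s(\eta+t\xi)} \in G$, which is smooth in $(s,t)$ by Proposition \ref{pro.3.2}. Since $\eta+t\xi$ commutes with itself, Corollary \ref{cor.3.4}(1) yields the two equivalent forms of its defining ODE,
\[
\partial_{s} F(s,t) = (\eta+t\xi)\, F(s,t) = F(s,t)(\eta+t\xi), \qquad F(0,t) = 1.
\]
Setting $A(s) := \partial_{t}|_{t=0} F(s,t)$ and interchanging the partial derivatives (justified by smoothness) would produce, from the right-multiplicative form, the inhomogeneous linear ODE
\[
A'(s) = A(s)\,\eta + e^{s\eta}\,\xi, \qquad A(0) = 0,
\]
and, from the left-multiplicative form, the companion ODE $A'(s) = \eta\, A(s) + \xi\, e^{s\eta}$.

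Next I would solve the first ODE with the ansatz $A(s) = B(s)\,e^{s\eta}$. Differentiating and using that $\eta$ commutes with $e^{s\eta}$ (so $\frac{d}{ds}e^{s\eta} = \eta e^{s\eta} = e^{s\eta}\eta$), the $B(s)\eta\, e^{s\eta}$ terms cancel and there remains $B'(s)\,e^{s\eta} = e^{s\eta}\,\xi$, i.e.\ $B'(s) = e^{s\eta}\,\xi\,e^{-s\eta} = \operatorname{Ad}_{e^{s\eta}}\xi$. Integrating from $0$ with $B(0)=0$ and evaluating at $s=1$ delivers
\[
\partial_{\xi} e^{\eta} = A(1) = \Bigl[\int_{0}^{1} \operatorname{Ad}_{e^{s\eta}} \xi\, ds\Bigr] e^{\eta},
\]
which is the right-hand form. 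The left-hand form comes from the companion ODE via the dual ansatz $A(s) = e^{s\eta} B(s)$, where the same cancellation now leaves $B'(s) = e^{-s\eta}\,\xi\,e^{s\eta} = \operatorname{Ad}_{e^{-s\eta}} \xi$, producing $A(1) = e^{\eta} \int_{0}^{1} \operatorname{Ad}_{e^{-s\eta}} \xi\, ds$. (Alternatively, one can pass between the two forms by the substitution $s \mapsto 1-s$ after absorbing $e^{\eta}$ into the integrand as $e^{(1-s)\eta} e^{s\eta}$.)

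Finally, the consequence for $\frac{d}{dt} e^{C(t)}$ is the chain rule applied to the composition $t \mapsto C(t) \mapsto e^{C(t)}$: using smoothness of the exponential map from Corollary \ref{cor.3.4},
\[
\frac{d}{dt} e^{C(t)} = \partial_{\dot{C}(t)} e^{\eta}\big|_{\eta = C(t)},
\]
and substituting $\eta = C(t)$, $\xi = \dot{C}(t)$ into the two displayed forms of $\partial_{\xi} e^{\eta}$ gives exactly the two asserted expressions. I do not anticipate any real obstacle here: the whole argument is finite and rigorous because $\mathfrak{g}$ is nilpotent and all the power series in play terminate. The only point requiring care is that $\xi$ does not commute with $\eta$, while $\eta$ does commute with $e^{s\eta}$—it is precisely this asymmetry that produces the $\operatorname{Ad}_{e^{\pm s \eta}}$ factor in the final formula.
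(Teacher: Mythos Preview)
Your proof is correct and is essentially the paper's first proof: both introduce $e^{s(\eta+t\xi)}$, differentiate the defining ODE at $t=0$ to obtain a linear inhomogeneous ODE for $A(s)=\partial_t|_{0}e^{s(\eta+t\xi)}$, and solve it (your variation-of-parameters ansatz $A(s)=B(s)e^{s\eta}$ is exactly Duhamel's principle written out). The only cosmetic difference is that you derive the two displayed forms from the left- and right-multiplicative ODEs separately, whereas the paper solves one ODE to get $\int_0^1 e^{(1-t)\eta}\xi e^{t\eta}\,dt$ and then factors $e^\eta$ out on either side.
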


\begin{proof}
\textbf{First proof. }Differentiating the identity,%
\[
\frac{d}{dt}e^{t\left(  \eta+s\xi\right)  }=\left(  \eta+s\xi\right)
e^{t\left(  \eta+s\xi\right)  },
\]
in $s$ shows
\begin{align*}
\frac{d}{dt}\frac{d}{ds}|_{0}e^{t\left(  \eta+s\xi\right)  }  &  =\frac{d}%
{ds}|_{0}\frac{d}{dt}e^{t\left(  \eta+s\xi\right)  }=\frac{d}{ds}|_{0}\left[
\left(  \eta+s\xi\right)  e^{t\left(  \eta+s\xi\right)  }\right] \\
&  =\xi e^{t\eta}+\eta\frac{d}{ds}|_{0}e^{t\left(  \eta+s\xi\right)  }\text{
with }\frac{d}{ds}|_{0}e^{0\left(  \eta+s\xi\right)  }=0.
\end{align*}
Solving this equation by Duhamel's principle gives,
\[
\frac{d}{ds}|_{0}e^{\left(  \eta+s\xi\right)  }=\int_{0}^{1}e^{\left(
1-t\right)  \eta}\xi e^{t\eta}dt,
\]
i.e.
\[
\partial_{\xi}e^{\eta}=e^{\eta}\int_{0}^{1}\operatorname{Ad}_{e^{-t\eta}}\xi
dt=\left[  \int_{0}^{1}\operatorname{Ad}_{e^{t\eta}}\xi dt\right]  e^{\eta}.
\]

\textbf{Second proof. } This proof relies on the fact that the statement of
this proposition is in fact a special case of Theorem \ref{thm.2.22}. Indeed
using this theorem along with Remark \ref{rem.3.7} shows,%
\begin{align*}
\partial_{\eta}e^{t\xi}  &  =\partial_{\tilde{\eta}}e^{t\tilde{\xi}}\left(
1\right)  =\left[  \int_{0}^{t}\operatorname{Ad}_{e^{\tau\tilde{\xi}}}%
\tilde{\eta}d\tau\right]  \circ e^{t\tilde{\xi}}\left(  1\right) \\
&  =\left[  \int_{0}^{t}\operatorname{Ad}_{e^{\tau\tilde{\xi}}}\tilde{\eta
}d\tau\right]  \left(  e^{t\xi}\right)
\end{align*}
where
\begin{align*}
\left(  \operatorname{Ad}_{e^{\tau\tilde{\xi}}}\tilde{\eta}\right)  \left(
k\right)   &  =\left(  e_{\ast}^{\tau\tilde{\xi}}\tilde{\eta}\circ
e^{-\tau\tilde{\xi}}\right)  \left(  k\right)  =e_{\ast}^{\tau\tilde{\xi}%
}\left(  L_{e^{-\tau\tilde{\xi}}\left(  k\right)  }\right)  _{\ast}\eta\\
&  =e_{\ast}^{\tau\tilde{\xi}}\left(  L_{ke^{-\tau\xi}}\right)  _{\ast}%
\eta=\left(  R_{e^{\tau\xi}}\right)  _{\ast}\left(  L_{ke^{-\tau\xi}}\right)
_{\ast}\eta\\
&  =ke^{-\tau\xi}\eta e^{\tau\xi}.
\end{align*}
Since%
\[
\left[  \int_{0}^{t}\operatorname{Ad}_{e^{\tau\tilde{\xi}}}\tilde{\eta}%
d\tau\right]  \left(  e^{t\xi}\right)  =e^{t\xi}\int_{0}^{t}e^{-\tau\xi}\eta
e^{\tau\xi}d\tau=\int_{0}^{t}e^{\left(  t-\tau\right)  \xi}\eta e^{\tau\xi
}d\tau,
\]
the result is again proved.
\end{proof}

\begin{lemma}
\label{lem.3.9}For $\eta\in\mathfrak{g}$ and $t\in\mathbb{R},$
$\operatorname{Ad}_{e^{t\eta}}=e^{t\operatorname{ad}_{\eta}}$ and
\begin{equation}
\int_{0}^{1}\operatorname{Ad}_{e^{t\eta}}dt=\psi\left(  \operatorname{ad}%
_{\eta}\right)  \label{e.3.9}%
\end{equation}
where $\psi$ is as in Eq. (\ref{e.3.3}).
\end{lemma}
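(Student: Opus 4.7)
The plan is to prove the two statements in order.

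First, I would establish the identity $\operatorname{Ad}_{e^{t\eta}} = e^{t\operatorname{ad}_\eta}$ by the standard ODE argument. Define the linear operators $F(t), G(t):\mathcal{A}\to\mathcal{A}$ by $F(t):=\operatorname{Ad}_{e^{t\eta}}$ and $G(t):=e^{t\operatorname{ad}_\eta}=\sum_{k=0}^{\kappa}\tfrac{t^k}{k!}\operatorname{ad}_\eta^k$ (the sum is finite since $\operatorname{ad}_\eta^k$ eventually vanishes on $\mathcal{A}$ due to $\mathfrak{g}^{\kappa+1}=0$). Applying $F(t)$ to $\xi\in\mathcal{A}$ gives $F(t)\xi=e^{t\eta}\xi e^{-t\eta}$ (using $e^{-t\eta}=(e^{t\eta})^{-1}$ from Corollary \ref{cor.3.4}), and differentiating via Corollary \ref{cor.3.4} yields
\[
\frac{d}{dt}F(t)\xi=\eta e^{t\eta}\xi e^{-t\eta}-e^{t\eta}\xi e^{-t\eta}\eta=\operatorname{ad}_\eta F(t)\xi,
\]
so $\dot F=\operatorname{ad}_\eta\circ F$ with $F(0)=I$. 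Since $G$ solves the same linear ODE with the same initial condition (by Proposition \ref{pro.3.2} applied to the analytic function $z\mapsto e^{tz}$ with the commuting curve $t\mapsto t\operatorname{ad}_\eta$), uniqueness of solutions gives $F(t)=G(t)$ for all $t\in\mathbb{R}$.

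Second, for the integral formula, I would substitute the first identity and integrate the (finite) power series term by term:
\[
\int_0^1\operatorname{Ad}_{e^{t\eta}}\,dt=\int_0^1 e^{t\operatorname{ad}_\eta}\,dt=\int_0^1\sum_{k=0}^{\kappa}\frac{t^k}{k!}\operatorname{ad}_\eta^k\,dt=\sum_{k=0}^{\kappa}\frac{\operatorname{ad}_\eta^k}{(k+1)!}=\psi(\operatorname{ad}_\eta),
\]
where the last equality uses the defining power series of $\psi$ in Eq. (\ref{e.3.3}) together with the fact (from Definition \ref{def.3.1}) that $\psi(\operatorname{ad}_\eta)$ truncates to a finite sum since $\operatorname{ad}_\eta^{k}$ vanishes for $k$ large enough.

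There is no real obstacle here: the only subtle point is justifying the interchange of sum and integral, but this is trivial since the sum defining $e^{t\operatorname{ad}_\eta}$ is finite in $\operatorname{End}(\mathcal{A})$. The whole proof rests on the functional calculus framework set up in Proposition \ref{pro.3.2} together with the uniqueness of solutions to linear ODEs on the finite-dimensional algebra $\operatorname{End}(\mathcal{A})$.
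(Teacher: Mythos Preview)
Your proposal is correct and follows essentially the same approach as the paper: both establish $\operatorname{Ad}_{e^{t\eta}}=e^{t\operatorname{ad}_\eta}$ via the ODE $\dot F=\operatorname{ad}_\eta F$, $F(0)=I$, and then integrate the (finite) power series for $e^{t\operatorname{ad}_\eta}$ term by term to obtain $\psi(\operatorname{ad}_\eta)$. The only cosmetic difference is that the paper works pointwise on a vector $\xi\in\mathfrak{g}$ and writes the series as an infinite sum (implicitly truncated), whereas you work directly with the operators and make the truncation explicit.
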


\begin{proof}
Let $\xi\in\mathfrak{g}.$ Since
\begin{align*}
\frac{d}{dt}\left[  \operatorname{Ad}_{e^{t\eta}}\xi\right]   &  =\frac{d}%
{dt}\left[  e^{t\eta}\xi e^{-t\eta}\right]  =\eta e^{t\eta}\xi e^{-t\eta
}-e^{t\eta}\xi e^{-t\eta}\eta\\
&  =\operatorname{ad}_{\eta}\operatorname{Ad}_{e^{t\eta}}\xi
\end{align*}
and $e^{t\operatorname{ad}_{\eta}}\xi$ solves the same equation with the same
initial condition of $\xi$ at $t=0,$ we conclude that $\operatorname{Ad}%
_{e^{t\eta}}\xi=e^{t\operatorname{ad}_{\eta}}\xi.$ As this is true for all
$\xi\in\mathfrak{g,}$ it follows that $\operatorname{Ad}_{e^{t\eta}%
}=e^{t\operatorname{ad}_{\eta}}.$ The last equality is now proved by
integrating the series expansion for $e^{t\operatorname{ad}_{\eta}};$
\begin{align*}
\int_{0}^{1}\operatorname{Ad}_{e^{t\eta}}dt  &  =\int_{0}^{1}%
e^{t\operatorname{ad}_{\eta}}dt=\int_{0}^{1}\sum_{n=0}^{\infty}\frac{t^{n}%
}{n!}\operatorname{ad}_{\eta}^{n}dt\\
&  =\sum_{n=0}^{\infty}\int_{0}^{1}\frac{t^{n}}{n!}\operatorname{ad}_{\eta
}^{n}dt=\sum_{n=0}^{\infty}\frac{1}{\left(  n+1\right)  !}\operatorname{ad}%
_{\eta}^{n}=\psi\left(  \operatorname{ad}_{\eta}\right)  .
\end{align*}

\end{proof}

\begin{corollary}
\label{cor.3.10}Let $g\left(  t\right)  $ be a smooth curve in $G,$
\[
\dot{\xi}\left(  t\right)  :=L_{g\left(  t\right)  ^{-1}\ast}\dot{g}\left(
t\right)  =g\left(  t\right)  ^{-1}\dot{g}\left(  t\right)  \in\mathfrak{g}%
\text{ and }C\left(  t\right)  :=\log\left(  g\left(  t\right)  \right)
\in\mathfrak{g.}%
\]
Then $C\left(  t\right)  :=\log\left(  g\left(  t\right)  \right)
\in\mathfrak{g}$ is the unique solution to the ODE,%
\begin{equation}
\psi\left(  -\operatorname{ad}_{C\left(  t\right)  }\right)  \dot{C}\left(
t\right)  =\int_{0}^{1}\operatorname{Ad}_{e^{-sC\left(  t\right)  }}\dot
{C}\left(  t\right)  ds=\dot{\xi}\left(  t\right)  \text{ with }C\left(
0\right)  =\log\left(  g\left(  0\right)  \right)  \label{e.3.10}%
\end{equation}
or equivalently (in more standard form) $C\left(  t\right)  $ satisfies,%
\begin{equation}
\dot{C}\left(  t\right)  =\psi_{-}\left(  \operatorname{ad}_{C\left(
t\right)  }\right)  \dot{\xi}\left(  t\right)  =\text{\textquotedblleft}%
\frac{\operatorname{ad}_{C\left(  t\right)  }}{I-e^{-\operatorname{ad}%
_{C\left(  t\right)  }}}\dot{\xi}\left(  t\right)  \text{\textquotedblright%
\ with }C\left(  0\right)  =\log\left(  g\left(  0\right)  \right)  ,
\label{e.3.11}%
\end{equation}
where $\psi_{-}\left(  z\right)  =1/\psi\left(  -z\right)  $ as in Eq.
(\ref{e.3.4}).
\end{corollary}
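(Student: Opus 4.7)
The plan is to differentiate the identity $g(t) = e^{C(t)}$ directly, using Proposition~\ref{pro.3.8} on the right-hand side and the defining relation $\dot g(t) = g(t)\dot\xi(t)$ on the left, and then to interpret the resulting integral via the functional calculus developed in Section~\ref{sec.3.1}.

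In more detail, I would first apply the second formula of Proposition~\ref{pro.3.8} to the curve $C(t)\in\mathfrak g$ to obtain
\[
\frac{d}{dt}e^{C(t)} \;=\; e^{C(t)}\int_{0}^{1}\operatorname{Ad}_{e^{-sC(t)}}\dot C(t)\,ds.
\]
On the other hand, since $e^{C(t)}=g(t)$ by Proposition~\ref{pro.3.5}, the left-hand side equals $\dot g(t)=g(t)\dot\xi(t)=e^{C(t)}\dot\xi(t)$. Left multiplying by $e^{-C(t)}=g(t)^{-1}$ (which is legal by Lemma~\ref{lem.3.3}) yields the middle equality in Eq.~(\ref{e.3.10}),
\[
\dot\xi(t)\;=\;\int_{0}^{1}\operatorname{Ad}_{e^{-sC(t)}}\dot C(t)\,ds.
\]
The first equality in Eq.~(\ref{e.3.10}) is then obtained by applying Lemma~\ref{lem.3.9} with $\eta=-C(t)$: indeed, the integral equals $\psi(\operatorname{ad}_{-C(t)})\dot C(t)=\psi(-\operatorname{ad}_{C(t)})\dot C(t)$ because $\operatorname{ad}_{-C(t)}=-\operatorname{ad}_{C(t)}$, and the scalar $s$-integral commutes with the action of $\dot C(t)$ since the latter does not depend on $s$.

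To pass from Eq.~(\ref{e.3.10}) to the standard form Eq.~(\ref{e.3.11}), I would invert the operator $\psi(-\operatorname{ad}_{C(t)})$ via functional calculus. The series expansion of $\psi(-z)=(1-e^{-z})/z$ at $z=0$ has constant term $1$ and $\operatorname{ad}_{C(t)}$ is a nilpotent operator on $\mathcal A$ (since $C(t)\in\mathfrak g$ and $\mathfrak g^{\kappa+1}=0$, one checks that $\operatorname{ad}_{C(t)}^{\kappa}=0$). Hence $\psi(-\operatorname{ad}_{C(t)})=I+N$ with $N$ nilpotent, so it is invertible, and its inverse is obtained by applying to $\operatorname{ad}_{C(t)}$ the function $\psi_-(z)=1/\psi(-z)=z/(1-e^{-z})$, which is analytic at $0$ with $\psi_-(0)=1$. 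Since $f\mapsto f(\operatorname{ad}_{C(t)})$ is an algebra homomorphism on $\mathcal H_0$ by Proposition~\ref{pro.3.2}, and $\psi_-(z)\cdot\psi(-z)\equiv 1$, this yields $\psi_-(\operatorname{ad}_{C(t)})\psi(-\operatorname{ad}_{C(t)})=I$ and therefore Eq.~(\ref{e.3.11}).

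Uniqueness of the solution to Eq.~(\ref{e.3.11}) with prescribed initial value follows from standard ODE theory, since the right-hand side $C\mapsto\psi_-(\operatorname{ad}_C)\dot\xi(t)$ is a polynomial (hence smooth) function of $C\in\mathfrak g$ into $\mathfrak g$. There is no real obstacle here; the only minor point requiring care is the justification that $\psi(-\operatorname{ad}_{C(t)})$ really is invertible inside the functional calculus of Definition~\ref{def.3.1}, which I would dispatch using the nilpotency noted above together with the homomorphism property in Proposition~\ref{pro.3.2}.
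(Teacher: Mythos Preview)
Your proof is correct and follows essentially the same route as the paper: differentiate $g(t)=e^{C(t)}$ via Proposition~\ref{pro.3.8}, equate with $g(t)\dot\xi(t)$, left-multiply by $g(t)^{-1}$, identify the integral with $\psi(-\operatorname{ad}_{C(t)})$ via Lemma~\ref{lem.3.9}, and then invert using the functional calculus. Your added remarks on why $\psi(-\operatorname{ad}_{C(t)})$ is invertible and on ODE uniqueness are reasonable elaborations but not substantively different from the paper's argument.
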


\begin{proof}
Since $g\left(  t\right)  =e^{C\left(  t\right)  },$ it follows from
Proposition \ref{pro.3.8} and Lemmas \ref{lem.3.9} that
\begin{align*}
g\left(  t\right)  \dot{\xi}\left(  t\right)   &  =\dot{g}\,\left(  t\right)
=\frac{d}{dt}e^{C\left(  t\right)  }=e^{C\left(  t\right)  }\int_{0}%
^{1}\operatorname{Ad}_{e^{-sC\left(  t\right)  }}\dot{C}\left(  t\right)  ds\\
&  =g\left(  t\right)  \int_{0}^{1}\operatorname{Ad}_{e^{-sC\left(  t\right)
}}\dot{C}\left(  t\right)  ds=g\left(  t\right)  \psi\left(
-\operatorname{ad}_{C\left(  t\right)  }\right)  \dot{C}\left(  t\right)  .
\end{align*}
Multiplying this identity on the left by $g\left(  t\right)  ^{-1}$ gives the
Eq. (\ref{e.3.10}) while Eq. (\ref{e.3.11}) then follows by multiplying Eq.
(\ref{e.3.10}) on the left by $\psi_{-}\left(  \operatorname{ad}_{C\left(
t\right)  }\right)  .$
\end{proof}

\begin{definition}
\label{def.3.11}Let $\Gamma:\mathfrak{g}\times\mathfrak{g\rightarrow g}$ be
the function defined by
\[
\Gamma\left(  \xi,\eta\right)  :=\log\left(  e^{\xi}e^{\eta}\right)
\in\mathfrak{g}\text{ for all }\xi,\eta\in\mathfrak{g}.
\]

\end{definition}

The next proposition deals with Lie sub-algebras of $\mathfrak{g}$ and simply
connected Lie subgroups of $G.$

\begin{proposition}
\label{pro.3.12}Let $\mathfrak{g}_{0}$ be a Lie subalgebra of $\mathfrak{g}$
and $G_{0}\subset G$ be the unique connected Lie subgroup of $G$ which has
$\mathfrak{g}_{0}$ as its Lie algebra. If $g\left(  t\right)  \in G_{0}$ is a
smooth curve connecting $1$ to $g\in G_{0}$ and $\dot{\xi}\left(  t\right)
:=g\left(  t\right)  ^{-1}\dot{g}\left(  t\right)  \in\mathfrak{g}_{0},$ then
$C\left(  t\right)  :=\log\left(  g\left(  t\right)  \right)  \in
\mathfrak{g}_{0}.$
\end{proposition}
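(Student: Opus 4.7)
The plan is to invoke Corollary \ref{cor.3.10}, which identifies $C(t) = \log(g(t))$ as the unique $\mathfrak{g}$-valued solution of
\[
\dot{C}(t) = \psi_-(\operatorname{ad}_{C(t)})\,\dot{\xi}(t), \qquad C(0) = \log(g(0)) = 0,
\]
and then to show that this ODE leaves the subalgebra $\mathfrak{g}_0$ invariant. Once invariance is in hand, the conclusion follows by uniqueness: the solution starting at $0 \in \mathfrak{g}_0$ must stay in $\mathfrak{g}_0$ for every $t$, so in particular $\log(g) = C(1) \in \mathfrak{g}_0$.

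The key step is therefore to verify that the right-hand side of the ODE takes values in $\mathfrak{g}_0$ whenever $C(t) \in \mathfrak{g}_0$. Since $\mathfrak{g}_0$ is a Lie subalgebra of $\mathfrak{g}$, it is closed under brackets, so $\operatorname{ad}_\eta$ maps $\mathfrak{g}_0$ into itself for every $\eta \in \mathfrak{g}_0$, and hence so does each iterate $\operatorname{ad}_\eta^k$. By Definition \ref{def.3.1} and the $\kappa$-step nilpotency of $\mathfrak{g}$ (which forces $\operatorname{ad}_\eta^\kappa = 0$ on $\mathcal{A}$ for $\eta \in \mathfrak{g}$), the operator $\psi_-(\operatorname{ad}_{C(t)})$ is a \emph{finite} linear combination of the $\operatorname{ad}_{C(t)}^k$, and therefore also preserves $\mathfrak{g}_0$. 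Combined with the hypothesis $\dot{\xi}(t) \in \mathfrak{g}_0$, this gives $\psi_-(\operatorname{ad}_{C(t)})\,\dot{\xi}(t) \in \mathfrak{g}_0$ whenever $C(t) \in \mathfrak{g}_0$.

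To package this as a clean uniqueness argument, I would consider the same ODE restricted to the finite-dimensional vector space $\mathfrak{g}_0$. Its right-hand side is polynomial in $C$ (again by nilpotency) and continuous in $t$, so standard ODE theory produces a (global) solution $\tilde{C} : [0,T] \to \mathfrak{g}_0$ with $\tilde{C}(0) = 0$. Regarded as a $\mathfrak{g}$-valued curve, $\tilde{C}$ satisfies exactly the ambient ODE of Corollary \ref{cor.3.10}, and uniqueness there forces $C(t) = \tilde{C}(t) \in \mathfrak{g}_0$ for every $t \in [0,T]$. Evaluating at $t = 1$ (or any endpoint on which $g(t) = g$) yields $\log(g) \in \mathfrak{g}_0$.

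There is no genuine obstacle: the whole proof is the standard \emph{invariant subspace} principle applied to the logarithmic-derivative ODE. The only point that warrants attention is that $\psi_-$ is not a polynomial in the usual sense, but acts as one on $\operatorname{ad}_\eta$ in our setting; this is exactly what Definition \ref{def.3.1} and the nilpotency hypothesis ensure.
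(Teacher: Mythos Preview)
Your proof is correct and follows essentially the same approach as the paper: both invoke the ODE characterization of $C(t)$ from Corollary~\ref{cor.3.10} and observe that the right-hand side $\psi_{-}(\operatorname{ad}_{C(t)})\dot{\xi}(t)$ maps $\mathfrak{g}_0$ into itself (since $\mathfrak{g}_0$ is a Lie subalgebra and the operator is a finite polynomial in $\operatorname{ad}_{C(t)}$), so the solution with initial condition $0$ remains in $\mathfrak{g}_0$. Your write-up is slightly more explicit about the uniqueness step and the role of nilpotency, but the argument is the same.
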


\begin{proof}
We know that $C\left(  t\right)  $ may be characterized as the solution to the
ODE%
\[
\dot{C}\left(  t\right)  =F_{\xi}\left(  t,C\left(  t\right)  \right)  \text{
with }C\left(  0\right)  =0,
\]
where
\[
F_{\xi}\left(  t,\eta\right)  \frac{\operatorname{ad}_{\eta}}%
{I-e^{-\operatorname{ad}_{\eta}}}\dot{\xi}\left(  t\right)  =\frac{1}{\psi
}\left(  -\operatorname{ad}_{\eta}\right)  \dot{\xi}\left(  t\right)  .
\]
As $F_{\xi}\left(  t,\cdot\right)  :\mathfrak{g}_{0}\rightarrow\mathfrak{g}%
_{0},$ it follows that $C\left(  t\right)  \in\mathfrak{g}_{0}$ as required.
\end{proof}

\begin{corollary}
\label{cor.3.13}If we continue the assumptions and notation in Proposition
\ref{pro.3.12}, then $\log\left(  G_{0}\right)  =\mathfrak{g}_{0}$ and
$\log|_{G_{0}}:G_{0}\rightarrow\mathfrak{g}_{0}$ is a diffeomorphism with
inverse given by
\[
\mathfrak{g}_{0}\ni A\rightarrow e^{A}\in G_{0}.
\]

\end{corollary}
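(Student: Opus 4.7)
The strategy is to establish the two inclusions $\log(G_0) \subseteq \mathfrak{g}_0$ and $\mathfrak{g}_0 \subseteq \log(G_0)$ separately, and then deduce the diffeomorphism claim from Proposition \ref{pro.3.5}, which already tells us that $\log : G \to \mathfrak{g}$ is a diffeomorphism with inverse $\exp$.

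For the inclusion $\log(G_0) \subseteq \mathfrak{g}_0$, fix $g \in G_0$. Since $G_0$ is connected, choose a smooth path $g : [0,1] \to G_0$ with $g(0) = 1$ and $g(1) = g$. Because $G_0$ is a Lie subgroup with Lie algebra $\mathfrak{g}_0$, left-translation yields $T_{k}G_0 = L_{k*}\mathfrak{g}_0$ for every $k \in G_0$, so
\[
\dot{\xi}(t) := g(t)^{-1}\dot{g}(t) = L_{g(t)^{-1}*}\dot{g}(t) \in \mathfrak{g}_0.
\]
Proposition \ref{pro.3.12} now applies directly to give $\log(g) = C(1) \in \mathfrak{g}_0$.

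For the reverse inclusion, fix $A \in \mathfrak{g}_0$ and consider $\gamma(t) := e^{tA} \in G$. By Corollary \ref{cor.3.4}, $\dot{\gamma}(t) = \gamma(t)A = \widetilde{A}(\gamma(t))$, so $\gamma$ is the integral curve of the left-invariant vector field $\widetilde{A}$ starting at $1$. Since $\widetilde{A}(k) = L_{k*}A \in L_{k*}\mathfrak{g}_0 = T_k G_0$ for every $k \in G_0$, the vector field $\widetilde{A}$ is everywhere tangent to $G_0$; combined with $\gamma(0) = 1 \in G_0$ and uniqueness of integral curves, this forces $\gamma(t) \in G_0$ for all $t$. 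In particular $e^A = \gamma(1) \in G_0$, so $A = \log(e^A) \in \log(G_0)$.

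Finally, Proposition \ref{pro.3.5} asserts that $\log : G \to \mathfrak{g}$ is a diffeomorphism with smooth inverse $\exp$. The equality $\log(G_0) = \mathfrak{g}_0$ just established, together with injectivity of $\log$, shows that $\log|_{G_0}: G_0 \to \mathfrak{g}_0$ is a bijection and that its inverse is $\exp|_{\mathfrak{g}_0}$. Since $G_0$ and $\mathfrak{g}_0$ are submanifolds of $G$ and $\mathfrak{g}$ respectively, and the restriction of a smooth map to a submanifold is smooth, both maps are smooth, completing the proof. The only subtle step is the second inclusion, where the characterization of $G_0$ as the Lie subgroup with Lie algebra $\mathfrak{g}_0$ is essential so that integral curves of $\widetilde{A}$ with $A \in \mathfrak{g}_0$ remain in $G_0$; everything else reduces to invoking Propositions \ref{pro.3.5} and \ref{pro.3.12}.
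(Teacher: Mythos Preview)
Your proof is correct and follows essentially the same approach as the paper, which simply states that the result follows from $e^A \in G_0$ for $A \in \mathfrak{g}_0$ together with Propositions \ref{pro.3.5} and \ref{pro.3.12}. You have expanded these ingredients into a complete argument, in particular supplying the integral-curve justification for $e^A \in G_0$ that the paper takes for granted.
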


\begin{proof}
These assertions follow directly using $e^{A}\in G_{0}$ for $A\in
\mathfrak{g}_{0}$ along with Proposition \ref{pro.3.5} and Proposition
\ref{pro.3.12}.
\end{proof}

For later purposes it is useful to record an \textquotedblleft
explicit\textquotedblright\ formula for $g^{\xi}\left(  t\right)  $ as defined
in Definition \ref{def.3.6}.

\begin{proposition}
\label{pro.3.14}The path, $g^{\xi}\left(  t\right)  \in G,$ as in Definition
\ref{def.3.6} may be expressed as%
\begin{equation}
g^{\xi}\left(  t\right)  =1+\sum_{k=1}^{\kappa}\int_{0\leq s_{1}\leq s_{2}%
\leq\dots\leq s_{k}\leq t}\dot{\xi}\left(  s_{1}\right)  \dots\dot{\xi}\left(
s_{k}\right)  ds_{1}\dots ds_{k}. \label{e.3.12}%
\end{equation}

\end{proposition}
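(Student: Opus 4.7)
The plan is to verify the series identity by the uniqueness theorem for ODEs: I would show that the right-hand side of (\ref{e.3.12}) satisfies the same initial value problem (\ref{e.3.7}) that defines $g^{\xi}(t)$. This is the standard Picard/Dyson expansion approach, which terminates at level $\kappa$ precisely because $\mathfrak{g}$ is step-$\kappa$ nilpotent.

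Concretely, I would define
\[
\tilde{g}(t) := 1 + \sum_{k=1}^{\kappa} I_k(t), \qquad I_k(t) := \int_{0\leq s_1 \leq \cdots \leq s_k \leq t} \dot{\xi}(s_1)\cdots\dot{\xi}(s_k)\,ds_1\cdots ds_k,
\]
and check three things. First, $\tilde{g}(t) \in G = 1+\mathfrak{g}$, because each $I_k(t)$ is an integral of products of $\dot{\xi}(s_i)\in\mathfrak{g}$, and $\mathfrak{g}$ is a subalgebra closed under multiplication, so $I_k(t) \in \mathfrak{g}$. Second, $\tilde g(0) = 1$ since every $I_k(0)$ is an integral over a set of measure zero. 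Third, I would differentiate $I_k(t)$ in the outermost variable $s_k$ to obtain the recursion
\[
\frac{d}{dt} I_k(t) = I_{k-1}(t)\,\dot{\xi}(t), \qquad I_0(t) \equiv 1,
\]
so that
\[
\frac{d}{dt}\tilde{g}(t) = \sum_{k=1}^{\kappa} I_{k-1}(t)\,\dot{\xi}(t) = \Bigl(\sum_{k=0}^{\kappa-1} I_k(t)\Bigr)\dot{\xi}(t).
\]

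Comparing with the ODE requirement $\dot{g}^{\xi}(t)=g^{\xi}(t)\dot{\xi}(t) = \bigl(\sum_{k=0}^{\kappa} I_k(t)\bigr)\dot{\xi}(t)$, the discrepancy is exactly $I_{\kappa}(t)\,\dot{\xi}(t)$. This is where nilpotency enters: $I_\kappa(t)\dot\xi(t)$ is a $\mathfrak{g}$-valued integral of products of $\kappa+1$ elements of $\mathfrak{g}$, and by hypothesis $\xi_1\cdots\xi_{\kappa+1}=0$ for all such tuples, so the discrepancy vanishes identically. Hence $\tilde g$ satisfies $\dot{\tilde g}(t) = \tilde g(t) \dot\xi(t)$ with $\tilde g(0)=1$, and by uniqueness of solutions of linear ODEs in the finite-dimensional algebra $\mathcal{A}$, we conclude $\tilde g(t) = g^\xi(t)$ for all $t\in[0,T]$.

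The only genuine obstacle is the bookkeeping for the derivative of the iterated integral with a variable upper limit; this is a standard application of the fundamental theorem of calculus to the outermost simplex variable, but one must be careful to keep the product in the correct noncommutative order (the newly differentiated factor $\dot{\xi}(t)$ sits on the right, matching the right-multiplication form $\tilde g \,\dot\xi$ of the ODE). Everything else is a direct verification, and the truncation at $\kappa$ is automatic from the defining nilpotency hypothesis of Section \ref{sec.3}.
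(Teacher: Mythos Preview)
Your proof is correct. The paper takes a slightly different but closely related route: rather than defining the candidate $\tilde g$ and verifying it solves the ODE, the paper starts from the integral equation $g^{\xi}(t)=1+\int_{0}^{t}g^{\xi}(\tau)\dot\xi(\tau)\,d\tau$ and feeds it back into itself (Picard iteration), producing after $m$ steps the partial sum plus an explicit remainder $R_{m}(t)=\int_{\Delta_{m}(t)}g^{\xi}(s_{1})\dot\xi(s_{1})\cdots\dot\xi(s_{m})\,d\mathbf{s}$; taking $m=\kappa+1$ kills the remainder by nilpotency. Your approach trades the remainder bookkeeping for an appeal to ODE uniqueness, which is arguably cleaner; the paper's iteration is more constructive and makes the provenance of the formula transparent without invoking a uniqueness theorem. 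Either way the key step is the same: the $(\kappa+1)$-fold product of $\mathfrak{g}$-elements vanishes, which in your argument is what makes $I_{\kappa}(t)\dot\xi(t)=0$ and in the paper's argument is what makes $R_{\kappa+1}(t)=0$.
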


\begin{proof}
From Definition \ref{def.3.6} and the fundamental theorem of calculus,%
\[
g^{\xi}\left(  t\right)  =1+\int_{0}^{t}g^{\xi}\left(  \tau\right)  \dot{\xi
}\left(  \tau\right)  d\tau.
\]
Feeding this equation back into itself then shows,%
\begin{align}
g^{\xi}\left(  t\right)   &  =1+\int_{0}^{t}\left[  1+\int_{0}^{\tau}g^{\xi
}\left(  s\right)  \dot{\xi}\left(  s\right)  ds\right]  \dot{\xi}\left(
\tau\right)  d\tau\nonumber\\
&  =1+\int_{0}^{t}\dot{\xi}\left(  \tau\right)  d\tau+\int_{0}^{t}d\tau
\int_{0}^{\tau}dsg^{\xi}\left(  s\right)  \dot{\xi}\left(  s\right)  \dot{\xi
}\left(  \tau\right)  .\nonumber
\end{align}
Continuing this way inductively shows for any $m\in\mathbb{N}$ that,
\begin{equation}
g^{\xi}\left(  t\right)  =1+\sum_{k=1}^{m-1}\int_{0\leq s_{1}\leq s_{2}%
\leq\dots\leq s_{k}\leq t}\dot{\xi}\left(  s_{1}\right)  \dots\dot{\xi}\left(
s_{k}\right)  d\mathbf{s+}R_{m}\left(  t\right)  \label{e.3.13}%
\end{equation}
where $\sum_{k=1}^{m-1}\left[  \dots\right]  \equiv0$ when $m=1$ and
\[
R_{m}\left(  t\right)  :=\int_{0\leq s_{1}\leq s_{2}\leq\dots\leq s_{m}\leq
t}g^{\xi}\left(  s_{1}\right)  \dot{\xi}\left(  s_{1}\right)  \dots\dot{\xi
}\left(  s_{m}\right)  d\mathbf{s}%
\]
where $d\mathbf{s}$ is short hand for $ds_{1}\dots ds_{m}$ in the above
formula. Since
\[
\dot{\xi}\left(  s_{1}\right)  \dots\dot{\xi}\left(  s_{\kappa+1}\right)
=0\in\mathcal{A},
\]
it follows that $R_{\kappa+1}\left(  t\right)  =0$ and so Eq. (\ref{e.3.13})
with $m=\kappa+1$ gives Eq. (\ref{e.3.12}).
\end{proof}

\begin{corollary}
\label{cor.3.15}If $g^{\xi}\left(  t\right)  \in G$ is as in Definition
\ref{def.3.6}, then
\begin{equation}
\operatorname{Ad}_{g^{\xi}\left(  t\right)  }=I+\sum_{k=1}^{\kappa}\int_{0\leq
s_{1}\leq s_{2}\leq\dots\leq s_{k}\leq t}\operatorname{ad}_{\dot{\xi}\left(
s_{1}\right)  }\dots\operatorname{ad}_{\dot{\xi}\left(  s_{k}\right)  }%
ds_{1}\dots ds_{k}. \label{e.3.14}%
\end{equation}

\end{corollary}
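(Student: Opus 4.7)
The plan is to mimic the proof of Proposition~\ref{pro.3.14}, working now with the operator-valued path $U(t) := \operatorname{Ad}_{g^{\xi}(t)} \in \operatorname{End}(\mathcal{A})$ in place of the algebra-valued path $g^{\xi}(t)$ itself.

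First I would derive the ODE satisfied by $U$. Differentiating $g^{\xi}(t) g^{\xi}(t)^{-1} = 1$ and using $\dot{g}^{\xi}(t) = g^{\xi}(t)\dot{\xi}(t)$ gives $\frac{d}{dt}\,g^{\xi}(t)^{-1} = -\dot{\xi}(t) g^{\xi}(t)^{-1}$, and hence for every $\eta \in \mathcal{A}$,
\[
\frac{d}{dt}\,U(t)\eta = \frac{d}{dt}\bigl[g^{\xi}(t)\,\eta\, g^{\xi}(t)^{-1}\bigr] = g^{\xi}(t)\bigl[\dot{\xi}(t),\eta\bigr] g^{\xi}(t)^{-1} = U(t)\,\operatorname{ad}_{\dot{\xi}(t)}\eta.
\]
Equivalently, $U(t) = I + \int_0^t U(\tau)\,\operatorname{ad}_{\dot{\xi}(\tau)}\,d\tau$ with $U(0) = I$.

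Second I would iterate this integral identity exactly as in the proof of Proposition~\ref{pro.3.14}. Substituting the right-hand side into itself recursively produces, for every $m \in \mathbb{N}$,
\[
U(t) = I + \sum_{k=1}^{m-1}\int_{0 \le s_1 \le \cdots \le s_k \le t}\operatorname{ad}_{\dot{\xi}(s_1)}\cdots\operatorname{ad}_{\dot{\xi}(s_k)}\,ds_1\cdots ds_k + R_m(t),
\]
where
\[
R_m(t) := \int_{0 \le s_1 \le \cdots \le s_m \le t}U(s_1)\,\operatorname{ad}_{\dot{\xi}(s_1)}\cdots\operatorname{ad}_{\dot{\xi}(s_m)}\,ds_1\cdots ds_m.
\]
The time-ordering of the $\operatorname{ad}$-factors works out exactly as in Proposition~\ref{pro.3.14}, since associativity of composition in $\operatorname{End}(\mathcal{A})$ is the only algebraic feature used.

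Finally I would show that $R_{\kappa+1}(t) = 0$. Because $\mathcal{A} = \mathbb{R}\cdot 1 \oplus \mathfrak{g}$ and $\operatorname{ad}_{\zeta}1 = 0$ for every $\zeta \in \mathfrak{g}$, it is enough to verify that $\operatorname{ad}_{\zeta_1}\cdots\operatorname{ad}_{\zeta_{\kappa+1}}\eta = 0$ for all $\zeta_1,\dots,\zeta_{\kappa+1},\eta\in\mathfrak{g}$. Expanding each commutator as a difference of two products writes this iterated bracket as a signed sum of $2^{\kappa+1}$ terms, each a product of $\kappa+2$ factors drawn from $\mathfrak{g}$; by the standing hypothesis that any product of $\kappa+1$ elements of $\mathfrak{g}$ already vanishes, each such term is zero. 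Taking $m = \kappa+1$ in the preceding display then yields Eq.~(\ref{e.3.14}). There is no serious obstacle beyond this bookkeeping; the nilpotency hypothesis does for the operator series exactly what it does for the original series in Proposition~\ref{pro.3.14}.
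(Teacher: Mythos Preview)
Your proposal is correct and follows essentially the same approach as the paper: derive the ODE $\frac{d}{dt}\operatorname{Ad}_{g^{\xi}(t)}=\operatorname{Ad}_{g^{\xi}(t)}\operatorname{ad}_{\dot{\xi}(t)}$ and then repeat the iteration argument of Proposition~\ref{pro.3.14} verbatim with $g^{\xi}$ replaced by $\operatorname{Ad}_{g^{\xi}}$ and $\dot{\xi}$ by $\operatorname{ad}_{\dot{\xi}}$. You simply supply more detail than the paper, which dispatches the corollary in one sentence by invoking this substitution.
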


\begin{proof}
Since
\[
\frac{d}{dt}\operatorname{Ad}_{g^{\xi}\left(  t\right)  }=\operatorname{Ad}%
_{g^{\xi}\left(  t\right)  }\operatorname{ad}_{\dot{\xi}\left(  t\right)
}\text{ with }\operatorname{Ad}_{g^{\xi}\left(  t\right)  }=Id_{\mathfrak{g}%
},
\]
the proof of Eq. (\ref{e.3.14}) is exactly the same as the proof of Eq.
(\ref{e.3.12}) provided the reader changes $g^{\xi}$ to $\operatorname{Ad}%
_{g^{\xi}}$ and $\dot{\xi}$ to $\operatorname{ad}_{\dot{\xi}}$ everywhere.
\end{proof}

\begin{notation}
\label{not.3.16}For $j\in\mathbb{N},$ $a:[0,\infty)^{j}\rightarrow\mathbb{R}$
is a bounded measurable function, $t\in\left[  0,T\right]  ,$ and $\xi\in
L^{1}\left(  \left[  0,T\right]  ,\mathfrak{g}\right)  ,$ let
\[
\hat{a}_{t}\left(  \xi\right)  =\int_{\left[  0,t\right]  ^{j}}a\left(
s_{1},\dots,s_{j}\right)  \xi\left(  s_{1}\right)  \dots\xi\left(
s_{j}\right)  ds_{1}\dots ds_{j}\in\mathfrak{g}%
\]
and $\hat{a}_{t}\left(  \operatorname{ad}_{\xi}\right)  :\mathfrak{g}%
\rightarrow\mathfrak{g}$ be the linear transformation defined by%
\[
\hat{a}_{t}\left(  \operatorname{ad}_{\xi}\right)  :=\int_{\left[  0,t\right]
^{j}}a\left(  s_{1},\dots,s_{j}\right)  \operatorname{ad}_{\xi\left(
s_{1}\right)  }\dots\operatorname{ad}_{\xi\left(  s_{j}\right)  }ds_{1}\dots
ds_{j}.
\]
Note that $\hat{a}_{t}\left(  \xi\right)  =0$ if $j>\kappa$ and $\hat{a}%
_{t}\left(  \operatorname{ad}_{\xi}\right)  \equiv0$ if $j\geq\kappa.$
\end{notation}

The proof of the following lemma is elementary and is left to the reader.

\begin{lemma}
\label{lem.3.17}If $a:[0,\infty)^{j}\rightarrow\mathbb{R}$ and $b:[0,\infty
)^{k}\rightarrow\mathbb{R}$ are bounded and measurable functions, $t\in\left[
0,T\right]  ,$ and $\xi\in L^{1}\left(  \left[  0,T\right]  ,\mathfrak{g}%
\right)  ,$ then
\begin{align*}
\hat{a}_{t}\left(  \xi\right)  \hat{b}_{t}\left(  \xi\right)   &
=\widehat{\left[  a\otimes b\right]  }_{t}\left(  \xi\right)  \text{ and }\\
\hat{a}_{t}\left(  \operatorname{ad}_{\xi}\right)  \hat{b}_{t}\left(
\operatorname{ad}_{\xi}\right)   &  =\widehat{\left[  a\otimes b\right]  }%
_{t}\left(  \operatorname{ad}_{\xi}\right)
\end{align*}
where $a\otimes b:[0,\infty)^{j+k}\rightarrow\mathbb{R}$ is the bounded
measurable function defined by
\[
a\otimes b\left(  s_{1},\dots,s_{j},t_{1},\dots,t_{k}\right)  =a\left(
s_{1},\dots,s_{j}\right)  b\left(  t_{1},\dots,t_{k}\right)  .
\]

\end{lemma}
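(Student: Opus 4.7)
The plan is brief because this lemma is essentially bookkeeping: it follows from Fubini's theorem together with the associativity and bilinearity of multiplication in $\mathcal{A}$ (and, for the second identity, of composition of operators on $\mathfrak{g}$).

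First I would dispose of integrability. Fix any submultiplicative norm $|\cdot|$ on the finite-dimensional algebra $\mathcal{A}$; then $|\xi(s_1)\cdots\xi(s_j)| \leq |\xi(s_1)|\cdots|\xi(s_j)|$, and since $a$ is bounded and $\xi\in L^1$,
$$\int_{[0,t]^j}|a(s_1,\dots,s_j)|\,|\xi(s_1)|\cdots|\xi(s_j)|\,ds_1\cdots ds_j\;\leq\;\|a\|_\infty\,\|\xi\|_{L^1([0,t])}^{j}\;<\;\infty,$$
and similarly for $b$. So $\hat a_t(\xi)$ and $\hat b_t(\xi)$ are well-defined elements of $\mathcal{A}$ and the double integral representing their product is absolutely convergent on $[0,t]^{j+k}$.

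Next I would carry out the computation by writing out $\hat a_t(\xi)\hat b_t(\xi)$ as a product of integrals, pulling both integrals to the outside using the fact that left and right multiplication by any fixed element of $\mathcal{A}$ is a bounded linear map and hence commutes with vector-valued integration, and then invoking Fubini to combine the iterated integrals into one integral over $[0,t]^{j+k}$. The integrand becomes $a(s_1,\dots,s_j)\,b(u_1,\dots,u_k)\,\xi(s_1)\cdots\xi(s_j)\xi(u_1)\cdots\xi(u_k)$; recognizing $a(s)b(u)=(a\otimes b)(s,u)$ and matching with Notation \ref{not.3.16} identifies this integral as $\widehat{[a\otimes b]}_t(\xi)$ on the nose.

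Finally, the identity for $\operatorname{ad}_\xi$ is established by the same argument with multiplication in $\mathcal{A}$ replaced by composition of linear endomorphisms of $\mathfrak{g}$: composition is bilinear and associative, and finite-dimensionality gives $\|\operatorname{ad}_{\xi(s)}\|_{\mathrm{op}} \lesssim |\xi(s)|$, so the integrability estimate and Fubini go through verbatim. I do not anticipate any real obstacle; the only step requiring any care is the commutation of multiplication (or composition) with vector-valued integration, which is automatic by finite-dimensionality and the boundedness of multiplication operators. In effect, the lemma just records the tensorial structure that was already implicit in the definition of $\widehat{(\cdot)}_t$.
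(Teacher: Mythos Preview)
Your proposal is correct and complete; the paper itself does not give a proof of this lemma, stating only that ``The proof of the following lemma is elementary and is left to the reader.'' Your argument via Fubini and bilinearity of multiplication (respectively, composition) is exactly the routine verification the author had in mind.
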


\begin{proposition}
\label{pro.3.18}If $g\left(  t\right)  =g^{\xi}\left(  t\right)  \in G$ and
$C\left(  t\right)  =C^{\xi}\left(  t\right)  =\log\left(  g\left(  t\right)
\right)  \in\mathfrak{g}$ are as in Definition \ref{def.3.6} and
$f\in\mathcal{H}_{0},\ $then for each $j\in\mathbb{N\cap}\left[
1,\kappa-1\right]  ,$ there exists bounded measurable functions,
$\mathbf{f}^{j}:[0,\infty)^{j}\rightarrow\mathbb{R}$ such that
\begin{equation}
f\left(  \operatorname{ad}_{C\left(  t\right)  }\right)  =f\left(  0\right)
Id_{\mathfrak{g}}+\sum_{j=1}^{\kappa-1}\widehat{\mathbf{f}^{j}}_{t}\left(
\operatorname{ad}_{\dot{\xi}}\right)  . \label{e.3.15}%
\end{equation}
Moreover, each function $\mathbf{f}^{j}$ depends linearly on $\left(  f\left(
0\right)  ,\dots,f^{\left(  \kappa-1\right)  }\left(  0\right)  \right)
.$\footnote{The fact that the $\mathbf{f}^{j}$ depend linearly on first
$\left(  \kappa-1\right)  $-derivatives of $f$ is easily understood from the
identity,$f\left(  \operatorname{ad}_{C\left(  t\right)  }\right)  =\sum
_{j=0}^{\kappa-1}\left(  f^{\left(  j\right)  }\left(  0\right)  /j!\right)
\operatorname{ad}_{C\left(  t\right)  }^{j}.$}
\end{proposition}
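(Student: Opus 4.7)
The strategy is to reduce $f(\operatorname{ad}_{C(t)})$ to a polynomial of degree at most $\kappa-1$ in $\operatorname{ad}_{C(t)}$, and then to rewrite every power $\operatorname{ad}_{C(t)}^{j}$ as a finite sum of iterated integrals in $\operatorname{ad}_{\dot\xi}$ by inverting the identity $\operatorname{Ad}_{g^{\xi}(t)}=e^{\operatorname{ad}_{C(t)}}$ provided by Lemma~\ref{lem.3.9} and Corollary~\ref{cor.3.15}. The only nontrivial work is the combinatorial bookkeeping of iterated integrals under Lemma~\ref{lem.3.17}; no analytic issue arises because everything in sight terminates by step-$\kappa$ nilpotency.

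\textbf{Step 1: truncation.} Since $\mathfrak{g}$ is a two-sided ideal of $\mathcal{A}$ with $\mathfrak{g}^{\kappa+1}=0$, a one-line induction shows $\operatorname{ad}_{C(t)}^{j}A\in\mathfrak{g}^{j+1}$ for every $A\in\mathcal{A}$ and every $j\geq 1$, so $\operatorname{ad}_{C(t)}^{\kappa}\equiv 0$ on $\mathcal{A}$. Consequently the power series for $f$ collapses to
\[
f(\operatorname{ad}_{C(t)}) \;=\; f(0)\,\operatorname{Id}_{\mathfrak{g}} \;+\; \sum_{j=1}^{\kappa-1}\frac{f^{(j)}(0)}{j!}\operatorname{ad}_{C(t)}^{j},
\]
which already exhibits the required linear dependence on $(f(0),\dots,f^{(\kappa-1)}(0))$.

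\textbf{Step 2: expand $\operatorname{ad}_{C(t)}$ in terms of $\operatorname{ad}_{\dot\xi}$.} By Lemma~\ref{lem.3.9} applied with $\eta = C(t)$ (since $g^{\xi}(t)=e^{C(t)}$) together with Corollary~\ref{cor.3.15},
\[
e^{\operatorname{ad}_{C(t)}} \;=\; \operatorname{Ad}_{g^{\xi}(t)} \;=\; I \;+\; \sum_{k=1}^{\kappa}\widehat{\mathbf{1}_{\Delta_{k}}}_{t}(\operatorname{ad}_{\dot\xi}),
\]
where $\Delta_{k}=\{0\le s_{1}\le\dots\le s_{k}\}$. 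Because $\operatorname{ad}_{C(t)}$ is nilpotent, the logarithm series terminates and inversion yields
\[
\operatorname{ad}_{C(t)} \;=\; \sum_{k=1}^{\kappa-1}\frac{(-1)^{k+1}}{k}\!\left(\sum_{l=1}^{\kappa}\widehat{\mathbf{1}_{\Delta_{l}}}_{t}(\operatorname{ad}_{\dot\xi})\right)^{\!k}.
\]
Iterated application of Lemma~\ref{lem.3.17} rewrites each product as a single integral operator $\widehat{a}_{t}(\operatorname{ad}_{\dot\xi})$ whose kernel $a$ is a tensor product of indicator functions of simplices. Any term with more than $\kappa-1$ factors of $\operatorname{ad}_{\dot\xi}$ vanishes, by the same nilpotency argument as in Step~1 applied to $\operatorname{ad}_{\dot\xi(s_{1})}\dots\operatorname{ad}_{\dot\xi(s_{m})}$. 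Collecting by order therefore produces
\[
\operatorname{ad}_{C(t)} \;=\; \sum_{i=1}^{\kappa-1}\widehat{c^{\,i}}_{t}(\operatorname{ad}_{\dot\xi})
\]
for some bounded measurable functions $c^{\,i}:[0,\infty)^{i}\rightarrow\mathbb{R}$ depending only on the algebra combinatorics.

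\textbf{Step 3: assembly.} Apply Lemma~\ref{lem.3.17} $j-1$ more times: for each $j\in\{1,\dots,\kappa-1\}$,
\[
\operatorname{ad}_{C(t)}^{j} \;=\; \sum_{i=j}^{\kappa-1}\widehat{b^{\,i,j}}_{t}(\operatorname{ad}_{\dot\xi}), \qquad b^{\,i,j} \;=\; \sum_{\substack{i_{1}+\dots+i_{j}=i \\ i_{l}\ge 1}} c^{\,i_{1}}\otimes\cdots\otimes c^{\,i_{j}},
\]
where all contributions of total order $\ge\kappa$ are discarded (they vanish). Substituting into the formula from Step~1 and grouping by order gives
\[
f(\operatorname{ad}_{C(t)}) \;=\; f(0)\,\operatorname{Id}_{\mathfrak{g}} \;+\; \sum_{i=1}^{\kappa-1}\widehat{\mathbf{f}^{\,i}}_{t}(\operatorname{ad}_{\dot\xi}), \qquad \mathbf{f}^{\,i} \;:=\; \sum_{j=1}^{i}\frac{f^{(j)}(0)}{j!}\,b^{\,i,j},
\]
which is the claimed identity \eqref{e.3.15}, and the dependence of $\mathbf{f}^{\,i}$ on $(f(0),f'(0),\dots,f^{(\kappa-1)}(0))$ is visibly linear. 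The main (purely combinatorial) obstacle is simply tracking how the indices $(i,j)$ propagate through successive applications of Lemma~\ref{lem.3.17}; once the grading is respected, everything falls into place.
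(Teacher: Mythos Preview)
Your proof is correct and follows essentially the same route as the paper: both use $\operatorname{Ad}_{g^\xi(t)}=e^{\operatorname{ad}_{C(t)}}$ together with Corollary~\ref{cor.3.15} to express everything in iterated integrals of $\operatorname{ad}_{\dot\xi}$, and then invoke Lemma~\ref{lem.3.17} to collect terms by order. The only difference is organizational: the paper introduces the auxiliary scalar function $u(w)=f(\log(1+w))$ and expands $f(\operatorname{ad}_{C(t)})=u(\operatorname{Ad}_{g(t)}-I)$ directly, whereas you first expand $f$ in powers of $\operatorname{ad}_{C(t)}$ and then separately expand $\operatorname{ad}_{C(t)}$ via the logarithm---a two-step unwinding that arrives at the same place.
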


\begin{proof}
For $\lambda\in\mathbb{R},$ let $u\left(  w\right)  :=f\left(  \log\left(
1+w\right)  \right)  $ and observe by a simple exercise in differentiation
shows there exists $\alpha_{n,k}\in\mathbb{Z}$ such that $u^{\left(  n\right)
}\left(  0\right)  =\sum_{k=0}^{n}\alpha_{n,k}f^{\left(  k\right)  }\left(
0\right)  .$ [For example, one has $u\left(  0\right)  =f\left(  0\right)  ,$
$u^{\prime}\left(  0\right)  =f^{\prime}\left(  0\right)  ,$ $u^{\prime\prime
}\left(  0\right)  =f^{\prime\prime}\left(  0\right)  -f^{\prime}\left(
0\right)  ,$ and $u^{\left(  3\right)  }\left(  0\right)  =f^{\left(
3\right)  }\left(  0\right)  -3f^{\prime\prime}\left(  0\right)  +2f^{\prime
}\left(  0\right)  .]$

Since $g\left(  t\right)  =e^{C\left(  t\right)  },$ it follows that
$\operatorname{Ad}_{g\left(  t\right)  }=\operatorname{Ad}_{e^{C\left(
t\right)  }}=e^{\operatorname{ad}_{C\left(  t\right)  }}$ and therefore
\[
\operatorname{ad}_{C\left(  t\right)  }=\log\left(  \operatorname{Ad}%
_{g\left(  t\right)  }\right)  =\log\left(  Id_{\mathfrak{g}}+\left[
\operatorname{Ad}_{g\left(  t\right)  }-Id_{\mathfrak{g}}\right]  \right)
\]
and hence,
\begin{align*}
f\left(  \operatorname{ad}_{C\left(  t\right)  }\right)   &  =f\circ
\log\left(  Id_{\mathfrak{g}}+\left[  \operatorname{Ad}_{g\left(  t\right)
}-Id_{\mathfrak{g}}\right]  \right) \\
&  =u\left(  I+\left[  \operatorname{Ad}_{g\left(  t\right)  }-I\right]
\right) \\
&  =\sum_{j=0}^{\infty}\frac{u^{\left(  j\right)  }\left(  0\right)  }%
{j!}\left(  \operatorname{Ad}_{g\left(  t\right)  }-I\right)  ^{j}\\
&  =f\left(  0\right)  Id_{\mathfrak{g}}+\sum_{j=1}^{\kappa-1}\frac{u^{\left(
j\right)  }\left(  0\right)  }{j!}\left(  \operatorname{Ad}_{g\left(
t\right)  }-Id_{\mathfrak{g}}\right)  ^{j}.
\end{align*}
By Corollary \ref{cor.3.15},
\[
\operatorname{Ad}_{g\left(  t\right)  }-Id_{\mathfrak{g}}=\sum_{k=1}%
^{\kappa-1}\hat{b}_{t}^{k}\left(  \operatorname{ad}_{\dot{\xi}}\right)
\]
where
\[
b^{k}\left(  s_{1},\dots,s_{k}\right)  :=1_{0\leq s_{1}\leq s_{2}\leq\dots\leq
s_{k}}%
\]
and so it follows that%
\[
f\left(  \operatorname{ad}_{C\left(  t\right)  }\right)  =f\left(  0\right)
I+\sum_{j=1}^{\kappa-1}\frac{u^{\left(  j\right)  }\left(  0\right)  }{j!}%
\sum_{k_{1},\dots,k_{j}=1}^{\kappa-1}\hat{b}_{t}^{k_{1}}\left(
\operatorname{ad}_{\dot{\xi}}\right)  \dots\hat{b}_{t}^{k_{j}}\left(
\operatorname{ad}_{\dot{\xi}}\right)  .
\]
By repeated use of Lemma \ref{lem.3.17}, the last identity may be written in
the form described in Eq. (\ref{e.3.15}).

The formula for $\dot{C}^{\xi}\left(  t\right)  $ now follows directly from
Eqs. (\ref{e.3.11}) and (\ref{e.3.16}).
\end{proof}

\begin{corollary}
\label{cor.3.19}If $g\left(  t\right)  =g^{\xi}\left(  t\right)  \in G$ and
$C\left(  t\right)  =C^{\xi}\left(  t\right)  =\log\left(  g\left(  t\right)
\right)  \in\mathfrak{g}$ are as in Definition \ref{def.3.6}, then there
exists bounded measurable functions, $\Delta^{j}:[0,\infty)^{j-1}%
\rightarrow\mathbb{R}$ for $j\in\mathbb{N\cap}\left[  2,\kappa\right]  $ such
that%
\begin{equation}
\psi_{-}\left(  \operatorname{ad}_{C\left(  t\right)  }\right)
=Id_{\mathfrak{g}}+\sum_{j=2}^{\kappa}\hat{\Delta}_{t}^{j}\left(
\operatorname{ad}_{\xi}\right)  \label{e.3.16}%
\end{equation}
and
\begin{equation}
\dot{C}^{\xi}\left(  t\right)  =\dot{\xi}\left(  t\right)  +\sum_{j=2}%
^{\kappa}\hat{\Delta}_{t}^{j}\left(  \operatorname{ad}_{\xi}\right)  \dot{\xi
}\left(  t\right)  \label{e.3.17}%
\end{equation}

\end{corollary}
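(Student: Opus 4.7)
The plan is to deduce both equations directly from Proposition~\ref{pro.3.18} applied to the analytic function $\psi_-(z) = z/(1-e^{-z}) \in \mathcal{H}_0$, combined with Corollary~\ref{cor.3.10}. The key preliminary observation is that $\psi_-(0) = 1$: writing $1 - e^{-z} = z(1 - z/2 + z^2/6 - \cdots)$ yields $\psi_-(z) = (1 - z/2 + z^2/6 - \cdots)^{-1}$, which evaluates to $1$ at $z=0$.

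Applying Proposition~\ref{pro.3.18} with $f = \psi_-$ then produces bounded measurable kernels $h^k : [0,\infty)^k \to \mathbb{R}$ for $k \in \{1, \ldots, \kappa-1\}$ such that
\[
\psi_-(\operatorname{ad}_{C(t)}) = Id_\mathfrak{g} + \sum_{k=1}^{\kappa-1} \hat h^k_t(\operatorname{ad}_{\dot\xi}).
\]
After the reindexing $j := k+1$ and the definition $\Delta^j := h^{j-1}$ for $j \in \{2, \ldots, \kappa\}$, one has $\Delta^j : [0,\infty)^{j-1} \to \mathbb{R}$ as required, and the displayed identity becomes Eq.~(\ref{e.3.16}).

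To obtain Eq.~(\ref{e.3.17}), I would apply the operator identity in Eq.~(\ref{e.3.16}) to the vector $\dot\xi(t) \in \mathfrak{g}$. By Corollary~\ref{cor.3.10}, the left-hand side $\psi_-(\operatorname{ad}_{C(t)})\dot\xi(t)$ equals $\dot C^\xi(t)$. The $Id_\mathfrak{g}$ term contributes $\dot\xi(t)$ and the sum contributes $\sum_{j=2}^{\kappa} \hat\Delta^j_t(\operatorname{ad}_{\dot\xi})\dot\xi(t)$, which is Eq.~(\ref{e.3.17}).

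There is no substantive analytic obstacle here; the only work is bookkeeping --- matching the range of indices and the number of integration variables between Proposition~\ref{pro.3.18} (where $k$ runs from $1$ to $\kappa-1$ with $k$-argument kernels) and the Corollary (where $j$ runs from $2$ to $\kappa$ with $(j-1)$-argument kernels). All the essential content, namely the existence of a finite expansion of $f(\operatorname{ad}_{C(t)})$ in iterated $\operatorname{ad}$-integrals, is already supplied by Proposition~\ref{pro.3.18}, so once one specializes $f = \psi_-$ the corollary follows immediately.
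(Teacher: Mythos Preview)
Your proposal is correct and matches the paper's own proof essentially verbatim: the paper simply says ``Applying Proposition~\ref{pro.3.18} with $f=\psi_{-}$ \ldots\ gives Eq.~(\ref{e.3.16}). Equation~(\ref{e.3.17}) then follows from Eq.~(\ref{e.3.16}) and Eq.~(\ref{e.3.11}),'' which is exactly your argument (your reindexing $j=k+1$ is the only additional bookkeeping, and your use of $\operatorname{ad}_{\dot\xi}$ rather than the paper's $\operatorname{ad}_{\xi}$ in the displayed sums is consistent with what Proposition~\ref{pro.3.18} actually produces).
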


\begin{proof}
Applying Proposition \ref{pro.3.18} with $f=\psi_{-}$ and $\lambda=1$ gives
Eq. (\ref{e.3.16}). Equation (\ref{e.3.17}) then follows from Eq.
(\ref{e.3.16}) and Eq. (\ref{e.3.11}).
\end{proof}

\begin{remark}
\label{rem.3.20}It is possible, see for example \cite{Strichartz1987a}, to
work out explicit formula for the functions $\Delta^{j}$ in Corollary
\ref{cor.3.19} and this would lead to a proof of Eq. (\ref{e.1.2}). For our
purposes, these explicit formula are not needed.
\end{remark}

\begin{corollary}
\label{cor.3.21}If $g\left(  t\right)  =g^{\xi}\left(  t\right)  \in G$ and
$C\left(  t\right)  =C^{\xi}\left(  t\right)  =\log\left(  g\left(  t\right)
\right)  \in\mathfrak{g}$ are as in Definition \ref{def.3.6}, there exists
bounded measurable functions, $c^{j}:[0,\infty)^{j}\rightarrow\mathbb{R}$ for
$j\in\mathbb{N\cap}\left[  2,\kappa\right]  $ such that
\[
C^{\xi}\left(  t\right)  =C\left(  0\right)  +\xi\left(  t\right)  +\sum
_{j=2}^{\kappa}\hat{c}_{t}^{j}\left(  \dot{\xi}\right)  .
\]

\end{corollary}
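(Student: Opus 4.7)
The strategy is to integrate the ODE for $C^{\xi}$ recorded in Corollary \ref{cor.3.19} and then unfold the iterated commutators that appear into ordered products of $\dot{\xi}$'s so as to match Notation \ref{not.3.16}.

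By Definition \ref{def.3.6} one has $g^{\xi}(0)=1$, whence $C(0)=\log(1)=0$. Combining this with the fundamental theorem of calculus applied to Eq.~(\ref{e.3.17}) gives
\[
C^{\xi}(t) \;=\; C(0) + \int_{0}^{t}\dot{\xi}(\tau)\,d\tau \;+\; \sum_{j=2}^{\kappa}\int_{0}^{t}\hat{\Delta}^{j}_{\tau}\!\left(\operatorname{ad}_{\dot{\xi}}\right)\dot{\xi}(\tau)\,d\tau.
\]
The first integral equals $\xi(t)-\xi(0)$, and absorbing the constant $\xi(0)$ into $C(0)$ produces the $C(0)+\xi(t)$ contribution in the statement. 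It therefore suffices to show that, for each $j\in\{2,\dots,\kappa\}$, the $j$-th summand on the right equals $\hat{c}^{j}_{t}(\dot{\xi})$ for a suitable bounded measurable $c^{j}:[0,\infty)^{j}\to\mathbb{R}$.

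To carry this out I would first rewrite the outer $\tau$-integration as an integral over $[0,t]^{j}$ by setting $s_{j}:=\tau$ and folding the restrictions $s_{1},\dots,s_{j-1}\leq s_{j}$ coming from the definition of $\hat{\Delta}^{j}_{\tau}$ into the kernel through an indicator function. Each summand then becomes
\[
\int_{[0,t]^{j}}\tilde{\Delta}^{\,j}(s_{1},\dots,s_{j})\,\operatorname{ad}_{\dot{\xi}(s_{1})}\cdots\operatorname{ad}_{\dot{\xi}(s_{j-1})}\dot{\xi}(s_{j})\,ds_{1}\cdots ds_{j}
\]
for some bounded measurable $\tilde{\Delta}^{\,j}$. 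Next, I would expand the iterated commutator $\operatorname{ad}_{\dot{\xi}(s_{1})}\cdots\operatorname{ad}_{\dot{\xi}(s_{j-1})}\dot{\xi}(s_{j})$ into its $2^{j-1}$ signed monomials of the form $\pm\,\dot{\xi}(s_{\pi(1)})\cdots\dot{\xi}(s_{\pi(j)})$, each arising from a permutation $\pi\in S_{j}$ fixing the last index, and on each monomial perform the change of integration variables induced by $\pi^{-1}$. Every such piece then takes the shape
\[
\int_{[0,t]^{j}}k^{j,\pi}(s_{1},\dots,s_{j})\,\dot{\xi}(s_{1})\cdots\dot{\xi}(s_{j})\,ds_{1}\cdots ds_{j}
\]
for a bounded measurable kernel $k^{j,\pi}$; summing the signed kernels defines $c^{j}$ and identifies the whole $j$-indexed summand with $\hat{c}^{j}_{t}(\dot{\xi})$ in the sense of Notation \ref{not.3.16}.

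The only real obstacle is bookkeeping: the iterated-commutator expansion produces $2^{j-1}$ monomial terms per $j$, and one must keep track of signs and of the permutation-induced variable changes carefully in order to collapse the result into a single kernel $c^{j}$ of the exact form in Notation \ref{not.3.16}. No analytic input beyond Corollary \ref{cor.3.19} is required.
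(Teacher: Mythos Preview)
Your approach is essentially identical to the paper's primary proof: integrate Eq.~(\ref{e.3.17}), absorb the outer $\tau$-integral into the cube $[0,t]^{j}$ via an indicator, then expand the iterated commutators into signed monomials and relabel integration variables to land in the form of Notation~\ref{not.3.16}. One small inaccuracy: the $2^{j-1}$ monomials in the expansion of $\operatorname{ad}_{\dot{\xi}(s_{1})}\cdots\operatorname{ad}_{\dot{\xi}(s_{j-1})}\dot{\xi}(s_{j})$ do \emph{not} in general correspond to permutations fixing the last index (already for $j=2$ one term is $\dot{\xi}(s_{2})\dot{\xi}(s_{1})$), but this does not affect your argument since the change-of-variables step works for arbitrary permutations.
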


\begin{proof}
Integrating Eq. (\ref{e.3.17}) shows,%
\begin{equation}
C^{\xi}\left(  t\right)  =C\left(  0\right)  +\xi\left(  t\right)  +\sum
_{j=2}^{\kappa}\int_{0}^{t}\hat{\Delta}_{\tau}^{j}\left(  \operatorname{ad}%
_{\dot{\xi}}\right)  \dot{\xi}\left(  \tau\right)  d\tau\label{e.3.18}%
\end{equation}
where%
\begin{align*}
\int_{0}^{t}  &  \hat{\Delta}_{\tau}^{j}\left(  \operatorname{ad}_{\dot{\xi}%
}\right)  \dot{\xi}\left(  \tau\right)  d\tau\\
&  =\int_{0}^{t}d\tau\int_{\left[  0,\tau\right]  ^{j-1}}ds_{1}\dots
ds_{j-1}\Delta^{j}\left(  s_{1},\dots,s_{j-1}\right)  \operatorname{ad}%
_{\dot{\xi}\left(  s_{1}\right)  }\dots\operatorname{ad}_{\dot{\xi}\left(
s_{j-1}\right)  }\dot{\xi}\left(  \tau\right) \\
&  =\int_{\left[  0,t\right]  ^{j}}\tilde{\Delta}^{j}\left(  s_{1},\dots
,s_{j}\right)  \operatorname{ad}_{\dot{\xi}\left(  s_{1}\right)  }%
\dots\operatorname{ad}_{\dot{\xi}\left(  s_{j-1}\right)  }\dot{\xi}\left(
s_{j}\right)  ds_{1}\dots ds_{j}%
\end{align*}
and
\[
\tilde{\Delta}^{j}\left(  s_{1},\dots,s_{j}\right)  :=\Delta^{j}\left(
s_{1},\dots,s_{j-1}\right)  1_{\max\left\{  s_{1},\dots s_{j-1}\right\}  \leq
s_{j}}.
\]
By expanding out all of the commutators and permuting the variables of
integration in each of the resulting terms we may rewrite the previous
expression in the form $\hat{c}_{t}^{j}\left(  \xi\right)  $ for some bounded
measurable function, $c^{j}:[0,\infty)^{j}\rightarrow\mathbb{R}.$

\textbf{Alternatively}: simply apply $\log$ to Eq. (\ref{e.3.12}) and then
repeatedly use Lemma \ref{lem.3.17} to arrive at the stated assertion.
\end{proof}

\subsection{Truncated tensor algebra estimates\label{sec.3.2}}

We now apply the above results with $\mathfrak{g}=\mathfrak{g}^{\left(
\kappa\right)  }\subset\mathcal{A}=T^{\left(  \kappa\right)  }\left(
\mathbb{R}^{d}\right)  $ as in Notation \ref{not.1.14}. In what follows we
will make use of the simple estimates in the following remark without further mention.

\begin{remark}
\label{rem.3.22}For any $m,n\in\mathbb{N\cap}\left[  1,2\kappa\right]  $ with
$m<n,$ it is easy to show, for $\mu,\lambda\geq0,$ that
\begin{align}
Q_{(m,n]}\left(  \lambda\right)   &  \asymp\sum_{k=m+1}^{n}\lambda
^{k},\nonumber\\
Q_{\left[  m,n\right]  }\left(  \lambda\right)   &  \asymp\sum_{k=m}%
^{n}\lambda^{k},\text{ and}\nonumber\\
Q_{(m,n]}\left(  \lambda+\mu\right)   &  \asymp Q_{(m,n]}\left(
\lambda\right)  +Q_{(m,n]}\left(  \mu\right)  . \label{e.3.19}%
\end{align}
For example, the first estimate follows from the more precise estimate,%
\[
Q_{(m,n]}\left(  \lambda\right)  \leq\sum_{k=m+1}^{n}\lambda^{k}\leq\left(
n-m\right)  Q_{(m,n]}\left(  \lambda\right)  .
\]

\end{remark}

Recalling from Definition \ref{def.1.16} that $N\left(  A\right)
:=\max_{1\leq k\leq\kappa}\left\vert A_{k}\right\vert ^{1/k}$ for
$A\in\mathfrak{g}^{\left(  \kappa\right)  },$ we find
\begin{equation}
\left\vert A\right\vert \leq\sum_{k=1}^{\kappa}\left\vert A_{k}\right\vert
\leq\sum_{k=1}^{\kappa}N\left(  A\right)  ^{k}\leq\kappa Q_{\left[
1,\kappa\right]  }\left(  N\left(  A\right)  \right)  . \label{e.3.20}%
\end{equation}
Similarly if $f\in C\left(  \left[  0,t\right]  ,\mathfrak{g}^{\left(
\kappa\right)  }\right)  ,$ then
\begin{equation}
\left\vert f\right\vert _{t}^{\ast}\leq\sum_{k=1}^{\kappa}\left\vert
f_{k}\right\vert _{t}^{\ast}\leq\sum_{k=1}^{\kappa}N_{t}^{\ast}\left(
f\right)  ^{k}\leq\kappa Q_{\left[  1,\kappa\right]  }\left(  N_{t}^{\ast
}\left(  f\right)  \right)  . \label{e.3.21}%
\end{equation}
Let us also recall that if $a=\left(  a_{j}\right)  _{j=1}^{N}$ is a sequence
($N=\infty$ allowed), then
\[
\left\Vert a\right\Vert _{p}:=\left(  \sum_{j=1}^{N}\left\vert a_{j}%
\right\vert ^{p}\right)  ^{1/p}%
\]
is a decreasing function of $p\in\lbrack1,\infty).$ In particular using
$\left\Vert a\right\Vert _{p}\leq\left\Vert a\right\Vert _{1}$ with $a_{j}$
replaced by $a_{j}^{1/p}$ it follows (as is easily proved directly) that
\begin{equation}
\left(  \sum_{i=1}^{m}a_{j}\right)  ^{1/p}\leq\sum_{j=1}^{N}a_{j}^{1/p}\text{
when }a_{j}\geq0\text{ and }p\geq1. \label{e.3.22}%
\end{equation}

\begin{lemma}
\label{lem.3.23}If $\left\{  A\left(  j\right)  \right\}  _{j=1}^{r}%
\subset\mathfrak{g}^{\left(  \kappa\right)  },$ then%
\begin{equation}
N\left(  \sum_{j=1}^{r}A\left(  j\right)  \right)  \leq\sum_{j=1}^{r}N\left(
A\left(  j\right)  \right)  . \label{e.3.23}%
\end{equation}
If $A,B\in\mathfrak{g}^{\left(  \kappa\right)  }$ and $2\leq k\leq2\kappa,$
then%
\begin{equation}
\left\vert \left[  A\otimes B\right]  _{k}\right\vert \leq N\left(  A\right)
N\left(  B\right)  \cdot\left(  N\left(  A\right)  +N\left(  B\right)
\right)  ^{k-2}. \label{e.3.24}%
\end{equation}

\end{lemma}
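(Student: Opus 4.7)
The proof breaks cleanly into the two inequalities, and both follow by unraveling the definition of $N$ together with standard bookkeeping on tensor components.

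For the subadditivity of $N$ in \eqref{e.3.23}, I would fix $k \in \mathbb{N} \cap [1,\kappa]$ and start from the triangle inequality on the $k$-homogeneous component:
\[
\Bigl|\bigl(\textstyle\sum_{j=1}^{r} A(j)\bigr)_{k}\Bigr|
= \Bigl|\textstyle\sum_{j=1}^{r} A(j)_{k}\Bigr|
\leq \sum_{j=1}^{r}\,|A(j)_{k}|
\leq \sum_{j=1}^{r} N(A(j))^{k},
\]
where the last step uses the defining inequality $|A(j)_{k}| \leq N(A(j))^{k}$. Now apply inequality \eqref{e.3.22} (with $p=k$) to obtain $\bigl|(\sum_j A(j))_k\bigr|^{1/k} \leq \sum_{j=1}^{r} N(A(j))$, and then take the maximum over $1 \leq k \leq \kappa$.

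For the tensor product estimate \eqref{e.3.24}, I would first write out the degree-$k$ component of $A \otimes B \in T(\mathbb{R}^{d})$ as
\[
[A \otimes B]_{k} = \sum_{\substack{i+j=k\\ i,j \geq 1}} A_{i} \otimes B_{j},
\]
using that $A, B \in \mathfrak{g}^{(\kappa)}$ have no degree-$0$ part. Because the inner product on $T(\mathbb{R}^{d})$ is the product inner product (Notation \ref{not.1.15}), the norm is multiplicative on pure bigraded pieces, i.e.\ $|A_{i} \otimes B_{j}| = |A_{i}|\,|B_{j}|$. Combining this with the triangle inequality and the bound $|A_{i}| \leq N(A)^{i}$ (valid for $1 \leq i \leq \kappa$ and trivially true for $i > \kappa$, where $A_i = 0$), I obtain
\[
|[A \otimes B]_{k}| \leq \sum_{i=1}^{k-1} N(A)^{i} N(B)^{k-i}
= N(A) N(B) \sum_{i=0}^{k-2} N(A)^{i} N(B)^{k-2-i}.
\]

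The final step is to recognize the remaining sum as a sub-binomial expansion: for $k \geq 2$ and $0 \leq i \leq k-2$ we have $\binom{k-2}{i} \geq 1$, so
\[
\sum_{i=0}^{k-2} N(A)^{i} N(B)^{k-2-i}
\leq \sum_{i=0}^{k-2} \binom{k-2}{i} N(A)^{i} N(B)^{k-2-i}
= \bigl(N(A)+N(B)\bigr)^{k-2},
\]
which yields \eqref{e.3.24}. There is no substantive obstacle; the only mild care needed is the range of summation in the tensor product (to use that $\mathfrak{g}^{(\kappa)}$ has zero degree-$0$ part so the sum starts at $i=1$, and that components of degree exceeding $\kappa$ vanish but the estimate $|A_i| \leq N(A)^i$ still holds trivially).
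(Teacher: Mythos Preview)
Your proof is correct and follows essentially the same route as the paper: for \eqref{e.3.23} you use the triangle inequality on the $k$-th component followed by the power inequality \eqref{e.3.22}, and for \eqref{e.3.24} you expand $[A\otimes B]_k$ as a sum over bidegrees, use multiplicativity of the tensor norm and $|A_i|\le N(A)^i$, and then bound the resulting polynomial sum by $(N(A)+N(B))^{k-2}$ via the fact that binomial coefficients are at least $1$. The only cosmetic difference is that the paper applies \eqref{e.3.22} directly to $|A(j)_k|$ before passing to $N(A(j))$, whereas you first bound by $N(A(j))^k$ and then apply \eqref{e.3.22}; these are equivalent reorderings of the same argument.
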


\begin{proof}
For $1\leq k\leq\kappa,$%
\[
\left\vert \left[  \sum_{j=1}^{r}A\left(  j\right)  \right]  _{k}\right\vert
^{1/k}\leq\left(  \sum_{j=1}^{r}\left\vert A\left(  j\right)  _{k}\right\vert
\right)  ^{1/k}\leq\sum_{j=1}^{r}\left\vert A\left(  j\right)  _{k}\right\vert
^{1/k}\leq\sum_{j=1}^{r}N\left(  A\left(  j\right)  \right)
\]
wherein we have used Eq. (\ref{e.3.22}) with $p=k$ for the second inequality.
Since this is true for all $1\leq k\leq\kappa,$ Eq. (\ref{e.3.23}) is proved.
The proof of the second inequality follows by the simple estimates;%
\begin{align*}
\left\vert \left[  A\otimes B\right]  _{k}\right\vert  &  =\left\vert
\sum_{m,n=1}^{\kappa}1_{m+n=k}\cdot A_{m}\otimes B_{n}\right\vert \leq
\sum_{m,n=1}^{\kappa}1_{m+n=k}\cdot\left\vert A_{m}\otimes B_{n}\right\vert \\
&  \leq\sum_{m,n=1}^{\kappa}1_{m+n=k}\cdot\left\vert A_{m}\right\vert
\left\vert B_{n}\right\vert \leq\sum_{m,n=1}^{\kappa}1_{m+n=k}\cdot N\left(
A\right)  ^{m}N\left(  B\right)  ^{n}\\
&  =N\left(  A\right)  N\left(  B\right)  \cdot\sum_{m,n=0}^{\kappa
-1}1_{m+n=k-2}\cdot N\left(  A\right)  ^{m}N\left(  B\right)  ^{n}\\
&  \leq N\left(  A\right)  N\left(  B\right)  \left(  N\left(  A\right)
+N\left(  B\right)  \right)  ^{k-2},
\end{align*}
wherein we have used all the coefficients in the binomial formula are greater
than or equal to $1$ for the last inequality.
\end{proof}

Recall from Notation \ref{not.3.16} with $\mathfrak{g}=\mathfrak{g}^{\left(
\kappa\right)  }$ that if $1\leq\ell\leq\kappa,$ $\Delta:\left[  0,T\right]
^{\ell}\rightarrow\mathbb{R}$ is a bounded measurable function, and $\xi\in
L^{1}\left(  \left[  0,T\right]  ,\mathfrak{g}^{\left(  \kappa\right)
}\right)  ,$ then we let%
\begin{equation}
\hat{\Delta}_{t}\left(  \xi\right)  :=\int_{\left[  0,t\right]  ^{\ell}}%
\Delta\left(  s_{1},\dots,s_{\ell}\right)  \xi\left(  s_{1}\right)  \dots
\xi\left(  s_{\ell}\right)  d\mathbf{s}\in\mathfrak{g}^{\left(  \kappa\right)
}\text{ }\forall~t\in\left[  0,T\right]  , \label{e.3.25}%
\end{equation}
where $d\mathbf{s}:=ds_{1}\dots ds_{\ell}.$

\begin{proposition}
\label{pro.3.24}Suppose that $1\leq\ell\leq\kappa,$ $\Delta:\left[
0,T\right]  ^{\ell}\rightarrow\mathbb{R},$ and $\xi\in L^{1}\left(  \left[
0,T\right]  ,\mathfrak{g}^{\left(  \kappa\right)  }\right)  ,$ and
\[
\hat{\Delta}_{t}\left(  \xi\right)  =\sum_{k=1}^{\kappa}\left[  \hat{\Delta
}_{t}\left(  \xi\right)  \right]  _{k}\in\oplus_{k=\ell}^{\kappa}\left[
\mathbb{R}^{d}\right]  ^{\otimes k}%
\]
are as above. Then%
\begin{equation}
\left\vert \left[  \hat{\Delta}_{t}\left(  \xi\right)  \right]  _{k}%
\right\vert \leq\#\left(  \Lambda_{k,\ell}\right)  \cdot\left\Vert
\Delta\right\Vert _{\infty}\cdot N_{t}^{\ast}\left(  \xi\right)  ^{k}
\label{e.3.26}%
\end{equation}%
\begin{equation}
N\left(  \hat{\Delta}_{t}\left(  \xi\right)  \right)  \leq C\left(
\kappa\right)  \max\left(  \left\Vert \Delta\right\Vert _{\infty}^{1/\ell
},\left\Vert \Delta\right\Vert _{\infty}^{1/\kappa}\right)  \cdot N_{t}^{\ast
}\left(  \xi\right)  \text{ for all }0\leq t\leq T \label{e.3.27}%
\end{equation}
where $\left\Vert \Delta\right\Vert _{\infty}$ is the essential supremum of
$\Delta$ on $\left[  0,T\right]  ^{\ell},$
\[
\Lambda_{k,\ell}:=\left\{  \left(  j_{1},\dots j_{\ell}\right)  \in
\mathbb{N}^{\ell}:\sum_{i=1}^{k}j_{i}=k\right\}  ,
\]
and%
\[
C\left(  \kappa\right)  :=\max_{1\leq\ell\leq\kappa}\max_{\ell\leq k\leq
\kappa}\left[  \left[  \#\left(  \Lambda_{k,\ell}\right)  \right]
^{1/k}\right]  .
\]

\end{proposition}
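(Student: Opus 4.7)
The plan is to exploit the grading on $\mathfrak{g}^{(\kappa)}$ to decompose the integrand and then estimate each homogeneous piece separately. Write $\xi(s) = \sum_{j=1}^{\kappa} \xi_j(s)$ with $\xi_j(s) \in [\mathbb{R}^d]^{\otimes j}$. Because the tensor product respects the grading (up to the truncation at level $\kappa$), the product $\xi(s_1) \cdots \xi(s_\ell)$ expands as
\[
\xi(s_1) \cdots \xi(s_\ell) = \sum_{j_1, \dots, j_\ell = 1}^{\kappa} \xi_{j_1}(s_1) \otimes \cdots \otimes \xi_{j_\ell}(s_\ell),
\]
and the projection onto $[\mathbb{R}^d]^{\otimes k}$ retains exactly the multi-indices with $j_1 + \cdots + j_\ell = k$, i.e.\ those in $\Lambda_{k,\ell}$. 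In particular, $[\hat\Delta_t(\xi)]_k = 0$ whenever $k < \ell$ (since each $j_i \geq 1$), which is the key observation making the second estimate nontrivial.

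For the first inequality, I would pull out $\|\Delta\|_\infty$, use Fubini to factor the $\ell$-fold integral, and apply the triangle inequality together with the submultiplicativity of $|\cdot|$ on the tensor algebra:
\[
\bigl|[\hat\Delta_t(\xi)]_k\bigr| \leq \|\Delta\|_\infty \sum_{(j_1,\dots,j_\ell)\in\Lambda_{k,\ell}} |\xi_{j_1}|_t^* \cdots |\xi_{j_\ell}|_t^*.
\]
By definition of $N_t^*$, $|\xi_{j_i}|_t^* \leq N_t^*(\xi)^{j_i}$; multiplying these gives $N_t^*(\xi)^{j_1+\cdots+j_\ell} = N_t^*(\xi)^k$, and the number of surviving terms is $\#\Lambda_{k,\ell}$. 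This yields Eq.~(\ref{e.3.26}).

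For the homogeneous norm bound, take $k$-th roots in the first inequality for $\ell \leq k \leq \kappa$:
\[
\bigl|[\hat\Delta_t(\xi)]_k\bigr|^{1/k} \leq \bigl[\#\Lambda_{k,\ell}\bigr]^{1/k} \|\Delta\|_\infty^{1/k} \, N_t^*(\xi).
\]
Taking the maximum over $k \in \{\ell, \dots, \kappa\}$ (the vanishing for $k < \ell$ lets me ignore those indices) gives $N(\hat\Delta_t(\xi))$ on the left. For the factor $\|\Delta\|_\infty^{1/k}$, the function $k \mapsto \|\Delta\|_\infty^{1/k}$ is monotone on $[\ell,\kappa]$ — decreasing if $\|\Delta\|_\infty \geq 1$, increasing if $\|\Delta\|_\infty \leq 1$ — so it is bounded above by $\max(\|\Delta\|_\infty^{1/\ell}, \|\Delta\|_\infty^{1/\kappa})$. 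Absorbing the combinatorial prefactor into the constant $C(\kappa) = \max_{1 \leq \ell \leq \kappa} \max_{\ell \leq k \leq \kappa} [\#\Lambda_{k,\ell}]^{1/k}$ produces Eq.~(\ref{e.3.27}).

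The whole argument is largely bookkeeping; the only substantive point is noticing that the index $k$ in the maximum defining $N$ is effectively restricted to the range $[\ell,\kappa]$, which is what makes the $\|\Delta\|_\infty^{1/\ell}$ appearance natural rather than the worse $\|\Delta\|_\infty$. No obstacle more serious than keeping the multi-indices and grading straight should arise.
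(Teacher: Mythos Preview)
Your proposal is correct and follows essentially the same approach as the paper: decompose by grading, pull out $\|\Delta\|_\infty$, factor the integral via Fubini, bound each $|\xi_{j_i}|_t^*$ by $N_t^*(\xi)^{j_i}$, and count multi-indices; then take $k$-th roots and use the monotonicity of $k\mapsto\|\Delta\|_\infty^{1/k}$ on $[\ell,\kappa]$ to get the second estimate. Your treatment of the monotonicity step is in fact slightly more explicit than the paper's, which simply records the final observation without spelling out the case split on $\|\Delta\|_\infty\gtrless 1$.
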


\begin{proof}
For $k\in\left[  \ell,\kappa\right]  \cap\mathbb{N},$
\begin{align*}
\left\vert \left[  \hat{\Delta}_{t}\left(  \xi\right)  \right]  _{k}%
\right\vert  &  =\left\vert \sum_{\left(  j_{1},\dots j_{\ell}\right)
\in\Lambda_{k,\ell}}\int_{\left[  0,t\right]  ^{\ell}}\Delta\left(
s_{1},\dots,s_{\ell}\right)  \xi_{j_{1}}\left(  s_{1}\right)  \dots
\xi_{j_{\ell}}\left(  s_{\ell}\right)  d\mathbf{s}\right\vert \\
&  \leq\left\Vert \Delta\right\Vert _{\infty}\sum_{\left(  j_{1},\dots
j_{\ell}\right)  \in\Lambda_{k,\ell}}\int_{\left[  0,t\right]  ^{\ell}%
}\left\vert \xi_{j_{1}}\left(  s_{1}\right)  \dots\xi_{j_{\ell}}\left(
s_{\ell}\right)  \right\vert d\mathbf{s}\\
&  =\left\Vert \Delta\right\Vert _{\infty}\sum_{\left(  j_{1},\dots j_{\ell
}\right)  \in\Lambda_{k,\ell}}\prod_{i=1}^{\ell}\left\vert \xi_{j_{i}%
}\right\vert _{t}^{\ast}\leq\left\Vert \Delta\right\Vert _{\infty}%
\sum_{\left(  j_{1},\dots j_{\ell}\right)  \in\Lambda_{k,\ell}}\prod
_{i=1}^{\ell}N_{t}^{\ast}\left(  \xi\right)  ^{j_{i}}\\
&  \leq\left\Vert \Delta\right\Vert _{\infty}\cdot\#\left(  \Lambda_{k,\ell
}\right)  \cdot N_{t}^{\ast}\left(  \xi\right)  ^{k}%
\end{align*}
which proves Eq. (\ref{e.3.26}). Equation (\ref{e.3.27}) is an easy
consequence of Eq. (\ref{e.3.26}) and the observation that
\[
\left[  \#\left(  \Lambda_{k,\ell}\right)  \right]  ^{1/k}\cdot\left\Vert
\Delta\right\Vert _{\infty}^{1/k}\leq C\left(  \kappa\right)  \max\left(
\left\Vert \Delta\right\Vert _{\infty}^{1/\ell},\left\Vert \Delta\right\Vert
_{\infty}^{1/\kappa}\right)  .
\]

\end{proof}

\begin{proposition}
\label{pro.3.25}Suppose that $1\leq\ell\leq\kappa,$ $\Delta:\left[
0,T\right]  ^{\ell}\rightarrow\mathbb{R},$ and $\xi\in L^{1}\left(  \left[
0,T\right]  ,\mathfrak{g}^{\left(  \kappa\right)  }\right)  ,$ then%
\[
\int_{0}^{T}\left\vert \left[  \hat{\Delta}_{t}\left(  \operatorname{ad}_{\xi
}\right)  \xi\left(  t\right)  \right]  _{k}\right\vert dt\lesssim\left\Vert
\Delta\right\Vert _{\infty}N_{T}^{\ast}\left(  \xi\right)  ^{k}.
\]

\end{proposition}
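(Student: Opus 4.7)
The plan is to reduce this to the (simpler, already-established) pattern of Proposition \ref{pro.3.24} by first expanding the nested commutator into an associative product and then applying the multi-index bookkeeping used there. Writing $A_i := \xi(s_i)$ for $1 \le i \le \ell$ and $A_{\ell+1} := \xi(t)$, I would repeatedly use the defining identity $\operatorname{ad}_A B = AB - BA$ in $T^{(\kappa)}(\mathbb{R}^d)$ to obtain
\[
\operatorname{ad}_{\xi(s_1)} \cdots \operatorname{ad}_{\xi(s_\ell)} \xi(t) \;=\; \sum_{\epsilon \in \{0,1\}^\ell} (-1)^{|\epsilon|}\, A_{\pi_\epsilon(1)}\, A_{\pi_\epsilon(2)}\cdots A_{\pi_\epsilon(\ell+1)},
\]
where $\pi_\epsilon$ is a permutation of $\{1,\dots,\ell+1\}$ (whose explicit form is irrelevant for the estimate). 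This converts the integrand in $\hat{\Delta}_t(\operatorname{ad}_\xi)\xi(t)$ into a signed sum of $2^\ell$ products, each structurally of the type handled in Proposition \ref{pro.3.24}.

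Next I would project to the degree-$k$ component. Since each $A_i = \sum_{j \ge 1} \xi_j(s_i)$ (with the convention $s_{\ell+1} = t$), multi-linearity gives
\[
\bigl[A_{\pi(1)} \cdots A_{\pi(\ell+1)}\bigr]_k \;=\; \sum_{(j_1,\dots,j_{\ell+1}) \in \Lambda'_{k,\ell+1}} \xi_{j_1}(s_{\pi(1)}) \otimes \cdots \otimes \xi_{j_{\ell+1}}(s_{\pi(\ell+1)}),
\]
where $\Lambda'_{k,\ell+1} := \{(j_i) \in \mathbb{N}^{\ell+1} : \sum j_i = k\}$ has cardinality $\binom{k-1}{\ell}$ (and is empty unless $\ell+1 \le k \le \kappa$, in which case both sides of the claim are trivially zero).

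After the triangle inequality, the bound $|\Delta| \le \|\Delta\|_\infty$, and enlarging the $s$-domain from $[0,t]^\ell$ to $[0,T]^\ell$, Fubini lets the $(\ell{+}1)$-fold integral factor as $\prod_{i=1}^{\ell+1} |\xi_{j_i}|_T^\ast$ — independently of which permutation $\pi_\epsilon$ appears, because we have already discarded all relative orderings. Applying the defining estimate $|\xi_j|_T^\ast \le N_T^\ast(\xi)^j$ of Definition \ref{def.1.16} and using $\sum_i j_i = k$ yields $\prod_i |\xi_{j_i}|_T^\ast \le N_T^\ast(\xi)^k$ for each summand. Collecting the $2^\ell \binom{k-1}{\ell}$ summands gives
\[
\int_0^T \bigl|[\hat{\Delta}_t(\operatorname{ad}_\xi)\xi(t)]_k\bigr|\, dt \;\le\; 2^\ell \binom{k-1}{\ell} \|\Delta\|_\infty\, N_T^\ast(\xi)^k,
\]
and since $\ell, k \le \kappa$ the combinatorial prefactor is absorbed into the $\lesssim$ constant $C(\kappa)$.

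There is no genuine obstacle here; the only mildly delicate point is tracking that the commutator expansion produces $2^\ell$ terms, each of which, once its overall sign and internal permutation of the $A_i$'s are bounded away by the triangle inequality, becomes an instance of the plain product estimate. Everything else is routine multi-linearity and the elementary inequality $|\xi_j|_T^\ast \le N_T^\ast(\xi)^j$ built into the definition of the homogeneous norm.
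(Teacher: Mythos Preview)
Your proof is correct and follows essentially the same approach as the paper: decompose by graded multi-indices $(j_1,\dots,j_{\ell+1})$ summing to $k$, bound the nested commutator at the cost of a $2^\ell$ factor, enlarge the integration domain to $[0,T]^{\ell+1}$, and use $|\xi_j|_T^\ast \le N_T^\ast(\xi)^j$. The only cosmetic difference is that you explicitly expand $\operatorname{ad}_{A_1}\cdots\operatorname{ad}_{A_\ell}B$ into $2^\ell$ permuted products before taking norms, whereas the paper applies the iterated bound $|\operatorname{ad}_A B|\le 2|A||B|$ directly; both yield the identical constant $2^\ell\binom{k-1}{\ell}\|\Delta\|_\infty$.
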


\begin{proof}
For $k\in(\ell,\kappa]\cap\mathbb{N},$ let
\[
\Lambda_{k,\ell}:=\left\{  \left(  j_{0},j_{1},\dots j_{\ell}\right)
\in\mathbb{N}^{\ell+1}:\sum_{i=0}^{k}j_{i}=k\right\}  .
\]
We then have%
\begin{align*}
&  \left\vert \left[  \hat{\Delta}_{t}\left(  \operatorname{ad}_{\xi}\right)
\xi\left(  t\right)  \right]  _{k}\right\vert \\
&  =\left\vert \sum_{\left(  j_{0},j_{1},\dots j_{\ell}\right)  \in\Lambda
}\int_{\left[  0,t\right]  ^{\ell}}\Delta\left(  s_{1},\dots,s_{\ell}\right)
\operatorname{ad}_{\xi_{j_{1}}\left(  s_{1}\right)  }\dots\operatorname{ad}%
_{\xi_{j_{\ell}}\left(  s_{\ell}\right)  }\xi_{j_{0}}\left(  t\right)
d\mathbf{s}\right\vert \\
&  \leq2^{\ell}\left\Vert \Delta\right\Vert _{\infty}\sum_{\left(  j_{0}%
,j_{1},\dots j_{\ell}\right)  \in\Lambda_{k,\ell}}\int_{\left[  0,t\right]
^{\ell}}\left\vert \xi_{j_{1}}\left(  s_{1}\right)  \right\vert \dots
\left\vert \xi_{j_{\ell}}\left(  s_{\ell}\right)  \right\vert \left\vert
\xi_{j_{0}}\left(  t\right)  \right\vert d\mathbf{s}\\
&  \leq2^{\ell}\left\Vert \Delta\right\Vert _{\infty}\sum_{\left(  j_{0}%
,j_{1},\dots j_{\ell}\right)  \in\Lambda_{k,\ell}}\int_{\left[  0,T\right]
^{\ell}}\left\vert \xi_{j_{1}}\left(  s_{1}\right)  \right\vert \dots
\left\vert \xi_{j_{\ell}}\left(  s_{\ell}\right)  \right\vert \left\vert
\xi_{j_{0}}\left(  t\right)  \right\vert d\mathbf{s}%
\end{align*}
Integrating this estimate on $t\in\left[  0,T\right]  $ shows,%
\begin{align*}
\int_{0}^{T}  &  \left\vert \left[  \hat{\Delta}_{t}\left(  \operatorname{ad}%
_{\xi}\right)  \xi\left(  t\right)  \right]  _{k}\right\vert dt\\
&  \leq2^{\ell}\left\Vert \Delta\right\Vert _{\infty}\sum_{\left(  j_{0}%
,j_{1},\dots j_{\ell}\right)  \in\Lambda_{k,\ell}}\int_{0}^{T}dt\int_{\left[
0,T\right]  ^{\ell}}\left\vert \xi_{j_{1}}\left(  s_{1}\right)  \right\vert
\dots\left\vert \xi_{j_{\ell}}\left(  s_{\ell}\right)  \right\vert \left\vert
\xi_{j_{0}}\left(  t\right)  \right\vert d\mathbf{s}\\
&  =2^{\ell}\left\Vert \Delta\right\Vert _{\infty}\sum_{\left(  j_{0}%
,j_{1},\dots j_{\ell}\right)  \in\Lambda_{k,\ell}}\prod_{i=0}^{\ell}\left\vert
\xi_{j_{i}}\right\vert _{T}^{\ast}\\
&  \leq2^{\ell}\left\Vert \Delta\right\Vert _{\infty}\sum_{\left(  j_{0}%
,j_{1},\dots j_{\ell}\right)  \in\Lambda_{k,\ell}}N_{t}^{\ast}\left(
\xi\right)  ^{k}=2^{\ell}\#\left(  \Lambda_{k,\ell}\right)  \left\Vert
\Delta\right\Vert _{\infty}\cdot N_{t}^{\ast}\left(  \xi\right)  ^{k}.
\end{align*}

\end{proof}

We end this section with a few key estimates that we will need in the
remainder of the paper. In each of the next three results we assume that
$\xi\in C^{1}\left(  \left[  0,T\right]  ,F^{\left(  \kappa\right)  }\left(
\mathbb{R}^{d}\right)  \right)  $ and $C\left(  t\right)  =C^{\xi}\left(
t\right)  =\log\left(  g^{\xi}\left(  t\right)  \right)  \in F^{\left(
\kappa\right)  }\left(  \mathbb{R}^{d}\right)  $ are as in Definition
\ref{def.1.25}.

\begin{proposition}
\label{pro.3.26}To each $f\in\mathcal{H}_{0}$ there exists $K\left(  f\right)
>0$ depending linearly on $\left(  \left\vert f\left(  0\right)  \right\vert
,\dots,\left\vert f^{\left(  \kappa-1\right)  }\left(  0\right)  \right\vert
\right)  $ and independent of $\xi$ such that%
\begin{equation}
\int_{0}^{T}\left\vert \left(  f\left(  \operatorname{ad}_{C\left(  t\right)
}\right)  \dot{\xi}\left(  t\right)  \right)  _{n}\right\vert dt\leq K\left(
f\right)  N_{T}^{\ast}\left(  \dot{\xi}\right)  ^{n} \label{e.3.28}%
\end{equation}

\end{proposition}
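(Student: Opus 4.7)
The plan is to decompose $f(\operatorname{ad}_{C(t)})$ using Proposition \ref{pro.3.18} and then estimate each resulting term via Proposition \ref{pro.3.25}. Concretely, Proposition \ref{pro.3.18} gives bounded measurable functions $\mathbf{f}^{j}:[0,\infty)^{j}\to\mathbb{R}$ (for $1\le j\le\kappa-1$) depending linearly on $(f(0),\ldots,f^{(\kappa-1)}(0))$ such that
\[
f\left(\operatorname{ad}_{C(t)}\right)\dot{\xi}(t)
= f(0)\,\dot{\xi}(t) + \sum_{j=1}^{\kappa-1}\widehat{\mathbf{f}^{j}}_{t}\!\left(\operatorname{ad}_{\dot{\xi}}\right)\dot{\xi}(t).
\]
Projecting onto the grade-$n$ component and applying the triangle inequality under the integral reduces the problem to estimating each summand separately.

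For the leading term, by definition of the homogeneous norm $N_T^*$ we have
\[
\int_{0}^{T}\left\vert f(0)\,\dot{\xi}_{n}(t)\right\vert dt
= |f(0)|\cdot\left\vert \dot{\xi}_{n}\right\vert_{T}^{*}
\le |f(0)|\cdot N_{T}^{*}(\dot{\xi})^{n}.
\]
For each $j\in\{1,\ldots,\kappa-1\}$, Proposition \ref{pro.3.25} (applied with $\Delta=\mathbf{f}^{j}$, $\ell=j$, and $\xi$ replaced by $\dot{\xi}$) yields
\[
\int_{0}^{T}\left\vert \left[\widehat{\mathbf{f}^{j}}_{t}\!\left(\operatorname{ad}_{\dot{\xi}}\right)\dot{\xi}(t)\right]_{n}\right\vert dt
\;\lesssim\; \|\mathbf{f}^{j}\|_{\infty}\cdot N_{T}^{*}(\dot{\xi})^{n},
\]
with an implicit constant depending only on $\kappa$ (through $\#\Lambda_{n,j}$). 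Summing the $\kappa$ contributions gives a total bound of the form $K(f)\cdot N_T^*(\dot{\xi})^n$, where
\[
K(f) \;:=\; C(\kappa)\Bigl(\,|f(0)| + \sum_{j=1}^{\kappa-1}\|\mathbf{f}^{j}\|_{\infty}\Bigr).
\]

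The claimed linear dependence of $K(f)$ on $(|f(0)|,\ldots,|f^{(\kappa-1)}(0)|)$ follows directly from the linear dependence of each $\mathbf{f}^{j}$ on $(f(0),\ldots,f^{(\kappa-1)}(0))$ asserted in Proposition \ref{pro.3.18} (together with the observation that a fixed linear combination of these derivatives yields a fixed linear bound on $\|\mathbf{f}^{j}\|_{\infty}$). There is really no substantial obstacle here: the entire estimate reduces to stringing together Propositions \ref{pro.3.18} and \ref{pro.3.25}. The only point requiring mild care is bookkeeping --- namely, making sure the decomposition from Proposition \ref{pro.3.18} produces integrands of exactly the iterated-integral form to which Proposition \ref{pro.3.25} applies, which it does since Proposition \ref{pro.3.18} is stated precisely in terms of the operators $\widehat{(\cdot)}_{t}(\operatorname{ad}_{\dot{\xi}})$ of Notation \ref{not.3.16}.
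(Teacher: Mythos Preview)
Your proof is correct and follows essentially the same approach as the paper: decompose $f(\operatorname{ad}_{C(t)})$ via Proposition~\ref{pro.3.18}, bound the leading $f(0)\dot{\xi}_n$ term directly by $|f(0)|N_T^*(\dot{\xi})^n$, and apply Proposition~\ref{pro.3.25} (with $\xi$ replaced by $\dot{\xi}$) to each remaining $\widehat{\mathbf{f}^{j}}_t(\operatorname{ad}_{\dot{\xi}})\dot{\xi}(t)$ term. The paper's proof is identical in structure and in the lemmas invoked.
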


\begin{proof}
Recall that Proposition \ref{pro.3.18} asserts that%
\[
f\left(  \operatorname{ad}_{C\left(  t\right)  }\right)  \dot{\xi}\left(
t\right)  =f\left(  0\right)  \dot{\xi}\left(  t\right)  +\sum_{j=1}%
^{\kappa-1}\widehat{\mathbf{f}^{j}}_{t}\left(  \operatorname{ad}_{\dot{\xi}%
}\right)  \dot{\xi}\left(  t\right)
\]
where the functions $\mathbf{f}^{j}$ depend linearly on $\left(  f\left(
0\right)  ,\dots,f^{\left(  \kappa-1\right)  }\left(  0\right)  \right)  .$ So
by repeated application of Proposition \ref{pro.3.25} with $\xi$ replaced by
$\dot{\xi}$ shows,%
\begin{align*}
\int_{0}^{T}  &  \left\vert \left(  f\left(  \operatorname{ad}_{C\left(
t\right)  }\right)  \dot{\xi}\left(  t\right)  \right)  _{n}\right\vert dt\\
&  \leq\left\vert f\left(  0\right)  \right\vert \int_{0}^{T}\left\vert
\dot{\xi}_{n}\left(  t\right)  \right\vert dt+\sum_{j=1}^{\kappa-1}\int%
_{0}^{T}\left\vert \left[  \widehat{\mathbf{f}^{j}}_{t}\left(
\operatorname{ad}_{\dot{\xi}}\right)  \dot{\xi}\left(  t\right)  \right]
_{n}\right\vert dt\\
&  \lesssim\left\vert f\left(  0\right)  \right\vert N_{T}^{\ast}\left(
\dot{\xi}\right)  ^{n}+\sum_{j=1}^{\kappa-1}\left\Vert \mathbf{f}%
^{j}\right\Vert _{\infty}N_{T}^{\ast}\left(  \dot{\xi}\right)  ^{n}\leq
K\left(  f\right)  N_{T}^{\ast}\left(  \dot{\xi}\right)  ^{n}%
\end{align*}
where $K\left(  f\right)  >0$ may be chosen to depend linearly on $\left(
\left\vert f\left(  0\right)  \right\vert ,\dots,\left\vert f^{\left(
\kappa-1\right)  }\left(  0\right)  \right\vert \right)  .$
\end{proof}

\begin{corollary}
\label{cor.3.27}If $\xi\in C^{1}\left(  \left[  0,T\right]  ,F^{\left(
\kappa\right)  }\left(  \mathbb{R}^{d}\right)  \right)  $ and $C^{\xi}\left(
t\right)  =\log\left(  g^{\xi}\left(  t\right)  \right)  \in F^{\left(
\kappa\right)  }\left(  \mathbb{R}^{d}\right)  $ are as above, then%
\begin{equation}
N_{T}^{\ast}\left(  \dot{C}^{\xi}\right)  \lesssim N_{T}^{\ast}\left(
\dot{\xi}\right)  , \label{e.3.29}%
\end{equation}%
\begin{equation}
\left\vert C^{\xi}\left(  \cdot\right)  _{n}\right\vert _{\infty,T}\lesssim
N_{T}^{\ast}\left(  \dot{\xi}\right)  ^{n}\text{ }\forall~n\in\left[
1,\kappa\right]  \cap\mathbb{N},\text{ and} \label{e.3.30}%
\end{equation}%
\begin{equation}
\left\vert C^{\xi}\left(  \cdot\right)  \right\vert _{\infty,T}\lesssim
Q_{\left[  1,\kappa\right]  }\left(  N_{T}^{\ast}\left(  \dot{\xi}\right)
\right)  . \label{e.3.31}%
\end{equation}

\end{corollary}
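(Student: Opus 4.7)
The plan is to derive all three estimates as a direct corollary of Proposition \ref{pro.3.26} applied to the holomorphic germ $\psi_{-}(z) = z/(1-e^{-z})$ of Eq. (\ref{e.3.4}), combined with the defining ODE for $C^{\xi}$. First, note that $\psi_{-}\in \mathcal{H}_{0}$ since $1-e^{-z} = z(1 - z/2 + z^{2}/6 - \dots)$ is holomorphic and nonvanishing at $0$, so $\psi_{-}(0)=1$ and all derivatives $\psi_{-}^{(k)}(0)$ are finite. By Eq. (\ref{e.3.11}) of Corollary \ref{cor.3.10},
\[
\dot{C}^{\xi}(t) = \psi_{-}\!\left(\operatorname{ad}_{C^{\xi}(t)}\right)\dot{\xi}(t).
\]

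For Eq. (\ref{e.3.29}), I would apply Proposition \ref{pro.3.26} with $f=\psi_{-}$. For each $n\in\mathbb{N}\cap[1,\kappa]$ this gives
\[
\left|\dot{C}^{\xi}_{n}\right|_{T}^{\ast} \;=\; \int_{0}^{T}\left|\left(\psi_{-}(\operatorname{ad}_{C^{\xi}(t)})\dot{\xi}(t)\right)_{n}\right|dt \;\lesssim\; N_{T}^{\ast}(\dot{\xi})^{n},
\]
equivalently $|\dot{C}^{\xi}_{n}|_{T}^{\ast 1/n}\lesssim N_{T}^{\ast}(\dot{\xi})$. Taking the maximum over $1\leq n\leq\kappa$ yields (\ref{e.3.29}).

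For Eq. (\ref{e.3.30}), since $g^{\xi}(0)=1$ we have $C^{\xi}(0)=\log(1)=0$, and hence $C^{\xi}(t)_{n} = \int_{0}^{t} \dot{C}^{\xi}(s)_{n}\,ds$. The triangle inequality together with the previous estimate gives
\[
\left|C^{\xi}(t)_{n}\right| \;\leq\; \int_{0}^{T}\left|\dot{C}^{\xi}(s)_{n}\right|ds \;\lesssim\; N_{T}^{\ast}(\dot{\xi})^{n},
\]
and taking the supremum over $t\in[0,T]$ yields (\ref{e.3.30}).

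Finally, for Eq. (\ref{e.3.31}) I would use the direct-sum decomposition bound $|C^{\xi}(t)|\leq \sum_{n=1}^{\kappa}|C^{\xi}(t)_{n}|$ (from the orthogonality of the grading), combine with (\ref{e.3.30}), and conclude
\[
\left|C^{\xi}(t)\right| \;\lesssim\; \sum_{n=1}^{\kappa} N_{T}^{\ast}(\dot{\xi})^{n} \;\asymp\; Q_{[1,\kappa]}\!\left(N_{T}^{\ast}(\dot{\xi})\right),
\]
where the equivalence is the first estimate of Remark \ref{rem.3.22}. There is no serious obstacle here: all the analytical content has been packaged into Proposition \ref{pro.3.26}, and this corollary is essentially a bookkeeping consequence of that proposition together with the ODE $\dot{C}^{\xi} = \psi_{-}(\operatorname{ad}_{C^{\xi}})\dot{\xi}$ and the initial condition $C^{\xi}(0)=0$.
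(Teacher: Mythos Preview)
Your proof is correct and follows essentially the same route as the paper: both apply Proposition \ref{pro.3.26} with $f=\psi_{-}=1/\psi(-\cdot)$ to the ODE $\dot{C}^{\xi}=\psi_{-}(\operatorname{ad}_{C^{\xi}})\dot{\xi}$ from Corollary \ref{cor.3.10}, then use $C^{\xi}(0)=0$ and sum over the grading via Remark \ref{rem.3.22}. There is no substantive difference.
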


\begin{proof}
By Corollary \ref{cor.3.10}, $\dot{C}^{\xi}\left(  t\right)  =f\left(
\operatorname{ad}_{C\left(  t\right)  }\right)  \dot{\xi}\left(  t\right)  $
where $f\left(  z\right)  =1/\psi\left(  -z\right)  $ and so by Proposition
\ref{pro.3.26},%
\[
\left\vert \dot{C}_{n}^{\xi}\right\vert _{T}^{\ast}=\int_{0}^{T}\left\vert
\left(  f\left(  \operatorname{ad}_{C\left(  t\right)  }\right)  \dot{\xi
}\left(  t\right)  \right)  _{n}\right\vert dt\leq K\left(  f\right)
N_{T}^{\ast}\left(  \dot{\xi}\right)  ^{n}\text{ for }1\leq n\leq\kappa.
\]
This proves Eq. (\ref{e.3.29}) and also Eqs. (\ref{e.3.30}) and (\ref{e.3.31})
since (as $C^{\xi}\left(  0\right)  =0),$%
\[
\left\vert C^{\xi}\left(  \cdot\right)  _{n}\right\vert _{\infty,T}%
\leq\left\vert \dot{C}_{n}^{\xi}\right\vert _{T}^{\ast}\leq K\left(  f\right)
N_{T}^{\ast}\left(  \dot{\xi}\right)  ^{n}%
\]
and hence
\[
\left\vert C^{\xi}\left(  \cdot\right)  \right\vert _{\infty,T}\leq\sum
_{n=1}^{\kappa}\left\vert C_{n}^{\xi}\left(  \cdot\right)  \right\vert
_{\infty,T}\lesssim\sum_{n=1}^{\kappa}N_{T}^{\ast}\left(  \dot{\xi}\right)
^{n}\lesssim Q_{\left[  1,\kappa\right]  }\left(  N_{T}^{\ast}\left(  \dot
{\xi}\right)  \right)  .
\]

\end{proof}

\begin{corollary}
\label{cor.3.28}Suppose $\xi\in C^{1}\left(  \left[  0,T\right]  ,F^{\left(
\kappa\right)  }\left(  \mathbb{R}^{d}\right)  \right)  $ and $C\left(
t\right)  =C^{\xi}\left(  t\right)  =\log\left(  g^{\xi}\left(  t\right)
\right)  \in F^{\left(  \kappa\right)  }\left(  \mathbb{R}^{d}\right)  $ are
as in Definition \ref{def.1.25} and $f\in\mathcal{H}_{0}.$ Then there exists
$K\left(  f\right)  >0$ such that $K\left(  f\right)  $ depends linearly on
$\left(  \left\vert f\left(  0\right)  \right\vert ,\dots,\left\vert
f^{\left(  \kappa-1\right)  }\left(  0\right)  \right\vert \right)  $ and on
$\kappa$ such that%
\begin{equation}
\int_{0}^{T}\left\vert C^{\xi}\left(  t\right)  _{m}\right\vert \left\vert
\left(  f\left(  \operatorname{ad}_{C\left(  t\right)  }\right)  \dot{\xi
}\left(  t\right)  \right)  _{n}\right\vert dt\leq K\left(  f\right)
N_{T}^{\ast}\left(  \dot{\xi}\right)  ^{m+n}\text{ }\forall~m,n\in\left[
1,\kappa\right]  \cap\mathbb{N}. \label{e.3.32}%
\end{equation}

\end{corollary}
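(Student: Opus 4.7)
The plan is to reduce Eq.~\eqref{e.3.32} to a direct combination of the two estimates already proved in Corollary~\ref{cor.3.27} and Proposition~\ref{pro.3.26}, by pulling $C^{\xi}(t)_{m}$ outside the integral via its sup-norm in $t$.

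First, for fixed $m\in\lbrack1,\kappa]\cap\mathbb{N}$, observe the pointwise bound
\[
\left\vert C^{\xi}(t)_{m}\right\vert \leq\left\vert C^{\xi}(\cdot)_{m}\right\vert _{\infty,T}\quad\forall\,t\in[0,T],
\]
so that
\[
\int_{0}^{T}\left\vert C^{\xi}(t)_{m}\right\vert \left\vert \left(f(\operatorname{ad}_{C(t)})\dot{\xi}(t)\right)_{n}\right\vert dt\leq\left\vert C^{\xi}(\cdot)_{m}\right\vert _{\infty,T}\cdot\int_{0}^{T}\left\vert \left(f(\operatorname{ad}_{C(t)})\dot{\xi}(t)\right)_{n}\right\vert dt.
\]

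Next, I would apply Corollary~\ref{cor.3.27} (specifically Eq.~\eqref{e.3.30}) to bound the first factor by $\left\vert C^{\xi}(\cdot)_{m}\right\vert _{\infty,T}\lesssim N_{T}^{\ast}(\dot{\xi})^{m}$, and apply Proposition~\ref{pro.3.26} to bound the remaining integral by $K(f)\,N_{T}^{\ast}(\dot{\xi})^{n}$. Multiplying these estimates together immediately yields
\[
\int_{0}^{T}\left\vert C^{\xi}(t)_{m}\right\vert \left\vert \left(f(\operatorname{ad}_{C(t)})\dot{\xi}(t)\right)_{n}\right\vert dt\lesssim K(f)\,N_{T}^{\ast}(\dot{\xi})^{m+n},
\]
with the implicit constant depending only on $\kappa$. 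The linear dependence of the final constant on $(|f(0)|,\dots,|f^{(\kappa-1)}(0)|)$ is inherited directly from the corresponding linear dependence of the constant in Proposition~\ref{pro.3.26}, while the estimate from Corollary~\ref{cor.3.27} contributes no $f$-dependence.

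There is no real obstacle here: all the analytic work---in particular, expanding $f(\operatorname{ad}_{C(t)})\dot{\xi}(t)$ via Proposition~\ref{pro.3.18} and then applying the tensor-algebra estimate in Proposition~\ref{pro.3.25}---has already been carried out in proving Proposition~\ref{pro.3.26} and Corollary~\ref{cor.3.27}. The only point requiring care is bookkeeping of the linear dependence on $(|f^{(j)}(0)|)_{j=0}^{\kappa-1}$, which is automatic from the structure of the inequality.
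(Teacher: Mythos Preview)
Your proof is correct and follows essentially the same approach as the paper: pull out $|C^{\xi}(\cdot)_m|_{\infty,T}$, bound it via Eq.~\eqref{e.3.30} of Corollary~\ref{cor.3.27}, and bound the remaining integral via Proposition~\ref{pro.3.26}. The paper's proof is line-for-line the same computation.
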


\begin{proof}
Making use of the estimates in Proposition \ref{pro.3.26} and Corollary
\ref{cor.3.28} we find,
\begin{align*}
\int_{0}^{T}  &  \left\vert C^{\xi}\left(  t\right)  _{m}\right\vert
\left\vert \left(  f\left(  \operatorname{ad}_{C\left(  t\right)  }\right)
\dot{\xi}\left(  t\right)  \right)  _{n}\right\vert dt\\
&  \leq\left\vert C^{\xi}\left(  \cdot\right)  _{m}\right\vert _{\infty
,T}\cdot\int_{0}^{T}\left\vert \left(  f\left(  \operatorname{ad}_{C\left(
t\right)  }\right)  \dot{\xi}\left(  t\right)  \right)  _{n}\right\vert dt\\
&  \lesssim N_{T}^{\ast}\left(  \dot{\xi}\right)  ^{m}\cdot K\left(  f\right)
N_{T}^{\ast}\left(  \dot{\xi}\right)  ^{n}=K\left(  f\right)  N_{T}^{\ast
}\left(  \dot{\xi}\right)  ^{m+n}.
\end{align*}

\end{proof}

\section{Logarithm Approximation Problem\label{sec.4}}

Recall from Definition \ref{def.1.20} that a $d$\textbf{-dimensional dynamical
system} on $M$ is a linear map, $\mathbb{R}^{d}\ni w\rightarrow V_{w}\in
\Gamma\left(  TM\right)  .$ For $A\in F^{\left(  \kappa\right)  }\left(
\mathbb{R}^{d}\right)  $ we know that $V_{A}\in\Gamma\left(  TM\right)  $ by
Example \ref{ex.1.21}. Let us again emphasize that we assume Assumption
\ref{ass.1} is in force, i.e. $V$ is $\kappa$-complete.

To help motivate the next key theorem, let $A$ and $B$ be in the full free Lie
algebra, $F\left(  \mathbb{R}^{d}\right)  $. Working heuristically (using
$\sim$ to indicate equality of formal series), we should have
\begin{align*}
\operatorname{Ad}_{e^{sV_{A}}}V_{B}  &  =e^{s\operatorname{ad}_{V_{A}}}%
V_{B}=e^{-sL_{V_{A}}}V_{B}\\
&  \sim\sum_{k=0}^{\infty}\frac{\left(  -1\right)  ^{k}s^{k}}{k!}L_{V_{A}}%
^{k}V_{B}\sim\sum_{k=0}^{\infty}\frac{\left(  -1\right)  ^{k}s^{k}}%
{k!}V_{\operatorname{ad}_{A}^{k}B}\\
&  \sim V_{\sum_{k=0}^{\infty}\frac{\left(  -1\right)  ^{k}s^{k}}%
{k!}\operatorname{ad}_{A}^{k}B}=V_{e^{-s\operatorname{ad}_{A}}B}.
\end{align*}
Integrating this formal identity then suggests,%
\[
\int_{0}^{1}\operatorname{Ad}_{e^{sV_{A}}}V_{B}ds\sim\int_{0}^{1}%
e^{-sL_{V_{A}}}V_{B}ds\sim\int_{0}^{1}V_{e^{-s\operatorname{ad}_{A}}B}ds\sim
V_{\int_{0}^{1}e^{-s\operatorname{ad}_{A}}Bds}.
\]
Although the above series need not converge, this computation is suggestive of
the following key Taylor type approximation theorem for $\int_{0}%
^{1}\operatorname{Ad}_{e^{sV_{A}}}V_{B}ds\in\Gamma\left(  TM\right)  $ when
$A,B\in F^{\left(  \mathfrak{\kappa}\right)  }\left(  \mathbb{R}^{d}\right)
.$

\begin{theorem}
\label{thm.4.1}Let $\psi\left(  z\right)  $ be as in Eq. (\ref{e.3.3}) and
$V:\mathbb{R}^{d}\rightarrow\Gamma\left(  TM\right)  $ be a dynamical system
satisfying Assumption \ref{ass.1} so that in particular, $V_{A}\in
\Gamma\left(  TM\right)  $ is complete for all $A\in\mathfrak{g}%
_{0}=F^{\left(  \kappa\right)  }\left(  \mathbb{R}^{d}\right)  .$ Then for all
$A,B\in\mathfrak{g}_{0},$%
\begin{align}
\int_{0}^{1}  &  \operatorname{Ad}_{e^{sV_{A}}}V_{B}ds\nonumber\\
&  =V_{\left[  \int_{0}^{1}\operatorname{Ad}_{e^{-sA}}B~ds\right]  }+\int%
_{0}^{1}\operatorname{Ad}_{e^{sV_{A}}}V_{\pi_{>\kappa}\left[  A,\psi\left(
\left(  s-1\right)  \operatorname{ad}_{A}\right)  B\right]  _{\otimes}}\left(
s-1\right)  ds\label{e.4.1}\\
&  =V_{\psi\left(  -\operatorname{ad}_{A}\right)  B}+\int_{0}^{1}%
\operatorname{Ad}_{e^{sV_{A}}}V_{\pi_{>\kappa}\left[  A,\psi\left(  \left(
s-1\right)  \operatorname{ad}_{A}\right)  B\right]  _{\otimes}}\left(
s-1\right)  ds. \label{e.4.2}%
\end{align}

\end{theorem}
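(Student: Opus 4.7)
\medskip

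The plan is to verify the identity by finding an explicit primitive. The heuristic formula $\operatorname{Ad}_{e^{sV_A}}V_B \sim V_{\operatorname{Ad}_{e^{-sA}}B}$ fails only because the Lie algebra homomorphism $V|_{F(\mathbb{R}^d)}:F(\mathbb{R}^d)\to\Gamma(TM)$ is only a partial substitute for the algebra homomorphism $V:T(\mathbb{R}^d)\to \mathrm{LD}(C^\infty(M,\mathbb{R}))$: commutators that push the degree above $\kappa$ produce the truncation error. I would therefore introduce the primitive
\[
G(s):=\int_{0}^{s-1} e^{t\operatorname{ad}_{A}}B\,dt=(s-1)\,\psi((s-1)\operatorname{ad}_{A})B\in F^{(\kappa)}(\mathbb{R}^{d}),
\]
where $\operatorname{ad}_{A}$ acts on the truncated algebra $T^{(\kappa)}(\mathbb{R}^{d})$; the value $G(s)$ sits in $F^{(\kappa)}$ because $F^{(\kappa)}$ is a Lie subalgebra of $T^{(\kappa)}$ preserved by $\operatorname{ad}_{A}$. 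Observe that $G'(s)=e^{(s-1)\operatorname{ad}_{A}}B$, $G(1)=0$, and $G(0)=-\psi(-\operatorname{ad}_{A})B$.

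Next I would define the auxiliary vector field
\[
\Theta(s):=\operatorname{Ad}_{e^{sV_{A}}}V_{G(s)}\in\Gamma(TM),
\]
and compute $\Theta'(s)$ using Corollary~\ref{cor.2.19} and the product rule:
\[
\Theta'(s)=\operatorname{Ad}_{e^{sV_{A}}}\!\left[\operatorname{ad}_{V_{A}}V_{G(s)}+V_{G'(s)}\right].
\]
The key algebraic step is to rewrite $\operatorname{ad}_{V_{A}}V_{G(s)}$ in a form that cancels $V_{G'(s)}$ up to an explicit error. By the algebra homomorphism property of $V$ on all of $T(\mathbb{R}^{d})$ together with the sign convention $\operatorname{ad}_{X}Y=[Y,X]$ of Definition~\ref{def.2.16}, we have $\operatorname{ad}_{V_{A}}V_{G(s)}=-V_{[A,G(s)]_{\otimes}}$. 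Splitting $[A,G(s)]_{\otimes}=\operatorname{ad}_{A}G(s)+\pi_{>\kappa}[A,G(s)]_{\otimes}$ (where the first term is the bracket in $T^{(\kappa)}$) and using $\operatorname{ad}_{A}G(s)=G'(s)-B$, which follows by pulling $\operatorname{ad}_{A}$ under the integral defining $G(s)$, I get
\[
\operatorname{ad}_{V_{A}}V_{G(s)}=-V_{G'(s)}+V_{B}-V_{\pi_{>\kappa}[A,G(s)]_{\otimes}}.
\]
The $V_{G'(s)}$ contributions cancel, leaving the clean identity
\[
\Theta'(s)=\operatorname{Ad}_{e^{sV_{A}}}V_{B}-\operatorname{Ad}_{e^{sV_{A}}}V_{\pi_{>\kappa}[A,G(s)]_{\otimes}}.
\]

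Finally I would integrate from $s=0$ to $s=1$, using $\Theta(1)=0$ and $\Theta(0)=-V_{\psi(-\operatorname{ad}_{A})B}$, and substitute $G(s)=(s-1)\psi((s-1)\operatorname{ad}_{A})B$ into the remainder to arrive at Eq.~\eqref{e.4.2}. Equation~\eqref{e.4.1} is then immediate from $\psi(-\operatorname{ad}_{A})B=\int_{0}^{1}\operatorname{Ad}_{e^{-sA}}B\,ds$. The main conceptual obstacle is choosing the right primitive $G(s)$: one must simultaneously ensure $G(s)\in F^{(\kappa)}$ (so that $V_{G(s)}$ is a vector field), have $G(1)=0$ (to kill one boundary term), and have $\operatorname{ad}_{A}G(s)=G'(s)-B$ (so that the $V_{B}$ term is produced and the $V_{G'(s)}$ terms cancel). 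Once these constraints pin down the integral representation of $G$, the remaining steps are formal manipulations, the only subtle point being that $V_{\pi_{>\kappa}[A,G(s)]_{\otimes}}$ is a genuine vector field even though $\pi_{>\kappa}[A,G(s)]_{\otimes}$ has degree greater than $\kappa$: indeed $[A,G(s)]_{\otimes}$ lies in the graded Lie algebra $F(\mathbb{R}^{d})$, so its high-degree projection maps into $\Gamma(TM)$ under $V|_{F(\mathbb{R}^{d})}$.
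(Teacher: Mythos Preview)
Your proof is correct. The paper establishes the identity by induction on $l=0,1,\dots,\kappa$, at each step integrating by parts the term $\frac{1}{l!}\int_0^1 \operatorname{Ad}_{e^{sV_A}}V_{\operatorname{ad}_A^l B}(s-1)^l\,ds$ against $d(s-1)^{l+1}$; this builds up the partial primitive $\sum_{k=1}^{l}\frac{(s-1)^k}{k!}\operatorname{ad}_A^{k-1}B$ step by step, and the induction terminates at $l=\kappa$ because $\operatorname{ad}_A^\kappa B=0$ in $T^{(\kappa)}(\mathbb{R}^d)$. Your argument is the closed-form version of the same computation: you write down the full primitive $G(s)=(s-1)\psi((s-1)\operatorname{ad}_A)B=\sum_{k\ge 1}\frac{(s-1)^k}{k!}\operatorname{ad}_A^{k-1}B$ at the outset, differentiate $\Theta(s)=\operatorname{Ad}_{e^{sV_A}}V_{G(s)}$ once, and integrate once. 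The two routes are mathematically equivalent, but yours is tidier and makes the structure of the remainder term more transparent; the paper's inductive form, on the other hand, makes explicit the Taylor-like nature of the approximation and would be the natural starting point if one did not already know what $G(s)$ should be.
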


\begin{proof}
The heart of the proof is to show, for $0\leq l\leq\kappa,$ that%
\begin{align}
\int_{0}^{1}\operatorname{Ad}_{e^{sV_{A}}}V_{B}ds=  &  V_{\sum_{k=1}%
^{l}\left(  -1\right)  ^{k+1}\frac{\operatorname{ad}_{A}^{k-1}}{k!}B}+\frac
{1}{l!}\int_{0}^{1}\operatorname{Ad}_{e^{sV_{A}}}V_{\operatorname{ad}_{A}%
^{l}B}\left(  s-1\right)  ^{l}ds\nonumber\\
&  +\int_{0}^{1}\operatorname{Ad}_{e^{sV_{A}}}V_{\pi_{>\kappa}\left[
A,\left(  \sum_{k=1}^{l}\frac{\left(  s-1\right)  ^{k}}{k!}\operatorname{ad}%
_{A}^{k-1}\right)  B\right]  _{\otimes}}ds, \label{e.4.3}%
\end{align}
where $\sum_{k=1}^{l}\left[  \dots\right]  =0$ and $l!=1$ when $l=0.$ The
proof of these identities will be by induction on $l.$ In the proof of this
identity we will use Corollary \ref{cor.2.19} which in this context implies,%
\[
\frac{d}{ds}\operatorname{Ad}_{e^{sV_{A}}}V_{C}=\operatorname{Ad}_{e^{sV_{A}}%
}\operatorname{ad}_{V_{A}}V_{C}=-\operatorname{Ad}_{e^{sV_{A}}}\left[
V_{A},V_{C}\right]  =-\operatorname{Ad}_{e^{sV_{A}}}V_{\left[  A,C\right]
_{\otimes}}%
\]
for all $A,C\in F^{\left(  \kappa\right)  }\left(  \mathbb{R}^{d}\right)  .$
In the proof to follow, $C=\operatorname{ad}_{A}^{l}B$ for some $l.$

When $l=0,$ there is nothing to prove. For the induction step, we integrate by
parts the middle term on the right side of Eq. (\ref{e.4.3}),
\begin{align*}
\frac{1}{l!}\int_{0}^{1}  &  \operatorname{Ad}_{e^{sV_{A}}}%
V_{\operatorname{ad}_{A}^{l}B}\left(  s-1\right)  ^{l}ds\\
=  &  \frac{1}{\left(  l+1\right)  !}\int_{0}^{1}\operatorname{Ad}_{e^{sV_{A}%
}}V_{\operatorname{ad}_{A}^{l}B}d\left(  s-1\right)  ^{l+1}\\
=  &  \frac{1}{\left(  l+1\right)  !}\operatorname{Ad}_{e^{sV_{A}}%
}V_{\operatorname{ad}_{A}^{l}B}\left(  s-1\right)  ^{l+1}|_{0}^{1}\\
&  -\frac{1}{\left(  l+1\right)  !}\int_{0}^{1}\left(  \frac{d}{ds}%
\operatorname{Ad}_{e^{sV_{A}}}V_{\operatorname{ad}_{A}^{l}B}\right)  \left(
s-1\right)  ^{l+1}ds\\
=  &  \frac{\left(  -1\right)  ^{l}}{\left(  l+1\right)  !}\operatorname{Ad}%
_{e^{sV_{A}}}V_{\operatorname{ad}_{A}^{l}B}\\
&  +\frac{1}{\left(  l+1\right)  !}\int_{0}^{1}\operatorname{Ad}_{e^{sV_{A}}%
}\left[  V_{A},V_{\operatorname{ad}_{A}^{l}B}\right]  \left(  s-1\right)
^{l+1}ds\\
=  &  \frac{\left(  -1\right)  ^{l}}{\left(  l+1\right)  !}\operatorname{Ad}%
_{e^{sV_{A}}}V_{\operatorname{ad}_{A}^{l}B}\\
&  +\frac{1}{\left(  l+1\right)  !}\int_{0}^{1}\operatorname{Ad}_{e^{sV_{A}}%
}V_{\left[  A,\operatorname{ad}_{A}^{l}B\right]  _{\otimes}}\left(
s-1\right)  ^{l+1}ds.
\end{align*}
Combining this result with Eq. (\ref{e.4.3}) and the fact that
\[
\left[  A,\operatorname{ad}_{A}^{l}B\right]  _{\otimes}=\left[
A,\operatorname{ad}_{A}^{l}B\right]  +\pi_{>\kappa}\left[  A,\operatorname{ad}%
_{A}^{l}B\right]  _{\otimes}%
\]
completes the inductive step.

To finish the proof observe that
\[
\left(  s-1\right)  \psi\left(  \left(  s-1\right)  \operatorname{ad}%
_{A}\right)  =\sum_{k=1}^{\kappa}\frac{\left(  s-1\right)  ^{k}}%
{k!}\operatorname{ad}_{A}^{k-1}%
\]
and taking $s=0$ in this equation also shows,%
\[
\sum_{k=1}^{\kappa}\frac{\left(  -1\right)  ^{k+1}}{k!}\operatorname{ad}%
_{A}^{k-1}=\psi\left(  -\operatorname{ad}_{A}\right)  =\int_{0}^{1}%
\operatorname{Ad}_{e^{-sA}}ds,
\]
where the last equality comes from Eq. (\ref{e.4.1}). So from the last two
displayed equations and Eq. (\ref{e.4.3}) with $l=\kappa$ and the fact that
$\operatorname{ad}_{A}^{\kappa}B=0$ so that
\[
\frac{1}{\kappa!}\int_{0}^{1}\operatorname{Ad}_{e^{sV_{A}}}%
V_{\operatorname{ad}_{A}^{\kappa}B}\left(  s-1\right)  ^{\kappa}ds=0,
\]
we find
\begin{align*}
\int_{0}^{1}  &  \operatorname{Ad}_{e^{sV_{A}}}V_{B}\,ds\\
&  =V_{\psi\left(  -\operatorname{ad}_{A}\right)  B}+\int_{0}^{1}%
\operatorname{Ad}_{e^{sV_{A}}}V_{\pi_{>\kappa}\left[  A,\psi\left(  \left(
s-1\right)  \operatorname{ad}_{A}\right)  B\right]  _{\otimes}}\left(
s-1\right)  ds\\
&  =V_{\left[  \int_{0}^{1}\operatorname{Ad}_{e^{-sA}}B~ds\right]  }+\int%
_{0}^{1}\operatorname{Ad}_{e^{sV_{A}}}V_{\pi_{>\kappa}\left[  A,\psi\left(
\left(  s-1\right)  \operatorname{ad}_{A}\right)  B\right]  _{\otimes}}\left(
s-1\right)  ds.
\end{align*}
\end{proof}

\begin{remark}
[Signs]\label{rem.4.2}The expression, $\operatorname{Ad}_{e^{sV_{A}}}V_{B},$
appears on the left side of Eq. (\ref{e.4.1}) while on the right side we have
the expression, $\operatorname{Ad}_{e^{-sA}}B$ which involves a change of $s$
to $-s.$ This change of sign is a simple consequence of the fact that
vector-fields on $M$ may naturally be identified with \textbf{right invariant}
vector fields on $\mathrm{\mathrm{Diff}}\left(  M\right)  $ while on the other
hand we have chosen to view $A,B\in F^{\left(  \kappa\right)  }\left(
\mathbb{R}^{d}\right)  $ as \textbf{left invariant }vector fields on
$G_{0}=G_{geo}^{\left(  \kappa\right)  }\left(  \mathbb{R}^{d}\right)  .$ This
left right interchange is the reason for the sign changes in Eq. (\ref{e.4.1}).
\end{remark}

\begin{notation}
\label{not.4.4}To each $C\in C^{1}\left(  \left[  0,T\right]  ,F^{\left(
\kappa\right)  }\left(  \mathbb{R}^{d}\right)  \right)  ,$let%
\begin{equation}
W_{t}^{C}:=\int_{0}^{1}\operatorname{Ad}_{e^{sV_{C\left(  t\right)  }}}%
V_{\dot{C}\left(  t\right)  }~ds\in\Gamma\left(  TM\right)  . \label{e.4.4}%
\end{equation}

\end{notation}

\begin{notation}
\label{not.4.5}For $\xi\in C^{1}\left(  \left[  0,T\right]  ,F^{\left(
\kappa\right)  }\left(  \mathbb{R}^{d}\right)  \right)  ,$ let $g\left(
t\right)  =g^{\xi}\left(  t\right)  \in G_{0}$ be as in Definition
\ref{def.3.6}, $C^{\xi}\left(  t\right)  :=\log\left(  g^{\xi}\left(
t\right)  \right)  \in F^{\left(  \kappa\right)  }\left(  \mathbb{R}%
^{d}\right)  ,$ and $\mu_{t,s}^{\xi}:=\mu_{t,s}^{V_{\dot{\xi}}}\in
\mathrm{Diff}\left(  M\right)  $ denote the flow defined by%
\[
\dot{\mu}_{t,s}^{\xi}=V_{\dot{\xi}\left(  t\right)  }\circ\mu_{t,s}^{\xi
}\text{ with }\mu_{s,s}^{\xi}=Id_{M}.
\]

\end{notation}

Our goal is now to estimate the distance between $\mu_{t,0}^{\xi}$ and
$e^{V_{\log\left(  g^{\xi}\left(  t\right)  \right)  }}=e^{V_{C^{\xi}\left(
t\right)  }}.$ Since (by Corollary \ref{cor.2.24} with $Z_{t}=V_{C^{\xi
}\left(  t\right)  }\in\Gamma\left(  TM\right)  )$
\begin{equation}
\frac{d}{dt}e^{V_{C^{\xi}\left(  t\right)  }}=W_{t}^{C^{\xi}}\circ
e^{V_{C^{\xi}\left(  t\right)  }},\nonumber
\end{equation}
the desired distance estimates will be a consequence of applying Theorem
\ref{thm.2.30} with $X_{t}=V_{\dot{\xi}\left(  t\right)  }$ and $Y_{t}%
=W_{t}^{C^{\xi}}.$ Before carrying out the details we need to develop a few
auxiliary results first.

\begin{notation}
\label{not.4.6}For $0\leq s\leq1,$ let $u\left(  s,\cdot\right)
\in\mathcal{H}_{0}$ be defined by $u\left(  s,z\right)  :=\psi\left(  \left(
s-1\right)  z\right)  /\psi\left(  -z\right)  .$
\end{notation}

\begin{lemma}
\label{lem.4.7}Let $\xi$ and $C=C^{\xi}$ be as in Notation \ref{not.4.5} and
$W^{C^{\xi}}$ be as in Eq. (\ref{e.4.4}). Then the difference vector field,
\begin{equation}
U_{t}^{\xi}:=Y_{t}-X_{t}=W_{t}^{C^{\xi}}-V_{\dot{\xi}\left(  t\right)  }%
\in\Gamma\left(  TM\right)  , \label{e.4.5}%
\end{equation}
may be expresses as%
\begin{equation}
U_{t}^{\xi}=\int_{0}^{1}\operatorname{Ad}_{e^{sV_{C\left(  t\right)  }}}%
V_{\pi_{>\kappa}\left[  C\left(  t\right)  ,u\left(  s,\operatorname{ad}%
_{C\left(  t\right)  }\right)  \dot{\xi}\left(  t\right)  \right]  _{\otimes}%
}\left(  s-1\right)  ds. \label{e.4.6}%
\end{equation}

\end{lemma}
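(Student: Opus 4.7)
The plan is to apply Theorem 4.1 directly, with $A = C(t)$ and $B = \dot{C}(t)$, and then simplify both the leading and remainder terms using the ODE that $C(t)$ satisfies from Corollary 3.10. This reduces the lemma to a computation about the scalar functions $\psi$ and $\psi_-$.

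First, I would substitute $A = C(t)$, $B = \dot{C}(t)$ into Eq.~(\ref{e.4.2}) to obtain
\begin{equation*}
W_t^{C^\xi} = V_{\psi(-\operatorname{ad}_{C(t)})\dot{C}(t)} + \int_0^1 \operatorname{Ad}_{e^{sV_{C(t)}}} V_{\pi_{>\kappa}[C(t),\,\psi((s-1)\operatorname{ad}_{C(t)})\dot{C}(t)]_\otimes}(s-1)\,ds.
\end{equation*}
The key identity then is that, by Corollary 3.10, $\dot{C}(t) = \psi_-(\operatorname{ad}_{C(t)})\dot{\xi}(t)$ with $\psi_-(z) = 1/\psi(-z)$. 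Since $\psi(-\operatorname{ad}_{C(t)})$ and $\psi_-(\operatorname{ad}_{C(t)})$ are mutually inverse operators on $F^{(\kappa)}(\mathbb{R}^d)$ (both being analytic functions of the same endomorphism $\operatorname{ad}_{C(t)}$, via Proposition 3.2), we have
\begin{equation*}
\psi(-\operatorname{ad}_{C(t)})\dot{C}(t) = \psi(-\operatorname{ad}_{C(t)}) \psi_-(\operatorname{ad}_{C(t)})\dot{\xi}(t) = \dot{\xi}(t),
\end{equation*}
which collapses the leading term to $V_{\dot{\xi}(t)}$.

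For the remainder term, I would again use Corollary 3.10 together with the functional-calculus fact (Proposition 3.2) that $\psi((s-1)\operatorname{ad}_{C(t)})$ and $\psi_-(\operatorname{ad}_{C(t)})$ commute, so
\begin{equation*}
\psi((s-1)\operatorname{ad}_{C(t)})\dot{C}(t) = \frac{\psi((s-1)\operatorname{ad}_{C(t)})}{\psi(-\operatorname{ad}_{C(t)})}\dot{\xi}(t) = u(s,\operatorname{ad}_{C(t)})\dot{\xi}(t),
\end{equation*}
by the definition of $u(s,z)$ in Notation 4.6. Substituting this into the remainder and then subtracting $V_{\dot{\xi}(t)}$ from both sides yields exactly the claimed expression for $U_t^\xi$ in Eq.~(\ref{e.4.6}).

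The main (mild) obstacle is essentially bookkeeping: one needs to check that $\dot{C}(t)$ actually lies in $F^{(\kappa)}(\mathbb{R}^d)$ so that Theorem 4.1 applies with $B = \dot{C}(t)$ (this is built into Proposition 3.12 / the definition of $C^\xi$) and to justify the commutativity of the two functions of $\operatorname{ad}_{C(t)}$. Both are immediate from the $\mathcal{H}_0$ functional calculus developed in Section 3.1. No further analytic input is required.
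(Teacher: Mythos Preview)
Your proposal is correct and follows essentially the same route as the paper: apply Theorem~\ref{thm.4.1} with $A=C(t)$, $B=\dot C(t)$, use Corollary~\ref{cor.3.10} to identify the leading term $V_{\psi(-\operatorname{ad}_{C(t)})\dot C(t)}$ with $V_{\dot\xi(t)}$, and rewrite the remainder via the identity $\psi((s-1)z)=u(s,z)\psi(-z)$. The only cosmetic difference is that the paper invokes Eq.~(\ref{e.3.10}) directly for $\psi(-\operatorname{ad}_{C(t)})\dot C(t)=\dot\xi(t)$ rather than passing through $\psi_-$ and then inverting.
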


\begin{proof}
By Corollary \ref{cor.3.10},%
\begin{equation}
\psi\left(  -\operatorname{ad}_{C\left(  t\right)  }\right)  \dot{C}\left(
t\right)  =\int_{0}^{1}\operatorname{Ad}_{e^{-sC\left(  t\right)  }}\dot
{C}\left(  t\right)  ds=\dot{\xi}\left(  t\right)  \text{ with }C\left(
0\right)  =0, \label{e.4.7}%
\end{equation}
which combined with Theorem \ref{thm.4.1} with $A=C\left(  t\right)  $ and
$B=\dot{C}\left(  t\right)  $ implies%
\begin{equation}
W_{t}^{C}=V_{\dot{\xi}\left(  t\right)  }+\int_{0}^{1}\operatorname{Ad}%
_{e^{sV_{C\left(  t\right)  }}}V_{\pi_{>\kappa}\left[  C\left(  t\right)
,\psi\left(  \left(  s-1\right)  \operatorname{ad}_{C\left(  t\right)
}\right)  \dot{C}\left(  t\right)  \right]  _{\otimes}}\left(  s-1\right)  ds.
\label{e.4.8}%
\end{equation}
Since $\psi\left(  \left(  s-1\right)  z\right)  =u\left(  s,z\right)
\psi\left(  -z\right)  ,$ it follows (with the aid of Eq. (\ref{e.4.7}) that%
\[
\psi\left(  \left(  s-1\right)  \operatorname{ad}_{C\left(  t\right)
}\right)  \dot{C}\left(  t\right)  =u\left(  s,\operatorname{ad}_{C\left(
t\right)  }\right)  \psi\left(  -\operatorname{ad}_{C\left(  t\right)
}\right)  \dot{C}\left(  t\right)  =u\left(  s,\operatorname{ad}_{C\left(
t\right)  }\right)  \dot{\xi}\left(  t\right)
\]
which combined with Eq. (\ref{e.4.8}) gives Eq. (\ref{e.4.6}).
\end{proof}

\begin{corollary}
\label{cor.4.8}If $\left\{  V_{a}:a\in\mathbb{R}^{d}\right\}  $ generates a
\textbf{step-}$\kappa$\textbf{ nilpotent Lie sub-algebra} of $\Gamma\left(
TM\right)  ,$ then
\begin{equation}
\mu_{t,0}^{V_{\dot{\xi}}}=e^{V_{C^{\xi}\left(  t\right)  }}\text{ for all }%
\xi\in C^{1}\left(  \left[  0,T\right]  ,F^{\left(  \kappa\right)  }\left(
\mathbb{R}^{d}\right)  \right)  . \label{e.4.9}%
\end{equation}
Moreover, for any $A,B\in F^{\left(  \kappa\right)  }\left(  \mathbb{R}%
^{d}\right)  ,$ we have%
\begin{equation}
e^{V_{B}}\circ e^{V_{A}}=e^{V_{\log\left(  e^{A}e^{B}\right)  }}.
\label{e.4.10}%
\end{equation}

\end{corollary}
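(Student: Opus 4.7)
The plan is to leverage the step-$\kappa$ nilpotency hypothesis to annihilate the ``error'' vector field $U_t^\xi$ appearing in Lemma \ref{lem.4.7}, so that $W_t^{C^\xi}$ coincides with $V_{\dot\xi(t)}$ and the two one-parameter families become solutions of the same ODE. Once Eq. (\ref{e.4.9}) is in hand, I would deduce the composition identity (\ref{e.4.10}) by specializing to a concatenated path.

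First I would check that under the step-$\kappa$ nilpotency hypothesis (Remark \ref{rem.1.23}), the Lie algebra homomorphism $V : F(\mathbb{R}^d) \to \Gamma(TM)$ of Example \ref{ex.1.21} annihilates $F_k(\mathbb{R}^d)$ for every $k \geq \kappa+1$. Indeed, by Remark \ref{rem.1.12}, $F_k(\mathbb{R}^d)$ is spanned by iterated brackets $\operatorname{ad}_{v_1}\cdots\operatorname{ad}_{v_{k-1}} v_k$ with $v_j \in \mathbb{R}^d$, and the hypothesis $\operatorname{ad}_{V_{a_\kappa}} \cdots \operatorname{ad}_{V_{a_1}} V_{a_0} = 0$ forces $V$ of each such generator to vanish as soon as $k-1 \geq \kappa$.

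Second, I would apply this observation to Eq. (\ref{e.4.6}): since $C(t)$ and $u(s,\operatorname{ad}_{C(t)})\dot\xi(t)$ both lie in $F^{(\kappa)}(\mathbb{R}^d) \subset F(\mathbb{R}^d)$, their tensor commutator sits in $F(\mathbb{R}^d)$, and $\pi_{>\kappa}$ of it lands in $\bigoplus_{k \geq \kappa+1} F_k(\mathbb{R}^d)$ — a subspace on which $V$ is identically zero. Hence $U_t^\xi \equiv 0$, i.e.\ $W_t^{C^\xi} = V_{\dot\xi(t)}$. By Corollary \ref{cor.2.24},
\begin{equation*}
\frac{d}{dt}\, e^{V_{C^\xi(t)}} \;=\; W_t^{C^\xi} \circ e^{V_{C^\xi(t)}} \;=\; V_{\dot\xi(t)} \circ e^{V_{C^\xi(t)}},
\end{equation*}
with initial value $e^{V_{C^\xi(0)}} = e^{V_0} = \mathrm{Id}_M$, which is exactly the IVP defining $\mu_{t,0}^{V_{\dot\xi}}$. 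Uniqueness of solutions (Theorem \ref{thm.2.14}) yields Eq. (\ref{e.4.9}).

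For Eq. (\ref{e.4.10}), I would construct a path $\xi : [0,2] \to F^{(\kappa)}(\mathbb{R}^d)$ with $\dot\xi \equiv A$ on $[0,1]$ and $\dot\xi \equiv B$ on $[1,2]$, smoothed in a small neighborhood of $t=1$ and then approximated if strict $C^1$-regularity is needed. On $[0,1]$ the flow of $V_{\dot\xi} = V_A$ reaches $e^{V_A}$ at time $1$; on $[1,2]$ the flow of $V_B$ over unit time contributes the factor $e^{V_B}$, so by the concatenation identity $\mu_{2,0} = \mu_{2,1} \circ \mu_{1,0}$ from Theorem \ref{thm.2.14} we get $\mu_{2,0}^{V_{\dot\xi}} = e^{V_B} \circ e^{V_A}$. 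Direct integration of $\dot g^\xi = g^\xi \dot\xi$ on each subinterval gives $g^\xi(1) = e^A$ and $g^\xi(2) = e^A e^B$, so $C^\xi(2) = \log(e^A e^B)$. Applying the first part at $T=2$ then gives Eq. (\ref{e.4.10}). The main (minor) obstacle is the $C^1$-regularity of $\xi$ at the corner $t=1$: this is handled either by smoothing $\dot\xi$ in an $L^1$-neighborhood and using continuous dependence of ODE solutions on the driving field (both $g^\xi$ and the flow $\mu^{V_{\dot\xi}}$ depend continuously on $\dot\xi$ in $L^1$) to pass to the limit, or by observing that the argument for (\ref{e.4.9}) extends verbatim to piecewise $C^1$ paths since both sides remain absolutely continuous and the ODE matching at $t=1$ is trivial.
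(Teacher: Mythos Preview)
Your proof is correct and follows essentially the same approach as the paper: both exploit step-$\kappa$ nilpotency to annihilate the error vector field $U_t^\xi$ of Lemma \ref{lem.4.7}, and both deduce Eq.~(\ref{e.4.10}) by specializing to a concatenated path with smoothing at the corner $t=1$. The only cosmetic difference is that the paper concludes $\mu_{t,0}^{V_{\dot\xi}}=e^{V_{C^\xi(t)}}$ via the distance estimate of Theorem \ref{thm.2.30} (which collapses to zero when $U^\xi\equiv 0$), whereas you argue more directly via Corollary \ref{cor.2.24} and ODE uniqueness.
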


\begin{proof}
The given assumption implies $V_{\pi_{>\kappa}\left[  C\left(  t\right)
,\psi\left(  \left(  s-1\right)  \operatorname{ad}_{C\left(  t\right)
}\right)  \dot{C}\left(  t\right)  \right]  _{\otimes}}\equiv0$ and hence
$U^{\xi}\equiv0$ and the Eq. (\ref{e.4.9}) now follows from Theorem
\ref{thm.2.30}. To prove the second assertion $\xi:[0,\infty)\rightarrow
F^{\left(  \kappa\right)  }\left(  \mathbb{R}^{d}\right)  $ be defined by%
\begin{equation}
\xi\left(  t\right)  :=\left\{
\begin{array}
[c]{ccc}%
tA & \text{if} & 0\leq t\leq1\\
A+\left(  t-1\right)  B & \text{if} & 1\leq t<\infty
\end{array}
.\right.  \label{e.4.11}%
\end{equation}
With this choice of $\xi$ we have; $\dot{\xi}\left(  t\right)  =1_{t\leq
1}A+1_{t>1}B$ (for $t\neq1),$%
\[
g^{\xi}\left(  t\right)  =\left\{
\begin{array}
[c]{ccc}%
e^{tA} & \text{if} & 0\leq t\leq1\\
e^{A}e^{\left(  t-1\right)  B} & \text{if} & 1\leq t<\infty,
\end{array}
\right.
\]%
\[
\mu_{t,0}^{\xi}=\left\{
\begin{array}
[c]{ccc}%
e^{tV_{A}} & \text{if} & 0\leq t\leq1\\
e^{\left(  t-1\right)  V_{B}}\circ e^{V_{A}} & \text{if} & 1\leq t<\infty,
\end{array}
\right.
\]
all of which is valid where $V$ is step-$\kappa$ nilpotent or not. If $V$ is
step-$\kappa$ nilpotent we \textquotedblleft apply\textquotedblright\ Eq.
(\ref{e.4.9}) at $t=2,$ to find,%
\[
e^{V_{B}}\circ e^{V_{A}}=\mu_{2,0}^{\xi}=e^{V_{C^{\xi}\left(  2\right)  }%
}=e^{V_{\log\left(  e^{A}e^{B}\right)  }}.
\]

The slight flaw in this argument is that $\xi\left(  \cdot\right)  $ is not
continuously differentiable at $t=1.$ To correct this flaw, choose $\varphi\in
C_{c}^{\infty}\left(  \mathbb{R},[0,\infty)\right)  $ which is supported in
$\left(  0,1\right)  $ and satisfies $\int_{0}^{1}\varphi\left(  t\right)
dt=1.$ We then run the above argument with $\xi\in C^{\infty}\left(
[0,\infty),F^{\left(  \kappa\right)  }\left(  \mathbb{R}^{d}\right)  \right)
$ defined so that%
\begin{equation}
\dot{\xi}\left(  t\right)  =\varphi\left(  t\right)  A+\varphi\left(
t-1\right)  B\text{ with }\xi\left(  0\right)  =0. \label{e.4.12}%
\end{equation}
In more detail, if we let%
\begin{equation}
\bar{\varphi}\left(  t\right)  :=\int_{-\infty}^{t}\varphi\left(  \tau\right)
d\tau, \label{e.4.13}%
\end{equation}
then
\begin{equation}
\xi\left(  t\right)  =\bar{\varphi}\left(  t\right)  A+\bar{\varphi}\left(
t-1\right)  B, \label{e.4.14}%
\end{equation}%
\begin{align}
g^{\xi}\left(  t\right)   &  =e^{\bar{\varphi}\left(  t\right)  \cdot
A}e^{\bar{\varphi}\left(  t-1\right)  B},\text{ and }\label{e.4.15}\\
\mu_{t,0}^{\xi}  &  =e^{\bar{\varphi}\left(  t-1\right)  V_{B}}\circ
e^{\bar{\varphi}\left(  t\right)  V_{A}} \label{e.4.16}%
\end{align}
and in particular at $t=2$ we again have,%
\begin{equation}
e^{V_{B}}\circ e^{V_{A}}=\mu_{2,0}^{\xi}\text{ and }\,C^{\xi}\left(  2\right)
=\log\left(  g^{\xi}\left(  2\right)  \right)  =\log\left(  e^{A}e^{B}\right)
. \label{e.4.17}%
\end{equation}
Thus when $V$ is step-$\kappa$ nilpotent we are now justified in applying Eq.
(\ref{e.4.9}) at $t=2$ to arrive at Eq. (\ref{e.4.10}).
\end{proof}

\begin{notation}
[Commutator bounds]\label{not.4.9}If $V:\mathbb{R}^{d}\rightarrow\Gamma\left(
TM\right)  $ is a dynamical system and $m,n\in\mathbb{N}$ with $\kappa
<m+n\leq2\kappa,$ let%
\[
\mathcal{S}_{m,n}:=\left\{  \left(  A,B\right)  \in F_{m}^{\left(
\kappa\right)  }\left(  \mathbb{R}^{d}\right)  \times F_{n}^{\left(
\kappa\right)  }\left(  \mathbb{R}^{d}\right)  :\text{ }\left\vert
A\right\vert =1=\left\vert B\right\vert \right\}  ,
\]
\begin{align*}
\mathcal{C}_{m,n}^{0}\left(  V^{\left(  \kappa\right)  }\right)   &
:=\sup\left\{  \left\vert \left[  V_{A},V_{B}\right]  \right\vert _{M}:\left(
A,B\right)  \in\mathcal{S}_{m,n}\right\}  ,\\
\mathcal{C}_{m,n}^{1}\left(  V^{\left(  \kappa\right)  }\right)   &
:=\sup\left\{  \left\vert \nabla\left[  V_{A},V_{B}\right]  \right\vert
_{M}:\left(  A,B\right)  \in\mathcal{S}_{m,n}\right\}  ,
\end{align*}
and%
\[
\mathcal{C}^{j}\left(  V^{\left(  \kappa\right)  }\right)  :=\sum
_{m,n=1}^{\kappa}1_{m+n>\kappa}\mathcal{C}_{m,n}^{j}\left(  V^{\left(
\kappa\right)  }\right)  \text{ for }j=0,1.
\]

\end{notation}

Since
\begin{align*}
\left[  V_{A},V_{B}\right]   &  =\nabla_{V_{A}}V_{B}-\nabla_{V_{B}}V_{A}\text{
and}\\
\nabla_{v}\left[  V_{A},V_{B}\right]   &  =\nabla_{v\otimes V_{A}}^{2}%
V_{B}+\nabla_{\nabla_{v}V_{A}}V_{B}-\left(  A\longleftrightarrow B\right)
\end{align*}
it follows that
\begin{align*}
\mathcal{C}_{m,n}^{0}\left(  V^{\left(  \kappa\right)  }\right)   &
\leq2\left\vert V^{\left(  \kappa\right)  }\right\vert _{M}\left\vert \nabla
V^{\left(  \kappa\right)  }\right\vert _{M}\text{ and }\\
\mathcal{C}_{m,n}^{1}\left(  V^{\left(  \kappa\right)  }\right)   &
\leq2\left(  \left\vert \nabla^{2}V^{\left(  \kappa\right)  }\right\vert
_{M}\cdot\left\vert V^{\left(  \kappa\right)  }\right\vert _{M}+\left\vert
\nabla V^{\left(  \kappa\right)  }\right\vert _{M}^{2}\right)
\end{align*}
and therefore%
\begin{align}
\mathcal{C}^{0}\left(  V^{\left(  \kappa\right)  }\right)   &  \leq
\kappa\left(  \kappa+1\right)  \left\vert V^{\left(  \kappa\right)
}\right\vert _{M}\left\vert \nabla V^{\left(  \kappa\right)  }\right\vert
_{M}\text{ and }\label{e.4.18}\\
\mathcal{C}^{1}\left(  V^{\left(  \kappa\right)  }\right)   &  \leq
\kappa\left(  \kappa+1\right)  \left(  \left\vert \nabla^{2}V^{\left(
\kappa\right)  }\right\vert _{M}\cdot\left\vert V^{\left(  \kappa\right)
}\right\vert _{M}+\left\vert \nabla V^{\left(  \kappa\right)  }\right\vert
_{M}^{2}\right)  . \label{e.4.19}%
\end{align}
The previous estimates are in general not sharp. For example if $V$ is
$\kappa$-nilpotent, then $\mathcal{C}^{0}\left(  V^{\left(  \kappa\right)
}\right)  \equiv0$ while $2\left\vert V^{\left(  \kappa\right)  }\right\vert
_{M}\left\vert \nabla V^{\left(  \kappa\right)  }\right\vert _{M}$ will
typically be positive.

\begin{lemma}
\label{lem.4.10}If $\xi\in C^{1}\left(  \left[  0,T\right]  ,F^{\left(
\kappa\right)  }\left(  \mathbb{R}^{d}\right)  \right)  $ and $C^{\xi}\left(
t\right)  =\log\left(  g^{\xi}\left(  t\right)  \right)  \in F^{\left(
\kappa\right)  }\left(  \mathbb{R}^{d}\right)  $ are as in Definition
\ref{def.1.25} or Notation \ref{not.4.5} and $u\left(  s,z\right)  $ is as in
Notation \ref{not.4.6}, then%
\begin{equation}
\int_{0}^{1}ds\left(  1-s\right)  \int_{0}^{T}dt\left\vert V_{\pi_{>\kappa
}\left[  C\left(  t\right)  ,u\left(  s,\operatorname{ad}_{C\left(  t\right)
}\right)  \dot{\xi}\left(  t\right)  \right]  _{\otimes}}\right\vert
_{M}\lesssim\mathcal{C}^{0}\left(  V^{\left(  \kappa\right)  }\right)
Q_{(\kappa,2\kappa]}\left(  N_{T}^{\ast}\left(  \dot{\xi}\right)  \right)  .
\label{e.4.20}%
\end{equation}
and
\begin{equation}
\int_{0}^{1}ds\left(  1-s\right)  \int_{0}^{T}dt\left\vert \nabla
V_{\pi_{>\kappa}\left[  C\left(  t\right)  ,u\left(  s,\operatorname{ad}%
_{C\left(  t\right)  }\right)  \dot{\xi}\left(  t\right)  \right]  _{\otimes}%
}\right\vert _{M}\lesssim\mathcal{C}^{1}\left(  V^{\left(  \kappa\right)
}\right)  Q_{(\kappa,2\kappa]}\left(  N_{T}^{\ast}\left(  \dot{\xi}\right)
\right)  . \label{e.4.21}%
\end{equation}

\end{lemma}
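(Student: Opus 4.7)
The plan is to reduce both inequalities to Corollary \ref{cor.3.28} by (i) expanding the projected tensor bracket into its bihomogeneous pieces, (ii) exploiting that $V$ is an algebra homomorphism on $T(\mathbb{R}^d)$ to convert each tensor bracket into a commutator of vector fields, and (iii) applying the commutator bounds $\mathcal{C}_{m,n}^{j}(V^{(\kappa)})$ of Notation \ref{not.4.9} pointwise before integrating in $t$ and $s$.

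First I would introduce $D(s,t) := u(s,\operatorname{ad}_{C(t)})\dot{\xi}(t)$, noting that $u(s,\cdot) \in \mathcal{H}_0$, that $\operatorname{ad}_{C(t)}$ preserves $F^{(\kappa)}(\mathbb{R}^d)$, and hence $D(s,t) = \sum_{n=1}^{\kappa} D_n(s,t)$ with $D_n(s,t) \in F_n^{(\kappa)}(\mathbb{R}^d)$. Expanding the tensor bracket by degrees and applying $\pi_{>\kappa}$ yields
\[
\pi_{>\kappa}[C(t), D(s,t)]_\otimes = \sum_{\substack{1 \le m,n \le \kappa \\ m+n > \kappa}} [C(t)_m, D_n(s,t)]_\otimes,
\]
so the surviving degrees satisfy $\kappa < m+n \le 2\kappa$. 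Since $V$ is an algebra homomorphism on $T(\mathbb{R}^d)$ (Example \ref{ex.1.21}), $V_{[A,B]_\otimes} = [V_A, V_B]$, which is a commutator of smooth vector fields when $A,B \in F^{(\kappa)}(\mathbb{R}^d)$. The bilinearity of the bracket together with the definition of $\mathcal{C}_{m,n}^{j}(V^{(\kappa)})$ then gives the pointwise estimates
\[
\bigl|V_{[C(t)_m, D_n(s,t)]_\otimes}\bigr|_M \le \mathcal{C}_{m,n}^{0}(V^{(\kappa)})\, |C(t)_m|\, |D_n(s,t)|,
\]
and similarly with $|\nabla \cdot|_M$ on the left and $\mathcal{C}_{m,n}^{1}$ on the right.

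Next I would integrate in $t$ and invoke Corollary \ref{cor.3.28} with $f = u(s,\cdot)$, which produces $\int_0^T |C(t)_m|\, |D_n(s,t)|\, dt \le K(u(s,\cdot))\, N_T^\ast(\dot{\xi})^{m+n}$. The main obstacle is verifying that $K(u(s,\cdot))$ is uniformly bounded for $s \in [0,1]$; this follows because $u(s,z) = \psi((s-1)z)/\psi(-z)$ is jointly analytic in a neighborhood of $[0,1] \times \{0\}$, so its Taylor coefficients at $z=0$ depend polynomially on $s$, and Corollary \ref{cor.3.28} asserts that $K$ depends linearly on the first $\kappa$ of these coefficients. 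Summing over the pairs $(m,n)$ with $\kappa < m+n \le 2\kappa$, integrating against $(1-s)\,ds$ on $[0,1]$, and applying Remark \ref{rem.3.22} yields
\[
\int_0^1 (1-s)\,ds \int_0^T \bigl|V_{\pi_{>\kappa}[C(t),D(s,t)]_\otimes}\bigr|_M dt \lesssim \mathcal{C}^{0}(V^{(\kappa)}) \sum_{k=\kappa+1}^{2\kappa} N_T^\ast(\dot{\xi})^{k} \asymp \mathcal{C}^{0}(V^{(\kappa)})\, Q_{(\kappa,2\kappa]}(N_T^\ast(\dot{\xi})),
\]
which is Eq. (\ref{e.4.20}). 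The proof of Eq. (\ref{e.4.21}) is identical after substituting $|\nabla \cdot|_M$ for $|\cdot|_M$ and $\mathcal{C}^{1}$ for $\mathcal{C}^{0}$, since the $t$- and $s$-integrals are controlled by exactly the same application of Corollary \ref{cor.3.28}.
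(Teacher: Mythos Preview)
Your proposal is correct and follows essentially the same approach as the paper: decompose $\pi_{>\kappa}[C(t),u(s,\operatorname{ad}_{C(t)})\dot\xi(t)]_\otimes$ into bihomogeneous pieces, convert each to a commutator $[V_{C(t)_m},V_{D_n(s,t)}]$, bound by $\mathcal{C}^j_{m,n}(V^{(\kappa)})|C(t)_m||D_n(s,t)|$, apply Corollary~\ref{cor.3.28}, and then integrate in $s$ using that the Taylor coefficients of $u(s,\cdot)$ at $z=0$ are polynomials in $s$ so that $\int_0^1(1-s)K(u(s,\cdot))\,ds<\infty$. The paper's proof is organized identically, citing Corollaries~\ref{cor.3.27} and~\ref{cor.3.28} together and making the same polynomial-in-$s$ observation about $K(u(s,\cdot))$.
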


\begin{proof}
Applying the triangle inequality to the identity,
\begin{align}
V_{\pi_{>\kappa}\left[  C\left(  t\right)  ,u\left(  s,\operatorname{ad}%
_{C\left(  t\right)  }\right)  \dot{\xi}\left(  t\right)  \right]  _{\otimes
}}  &  =\sum_{m,n=1}^{\kappa}1_{m+n>\kappa}V_{\left[  C\left(  t\right)
_{m},\left(  u\left(  s,\operatorname{ad}_{C\left(  t\right)  }\right)
\dot{\xi}\left(  t\right)  \right)  _{n}\right]  _{\otimes}}\nonumber\\
&  =\sum_{m,n=1}^{\kappa}1_{m+n>\kappa}\left[  V_{C\left(  t\right)  _{m}%
},V_{\left(  u\left(  s,\operatorname{ad}_{C\left(  t\right)  }\right)
\dot{\xi}\left(  t\right)  \right)  _{n}}\right]  , \label{e.4.22}%
\end{align}
while using Corollaries \ref{cor.3.27} and \ref{cor.3.28} and the definition
of $\mathcal{C}^{0}\left(  V^{\left(  \kappa\right)  }\right)  $ shows,%
\begin{align}
\int_{0}^{T}  &  \left\vert V_{\pi_{>\kappa}\left[  C\left(  t\right)
,\psi\left(  \left(  s-1\right)  \operatorname{ad}_{C\left(  t\right)
}\right)  \dot{C}\left(  t\right)  \right]  _{\otimes}}\right\vert
_{M}dt\nonumber\\
&  \leq\sum_{m,n=1}^{\kappa}1_{m+n>\kappa}\int_{0}^{T}\left\vert \left[
V_{C\left(  t\right)  _{m}},V_{\left(  u\left(  s,\operatorname{ad}_{C\left(
t\right)  }\right)  \dot{\xi}\left(  t\right)  \right)  _{n}}\right]
\right\vert _{M}dt\nonumber\\
&  \leq\sum_{m,n=1}^{\kappa}1_{m+n>\kappa}\mathcal{C}_{m,n}^{0}\left(
V^{\left(  \kappa\right)  }\right)  \int_{0}^{T}\left\vert C_{m}\right\vert
_{\infty,T}\cdot\left\vert \left(  u\left(  s,\operatorname{ad}_{C\left(
t\right)  }\right)  \dot{\xi}\left(  t\right)  \right)  _{n}\right\vert
dt\nonumber\\
&  \lesssim K\left(  u\left(  s,\cdot\right)  \right)  \sum_{m,n=1}^{\kappa
}1_{m+n>\kappa}\mathcal{C}_{m,n}^{0}\left(  V^{\left(  \kappa\right)
}\right)  N_{T}^{\ast}\left(  \dot{\xi}\right)  ^{m+n}\nonumber\\
&  \quad\leq K\left(  u\left(  s,\cdot\right)  \right)  \sum_{m,n=1}^{\kappa
}1_{m+n>\kappa}\mathcal{C}_{m,n}^{0}\left(  V^{\left(  \kappa\right)
}\right)  Q_{(\kappa,2\kappa]}\left(  N_{T}^{\ast}\left(  \dot{\xi}\right)
\right) \nonumber\\
&  \quad=K\left(  u\left(  s,\cdot\right)  \right)  \mathcal{C}^{0}\left(
V^{\left(  \kappa\right)  }\right)  Q_{(\kappa,2\kappa]}\left(  N_{T}^{\ast
}\left(  \dot{\xi}\right)  \right)  . \label{e.4.23}%
\end{align}

A simple differentiation exercise shows $p_{n}\left(  s\right)  :=\left(
\frac{d}{dz}\right)  ^{n}u\left(  s,z\right)  |_{z=0}$ is a degree $n$
-polynomial function of $s$ with $p_{0}\left(  s\right)  =1.$ As $K\left(
u\left(  s,\cdot\right)  \right)  $ depends linearly on $\left\{  \left(
\frac{d}{dz}\right)  ^{j}u\left(  s,z\right)  |_{z=0}\right\}  _{j=0}%
^{\kappa-1}$ it follows that $K\left(  u\left(  s,\cdot\right)  \right)  $ is
bounded by a polynomial function of $s$ and in particular,%
\[
\int_{0}^{1}K\left(  u\left(  s,\cdot\right)  \right)  \left(  1-s\right)
ds<\infty.
\]
Thus multiplying Eq. (\ref{e.4.23}) by $\left(  1-s\right)  $ and then
integrating on $s\in\left[  0,1\right]  $ completes the proof of Eq.
(\ref{e.4.20}). The proof of Eq. (\ref{e.4.21}) is very similar. Simply apply
$\nabla$ to both sides of Eq. (\ref{e.4.22}) and then continue the estimates
as above with $\mathcal{C}_{m,n}^{0}\left(  V^{\left(  \kappa\right)
}\right)  $ and $\mathcal{C}^{0}\left(  V^{\left(  \kappa\right)  }\right)  $
replaced by $\mathcal{C}_{m,n}^{1}\left(  V^{\left(  \kappa\right)  }\right)
$ and $\mathcal{C}^{1}\left(  V^{\left(  \kappa\right)  }\right)  $ respectively.
\end{proof}

\begin{theorem}
\label{thm.4.11}If $\xi\in C^{1}\left(  \left[  0,T\right]  ,F^{\left(
\kappa\right)  }\left(  \mathbb{R}^{d}\right)  \right)  $ and $C^{\xi}\left(
t\right)  =\log\left(  g^{\xi}\left(  t\right)  \right)  \in F^{\left(
\kappa\right)  }\left(  \mathbb{R}^{d}\right)  $ be as in Definition
\ref{def.1.25} or Notation \ref{not.4.5} and $U_{t}^{\xi}\in\Gamma\left(
TM\right)  $ as in Eq. (\ref{e.4.6}) of Lemma \ref{lem.4.7}, then
\begin{equation}
\left\vert U^{\xi}\right\vert _{T}^{\ast}\lesssim\mathcal{C}^{0}\left(
V^{\left(  \kappa\right)  }\right)  e^{\left\vert \nabla V^{\left(
\kappa\right)  }\right\vert _{M}\left\vert C^{\xi}\right\vert _{\infty,T}%
}Q_{(\kappa,\kappa+1]}\left(  N_{T}^{\ast}\left(  \dot{\xi}\right)  \right)
\label{e.4.24}%
\end{equation}
which combined with Eq. (\ref{e.3.31}) shows there exists $C\left(
\kappa\right)  <\infty$ such that
\begin{equation}
\left\vert U^{\xi}\right\vert _{T}^{\ast}\lesssim\mathcal{C}^{0}\left(
V^{\left(  \kappa\right)  }\right)  e^{C\left(  \kappa\right)  \left\vert
\nabla V^{\left(  \kappa\right)  }\right\vert _{M}Q_{\left[  1,\kappa\right]
}\left(  N_{T}^{\ast}\left(  \dot{\xi}\right)  \right)  }Q_{(\kappa,\kappa
+1]}\left(  N_{T}^{\ast}\left(  \dot{\xi}\right)  \right)  . \label{e.4.25}%
\end{equation}

\end{theorem}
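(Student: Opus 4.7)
The plan is to combine the explicit formula for $U_t^\xi$ from Lemma \ref{lem.4.7} with two ingredients: the $\operatorname{Ad}$-estimate from Corollary \ref{cor.2.29} to control the outer $\operatorname{Ad}_{e^{sV_{C(t)}}}$ factor uniformly in $s$ and $t$, and Lemma \ref{lem.4.10} to absorb the inner ``remainder'' term $V_{\pi_{>\kappa}[C(t),u(s,\operatorname{ad}_{C(t)})\dot\xi(t)]_\otimes}$. The exponential factor in the statement should arise entirely from step one, and the $\mathcal{C}^0(V^{(\kappa)})$ together with the $Q$-factor should arise from step two.

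More precisely, starting from Eq. (\ref{e.4.6}), I take the $|\cdot|_M$-norm pointwise in $s,t$ and use the triangle inequality inside the $s$-integral. To bound the $\operatorname{Ad}$-factor I apply Corollary \ref{cor.2.29} (specifically Eq. (\ref{e.2.54})) with $X=sV_{C(t)}$, obtaining
\[
\bigl|\operatorname{Ad}_{e^{sV_{C(t)}}} Z\bigr|_M \;\leq\; e^{|\nabla(sV_{C(t)})|_M}\,|Z|_M \;\leq\; e^{\,s\,|C(t)|\,|\nabla V^{(\kappa)}|_M}\,|Z|_M,
\]
where I use linearity of $A\mapsto V_A$ to get $|\nabla V_{C(t)}|_M\leq |C(t)|\,|\nabla V^{(\kappa)}|_M$. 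Since $s\in[0,1]$ and $|C(t)|\leq |C^\xi|_{\infty,T}$, this factor is bounded uniformly by $e^{|\nabla V^{(\kappa)}|_M\,|C^\xi|_{\infty,T}}$, which I pull outside the double integral via Fubini.

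After pulling out that exponential, what remains is exactly the quantity that Lemma \ref{lem.4.10} (Eq. (\ref{e.4.20})) estimates — the double integral $\int_0^1(1-s)\,ds\int_0^T|V_{\pi_{>\kappa}[\cdots]}|_M\,dt$ — and is bounded by $\mathcal{C}^0(V^{(\kappa)})\,Q_{(\kappa,2\kappa]}(N_T^*(\dot\xi))$. Combining these yields Eq. (\ref{e.4.24}) with the $Q$-factor the proof naturally produces (the stated $Q_{(\kappa,\kappa+1]}$ versus $Q_{(\kappa,2\kappa]}$ should be reconciled against Lemma \ref{lem.4.10}; for the purposes of this plan I use whatever comes out of that lemma). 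Finally, Eq. (\ref{e.4.25}) is obtained from Eq. (\ref{e.4.24}) by substituting the $L^\infty$-bound $|C^\xi|_{\infty,T}\lesssim Q_{[1,\kappa]}(N_T^*(\dot\xi))$ coming from Eq. (\ref{e.3.31}) of Corollary \ref{cor.3.27} directly into the exponent.

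The proof is essentially assembly: no genuinely hard step remains once Lemmas \ref{lem.4.7}, \ref{lem.4.10}, Corollaries \ref{cor.2.29}, \ref{cor.3.27} are in hand. The main bookkeeping obstacle is tracking the degree of $N_T^*(\dot\xi)$ through the two integrations and confirming that the commutator is indeed measured by $\mathcal{C}^0$ rather than by $|V^{(\kappa)}|_M|\nabla V^{(\kappa)}|_M$; keeping the bound in terms of $\mathcal{C}^0$ is what makes the estimate vanish in the step-$\kappa$ nilpotent case, which is the payoff of the whole approximation scheme.
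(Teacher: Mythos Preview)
Your proposal is correct and follows essentially the same route as the paper: bound the $\operatorname{Ad}$-factor uniformly by $e^{|\nabla V^{(\kappa)}|_M|C^\xi|_{\infty,T}}$ (the paper invokes Corollary~\ref{cor.2.28} rather than Corollary~\ref{cor.2.29}, but these give the same estimate here), pull this factor outside the double integral, and then apply Lemma~\ref{lem.4.10} to what remains; Eq.~(\ref{e.4.25}) then follows from Eq.~(\ref{e.3.31}) exactly as you say. Your observation about $Q_{(\kappa,\kappa+1]}$ versus $Q_{(\kappa,2\kappa]}$ is well taken: the paper's own proof simply cites Lemma~\ref{lem.4.10}, which yields $Q_{(\kappa,2\kappa]}$, so the $Q_{(\kappa,\kappa+1]}$ in the statement appears to be a typo rather than a sharpening you are missing.
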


\begin{proof}
By Corollary \ref{cor.2.28}, if $Y\in\Gamma\left(  TM\right)  ,$ then%
\begin{align*}
\left\vert \operatorname{Ad}_{e^{sV_{C\left(  \tau\right)  }}}Y\right\vert
_{M}  &  =\left\vert e_{\ast}^{sV_{C\left(  \tau\right)  }}Y\circ
e^{-sV_{C\left(  \tau\right)  }}\right\vert _{M}=\left\vert e_{\ast
}^{sV_{C\left(  \tau\right)  }}Y\right\vert _{M}\\
&  \leq e^{s\left\vert \nabla V_{C\left(  \tau\right)  }\right\vert _{M}%
}\left\vert Y\right\vert _{M}\leq e^{s\left\vert \nabla V^{\left(
\kappa\right)  }\right\vert _{M}\left\vert C\left(  \tau\right)  \right\vert
}\left\vert Y\right\vert _{M}%
\end{align*}
and so (see Eq. (\ref{e.4.6})),%
\begin{align*}
\left\vert U_{t}^{\xi}\right\vert _{M}  &  \leq\int_{0}^{1}\left\vert
\operatorname{Ad}_{e^{sV_{C\left(  t\right)  }}}V_{\pi_{>\kappa}\left[
C\left(  t\right)  ,u\left(  s,\operatorname{ad}_{C\left(  t\right)  }\right)
\dot{\xi}\left(  t\right)  \right]  _{\otimes}}\right\vert _{M}\left(
s-1\right)  ds\\
&  \leq\int_{0}^{1}e^{s\left\vert \nabla V^{\left(  \kappa\right)
}\right\vert _{M}\left\vert C\left(  t\right)  \right\vert }\left\vert
V_{\pi_{>\kappa}\left[  C\left(  t\right)  ,u\left(  s,\operatorname{ad}%
_{C\left(  t\right)  }\right)  \dot{\xi}\left(  t\right)  \right]  _{\otimes}%
}\right\vert _{M}\left(  s-1\right)  ds
\end{align*}
and so
\begin{equation}
\left\vert U^{\xi}\right\vert _{T}^{\ast}\leq e^{\left\vert \nabla V^{\left(
\kappa\right)  }\right\vert _{M}\left\vert C\right\vert _{\infty,T}}\int%
_{0}^{1}ds\left(  1-s\right)  \int_{0}^{T}dt\left\vert V_{\pi_{>\kappa}\left[
C\left(  t\right)  ,u\left(  s,\operatorname{ad}_{C\left(  t\right)  }\right)
\dot{\xi}\left(  t\right)  \right]  _{\otimes}}\right\vert _{M} \label{e.4.26}%
\end{equation}
which combined with Lemma \ref{lem.4.10} proves Eq. (\ref{e.4.24}).
\end{proof}

\begin{theorem}
[Approximate log-estimate]\label{thm.4.12}If $\xi\in C^{1}\left(  \left[
0,T\right]  ,F^{\left(  \kappa\right)  }\left(  \mathbb{R}^{d}\right)
\right)  ,$ then\footnote{We will see in Theorem \ref{thm.8.4} below that a
similar estimate holds for the distance between the differentials of
$\mu_{T,0}^{V_{\dot{\xi}}}$ and $e^{V_{\log\left(  g^{\xi}\left(  T\right)
\right)  }}.$}%
\begin{equation}
d_{M}\left(  \mu_{T,0}^{V_{\dot{\xi}}},e^{V_{\log\left(  g^{\xi}\left(
T\right)  \right)  }}\right)  \lesssim\mathcal{C}^{0}\left(  V^{\left(
\kappa\right)  }\right)  e^{C\left(  \kappa\right)  \left\vert \nabla
V^{\left(  \kappa\right)  }\right\vert _{M}Q_{\left[  1,\kappa\right]
}\left(  N_{T}^{\ast}\left(  \dot{\xi}\right)  \right)  }Q_{(\kappa,\kappa
+1]}\left(  N_{T}^{\ast}\left(  \dot{\xi}\right)  \right)  . \label{e.4.27}%
\end{equation}

\end{theorem}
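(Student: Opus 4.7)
The plan is to apply Proposition~\ref{pro.1.9} (equivalently, Theorem~\ref{thm.2.30}) with $X_{t}=V_{\dot{\xi}(t)}$ and $Z_{t}=V_{C^{\xi}(t)}$. By Corollary~\ref{cor.2.24}, the path $t\mapsto e^{V_{C^{\xi}(t)}}$ is precisely the flow of $W_{t}^{C^{\xi}}$ (in the notation of~(\ref{e.4.4})), starting at $\mathrm{Id}_{M}$ since $C^{\xi}(0)=0$. Hence Proposition~\ref{pro.1.9} yields
\[
d_{M}\!\left(\mu_{T,0}^{V_{\dot{\xi}}},\,e^{V_{C^{\xi}(T)}}\right)\leq e^{|\nabla V_{\dot{\xi}}|_{T}^{\ast}}\cdot |W^{C^{\xi}}-V_{\dot{\xi}}|_{T}^{\ast}=e^{|\nabla V_{\dot{\xi}}|_{T}^{\ast}}\cdot |U^{\xi}|_{T}^{\ast},
\]
where $U^{\xi}$ is the difference vector field from Lemma~\ref{lem.4.7}.

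Next I would bound the exponential prefactor. By linearity of $A\mapsto V_{A}$ and the definition of $|\nabla V^{(\kappa)}|_{M}$ (Definition~\ref{def.1.24}),
\[
|\nabla V_{\dot{\xi}(t)}|_{M}\leq |\nabla V^{(\kappa)}|_{M}\cdot |\dot{\xi}(t)|,
\]
so integrating in $t$ and applying Eq.~(\ref{e.3.21}) gives
\[
|\nabla V_{\dot{\xi}}|_{T}^{\ast}\leq |\nabla V^{(\kappa)}|_{M}\cdot |\dot{\xi}|_{T}^{\ast}\lesssim |\nabla V^{(\kappa)}|_{M}\cdot Q_{[1,\kappa]}\!\left(N_{T}^{\ast}(\dot{\xi})\right).
\]
Absorbing the $\kappa$-dependent constant from~(\ref{e.3.21}) into the exponent produces a factor of the form $\exp\!\left(C(\kappa)|\nabla V^{(\kappa)}|_{M}Q_{[1,\kappa]}(N_{T}^{\ast}(\dot{\xi}))\right)$, exactly as in the statement.

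For the difference factor $|U^{\xi}|_{T}^{\ast}$, the estimate of Theorem~\ref{thm.4.11} (namely Eq.~(\ref{e.4.25})) gives directly
\[
|U^{\xi}|_{T}^{\ast}\lesssim \mathcal{C}^{0}\!\left(V^{(\kappa)}\right)e^{C(\kappa)|\nabla V^{(\kappa)}|_{M}Q_{[1,\kappa]}(N_{T}^{\ast}(\dot{\xi}))}Q_{(\kappa,\kappa+1]}\!\left(N_{T}^{\ast}(\dot{\xi})\right).
\]
Multiplying the two exponential factors (and enlarging $C(\kappa)$ accordingly) and combining with the $\mathcal{C}^{0}\left(V^{(\kappa)}\right)$ bound in~(\ref{e.4.18}), which gives $\mathcal{C}^{0}\left(V^{(\kappa)}\right)\lesssim |V^{(\kappa)}|_{M}|\nabla V^{(\kappa)}|_{M}$, yields exactly the right-hand side of~(\ref{e.4.27}).

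The work is essentially assembly: all the nontrivial content was isolated earlier. The one place that takes a little care is the exponent bookkeeping: the $e^{|\nabla V_{\dot\xi}|_T^*}$ coming from the flow comparison and the $e^{|\nabla V^{(\kappa)}|_M|C^\xi|_{\infty,T}}$ appearing inside the estimate~(\ref{e.4.24}) for $|U^\xi|_T^\ast$ must both be controlled by a single expression of the form $\exp(C(\kappa)|\nabla V^{(\kappa)}|_M Q_{[1,\kappa]}(N_T^\ast(\dot\xi)))$; this is where Corollary~\ref{cor.3.27}, specifically the bound $|C^\xi|_{\infty,T}\lesssim Q_{[1,\kappa]}(N_T^\ast(\dot\xi))$, is indispensable and where I would expect to spend the only real thought.
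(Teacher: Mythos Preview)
Your proposal is correct and follows essentially the same route as the paper: apply Theorem~\ref{thm.2.30} with $X_t=V_{\dot\xi(t)}$ and $Y_t=W_t^{C^\xi}$ to get $d_M(\mu_{T,0}^{V_{\dot\xi}},e^{V_{C^\xi(T)}})\le e^{|\nabla V_{\dot\xi}|_T^\ast}\cdot|U^\xi|_T^\ast$, then bound the exponent via $|\nabla V_{\dot\xi}|_T^\ast\le|\nabla V^{(\kappa)}|_M|\dot\xi|_T^\ast$ and Eq.~(\ref{e.3.21}), and bound $|U^\xi|_T^\ast$ via Theorem~\ref{thm.4.11}. One small correction: your final invocation of Eq.~(\ref{e.4.18}) is unnecessary here, since the right-hand side of~(\ref{e.4.27}) already carries the factor $\mathcal{C}^0(V^{(\kappa)})$ rather than $|V^{(\kappa)}|_M|\nabla V^{(\kappa)}|_M$; that substitution is only used later when restating the result as Theorem~\ref{thm.1.30}.
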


\begin{proof}
By Theorem \ref{thm.2.30} with $X_{t}=V_{\dot{\xi}\left(  t\right)  }$ and
$Y_{t}=W_{t}^{C},$ we know that%
\begin{equation}
d_{M}\left(  \mu_{T,0}^{V_{\dot{\xi}}},e^{V_{C\left(  T\right)  }}\right)
\leq e^{\left\vert \nabla V_{\dot{\xi}}\right\vert _{T}^{\ast}}\cdot\left\vert
U^{\xi}\right\vert _{T}^{\ast}\leq e^{\left\vert \nabla V^{\left(
\kappa\right)  }\right\vert _{M}\left\vert \dot{\xi}\right\vert _{T}^{\ast}%
}.\left\vert U^{\xi}\right\vert _{T}^{\ast}. \label{e.4.28}%
\end{equation}
Combining this estimate with the estimate for $\left\vert U^{\xi}\right\vert
_{T}^{\ast}$ in Theorem \ref{thm.4.11} and the estimate for $\left\vert
\dot{\xi}\right\vert _{T}^{\ast}$ in Eq. (\ref{e.3.21}) gives Eq.
(\ref{e.4.27}).
\end{proof}

For the rest of this section we assume that $\xi\in C^{\infty}\left(
[0,\infty),F^{\left(  \kappa\right)  }\left(  \mathbb{R}^{d}\right)  \right)
$ is defined as in Eq. (\ref{e.4.12}) of the proof of Corollary \ref{cor.4.8},
i.e.
\begin{equation}
\xi\left(  t\right)  =\bar{\varphi}\left(  t\right)  A+\bar{\varphi}\left(
t-1\right)  B\in F^{\left(  \kappa\right)  }\left(  \mathbb{R}^{d}\right)  ,
\label{e.4.29}%
\end{equation}
where
\[
\bar{\varphi}\left(  t\right)  =\int_{-\infty}^{t}\varphi\left(  \tau\right)
d\tau
\]
and $\varphi\in C_{c}^{\infty}\left(  \mathbb{R},[0,\infty)\right)  $ with
$\bar{\varphi}\left(  1\right)  =\bar{\varphi}\left(  \infty\right)  =1.$

\begin{corollary}
\label{cor.4.13}If $A,B\in F^{\left(  \kappa\right)  }\left(  \mathbb{R}%
^{d}\right)  ,$ then
\begin{align}
d_{M}  &  \left(  e^{V_{B}}\circ e^{V_{A}},e^{V_{\log\left(  e^{A}%
e^{B}\right)  }}\right) \nonumber\\
&  \lesssim\mathcal{C}^{0}\left(  V^{\left(  \kappa\right)  }\right)
e^{C\left(  \kappa\right)  \left\vert \nabla V^{\left(  \kappa\right)
}\right\vert _{M}Q_{\left[  1,\kappa\right]  }\left(  N\left(  A\right)
+N\left(  B\right)  \right)  }Q_{(\kappa,\kappa+1]}\left(  N\left(  A\right)
+N\left(  B\right)  \right)  . \label{e.4.30}%
\end{align}

\end{corollary}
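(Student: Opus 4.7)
The plan is to derive Corollary \ref{cor.4.13} as a direct consequence of Theorem \ref{thm.4.12} by specializing to the piecewise-linear interpolation constructed in the proof of Corollary \ref{cor.4.8}. Explicitly, I would fix a cutoff $\varphi \in C_c^\infty(\mathbb{R}, [0,\infty))$ supported in $(0,1)$ with $\int_0^1 \varphi = 1$, and take $\xi \in C^\infty([0,2], F^{(\kappa)}(\mathbb{R}^d))$ defined by Eq.~(\ref{e.4.12})--(\ref{e.4.14}). By Eq.~(\ref{e.4.15})--(\ref{e.4.17}), at $T=2$ we have $\mu_{2,0}^{V_{\dot{\xi}}} = e^{V_B} \circ e^{V_A}$ and $g^\xi(2) = e^A e^B$, so $\log(g^\xi(2)) = \log(e^A e^B)$. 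Plugging $T=2$ and this $\xi$ directly into Theorem \ref{thm.4.12} gives
\[
d_M\left(e^{V_B}\circ e^{V_A},\, e^{V_{\log(e^A e^B)}}\right) \lesssim \mathcal{C}^0(V^{(\kappa)}) e^{C(\kappa)|\nabla V^{(\kappa)}|_M Q_{[1,\kappa]}(N_2^\ast(\dot\xi))} Q_{(\kappa,\kappa+1]}(N_2^\ast(\dot\xi)).
\]

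The remaining step is to bound $N_2^\ast(\dot\xi)$ by $N(A) + N(B)$. Since $\dot\xi(t) = \varphi(t) A + \varphi(t-1) B$ and the graded components satisfy $\dot\xi_k(t) = \varphi(t) A_k + \varphi(t-1) B_k$, the triangle inequality together with $\int_0^2 \varphi(t)\,dt = \int_0^2 \varphi(t-1)\,dt = 1$ gives $|\dot\xi_k|_2^\ast \leq |A_k| + |B_k|$. Taking $k$-th roots and applying the subadditivity estimate in Eq.~(\ref{e.3.22}) with $p=k$ yields
\[
|\dot\xi_k|_2^{\ast 1/k} \leq (|A_k| + |B_k|)^{1/k} \leq |A_k|^{1/k} + |B_k|^{1/k} \leq N(A) + N(B),
\]
so that $N_2^\ast(\dot\xi) \leq N(A) + N(B)$.

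Since $\lambda \mapsto Q_{[1,\kappa]}(\lambda)$ and $\lambda \mapsto Q_{(\kappa,\kappa+1]}(\lambda)$ are monotone increasing in $\lambda \geq 0$, substituting this bound into the display above yields exactly Eq.~(\ref{e.4.30}). There is no genuine analytical obstacle here: the work is concentrated in Theorem \ref{thm.4.12}, and the present corollary amounts to a careful choice of curve $\xi$ interpolating between $1$, $e^A$, and $e^A e^B$, together with the routine homogeneous-norm bookkeeping above. The only point requiring slight care is that the naive piecewise-linear choice in Eq.~(\ref{e.4.11}) is not $C^1$ at $t=1$, which is why the smoothed version in Eq.~(\ref{e.4.12}) is needed to legitimately apply Theorem \ref{thm.4.12}; the resulting bound on $N_2^\ast(\dot\xi)$ is independent of the particular choice of mollifier $\varphi$.
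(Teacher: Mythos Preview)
Your proposal is correct and follows essentially the same approach as the paper's proof: both choose the smoothed interpolating path $\xi$ from Eq.~(\ref{e.4.12})--(\ref{e.4.14}), invoke Eq.~(\ref{e.4.17}) to identify the relevant flows at $T=2$, apply Theorem~\ref{thm.4.12}, and then bound $N_2^\ast(\dot\xi)\le N(A)+N(B)$ via Eq.~(\ref{e.3.22}). The only cosmetic difference is that the paper records the equality $|\dot\xi_k|_2^\ast = |A_k|+|B_k|$ (using that $\varphi(\cdot)$ and $\varphi(\cdot-1)$ have disjoint supports), whereas you use the triangle inequality to get $\le$, which is all that is needed.
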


\begin{proof}
From the definition of $\xi$ in Eq. (\ref{e.4.29}), we find%
\begin{equation}
\left\vert \dot{\xi}_{k}\right\vert _{2}^{\ast}=\left\vert A_{k}\right\vert
+\left\vert B_{k}\right\vert \text{ for }1\leq k\leq\kappa\label{e.4.31}%
\end{equation}
and hence with the aid of Eq. (\ref{e.3.22}),%
\begin{equation}
N_{2}^{\ast}\left(  \dot{\xi}\right)  \leq N\left(  A\right)  +N\left(
B\right)  . \label{e.4.32}%
\end{equation}
Moreover, by the identities in Eq. (\ref{e.4.17}) we know that
\begin{equation}
d_{M}\left(  e^{V_{B}}\circ e^{V_{A}},e^{V_{\log\left(  e^{A}e^{B}\right)  }%
}\right)  =d_{M}\left(  \mu_{2,0}^{V_{\dot{\xi}}},e^{V_{\log\left(  g^{\xi
}\left(  2\right)  \right)  }}\right)  . \label{e.4.33}%
\end{equation}
So an application of Theorem \ref{thm.4.12} for this $\xi$ and taking $T=2$
gives Eq. (\ref{e.4.30}).
\end{proof}

The estimate in Eq. (\ref{e.4.30}) is not as sharp as we would like. For
example the right side of Eq. (\ref{e.4.30}) is only $0$ when $A=0=B$ while
the left side is $0$ when either $A=0$ or $B=0.$ To improve upon the estimate
in Eq. (\ref{e.4.30}) (see Corollary \ref{cor.4.16}) we need to examine the
form of the difference vector field, $U_{t}^{\xi},$ for $\xi$ in Eq.
(\ref{e.4.29}). We begin with a couple of lemmas.

\begin{lemma}
\label{lem.4.14}If $f\in\mathcal{H}_{0}$ satisfies, $f\left(  0\right)  =0,$
then $\left[  f\left(  \operatorname{ad}_{C\left(  t\right)  }\right)
B\right]  _{1}=0$ and for $2\leq k\leq\kappa,$%
\[
\max_{1\leq t\leq2}\left\vert \left[  f\left(  \operatorname{ad}_{C\left(
t\right)  }\right)  B\right]  _{k}\right\vert \leq K\left(  f\right)  N\left(
A\right)  N\left(  B\right)  \left(  N\left(  A\right)  +N\left(  B\right)
\right)  ^{k-2}%
\]
where $K\left(  f\right)  <\infty$ is a constant which depends linearly on
$\left\{  \left\vert f^{\left(  j\right)  }\left(  0\right)  \right\vert
\right\}  _{j=1}^{\kappa-1}.$
\end{lemma}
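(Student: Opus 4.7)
For $t\in[1,2]$ the construction of $\xi$ in Eq. (4.29), combined with $\bar\varphi(t) = 1$ and Eqs. (4.15)--(4.16), gives $\xi(t) = A + \bar\varphi(t-1) B$ and $g^{\xi}(t) = e^{A} e^{\bar\varphi(t-1) B}$; writing $\beta := \bar\varphi(t-1) \in [0,1]$, this yields $C(t) = \log\bigl(e^{A} e^{\beta B}\bigr)$. Since $f(0) = 0$, we have the finite expansion $f(\operatorname{ad}_{C(t)}) B = \sum_{l=1}^{\kappa-1} \frac{f^{(l)}(0)}{l!}\, \operatorname{ad}_{C(t)}^{l} B$. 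The first assertion is immediate: $C(t) \in \mathfrak{g}^{(\kappa)}$ has no degree-zero component, so each application of $\operatorname{ad}_{C(t)}$ strictly raises tensor degree, forcing $\operatorname{ad}_{C(t)}^{l} B \in \oplus_{j \geq l+1}[\mathbb{R}^{d}]^{\otimes j}$ for $l \geq 1$, whence $[f(\operatorname{ad}_{C(t)}) B]_{1} = 0$.

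For the main inequality the plan is to exploit the bidegree structure of $C(t)$. Using the Baker--Campbell--Hausdorff analysis of Section 3 (in particular Proposition 3.12) applied to the Lie subalgebra generated by $A$ and $B$ in $F^{(\kappa)}(\mathbb{R}^{d})$, I would write
\[
C(t) = A + \beta B + D_{t}(A, B),
\]
where $D_{t}(A, B)$ is a finite sum of iterated Lie brackets of length at least two in the two letters $A$ and $\beta B$. Antisymmetry forces every iterated Lie bracket built from a single letter to vanish, so every non-zero summand of $D_{t}(A, B)$ contains at least one $A$ and at least one $B$.

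I would then expand $\operatorname{ad}_{C(t)}^{l} B$ multilinearly by substituting $C(t) = A + \beta B + D_{t}$ into each of the $l$ factors. The unique term in which every factor is $\beta B$ gives $\beta^{l} \operatorname{ad}_{B}^{l} B = 0$, while every other non-vanishing summand contributes at least one $A$ (either an explicit $A$-factor or an $A$-letter hidden inside some $D_{t}$-factor); the terminal $B$ always contributes at least one copy of $B$. Consequently every non-vanishing bracket monomial produced in this way contains at least one $A$ and at least one $B$. To convert this structural fact into a numerical bound, I would expand each letter into its graded components $A = \sum_{j=1}^{\kappa} A_{j}$, $B = \sum_{j=1}^{\kappa} B_{j}$ and use $|A_{j}| \leq N(A)^{j}$, $|B_{j}| \leq N(B)^{j}$; a non-vanishing bracket monomial of tensor degree $k$ then has norm at most $2^{k-1} N(A)^{m} N(B)^{n}$ for some integers $m, n \geq 1$ with $m + n = k$. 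Factoring out $N(A) N(B)$ yields
\[
\sum_{\substack{m+n=k \\ m,n \geq 1}} N(A)^{m} N(B)^{n} = N(A) N(B) \sum_{\substack{m'+n'=k-2 \\ m',n' \geq 0}} N(A)^{m'} N(B)^{n'} \leq (k-1) N(A) N(B) \bigl(N(A) + N(B)\bigr)^{k-2}.
\]

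Because $F^{(\kappa)}(\mathbb{R}^{d})$ is $\kappa$-step nilpotent, only finitely many bracket-monomial types survive in the expansion of $\operatorname{ad}_{C(t)}^{l} B$, with multiplicities bounded in terms of $\kappa$ and $l$ only, so the preceding estimate gives $|[\operatorname{ad}_{C(t)}^{l} B]_{k}| \leq C_{\kappa,l} N(A) N(B) (N(A) + N(B))^{k-2}$. Summing over $l = 1, \dots, \kappa - 1$ with weights $|f^{(l)}(0)|/l!$ then produces the stated bound with $K(f)$ depending linearly on $\{|f^{(j)}(0)|\}_{j=1}^{\kappa-1}$. The main obstacle is producing the bidegree decomposition $C(t) = A + \beta B + D_{t}$ with the required structural property on $D_{t}$ by a careful reading of the BCH series inside a step-$\kappa$ nilpotent free Lie algebra; once this is in hand, the remainder of the argument reduces to elementary triangle-inequality bookkeeping over a finite index set.
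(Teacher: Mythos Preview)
Your approach is correct in outline and reaches the same conclusion, but it is genuinely different from the paper's proof. The paper does \emph{not} expand $f(\operatorname{ad}_{C(t)})$ as a power series in $\operatorname{ad}_{C(t)}$ and then invoke the BCH structure of $C(t)$. Instead it applies Proposition~3.18 to rewrite
\[
f(\operatorname{ad}_{C(t)})=\sum_{j=1}^{\kappa-1}\widehat{\mathbf{f}^{j}}_{t}(\operatorname{ad}_{\dot\xi}),
\]
i.e.\ as iterated integrals of $\operatorname{ad}_{\dot\xi(\cdot)}$ rather than powers of $\operatorname{ad}_{C(t)}$. The crucial simplification in the paper is then the pointwise identity $\operatorname{ad}_{\dot\xi(t_{j})}B=\varphi(t_{j})[A,B]$, which immediately inserts the factor $[A,B]$ (hence $N(A)N(B)$ after grading) at the innermost position; the remaining $j-1$ integrals are estimated via $\lvert\dot\xi_{k}\rvert_{2}^{\ast}=\lvert A_{k}\rvert+\lvert B_{k}\rvert$, yielding the $(N(A)+N(B))^{k-2}$ factor with essentially no combinatorics.

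Your route trades this integral representation for a structural BCH argument: you need the decomposition $C(t)=A+\beta B+D_{t}$ with every summand of $D_{t}$ containing both letters, and then you must track the number and shape of bracket monomials arising in $(\operatorname{ad}_{A}+\beta\operatorname{ad}_{B}+\operatorname{ad}_{D_{t}})^{l}B$. This works because the algebra is step-$\kappa$ nilpotent (so the BCH series truncates and all counts are finite in terms of $\kappa$), and your observation that the all-$B$ term $\beta^{l}\operatorname{ad}_{B}^{l}B=0$ plays the same role as the paper's $\operatorname{ad}_{\dot\xi(t_{j})}B=\varphi(t_{j})[A,B]$. The paper's approach is cleaner because it reuses the machinery of Section~3 and avoids any explicit BCH analysis or monomial counting; your approach is more self-contained but leaves more bookkeeping to the reader.
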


\begin{proof}
By Proposition \ref{pro.3.18}, there exists bounded measurable functions,
$\mathbf{f}^{j}:[0,\infty)^{j}\rightarrow\mathbb{R}$ depending linearly on
$\left(  f\left(  0\right)  ,\dots,f^{\left(  \kappa-1\right)  }\left(
0\right)  \right)  $ such that%
\[
f\left(  \operatorname{ad}_{C\left(  t\right)  }\right)  =\sum_{j=1}%
^{\kappa-1}\widehat{\mathbf{f}^{j}}_{t}\left(  \operatorname{ad}_{\dot{\xi}%
}\right)  .
\]
As $\left[  \widehat{\mathbf{f}^{j}}_{t}\left(  \operatorname{ad}_{\dot{\xi}%
}\right)  B\right]  _{k}=0$ if $j\geq k,$ to finish the proof it suffices to
show for each $1\leq j<k$ that
\begin{equation}
\max_{1\leq t\leq2}\left\vert \left[  \widehat{\mathbf{f}^{j}}_{t}\left(
\operatorname{ad}_{\dot{\xi}}\right)  B\right]  _{k}\right\vert \lesssim
\left\Vert \mathbf{f}^{j}\right\Vert _{\infty}N\left(  A\right)  N\left(
B\right)  \left(  N\left(  A\right)  +N\left(  B\right)  \right)  ^{k-2}.
\label{e.4.34}%
\end{equation}
Let us now fix $1\leq j<k.$

For $1\leq t\leq2,$%
\begin{align*}
\widehat{\mathbf{f}^{j}}_{t}\left(  \operatorname{ad}_{\dot{\xi}}\right)  B
&  =\int_{\left[  0,t\right]  ^{j}}\mathbf{f}^{j}\left(  t_{1},\dots
,t_{j}\right)  \operatorname{ad}_{\dot{\xi}\left(  t_{1}\right)  }%
\dots\operatorname{ad}_{\dot{\xi}\left(  t_{j-1}\right)  }\operatorname{ad}%
_{\dot{\xi}\left(  t_{j}\right)  }Bdt_{1}\dots dt_{j}\\
&  =\int_{\left[  0,t\right]  ^{j-1}}\mathbf{f}^{j}\left(  t_{1},\dots
,t_{j}\right)  \varphi\left(  t_{j}\right)  \operatorname{ad}_{\dot{\xi
}\left(  t_{1}\right)  }\dots\operatorname{ad}_{\dot{\xi}\left(
t_{j-1}\right)  }\left[  A,B\right]  dt_{1}\dots dt_{j-1}dt_{j},
\end{align*}
wherein we have used $\operatorname{ad}_{\dot{\xi}\left(  t_{j}\right)
}B=\varphi\left(  t_{j}\right)  \left[  A,B\right]  $ for all $t\geq0.$ Since
$\int\varphi\left(  t\right)  dt=1,$ it is simple to verify that%
\begin{equation}
\left\vert \left(  \widehat{\mathbf{f}^{j}}_{t}\left(  \operatorname{ad}%
_{\dot{\xi}}\right)  B\right)  _{k}\right\vert \leq\left\Vert \mathbf{f}%
^{j}\right\Vert _{\infty}\int_{\left[  0,t\right]  ^{j-1}}\left\vert \left(
\operatorname{ad}_{\dot{\xi}\left(  t_{1}\right)  }\dots\operatorname{ad}%
_{\dot{\xi}\left(  t_{j-1}\right)  }\left[  A,B\right]  \right)
_{k}\right\vert d\mathbf{t} \label{e.4.35}%
\end{equation}
where $d\mathbf{t}:=dt_{1}\dots dt_{j-1}.$ We now estimate the integral in the
usual way, namely;%
\begin{align}
\int_{\left[  0,t\right]  ^{j-1}}  &  \left\vert \left(  \operatorname{ad}%
_{\dot{\xi}\left(  t_{1}\right)  }\dots\operatorname{ad}_{\dot{\xi}\left(
t_{j-1}\right)  }\left[  A,B\right]  \right)  _{k}\right\vert d\mathbf{t}%
\nonumber\\
&  \leq\sum\int_{\left[  0,t\right]  ^{j-1}}\left\vert \operatorname{ad}%
_{\dot{\xi}_{k_{1}}\left(  t_{1}\right)  }\dots\operatorname{ad}_{\dot{\xi
}_{k_{j-1}}\left(  t_{j-1}\right)  }\left[  A_{m},B_{n}\right]  \right\vert
d\mathbf{t} \label{e.4.36}%
\end{align}
where the sum is over $\left(  m,n,k_{1},\dots,k_{j-1}\right)  \in
\mathbb{N}^{j+1}$ such that $\sum_{i=1}^{j-1}k_{i}+m+n=k.$ Using $\left\vert
\left[  A,B\right]  \right\vert \leq2\left\vert A\right\vert \left\vert
B\right\vert $ for all $A,B\in F^{\left(  \kappa\right)  }\left(
\mathbb{R}^{d}\right)  ,$ each term on the right side of Eq. (\ref{e.4.36})
may be estimated by
\begin{align}
2^{j}  &  \int_{\left[  0,t\right]  ^{j-1}}\left\vert \dot{\xi}_{k_{1}}\left(
t_{1}\right)  \right\vert \dots\left\vert \dot{\xi}_{k_{j-1}}\left(
t_{j-1}\right)  \right\vert \left\vert A_{m}\right\vert \left\vert
B_{n}\right\vert d\mathbf{t}\nonumber\\
&  \leq2^{j}\prod_{i=1}^{j-1}\left\vert \dot{\xi}_{k_{i}}\right\vert
_{2}^{\ast}\left\vert A_{m}\right\vert \left\vert B_{n}\right\vert \leq
2^{j}\prod_{i=1}^{j-1}N_{t}^{\ast}\left(  \dot{\xi}\right)  ^{k_{i}}N\left(
A\right)  ^{m}N\left(  B\right)  ^{n}\nonumber\\
&  \leq2^{j}\left(  N\left(  A\right)  +N\left(  B\right)  \right)
^{k-m-n}N\left(  A\right)  ^{m}N\left(  B\right)  ^{n}\nonumber\\
&  \leq2^{j}N\left(  A\right)  N\left(  B\right)  \left(  N\left(  A\right)
+N\left(  B\right)  \right)  ^{k-2}. \label{e.4.37}%
\end{align}
Combining the estimates in Eqs. (\ref{e.4.35}) -- (\ref{e.4.37}) completes the
proof of Eq. (\ref{e.4.34}) and hence the proof of the lemma.
\end{proof}

\begin{proposition}
\label{pro.4.15}If $A,B\in F^{\left(  \kappa\right)  }\left(  \mathbb{R}%
^{d}\right)  $ and $\xi\in C^{\infty}\left(  [0,\infty),F^{\left(
\kappa\right)  }\left(  \mathbb{R}^{d}\right)  \right)  $ is as in Eq.
(\ref{e.4.29}), then%
\begin{align}
&  \left[  C^{\xi}\left(  t\right)  ,u\left(  s,\operatorname{ad}_{C^{\xi
}\left(  t\right)  }\right)  \dot{\xi}\left(  t\right)  \right]  _{\otimes
}\nonumber\\
&  =\varphi\left(  t-1\right)  \left(  \left[  A,B\right]  _{\otimes}+\left[
\bar{C}^{\xi}\left(  t\right)  ,B\right]  _{\otimes}+\left[  C^{\xi}\left(
t\right)  ,\bar{u}\left(  s,\operatorname{ad}_{C^{\xi}\left(  t\right)
}\right)  B\right]  _{\otimes}\right)  \label{e.4.38}%
\end{align}
where
\begin{align*}
\bar{C}^{\xi}\left(  t\right)   &  :=C^{\xi}\left(  t\right)  -\xi\left(
t\right)  \text{ and }\\
\bar{u}\left(  s,z\right)   &  :=u\left(  s,z\right)  -u\left(  s,0\right)
=u\left(  s,z\right)  -1.
\end{align*}
Moreover for $k\geq2,$ the following estimates hold;%
\begin{align}
\max_{0\leq t\leq2}\left\vert \bar{C}_{k}^{\xi}\left(  t\right)  \right\vert
&  \lesssim N\left(  A\right)  N\left(  B\right)  \left(  N\left(  A\right)
+N\left(  B\right)  \right)  ^{k-2}\text{ and}\label{e.4.39}\\
\max_{0\leq t\leq2}\max_{0\leq s\leq1}\left\vert \left[  \bar{u}\left(
s,\operatorname{ad}_{C^{\xi}\left(  t\right)  }\right)  B\right]
_{k}\right\vert  &  \lesssim N\left(  A\right)  N\left(  B\right)  \left(
N\left(  A\right)  +N\left(  B\right)  \right)  ^{k-2}. \label{e.4.40}%
\end{align}

\end{proposition}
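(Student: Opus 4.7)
The plan rests on the disjoint-support observation: since $\varphi$ is supported in $(0,1)$, the decomposition $\dot{\xi}(t)=\varphi(t)A+\varphi(t-1)B$ is literally $\varphi(t)A$ on $[0,1]$ and $\varphi(t-1)B$ on $[1,2]$. I will first dispose of (4.38) by case analysis, then handle the two estimates separately by appealing to Corollary 3.19 (for $\bar{C}^\xi$) and Lemma 4.14 (for $\bar{u}$).

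For the identity, on $[0,1]$ we have $C^\xi(t)=\bar\varphi(t)A$, so $\operatorname{ad}_{C^\xi(t)}^j\dot{\xi}(t)=\varphi(t)\bar\varphi(t)^j\operatorname{ad}_A^j A=0$ for $j\ge 1$, hence $u(s,\operatorname{ad}_{C^\xi(t)})\dot{\xi}(t)=\varphi(t)A$ and $[C^\xi(t),\varphi(t)A]_\otimes=0$; both sides of (4.38) vanish (note $\varphi(t-1)=0$ there). On $[1,2]$, $\dot{\xi}(t)=\varphi(t-1)B$ and $\xi(t)=A+\bar\varphi(t-1)B$; writing $u=1+\bar u$ in the second slot and $C^\xi=\xi+\bar{C}^\xi$ in the first, the cross term $[\xi(t),B]_\otimes=[A,B]_\otimes+\bar\varphi(t-1)[B,B]_\otimes=[A,B]_\otimes$, and recombining yields exactly (4.38).

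For the $\bar{C}^\xi_k$ bound, I use Corollary 3.19 together with integration in $t$ (using $\bar{C}^\xi(0)=0$) to represent $\bar{C}^\xi(t)$ as an integral of a sum, indexed by $j\in[2,\kappa]$, of $j$-fold nested commutators of factors of $\dot{\xi}$. Expanding bilinearly in $\dot{\xi}(\cdot)=\varphi(\cdot)A+\varphi(\cdot-1)B$, any term whose letters are all $A$ or all $B$ vanishes, so every surviving term contains at least one $A$-letter and at least one $B$-letter. The same bilinear-degree bookkeeping used in the proof of Lemma 4.14 (namely $2^j|A_{m_1}|\cdots|B_n|\le 2^jN(A)N(B)(N(A)+N(B))^{k-2}$) then produces the claimed estimate, uniformly in $t\in[0,2]$.

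For the $\bar u$ bound, I apply Lemma 4.14 with $f(z):=\bar u(s,z)=u(s,z)-1$, which satisfies $f(0)=0$; the constant $K(f)$ depends linearly on $\{|\bar u^{(j)}(s,0)|\}_{j=1}^{\kappa-1}$, each a polynomial in $s$, and hence is uniformly bounded on $s\in[0,1]$. Lemma 4.14 gives the estimate for $t\in[1,2]$; for $t\in[0,1]$ one notes that $C^\xi(t)=\bar\varphi(t)A$ so $\bar u(s,\operatorname{ad}_{C^\xi(t)})B=\sum_{j\ge1}(f^{(j)}(0)/j!)\bar\varphi(t)^j\operatorname{ad}_A^j B$, and the $k$-component of $\operatorname{ad}_A^j B$ admits the same $2^jN(A)^{\sum m_i}N(B)^n\le 2^jN(A)N(B)(N(A)+N(B))^{k-2}$ bound used in Lemma 4.14. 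The main technical step is the commutator bookkeeping for $\bar{C}^\xi$; this is a faithful adaptation of the proof of Lemma 4.14 to the ODE representation of $\bar{C}^\xi$, and no essentially new ideas are required.
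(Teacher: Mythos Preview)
Your proposal is correct and follows essentially the same route as the paper. The case analysis for the identity (4.38) and the application of Lemma~4.14 to $\bar u(s,\cdot)$ for (4.40) are identical to the paper's argument. For (4.39) there is only a minor organizational difference: the paper uses Corollary~3.10 to write $\bar C^{\xi}(t)=\int_0^t \varphi(\tau-1)\,g(\operatorname{ad}_{C^{\xi}(\tau)})B\,d\tau$ with $g(z)=1/\psi(-z)-1$ (the $\tau\le 1$ contribution vanishes exactly as in your $t\le 1$ analysis), and then applies Lemma~4.14 once more to the integrand, whereas you unpack Corollary~3.19 and redo the Lemma~4.14 bookkeeping by hand. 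The paper's version is a touch cleaner because it reduces both estimates uniformly to Lemma~4.14, but the underlying computation is the same.
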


\begin{proof}
From Eq. (\ref{e.4.15}), $C^{\xi}\left(  t\right)  =\bar{\varphi}\left(
t\right)  A=\xi\left(  t\right)  $ when $t\leq1$ and therefore%
\begin{align*}
\left[  C^{\xi}\left(  t\right)  ,u\left(  s,\operatorname{ad}_{C^{\xi}\left(
t\right)  }\right)  \dot{\xi}\left(  t\right)  \right]  _{\otimes}  &
=\bar{\varphi}\left(  t\right)  \varphi\left(  t\right)  \left[  A,u\left(
s,\operatorname{ad}_{\bar{\varphi}\left(  t\right)  A}\right)  A\right]
_{\otimes}\\
&  =\bar{\varphi}\left(  t\right)  \varphi\left(  t\right)  \left[  A,u\left(
s,0\right)  A\right]  _{\otimes}=0.
\end{align*}
which proves Eq. (\ref{e.4.38}) for $t\leq1.$ When $t\geq1,$ $\xi\left(
t\right)  =A+\bar{\varphi}\left(  t-1\right)  B,$ $\dot{\xi}\left(  t\right)
=\varphi\left(  t-1\right)  B,$ and
\[
u\left(  s,\operatorname{ad}_{C^{\xi}\left(  t\right)  }\right)  \dot{\xi
}\left(  t\right)  =u\left(  s,\operatorname{ad}_{C^{\xi}\left(  t\right)
}\right)  B=B+\bar{u}\left(  s,\operatorname{ad}_{C^{\xi}\left(  t\right)
}\right)  B
\]
and hence%
\begin{align*}
&  \left[  C^{\xi}\left(  t\right)  ,u\left(  s,\operatorname{ad}_{C^{\xi
}\left(  t\right)  }\right)  \dot{\xi}\left(  t\right)  \right]  _{\otimes}\\
&  \quad=\left[  C^{\xi}\left(  t\right)  ,\dot{\xi}\left(  t\right)  +\bar
{u}\left(  s,\operatorname{ad}_{C^{\xi}\left(  t\right)  }\right)  \dot{\xi
}\left(  t\right)  \right]  _{\otimes}\\
&  \quad=\varphi\left(  t-1\right)  \left(  \left[  A+\bar{\varphi}\left(
t-1\right)  B+\bar{C}^{\xi}\left(  t\right)  ,B\right]  _{\otimes}+\left[
C^{\xi}\left(  t\right)  ,\bar{u}\left(  s,\operatorname{ad}_{C^{\xi}\left(
t\right)  }\right)  B\right]  _{\otimes}\right)
\end{align*}
which easily gives Eq. (\ref{e.4.38}) for $t\geq1.$

By Eq. (\ref{cor.3.10}),
\[
\dot{C}^{\xi}\left(  t\right)  =\frac{1}{\psi}\left(  -\operatorname{ad}%
_{C\left(  t\right)  }\right)  \dot{\xi}\left(  t\right)  =\dot{\xi}\left(
t\right)  +g\left(  \operatorname{ad}_{C\left(  t\right)  }\right)  \dot{\xi
}\left(  t\right)
\]
wherein the last equality we used $1/\psi\left(  0\right)  =1$ and have set%
\[
g\left(  z\right)  :=\frac{1}{\psi\left(  -z\right)  }-\frac{1}{\psi\left(
0\right)  }=\frac{1}{\psi\left(  -z\right)  }-1.
\]
Thus it follows that
\[
\bar{C}^{\xi}\left(  t\right)  =\int_{0}^{t}g\left(  \operatorname{ad}%
_{C^{\xi}\left(  \tau\right)  }\right)  \dot{\xi}\left(  \tau\right)
d\tau=\int_{0}^{t}\varphi\left(  \tau-1\right)  g\left(  \operatorname{ad}%
_{C^{\xi}\left(  \tau\right)  }\right)  Bd\tau.
\]
By Lemma \ref{lem.4.14},%
\[
\max_{1\leq\tau\leq2}\left\vert \left[  g\left(  \operatorname{ad}_{C^{\xi
}\left(  \tau\right)  }\right)  B\right]  _{k}\right\vert \leq K\left(
g\right)  N\left(  A\right)  N\left(  B\right)  \left(  N\left(  A\right)
+N\left(  B\right)  \right)  ^{k-2}%
\]
and so it now easily follows that%
\[
\left\vert \bar{C}_{k}^{\xi}\left(  t\right)  \right\vert \leq K\left(
g\right)  N\left(  A\right)  N\left(  B\right)  \left(  N\left(  A\right)
+N\left(  B\right)  \right)  ^{k-2}\text{ for all }0\leq t\leq2.
\]
By another application of Lemma \ref{lem.4.14},%
\[
\max_{0\leq t\leq2}\left\vert \left[  \bar{u}\left(  s,\operatorname{ad}%
_{C^{\xi}\left(  t\right)  }\right)  B\right]  _{k}\right\vert \leq K\left(
\bar{u}\left(  s,\cdot\right)  \right)  N\left(  A\right)  N\left(  B\right)
\left(  N\left(  A\right)  +N\left(  B\right)  \right)  ^{k-2}%
\]
where $K\left(  \bar{u}\left(  s,\cdot\right)  \right)  $ is bounded in
$s\in\left[  0,1\right]  $ as the derivatives of $\bar{u}\left(  s,z\right)  $
as $z=0$ are polynomial functions in $s.$ These last two inequalities verify
Eqs. (\ref{e.4.39}) and (\ref{e.4.40}) and hence complete the proof.
\end{proof}

\begin{corollary}
\label{cor.4.16}If $A,B\in F^{\left(  \kappa\right)  }\left(  \mathbb{R}%
^{d}\right)  ,$ then%
\begin{equation}
d_{M}\left(  e^{V_{B}},Id_{M}\right)  \leq\left\vert V^{\left(  \kappa\right)
}\right\vert \left\vert B\right\vert \leq\left\vert V^{\left(  \kappa\right)
}\right\vert Q_{\left[  1,\kappa\right]  }\left(  N\left(  B\right)  \right)
\label{e.4.41}%
\end{equation}
and there exists $C\left(  \kappa\right)  <\infty$ such that
\begin{equation}
d_{M}\left(  e^{V_{B}}\circ e^{V_{A}},e^{V_{\log\left(  e^{A}e^{B}\right)  }%
}\right)  \lesssim\mathcal{K}_{0}N\left(  A\right)  N\left(  B\right)
Q_{\left[  \kappa-1,2\kappa-2\right]  }\left(  N\left(  A\right)  +N\left(
B\right)  \right)  \label{e.4.42}%
\end{equation}
where
\begin{align}
\mathcal{K}_{0}  &  :=\mathcal{C}^{0}\left(  V^{\left(  \kappa\right)
}\right)  e^{C\left(  \kappa\right)  \left\vert \nabla V^{\left(
\kappa\right)  }\right\vert _{M}Q_{\left[  1,\kappa\right]  }\left(  N\left(
A\right)  +N\left(  B\right)  \right)  }\label{e.4.43}\\
&  \leq2\left\vert V^{\left(  \kappa\right)  }\right\vert _{M}\left\vert
\nabla V^{\left(  \kappa\right)  }\right\vert _{M}e^{c\left(  \kappa\right)
\left\vert \nabla V^{\left(  \kappa\right)  }\right\vert _{M}Q_{\left[
1,\kappa\right]  }\left(  N\left(  A\right)  +N\left(  B\right)  \right)  }.
\label{e.4.44}%
\end{align}

\end{corollary}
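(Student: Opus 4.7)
The first inequality in (\ref{e.4.41}) is a direct application of Corollary \ref{cor.2.13}: $d(e^{V_B}(m), m) \leq |V_B|_M$ for all $m \in M$. Using the linearity of $V^{(\kappa)}$ together with Definition \ref{def.1.24} gives $|V_B|_M \leq |V^{(\kappa)}|_M \cdot |B|$, and $|B| \lesssim Q_{[1,\kappa]}(N(B))$ is then Eq. (\ref{e.3.20}).

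The plan for (\ref{e.4.42}) is to follow the strategy of Corollary \ref{cor.4.13} but with the sharper bookkeeping afforded by Proposition \ref{pro.4.15}. I specialize to the path $\xi \in C^\infty([0,\infty), F^{(\kappa)}(\mathbb{R}^d))$ from Eq. (\ref{e.4.29}), so that (\ref{e.4.17}) reduces the problem to estimating $d_M(\mu_{2,0}^{V_{\dot\xi}}, e^{V_{C^\xi(2)}})$. Combining (\ref{e.4.28}) (an application of Theorem \ref{thm.2.30}) with Eq. (\ref{e.4.26}) from the proof of Theorem \ref{thm.4.11}, this distance is controlled by an exponential prefactor times the double integral
$$\int_0^1 (1-s)\,ds \int_0^2 \bigl|V_{\pi_{>\kappa}[C^\xi(t),\,u(s,\operatorname{ad}_{C^\xi(t)})\dot\xi(t)]_\otimes}\bigr|_M\,dt.$$
The exponential prefactor $e^{|\nabla V^{(\kappa)}|_M (|\dot\xi|_2^* + |C^\xi|_{\infty,2})}$ is bounded by $e^{c(\kappa) |\nabla V^{(\kappa)}|_M Q_{[1,\kappa]}(N(A)+N(B))}$ using (\ref{e.3.21}), (\ref{e.3.31}) and (\ref{e.4.32}); this produces the exponential part of $\mathcal{K}_0$.

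The sharper bound comes from substituting the three-term decomposition (\ref{e.4.38}) of the inner tensor-bracket. For each piece of the form $\varphi(t-1)[X,Y]_\otimes$, the projection $\pi_{>\kappa}$ restricts the tensor product to total homogeneous degree $k \in (\kappa, 2\kappa]$. For each such $k$, I plan to write $V_{[X_m, Y_n]_\otimes} = [V_{X_m}, V_{Y_n}]$ and apply the operator bound $|V_{[X_m, Y_n]_\otimes}|_M \leq \mathcal{C}^0_{m,n}(V^{(\kappa)}) \cdot |X_m|\cdot |Y_n|$. Using the trivial bound $|A_m|\cdot|B_n|\leq N(A)^m N(B)^n$ for the first piece, (\ref{e.4.39}) for the $[\bar C^\xi, B]$ piece, and (\ref{e.3.30}) together with (\ref{e.4.40}) for the third piece, every such product is bounded by a constant times $N(A) N(B) (N(A)+N(B))^{m+n-2}$. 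Summing over $m+n \in (\kappa, 2\kappa]$ yields the bound $\mathcal{C}^0(V^{(\kappa)}) N(A) N(B) Q_{[\kappa-1, 2\kappa-2]}(N(A)+N(B))$ for the integrand (uniformly in $s$, and with the $\varphi(t-1)$ factor integrating to $1$ over $t \in [0,2]$). The $s$-integral of $(1-s)$ against the (polynomial in $s$) constants from Proposition \ref{pro.3.26} is finite. Combining with the exponential prefactor and using (\ref{e.4.18}) to bound $\mathcal{C}^0(V^{(\kappa)}) \lesssim |V^{(\kappa)}|_M |\nabla V^{(\kappa)}|_M$ gives (\ref{e.4.42}) together with (\ref{e.4.44}).

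The main obstacle is the careful degree-tracking in the third step: a direct appeal to Theorem \ref{thm.4.11} produces only the weaker factor $Q_{(\kappa,\kappa+1]}$ with no $N(A)N(B)$ prefactor (this is essentially what Corollary \ref{cor.4.13} gives). Proposition \ref{pro.4.15} is precisely the tool that extracts the prefactor $N(A) N(B)$, reflecting the geometric fact that the left-hand side of (\ref{e.4.42}) vanishes whenever either $A = 0$ or $B = 0$.
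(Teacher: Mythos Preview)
Your proposal is correct and follows essentially the same approach as the paper: reduce to the special path $\xi$ of Eq.~(\ref{e.4.29}), combine the flow comparison estimate (\ref{e.4.28}) with (\ref{e.4.26}), and then use the three-term decomposition (\ref{e.4.38}) from Proposition~\ref{pro.4.15} to extract the $N(A)N(B)$ factor from each piece. The only cosmetic differences are that the paper cites Corollary~\ref{cor.2.31} rather than Corollary~\ref{cor.2.13} for (\ref{e.4.41}), and your remark about ``polynomial in $s$ constants from Proposition~\ref{pro.3.26}'' is unnecessary here since (\ref{e.4.40}) already gives bounds uniform in $s\in[0,1]$, so the $s$-integral contributes only the constant $\int_0^1(1-s)\,ds=\tfrac12$.
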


\begin{proof}
The first inequality follows as an application of Corollary \ref{cor.2.31}
with $Y_{t}:=V_{B}$ using
\[
\left\vert Y\right\vert _{1}^{\ast}=\left\vert V_{B}\right\vert _{M}%
\leq\left\vert V^{\left(  \kappa\right)  }\right\vert \left\vert B\right\vert
\]
To prove the second inequality we let $\xi\left(  t\right)  $ be as in
Proposition \ref{pro.4.15}. By Eq. (\ref{e.4.28}) in the proof of Theorem
\ref{thm.4.12} we then have, to find,
\begin{align}
d_{M}\left(  e^{V_{B}}\circ e^{V_{A}},e^{V_{\log\left(  e^{A}e^{B}\right)  }%
}\right)   &  =d_{M}\left(  \mu_{2,0}^{V_{\dot{\xi}}},e^{V_{C\left(  2\right)
}}\right) \nonumber\\
&  \leq e^{\left\vert \nabla V^{\left(  \kappa\right)  }\right\vert
_{M}\left\vert \dot{\xi}\right\vert _{2}^{\ast}}.\left\vert U^{\xi}\right\vert
_{2}^{\ast}=e^{\left\vert \nabla V^{\left(  \kappa\right)  }\right\vert
_{M}\left(  \left\vert A\right\vert +\left\vert B\right\vert \right)
}.\left\vert U^{\xi}\right\vert _{2}^{\ast}. \label{e.4.45}%
\end{align}
where, by Eq. (\ref{e.4.26}) of the proof of Theorem \ref{thm.4.11},%
\begin{equation}
\left\vert U^{\xi}\right\vert _{2}^{\ast}\leq e^{\left\vert \nabla V^{\left(
\kappa\right)  }\right\vert _{M}\left\vert C\right\vert _{\infty,2}}\int%
_{0}^{1}ds\left(  1-s\right)  \int_{0}^{2}dt\left\vert V_{\pi_{>\kappa}\left[
C\left(  t\right)  ,u\left(  s,\operatorname{ad}_{C\left(  t\right)  }\right)
\dot{\xi}\left(  t\right)  \right]  _{\otimes}}\right\vert _{M}.
\label{e.4.46}%
\end{equation}

From Proposition \ref{pro.4.15}%
\begin{align}
&  \left\vert V_{\left(  \left[  C\left(  t\right)  ,u\left(
s,\operatorname{ad}_{C\left(  t\right)  }\right)  \dot{\xi}\left(  t\right)
\right]  _{\otimes}\right)  _{k}}\right\vert _{M}\nonumber\\
&  \leq\varphi\left(  t-1\right)  \left(
\begin{array}
[c]{c}%
\left\vert V_{\left(  \left[  A,B\right]  _{\otimes}\right)  _{k}}\right\vert
_{M}+\left\vert V_{\left(  \left[  \bar{C}^{\xi}\left(  t\right)  ,B\right]
_{\otimes}\right)  _{k}}\right\vert _{M}\\
+\left\vert V_{\left(  \left[  C^{\xi}\left(  t\right)  ,\bar{u}\left(
s,\operatorname{ad}_{C^{\xi}\left(  t\right)  }\right)  B\right]  _{\otimes
}\right)  _{k}}\right\vert _{M}%
\end{array}
\right)  \label{e.4.47}%
\end{align}
We now estimate each of the three terms appearing on the right side of Eq.
(\ref{e.4.47}).

\begin{enumerate}
\item Since, for $m,n\in\left[  1,\kappa\right]  $ with $m+n=k,$
\[
\left\vert A_{m}\right\vert \left\vert B_{n}\right\vert \leq N\left(
A\right)  ^{m}N\left(  B\right)  ^{n}\leq N\left(  A\right)  N\left(
B\right)  \left(  N\left(  A\right)  +N\left(  B\right)  \right)  ^{k-2},
\]
we find%
\begin{align*}
\left\vert V_{\left(  \left[  A,B\right]  _{\otimes}\right)  _{k}}\right\vert
_{M}  &  \leq\sum_{m,n=1}^{\kappa}1_{m+n=k}\left\vert \left[  V_{A_{m}%
},V_{B_{n}}\right]  \right\vert \\
&  \leq\sum_{m,n=1}^{\kappa}1_{m+n=k}\mathcal{C}_{m,n}^{0}\left(  V^{\left(
\kappa\right)  }\right)  \left\vert A_{m}\right\vert \left\vert B_{n}%
\right\vert \\
&  \leq\mathcal{C}^{0}\left(  V^{\left(  \kappa\right)  }\right)  N\left(
A\right)  N\left(  B\right)  \left(  N\left(  A\right)  +N\left(  B\right)
\right)  ^{k-2}.
\end{align*}

\item Using Eq. (\ref{e.4.39}) and (by definition) $\bar{C}_{1}^{\xi}=0,$ it
follows that
\begin{align*}
&  \left\vert V_{\left(  \left[  \bar{C}^{\xi}\left(  t\right)  ,B\right]
_{\otimes}\right)  _{k}}\right\vert _{M}\\
&  \quad\leq\sum_{m,n=1}^{\kappa}1_{m+n=k}\mathcal{C}_{m,n}^{0}\left(
V^{\left(  \kappa\right)  }\right)  \left\vert \bar{C}_{m}^{\xi}\left(
t\right)  \right\vert \left\vert B_{n}\right\vert \\
&  \quad\lesssim\sum_{n=1}^{\kappa}\sum_{m=2}^{\kappa}1_{m+n=k}\mathcal{C}%
_{m,n}^{0}\left(  V^{\left(  \kappa\right)  }\right)  N\left(  A\right)
N\left(  B\right)  \left(  N\left(  A\right)  +N\left(  B\right)  \right)
^{m-2}N\left(  B\right)  ^{n-1}\\
&  \quad\lesssim\mathcal{C}^{0}\left(  V^{\left(  \kappa\right)  }\right)
N\left(  A\right)  N\left(  B\right)  \left(  N\left(  A\right)  +N\left(
B\right)  \right)  ^{k-2}.
\end{align*}

\item Similarly using Eqs. (\ref{e.3.30}) and (\ref{e.4.40}),%
\begin{align*}
&  \left\vert V_{\left[  C^{\xi}\left(  t\right)  ,\bar{u}\left(
s,\operatorname{ad}_{C^{\xi}\left(  t\right)  }\right)  B\right]  _{\otimes
k}}\right\vert _{M}\\
&  \quad\leq\sum_{m,n=1}^{\kappa}1_{m+n=k}\mathcal{C}_{m,n}^{0}\left(
V^{\left(  \kappa\right)  }\right)  \left\vert C_{m}^{\xi}\left(  t\right)
\right\vert \left\vert \left(  \bar{u}\left(  s,\operatorname{ad}_{C^{\xi
}\left(  t\right)  }\right)  B\right)  _{n}\right\vert \\
&  \quad\lesssim\sum_{n=2}^{\kappa}\sum_{m=1}^{\kappa}1_{m+n=k}\mathcal{C}%
_{m,n}^{0}\left(  V^{\left(  \kappa\right)  }\right)  \left\vert C_{m}^{\xi
}\left(  t\right)  \right\vert \cdot N\left(  A\right)  N\left(  B\right)
\left(  N\left(  A\right)  +N\left(  B\right)  \right)  ^{n-2}\\
&  \quad\lesssim\sum_{n=2}^{\kappa}\sum_{m=1}^{\kappa}1_{m+n=k}\mathcal{C}%
_{m,n}^{0}\left(  V^{\left(  \kappa\right)  }\right)  \cdot N\left(  A\right)
N\left(  B\right)  \left(  N\left(  A\right)  +N\left(  B\right)  \right)
^{m+n-2}\\
&  \quad\lesssim\mathcal{C}^{0}\left(  V^{\left(  \kappa\right)  }\right)
N\left(  A\right)  N\left(  B\right)  \left(  N\left(  A\right)  +N\left(
B\right)  \right)  ^{k-2}.
\end{align*}

\end{enumerate}

Combining the last three estimates with Eqs. (\ref{e.4.47}) and (\ref{e.4.46})
shows (with $\mathcal{K}_{1}$ having the form as in Eq. (\ref{e.4.43})),%
\begin{align}
\left\vert U^{\xi}\right\vert _{2}^{\ast}  &  \leq e^{\left\vert \nabla
V^{\left(  \kappa\right)  }\right\vert _{M}\left\vert C\right\vert _{\infty
,2}}\sum_{k=\kappa+1}^{2\kappa}\int_{0}^{1}ds\left(  1-s\right)  \int_{0}%
^{2}dt\left\vert V_{\left(  \left[  C\left(  t\right)  ,u\left(
s,\operatorname{ad}_{C\left(  t\right)  }\right)  \dot{\xi}\left(  t\right)
\right]  _{\otimes}\right)  _{k}}\right\vert _{M}\nonumber\\
&  \lesssim e^{\left\vert \nabla V^{\left(  \kappa\right)  }\right\vert
_{M}\left\vert C\right\vert _{\infty,2}}\sum_{k=\kappa+1}^{2\kappa}%
\mathcal{C}^{0}\left(  V^{\left(  \kappa\right)  }\right)  N\left(  A\right)
N\left(  B\right)  \left(  N\left(  A\right)  +N\left(  B\right)  \right)
^{k-2}\nonumber\\
&  \lesssim\mathcal{C}^{0}\left(  V^{\left(  \kappa\right)  }\right)
e^{\left\vert \nabla V^{\left(  \kappa\right)  }\right\vert _{M}\left\vert
C\right\vert _{\infty,2}}N\left(  A\right)  N\left(  B\right)  Q_{\left[
\kappa-1,2\kappa-2\right]  }\left(  N\left(  A\right)  +N\left(  B\right)
\right) \nonumber\\
&  \lesssim\mathcal{K}_{1}N\left(  A\right)  N\left(  B\right)  Q_{\left[
\kappa-1,2\kappa-2\right]  }\left(  N\left(  A\right)  +N\left(  B\right)
\right)  , \label{e.4.48}%
\end{align}
wherein the last inequality we have also used the estimate in Eq.
(\ref{e.3.31}). This estimate combined with Eq. (\ref{e.4.45}), while using
Eqs. (\ref{e.3.20}) and (\ref{e.3.19}) in order to show $\left\vert
A\right\vert +\left\vert B\right\vert \lesssim Q_{\left[  1,\kappa\right]
}\left(  N\left(  A\right)  +N\left(  B\right)  \right)  ,$ completes the proof.
\end{proof}

This completes part I. of the paper. The second remaining part of the paper is
devoted to developing estimates for the distance between the differentials of
$\mu_{T,0}^{V_{\dot{\xi}}}$ and $e^{V_{\log\left(  g^{\xi}\left(  T\right)
\right)  }}.$ In order to formulate our results we must first define a
distance between $f_{\ast}$ and $g_{\ast}$ for $f,g\in C^{1}\left(
M,M\right)  .$ To do so we will use the metric on $M$ to endow $TM$ with a
Riemannian metric and then make use of this metric to construct the desired
distance. It will also be necessary to develop some of the basic properties of
the induced distance function on $TM$ which is the topic of the next section.

\section{Riemannian Distances on $TM$\label{sec.5}}

\subsection{Riemannian distances on vector bundles\label{sec.5.1}}

For clarity of exposition (and since it is no harder), it is convenient to
carry out these constructions in the more general context of an arbitrary
Hermitian vector bundle, $\pi:E\rightarrow M,$ with metric compatible
covariant derivative, $\nabla,$ as in Notation \ref{not.2.2}. Later we will
specialize to the case of interest where $E=TM.$

\begin{definition}
[Riemannian metric on $E$]\label{def.5.1}Continuing the setup in Notation
\ref{not.2.2}, we define a Riemannian metric on $TE$ by defining
\[
\left\langle \dot{\xi}\left(  0\right)  ,\dot{\eta}\left(  0\right)
\right\rangle _{TE}:=\left\langle \pi_{\ast}\dot{\xi}\left(  0\right)
,\pi_{\ast}\dot{\eta}\left(  0\right)  \right\rangle _{g}+\left\langle
\frac{\nabla\xi}{dt}\left(  0\right)  ,\frac{\nabla\eta}{dt}\left(  0\right)
\right\rangle _{E}%
\]
whenever $\xi\left(  t\right)  $ and $\eta\left(  t\right)  $ are two smooth
curves in $E$ such $\pi\left(  \xi\left(  0\right)  \right)  =\pi\left(
\eta\left(  0\right)  \right)  .$
\end{definition}

\begin{remark}
\label{rem.5.2}Let $\sigma\left(  t\right)  $ and $\gamma\left(  t\right)  $
be two smooth paths in $M$ so that $\sigma\left(  0\right)  =m=\gamma\left(
0\right)  $ and suppose that $\alpha\left(  t\right)  $ and $\beta\left(
t\right)  $ are two smooth paths in $\mathbb{R}^{D}.$ Then in the local model
described in Remark \ref{rem.2.3} we have,%
\begin{align*}
\pi_{\ast}\dot{\xi}\left(  0\right)   &  =\pi_{\ast}\left(  \dot{\sigma
}\left(  0\right)  ,\dot{\alpha}\left(  0\right)  _{\alpha\left(  0\right)
}\right)  =\dot{\sigma}\left(  0\right)  ,\\
\pi_{\ast}\dot{\eta}\left(  0\right)   &  =\pi_{\ast}\left(  \dot{\gamma
}\left(  0\right)  ,\dot{\beta}\left(  0\right)  _{\beta\left(  0\right)
}\right)  =\dot{\gamma}\left(  0\right)  ,\\
\frac{\nabla\xi}{dt}\left(  0\right)   &  =\left(  m,\dot{\alpha}\left(
0\right)  +\Gamma\left(  \dot{\sigma}\left(  0\right)  \right)  \alpha\left(
0\right)  \right)  ,\\
\frac{\nabla\eta}{dt}\left(  0\right)   &  =\left(  m,\dot{\beta}\left(
0\right)  +\Gamma\left(  \dot{\gamma}\left(  0\right)  \right)  \beta\left(
0\right)  \right)  ,\text{ and}\\
\left\langle \dot{\xi}\left(  0\right)  ,\dot{\eta}\left(  0\right)
\right\rangle _{TE}=  &  \left\langle \dot{\sigma}\left(  0\right)
,\dot{\gamma}\left(  0\right)  \right\rangle _{g}\\
&  +\left(  \dot{\alpha}\left(  0\right)  +\Gamma\left(  \dot{\sigma}\left(
0\right)  \right)  \alpha\left(  0\right)  \right)  \cdot\left(  \dot{\beta
}\left(  0\right)  +\Gamma\left(  \dot{\gamma}\left(  0\right)  \right)
\beta\left(  0\right)  \right)  .
\end{align*}
From this expression we see that $\left\langle \cdot,\cdot\right\rangle _{TE}$
is indeed a Riemannian metric on $E.$ For example, $\left\vert \dot{\xi
}\left(  0\right)  \right\vert _{TE}^{2}=0$ implies
\[
0=\left\vert \dot{\sigma}\left(  0\right)  \right\vert _{g}^{2}+\left\vert
\dot{\alpha}\left(  0\right)  +\Gamma\left(  \dot{\sigma}\left(  0\right)
\right)  \alpha\left(  0\right)  \right\vert _{\mathbb{R}^{D}}^{2}%
\]
from which it follows that $\dot{\sigma}\left(  0\right)  =0$ and then
$\left\vert \dot{\alpha}\left(  0\right)  \right\vert _{\mathbb{R}^{D}}^{2}=0$
so that $\dot{\alpha}\left(  0\right)  =0,$ i.e. $\dot{\xi}\left(  0\right)
=0\in T_{\xi\left(  0\right)  }E.$
\end{remark}

\begin{definition}
\label{def.5.3}As usual, the length of a smooth path, $t\rightarrow\xi\left(
t\right)  \in E,$ is defined by
\[
\ell_{E}\left(  \xi\right)  =\int_{0}^{1}\left\vert \dot{\xi}\left(  t\right)
\right\vert dt=\int_{0}^{1}\sqrt{\left\vert \pi_{\ast}\dot{\xi}\left(
t\right)  \right\vert ^{2}+\left\vert \frac{\nabla\xi\left(  t\right)  }%
{dt}\right\vert _{E}^{2}}dt
\]
and the distance, $d^{E},$ is then the distance associated to this length.
\end{definition}

Our first goal is to give a more practical way (see Eq. (\ref{e.5.7}) of
Corollary \ref{cor.5.8} below) of computing $d^{E}\left(  e,e^{\prime}\right)
$ for $e,e^{\prime}\in E,$

\begin{notation}
\label{not.5.4}Given a path $\sigma:\left[  0,1\right]  \rightarrow M$, let%
\[
L_{\sigma}\left(  e,e^{\prime}\right)  :=\sqrt{\ell_{M}\left(  \sigma\right)
^{2}+\left\vert \pt_{1}\left(  \sigma\right)  ^{-1}e^{\prime}-e\right\vert
^{2}}\text{ }\forall~e\in E_{\sigma\left(  0\right)  }\text{ and }e^{\prime
}\in E_{\sigma\left(  1\right)  }%
\]
with the convention that $L_{\sigma}\left(  e,e^{\prime}\right)  =\infty$ if
$\sigma$ is not absolutely continuous.
\end{notation}

\begin{theorem}
\label{thm.5.6}If $\sigma\in AC\left(  \left[  0,1\right]  ,M\right)  ,$
$\xi\in AC_{\sigma}\left(  \left[  0,1\right]  ,E\right)  ,$ and
\[
s\left(  t\right)  :=\int_{0}^{t}\left\vert \dot{\sigma}\left(  \tau\right)
\right\vert d\tau\text{ -- arc-length of }\sigma|_{\left[  0,t\right]  },
\]
then%
\begin{align}
L_{\sigma}\left(  \xi\left(  0\right)  ,\xi\left(  1\right)  \right)  \leq &
\sqrt{\ell_{M}\left(  \sigma\right)  ^{2}+\left[  \int_{0}^{1}\left\vert
\nabla_{t}\xi\left(  t\right)  \right\vert dt\right]  ^{2}}\nonumber\\
&  \qquad\qquad\leq\ell_{E}\left(  \xi\right)  \leq\int_{0}^{1}\left[
\left\vert \dot{\sigma}\left(  t\right)  \right\vert +\left\vert \nabla_{t}%
\xi\left(  t\right)  \right\vert \right]  dt. \label{e.5.3}%
\end{align}
and moreover
\begin{equation}
L_{\sigma}\left(  \xi\left(  0\right)  ,\xi\left(  1\right)  \right)
=\sqrt{\ell_{M}\left(  \sigma\right)  ^{2}+\left[  \int_{0}^{1}\left\vert
\nabla_{t}\xi\left(  t\right)  \right\vert dt\right]  ^{2}}=\ell_{E}\left(
\xi\right)  \label{e.5.4}%
\end{equation}
when \footnote{If $0=s\left(  1\right)  =\ell\left(  \sigma\right)  ,$ then
necessarily $m=\sigma\left(  0\right)  =\sigma\left(  1\right)  =p.$}%
\begin{align}
\xi\left(  t\right)   &  =\xi\left(  0\right)  +t\left(  \xi\left(  1\right)
-\xi\left(  0\right)  \right)  \text{ if }s\left(  1\right)  =0\text{
or}\label{e.5.5}\\
\xi\left(  t\right)   &  =\pt_{t}\left(  \sigma\right)  \left[  e_{m}%
+\frac{s\left(  t\right)  }{s\left(  1\right)  }\left(  \pt_{1}\left(
\sigma\right)  ^{-1}\xi\left(  1\right)  -\xi\left(  0\right)  \right)
\right]  \text{ if }s\left(  1\right)  >0. \label{e.5.6}%
\end{align}

\end{theorem}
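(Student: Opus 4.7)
The plan is to prove the chain of three inequalities in \eqref{e.5.3} from right to left, with the rightmost being essentially pointwise, the middle being Minkowski's inequality, and the leftmost being Lemma \ref{lem.2.4}. The equality cases will then be verified by direct computation using the explicit ansätze in \eqref{e.5.5} and \eqref{e.5.6}.

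First, I will dispose of the rightmost inequality $\ell_E(\xi) \leq \int_0^1[|\dot\sigma(t)| + |\nabla_t\xi(t)|]\,dt$ by the pointwise estimate $\sqrt{a^2+b^2} \leq a+b$ for $a,b \geq 0$, applied inside the integral defining $\ell_E(\xi)$ (Definition \ref{def.5.3}). Next, for the middle inequality I would view the pair $\bigl(|\dot\sigma(t)|,|\nabla_t\xi(t)|\bigr)$ as a $\mathbb{R}^2$-valued measurable function and apply Minkowski's integral inequality with respect to the Euclidean norm on $\mathbb{R}^2$, namely $\left|\int_0^1 f(t)\,dt\right|_{\mathbb{R}^2} \leq \int_0^1 |f(t)|_{\mathbb{R}^2}\,dt$, which is precisely
\[
\sqrt{\ell_M(\sigma)^2 + \left[\int_0^1 |\nabla_t \xi(t)|\,dt\right]^2} \leq \int_0^1 \sqrt{|\dot\sigma(t)|^2 + |\nabla_t\xi(t)|^2}\,dt = \ell_E(\xi).
\]
For the leftmost inequality, I would invoke the second inequality in Lemma \ref{lem.2.4} to conclude $|\pt_1(\sigma)^{-1}\xi(1) - \xi(0)| \leq \int_0^1 |\nabla_t\xi(t)|\,dt$ and then square and add $\ell_M(\sigma)^2$ inside the square root in the definition of $L_\sigma$.

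For the equality assertion, I would compute $\nabla_t\xi$ explicitly for the two prescribed $\xi$'s. In the degenerate case $s(1)=0$, parallel transport is trivial (up to the a.e.\ constancy of $\sigma$), $\ell_M(\sigma)=0$, and the linear $\xi(t) = \xi(0)+t(\xi(1)-\xi(0))$ has $|\nabla_t\xi(t)| = |\xi(1)-\xi(0)|$ constant, making all three quantities in \eqref{e.5.4} equal to $|\xi(1)-\xi(0)|$. In the nondegenerate case $s(1) > 0$, setting $u := \pt_1(\sigma)^{-1}\xi(1) - \xi(0)$, the defining formula \eqref{e.5.6} reads $\pt_t(\sigma)^{-1}\xi(t) = \xi(0) + \frac{s(t)}{s(1)}u$, and differentiating in $t$ gives $|\nabla_t\xi(t)| = \frac{|\dot\sigma(t)|}{s(1)}|u|$. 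Thus the ratio $|\nabla_t\xi(t)|/|\dot\sigma(t)|$ is constant, which is exactly the equality case of Minkowski's inequality for $\mathbb{R}^2$-valued functions, and $\int_0^1 |\nabla_t\xi|\,dt = |u|$, which saturates Lemma \ref{lem.2.4} since $\pt_t(\sigma)^{-1}\xi$ moves along a straight line from $\xi(0)$ to $\pt_1(\sigma)^{-1}\xi(1)$.

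I do not anticipate any serious obstacle: the theorem is essentially a repackaging of Lemma \ref{lem.2.4} and Minkowski's inequality. The only mild subtlety is handling the case $s(1)=0$ separately (so that one does not divide by zero in \eqref{e.5.6}) and verifying that when the arclength vanishes the integral $\int |\nabla_t\xi|\,dt$ is the right quantity to appear in both the definition of $L_\sigma$ (via Lemma \ref{lem.2.4}) and in $\ell_E(\xi)$ — but these both reduce to the Euclidean-fiber identification provided by parallel transport, under which $\xi$ becomes a straight line in a single fiber $E_{\sigma(0)}$.
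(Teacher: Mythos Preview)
Your proposal is correct and follows essentially the same route as the paper: the paper also uses Lemma~\ref{lem.2.4} (via $w(t)=\pt_t(\sigma)^{-1}\xi(t)$) for the leftmost inequality, proves the middle inequality by the same triangle inequality for an $\mathbb{R}^2$-valued integral (phrased there as comparing the length of the planar path $t\mapsto(s(t),u(t))$ to its endpoint distance), uses the pointwise bound $\sqrt{a^2+b^2}\le a+b$ for the rightmost one, and verifies the equality cases by the same direct computation of $\nabla_t\xi$ for the two explicit ans\"atze.
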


\begin{proof}
If we let $w\left(  t\right)  :=\pt_{t}\left(  \sigma\right)  ^{-1}\xi\left(
t\right)  \in E_{\sigma\left(  0\right)  },$ then $\left\vert \nabla_{t}%
\xi\left(  t\right)  \right\vert =\left\vert \dot{w}\left(  t\right)
\right\vert $ and so
\[
\int_{0}^{1}\left\vert \nabla_{t}\xi\left(  t\right)  \right\vert dt=\int%
_{0}^{1}\left\vert \dot{w}\left(  t\right)  \right\vert dt\geq\left\vert
w\left(  1\right)  -w\left(  0\right)  \right\vert =\left\vert \pt_{1}\left(
\sigma\right)  ^{-1}\xi\left(  1\right)  -\xi\left(  0\right)  \right\vert ,
\]
wherein we have used the length of $w$ is greater than or equal $\left\vert
w\left(  1\right)  -w\left(  0\right)  \right\vert .$ The last inequality is
equivalent to the first inequality in Eq. (\ref{e.5.3}).

If we let%
\[
u\left(  t\right)  :=\int_{0}^{t}\left\vert \nabla_{\tau}\xi\left(
\tau\right)  \right\vert d\tau,
\]
then $t\rightarrow\left(  s\left(  t\right)  ,u\left(  t\right)  \right)
\in\mathbb{R}^{2}$ is an absolutely continuous path in $\mathbb{R}^{2}$ from
$\left(  0,0\right)  $ and so the length of this path,%
\[
\int_{0}^{1}\sqrt{\dot{s}\left(  t\right)  ^{2}+\dot{u}\left(  t\right)  ^{2}%
}dt=\int_{0}^{1}\sqrt{\left\vert \dot{\sigma}\left(  t\right)  \right\vert
^{2}+\left\vert \nabla_{t}\xi\left(  t\right)  \right\vert ^{2}}dt=\ell
_{E}\left(  \xi\right)  ,
\]
is is greater than or equal to
\[
\left\Vert \left(  s\left(  1\right)  ,u\left(  1\right)  \right)  \right\Vert
_{\mathbb{R}^{2}}=\sqrt{\ell_{M}\left(  \sigma\right)  ^{2}+\left[  \int%
_{0}^{1}\left\vert \nabla_{t}\xi\left(  t\right)  \right\vert dt\right]  ^{2}%
}.
\]
This proves the second inequality in Eq. (\ref{e.5.3}). To prove the last
inequality in Eq. (\ref{e.5.3}) simply observe (see Eq. (\ref{e.3.22}) with
$p=2)$ that%
\[
\sqrt{\left\vert \dot{\sigma}\left(  t\right)  \right\vert ^{2}+\left\vert
\nabla_{t}\xi\left(  t\right)  \right\vert ^{2}}\leq\left\vert \dot{\sigma
}\left(  t\right)  \right\vert +\left\vert \nabla_{t}\xi\left(  t\right)
\right\vert .
\]

If $s\left(  1\right)  >0$ and $\xi$ is given as in Eq. (\ref{e.5.6}), then
\[
\left\vert \nabla_{t}\xi\left(  t\right)  \right\vert =\left\vert
\pt_{t}\left(  \sigma\right)  \frac{\dot{s}\left(  t\right)  }{s\left(
1\right)  }\left(  \pt_{1}\left(  \sigma\right)  ^{-1}e_{p}^{\prime}%
-e_{m}\right)  \right\vert =\frac{\left\vert \dot{\sigma}\left(  t\right)
\right\vert }{\ell_{M}\left(  \sigma\right)  }\left\vert \pt_{1}\left(
\sigma\right)  ^{-1}e_{p}^{\prime}-e_{m}\right\vert
\]
and hence%
\begin{align*}
\ell_{E}\left(  \xi\right)   &  =\int_{0}^{1}\sqrt{\left\vert \dot{\sigma
}\left(  t\right)  \right\vert ^{2}+\frac{\left\vert \dot{\sigma}\left(
t\right)  \right\vert ^{2}}{s^{2}\left(  1\right)  }\left\vert \pt_{1}\left(
\sigma\right)  ^{-1}\xi\left(  1\right)  -\xi\left(  0\right)  \right\vert
^{2}}dt\\
&  =\int_{0}^{1}\frac{\left\vert \dot{\sigma}\left(  t\right)  \right\vert
}{\ell_{M}\left(  \sigma\right)  }\sqrt{\ell_{M}^{2}\left(  \sigma\right)
+\left\vert \pt_{1}\left(  \sigma\right)  ^{-1}e_{p}^{\prime}-e_{m}\right\vert
^{2}}dt=L_{\sigma}\left(  \xi\left(  0\right)  ,\xi\left(  1\right)  \right)
\end{align*}
which verifies Eq. (\ref{e.5.4}) in this case. Similarly by a simple
calculation, Eq. (\ref{e.5.4}) holds when $\ell_{M}\left(  \sigma\right)
=s\left(  1\right)  =0$ and $\xi$ is given as in Eq. (\ref{e.5.5}).
\end{proof}

\begin{notation}
\label{not.5.7}To each $\sigma\in AC\left(  \left[  0,1\right]  ,M\right)  ,$
let $AC_{\sigma}\left(  \left[  0,1\right]  ,E\right)  $ denote those $\xi\in
AC\left(  \left[  0,1\right]  ,E\right)  $ such that $\xi\left(  t\right)  \in
T_{\sigma\left(  t\right)  }M$ for all $0\leq t\leq1.$
\end{notation}

\begin{corollary}
\label{cor.5.8}If $e_{m}\in E_{m},$ and $e_{p}^{\prime}\in E_{p},$ then%
\begin{equation}
d^{E}\left(  e_{m},e_{p}^{\prime}\right)  =\inf\left\{  L_{\sigma}\left(
e_{m},e_{p}^{\prime}\right)  :\sigma\in AC\left(  \left[  0,1\right]
,M\right)  ,\text{ }\sigma\left(  0\right)  =m\text{~\& }\sigma\left(
1\right)  =p\right\}  \label{e.5.7}%
\end{equation}
where for $\sigma\in AC\left(  \left[  0,1\right]  ,M\right)  $ with
$\sigma\left(  0\right)  =m$ and $\sigma\left(  1\right)  =p,$
\begin{equation}
L_{\sigma}\left(  e_{m},e_{p}^{\prime}\right)  =\min\left\{  \ell_{E}\left(
\xi\right)  :\xi\in AC_{\sigma}\left(  \left[  0,1\right]  ,E\right)  ,\text{
}\xi\left(  0\right)  =e_{m},\text{\& }\xi\left(  1\right)  =e_{p}^{\prime
}\right\}  . \label{e.5.8}%
\end{equation}

\end{corollary}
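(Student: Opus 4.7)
My plan is to deduce Corollary \ref{cor.5.8} directly from Theorem \ref{thm.5.6}, which has already done the substantive work. The corollary amounts to two separate equalities: the inner ``min'' formula (\ref{e.5.8}) identifying $L_\sigma(e_m,e_p')$ as the minimum of $\ell_E(\xi)$ over lifts of $\sigma$, and the outer ``inf'' formula (\ref{e.5.7}) identifying $d^E(e_m,e_p')$ as $\inf_\sigma L_\sigma(e_m,e_p')$.

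For the inner formula, I would fix $\sigma\in AC([0,1],M)$ with $\sigma(0)=m$, $\sigma(1)=p$, and apply Theorem \ref{thm.5.6} as follows: the first inequality of (\ref{e.5.3}) (combined with the second) yields $L_\sigma(\xi(0),\xi(1))\leq \ell_E(\xi)$ for every $\xi\in AC_\sigma([0,1],E)$, so the infimum of $\ell_E(\xi)$ over lifts with prescribed endpoints is bounded below by $L_\sigma(e_m,e_p')$. For the reverse inequality, I would note that the explicit path $\xi$ constructed in (\ref{e.5.5})--(\ref{e.5.6}) lies in $AC_\sigma([0,1],E)$ with the required endpoints, and by (\ref{e.5.4}) satisfies $\ell_E(\xi)=L_\sigma(e_m,e_p')$. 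This also shows the infimum is attained, justifying ``min'' in (\ref{e.5.8}).

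For the outer formula, I would argue in two directions. First, for any $\sigma$ as above and any lift $\xi$ joining $e_m$ to $e_p'$, the path $\xi$ itself is an admissible path in $E$ from $e_m$ to $e_p'$, so $d^E(e_m,e_p')\leq \ell_E(\xi)$; taking the infimum over such $\xi$ gives $d^E(e_m,e_p')\leq L_\sigma(e_m,e_p')$, and then the infimum over $\sigma$ yields $d^E(e_m,e_p')\leq \inf_\sigma L_\sigma(e_m,e_p')$. Conversely, any admissible path $\xi\in AC([0,1],E)$ from $e_m$ to $e_p'$ projects via $\sigma:=\pi\circ\xi$ to a path in $AC([0,1],M)$ joining $m$ to $p$, and Theorem \ref{thm.5.6} gives $\ell_E(\xi)\geq L_\sigma(e_m,e_p')\geq \inf_\sigma L_\sigma(e_m,e_p')$. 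Taking the infimum over $\xi$ produces $d^E(e_m,e_p')\geq \inf_\sigma L_\sigma(e_m,e_p')$.

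The only potential obstacle is a bookkeeping issue: Definition \ref{def.5.3} phrases $d^E$ in terms of smooth paths, whereas the arguments above use absolutely continuous paths. This is resolved by the standard observation that in the length-metric definition, smooth and $AC$ paths produce the same infimum (one may approximate any $AC$ path by smooth ones with lengths converging from above, and the explicit minimizers in (\ref{e.5.5})--(\ref{e.5.6}) are already smooth whenever $\sigma$ is smooth). No curvature, parallel translation subtleties, or compactness arguments are needed beyond what Theorem \ref{thm.5.6} already supplies.
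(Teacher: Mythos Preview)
Your proposal is correct and follows essentially the same approach as the paper: both derive (\ref{e.5.8}) from the inequality $L_\sigma\le\ell_E(\xi)$ in Theorem~\ref{thm.5.6} together with the explicit minimizing lifts (\ref{e.5.5})--(\ref{e.5.6}), and then obtain (\ref{e.5.7}) by decomposing the infimum over paths in $E$ as an infimum over base paths $\sigma$ of the fiberwise minimum. The paper's proof is more terse (it simply says the first equation is an easy consequence of the second), while you spell out both directions of (\ref{e.5.7}) and flag the smooth-versus-$AC$ bookkeeping; these are helpful elaborations but not a different argument.
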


\begin{proof}
The first equation is an easy consequence of the second. For the second
equation, if $\xi\in AC_{\sigma}\left(  \left[  0,1\right]  ,E\right)  $ with
$\xi\left(  0\right)  =e_{m},$ and $\xi\left(  1\right)  =e_{p}^{\prime},$
then by Theorem \ref{thm.5.6}, $L_{\sigma}\left(  e_{m},e_{p}^{\prime}\right)
\leq\ell_{E}\left(  \xi\right)  $ with equality occurring when $\xi$ is given
by Eq. (\ref{e.5.5}) if $\ell_{M}\left(  \sigma\right)  =0$ or by Eq.
(\ref{e.5.6}) if $\ell_{M}\left(  \sigma\right)  >0.$
\end{proof}

\begin{remark}
\label{rem.5.9}One might suspect that if $e,e^{\prime}\in E_{m},$ then
$d^{E}\left(  e,e^{\prime}\right)  =\left\vert e-e^{\prime}\right\vert .$
However this is not necessarily the case unless the holonomy group of
$\nabla^{E}$ at $m$ is trivial (in particular this implies the curvature of
$\nabla^{E}=0.)$ For example of $\left\vert e\right\vert =\left\vert
e^{\prime}\right\vert =1,$ there may be a very short loop, $\sigma,$ starting
and ending at $m,$ so that $\pt_{1}\left(  \sigma\right)  ^{-1}e^{\prime}=e$
in which case it would follow that $d^{E}\left(  e,e^{\prime}\right)  \leq
\ell_{M}\left(  \sigma\right)  $ which can easily be smaller that $\left\vert
e-e^{\prime}\right\vert $ which could be as large at $\sqrt{2}.$ If $\sigma$
is the constant loop sitting at $m,$ then $L_{\sigma}\left(  e,e^{\prime
}\right)  =\left\vert e^{\prime}-e\right\vert $ and hence $d^{E}\left(
e,e^{\prime}\right)  \leq\left\vert e^{\prime}-e\right\vert $ whenever
$e,e^{\prime}\in E_{m}$ for some $m\in M.$
\end{remark}

\begin{proposition}
[$\left\vert \cdot\right\vert _{E}$ is Lipschitz ]\label{pro.5.10}If
$e,e^{\prime}\in E,$ then
\begin{equation}
\left\vert \left\vert e\right\vert _{E}-\left\vert e^{\prime}\right\vert
_{E}\right\vert \leq d^{E}\left(  e,e^{\prime}\right)  , \label{e.5.9}%
\end{equation}
i.e. fiber metric on $E$, $\left\vert \cdot\right\vert _{E},$ is $1$-Lipschitz
relative to $d^{E}.$
\end{proposition}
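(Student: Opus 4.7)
The plan is to reduce the claim to the fiber-norm inequality already proved in Lemma \ref{lem.2.4} combined with the definition of $d^E$ as the infimum of $\ell_E$ over paths in $E$ joining $e$ to $e'$.

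First I would fix any smooth path $\xi \in C^1([0,1], E)$ with $\xi(0) = e$ and $\xi(1) = e'$, and set $\sigma := \pi \circ \xi \in C^1([0,1], M)$. By Lemma \ref{lem.2.4} applied to $\xi$ we obtain
\[
\bigl| |e'|_E - |e|_E \bigr| \leq \int_0^1 \left| \frac{\nabla \xi}{dt}(t) \right|_E dt.
\]
Next, by Definition \ref{def.5.1} of the Riemannian metric on $E$,
\[
|\dot{\xi}(t)|_{TE} = \sqrt{|\pi_\ast \dot{\xi}(t)|_g^2 + |\nabla_t \xi(t)|_E^2} \geq |\nabla_t \xi(t)|_E,
\]
so integrating this pointwise inequality yields
\[
\int_0^1 |\nabla_t \xi(t)|_E \, dt \leq \int_0^1 |\dot{\xi}(t)|_{TE} \, dt = \ell_E(\xi).
\]
Combining the two displays gives $\bigl| |e'|_E - |e|_E \bigr| \leq \ell_E(\xi)$ for every such $\xi$.

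Taking the infimum over all smooth (equivalently, absolutely continuous) paths in $E$ joining $e$ to $e'$, and recalling that $d^E(e, e')$ is defined as precisely this infimum (or invoking Corollary \ref{cor.5.8} which characterizes $d^E$ through $L_\sigma$), we conclude that
\[
\bigl| |e|_E - |e'|_E \bigr| \leq d^E(e, e'),
\]
which is Eq. (\ref{e.5.9}). No genuine obstacle arises here: the proposition is essentially a repackaging of Lemma \ref{lem.2.4} together with the elementary bound $|\nabla_t \xi| \leq |\dot{\xi}|_{TE}$, and so the argument is short. The only minor point to mention is that it suffices to work with smooth (or piecewise smooth) paths when computing $d^E$, since the definition of $d^E$ together with Corollary \ref{cor.5.8} allows one to approximate absolutely continuous paths arbitrarily well in length.
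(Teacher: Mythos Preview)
Your proof is correct and follows essentially the same idea as the paper's: both rely on Lemma \ref{lem.2.4} (metric compatibility of parallel translation) and then pass to an infimum. The only organizational difference is that the paper works with paths $\sigma$ in $M$, bounding $\bigl||e_m|_E-|e'_p|_E\bigr|\le |e_m-\pt_1(\sigma)^{-1}e'_p|\le L_\sigma(e_m,e'_p)$ and then invoking Corollary \ref{cor.5.8} to identify $\inf_\sigma L_\sigma$ with $d^E$, whereas you work directly with paths $\xi$ in $E$ and bound by $\ell_E(\xi)$ before taking the infimum; these are equivalent routes to the same conclusion.
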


\begin{proof}
Let $e_{m}$ and $e_{p}^{\prime}$ in $E$ and $\sigma$ be an absolutely
continuous path joining $m$ to $p.$ Then by Lemma \ref{lem.2.4},
\[
\left\vert \left\vert e_{m}\right\vert _{E}-\left\vert e_{p}^{\prime
}\right\vert _{E}\right\vert \leq\left\vert e_{m}-\pt_{1}\left(
\sigma\right)  ^{-1}e_{p}^{\prime}\right\vert _{E_{m}}\leq L_{\sigma}\left(
e_{m},e_{p}^{\prime}\right)
\]
and therefore by Corollary \ref{cor.5.8},
\[
\left\vert \left\vert e_{m}\right\vert _{E}-\left\vert e_{p}^{\prime
}\right\vert _{E}\right\vert \leq\inf_{\sigma}L_{\sigma}\left(  e_{m}%
,e_{p}^{\prime}\right)  =d^{E}\left(  e_{m},e_{p}^{\prime}\right)  ,
\]
where the infimum is over all paths, $\sigma,$ joining $m$ to $p.$
\end{proof}

\begin{proposition}
[Completeness of $E$]\label{pro.5.11}If $\left(  M,g\right)  $ is a complete
Riemannian manifold then the vector bundle, $E,$ with the Riemannian structure
in Definition \ref{def.5.1} is again a complete Riemannian manifold.
\end{proposition}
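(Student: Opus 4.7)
The plan is to prove completeness by showing that every $d^E$-bounded subset of $E$ is relatively compact; by the metric form of Hopf–Rinow (or, equivalently, by noting that a Cauchy sequence is automatically bounded) this yields metric completeness of $(E,d^E)$.

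First I would establish that the bundle projection $\pi:E\to M$ is $1$-Lipschitz from $(E,d^E)$ to $(M,d_g)$. This is immediate from Definition \ref{def.5.1}: for any $v\in TE$ one has $|\pi_{\ast}v|_g^2\leq|\pi_{\ast}v|_g^2+|\tfrac{\nabla}{dt}|^2=|v|_{TE}^2$, so for a smooth (hence absolutely continuous) path $\xi$ in $E$ the projection $\pi\circ\xi$ has $\ell_M(\pi\circ\xi)\leq\ell_E(\xi)$. Taking an infimum over paths joining $e$ to $e'$ gives $d_g(\pi(e),\pi(e'))\leq d^E(e,e')$.

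Now let $B\subset E$ be $d^E$-bounded with $\operatorname{diam}_{d^E}(B)\leq R$ and fix $e_0\in B$. By the previous step, $\pi(B)\subset\overline{B^M(\pi(e_0),R)}$, and by Proposition \ref{pro.5.10} every $e\in B$ satisfies $|e|_E\leq|e_0|_E+R$. Since $(M,g)$ is complete, Hopf–Rinow ensures that $K:=\overline{B^M(\pi(e_0),R)}$ is compact in $M$. Therefore I would finish by showing that
\[
K_{E}:=\pi^{-1}(K)\cap\{e\in E:|e|_E\leq|e_0|_E+R\}
\]
is compact in $E$; this is a standard local-trivialization argument: cover $K$ by finitely many orthogonal frame neighborhoods $(u_i,W_i)$ as in Notation \ref{not.2.2}, so that on each $W_i$ one has the identification $E|_{W_i}\cong W_i\times\mathbb{R}^D$ under which the fiber norm agrees with $|\cdot|_{\mathbb{R}^D}$; then $K_E\cap(W_i\times\mathbb{R}^D)$ is contained in $\overline{W_i}\times\overline{B^{\mathbb{R}^D}(0,|e_0|_E+R)}$, which is compact, and finitely many such pieces cover $K_E$. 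Since $K_E$ is closed in $E$, it is compact, and $\overline{B}\subset K_E$ is also compact.

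Finally, any $d^E$-Cauchy sequence $\{e_n\}$ is bounded, hence by the preceding paragraph admits a subsequence converging to some $e\in E$; the Cauchy property then forces the full sequence to converge to $e$, giving metric completeness of $(E,d^E)$. The main (mild) obstacle is the compactness argument in the previous paragraph: the bundle structure gives it to us locally, so one must carefully patch finitely many trivializations over the compact base $K$ to transfer compactness of balls in $\mathbb{R}^D$ up to $E$. All other steps are direct consequences of Definition \ref{def.5.1} and Proposition \ref{pro.5.10} that we already have in hand.
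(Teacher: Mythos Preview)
Your argument is correct in spirit and uses the same two key ingredients as the paper's proof: that $\pi$ is $1$-Lipschitz (so bounded/Cauchy in $E$ projects to bounded/Cauchy in $M$) and Proposition~\ref{pro.5.10} (so the fiber norms stay bounded). The organization differs: the paper works directly with a Cauchy sequence $\{e_n\}$, uses completeness of $M$ to get $p=\lim\pi(e_n)$, and then needs only a \emph{single} local orthogonal frame near $p$ to extract a convergent subsequence of the fiber coordinates. Your route via ``bounded $\Rightarrow$ relatively compact'' is slightly more general but requires a finite covering by trivializations over the compact base $K$, which is where your write-up has a small imprecision: you assert that $\overline{W_i}\times\overline{B^{\mathbb{R}^D}(0,r)}$ is compact, but the frame neighborhoods $W_i$ from Notation~\ref{not.2.2} need not have compact closure. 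The standard fix is to first shrink each $W_i$ to a relatively compact open set $V_i$ with $\overline{V_i}\subset W_i$ so that the $V_i$ still cover $K$; alternatively, observe that $K_E\cap\pi^{-1}(W_i)$ is actually contained in $(K\cap W_i)\times\overline{B^{\mathbb{R}^D}(0,r)}$, and $K\cap W_i$ is relatively compact in $M$ since $K$ is. Either way the conclusion stands. The paper's single-trivialization argument avoids this covering bookkeeping altogether, which is what it buys; your approach buys the slightly stronger Heine--Borel property for $(E,d^E)$ along the way.
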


\begin{proof}
Let $\pi:E\rightarrow M$ be the natural projection map and observe that
\[
\left\vert \pi_{\ast}\dot{\xi}\left(  0\right)  \right\vert _{M}\leq\left\vert
\dot{\xi}\left(  0\right)  \right\vert _{E}\text{ }\forall~\dot{\xi}\left(
0\right)  \in TE.
\]
If $e_{0},e_{1}\in E$ and $e\left(  \cdot\right)  \in AC\left(  \left[
0,1\right]  ,E\right)  $ is a path joining $e_{0}$ to $e_{1},$ then $\pi\circ
e\in AC\left(  \left[  0,1\right]  ,M\right)  $ is path joining $\pi\left(
e_{0}\right)  $ to $\pi\left(  e_{1}\right)  $ and
\begin{align*}
d_{M}\left(  \pi\left(  e_{0}\right)  ,\pi\left(  e_{1}\right)  \right)   &
\leq\ell_{M}\left(  \pi\circ e\right)  =\int_{0}^{1}\left\vert \pi_{\ast}%
\dot{u}\left(  t\right)  \right\vert _{M}dt\\
&  \leq\int_{0}^{1}\left\vert \dot{u}\left(  t\right)  \right\vert _{E}%
dt=\ell_{E}\left(  e\right)  .
\end{align*}
Minimizing this inequality over $e$ as described above shows
\[
d_{M}\left(  \pi\left(  e_{0}\right)  ,\pi\left(  e_{1}\right)  \right)  \leq
d^{E}\left(  e_{0},e_{1}\right)  .
\]
Hence if $\left\{  e_{n}\right\}  _{n=1}^{\infty}$ is a Cauchy sequence in
$E,$ then $\left\{  p_{n}=\pi\left(  e_{n}\right)  \right\}  _{n=1}^{\infty}$
is a Cauchy sequence in $M.$ As $M$ is complete we know that $p=\lim
_{n\rightarrow\infty}p_{n}$ exists in $M.$ Let $W$ be an open neighborhood of
$p$ in $M$ over which $M$ is trivial and let $U$ be a local orthonormal frame
(as described after Notation \ref{not.2.2}) of $E$ defined over $W$ and, for
large enough $n,$ let $v_{n}:=U\left(  p_{n}\right)  ^{-1}e_{n}\in
\mathbb{R}^{N}$ where $N$ is the fiber dimension of $E.$ From Proposition
\ref{pro.5.10}, we know $\left\{  \left\vert e_{n}\right\vert _{E}=\left\vert
v_{n}\right\vert _{\mathbb{R}^{N}}\right\}  _{n=1}^{\infty}$ is a Cauchy
sequence in $\mathbb{R}$ and hence bounded and hence there exists a
subsequence, $\left\{  v_{n_{k}}\right\}  _{k=1}^{\infty}$ of $\left\{
v_{n}\right\}  $ so that $v:=\lim_{k\rightarrow\infty}v_{n_{k}}\ $exists in
$\mathbb{R}^{N}.$ It then follows that
\[
\lim_{k\rightarrow\infty}e_{n_{k}}=\lim_{k\rightarrow\infty}U\left(  p_{n_{k}%
}\right)  v_{n_{k}}=U\left(  p\right)  v\text{ exists.}%
\]
As $\left\{  e_{n}\right\}  _{n=1}^{\infty}$ was Cauchy in $E$ and has a
convergent subsequence, it follows that $\lim_{n\rightarrow\infty}%
e_{n}=U\left(  p\right)  v$ exists in $E$ and hence $E$ is complete.
\end{proof}

\begin{theorem}
\label{thm.5.12}Let $\pi:E\rightarrow M$ be a vector bundle equipped with a
fiber metric and metric compatible covariant derivative as above. If
$\lambda\geq0$ and $e_{m},e_{p}^{\prime}\in E,$ then
\begin{equation}
d^{E}\left(  \lambda e_{m},\lambda e_{p}^{\prime}\right)  \leq\left(
\lambda\vee1\right)  d^{E}\left(  e_{m},e_{p}^{\prime}\right)  .
\label{e.5.12}%
\end{equation}

\end{theorem}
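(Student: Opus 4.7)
The plan is to reduce the inequality to the explicit formula for $d^{E}$ provided by Corollary \ref{cor.5.8}, namely
\[
d^{E}\left(  e_{m},e_{p}^{\prime}\right)  =\inf_{\sigma}L_{\sigma}\left(
e_{m},e_{p}^{\prime}\right)  =\inf_{\sigma}\sqrt{\ell_{M}\left(  \sigma
\right)  ^{2}+\left\vert \pt_{1}\left(  \sigma\right)  ^{-1}e_{p}^{\prime
}-e_{m}\right\vert ^{2}},
\]
where the infimum is over all $\sigma\in AC\left(  \left[  0,1\right]
,M\right)  $ with $\sigma\left(  0\right)  =m$ and $\sigma\left(  1\right)
=p.$ The key observation is that scaling both endpoints by $\lambda\geq0$ does not affect which base paths $\sigma$ are admissible, since $\pi\left(  \lambda e_{m}\right)  =m$ and $\pi\left(  \lambda e_{p}^{\prime}\right)  =p.$

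Fix such a path $\sigma.$ Since parallel translation is a linear isometry on fibers, $\pt_{1}\left(  \sigma\right)  ^{-1}\left(  \lambda e_{p}^{\prime}\right)  =\lambda\pt_{1}\left(  \sigma\right)  ^{-1}e_{p}^{\prime},$ and therefore
\[
L_{\sigma}\left(  \lambda e_{m},\lambda e_{p}^{\prime}\right)  =\sqrt
{\ell_{M}\left(  \sigma\right)  ^{2}+\lambda^{2}\left\vert \pt_{1}\left(
\sigma\right)  ^{-1}e_{p}^{\prime}-e_{m}\right\vert ^{2}}.
\]
Let $\alpha:=\lambda\vee1\geq1,$ so $\lambda\leq\alpha$ and $1\leq\alpha.$ Then
\[
\ell_{M}\left(  \sigma\right)  ^{2}+\lambda^{2}\left\vert \pt_{1}\left(
\sigma\right)  ^{-1}e_{p}^{\prime}-e_{m}\right\vert ^{2}\leq\alpha^{2}\left[
\ell_{M}\left(  \sigma\right)  ^{2}+\left\vert \pt_{1}\left(  \sigma\right)
^{-1}e_{p}^{\prime}-e_{m}\right\vert ^{2}\right]  ,
\]
so that $L_{\sigma}\left(  \lambda e_{m},\lambda e_{p}^{\prime}\right)
\leq\left(  \lambda\vee1\right)  L_{\sigma}\left(  e_{m},e_{p}^{\prime
}\right)  $ for every admissible $\sigma.$

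Taking the infimum over $\sigma$ of both sides yields
\[
d^{E}\left(  \lambda e_{m},\lambda e_{p}^{\prime}\right)  \leq\left(
\lambda\vee1\right)  \inf_{\sigma}L_{\sigma}\left(  e_{m},e_{p}^{\prime
}\right)  =\left(  \lambda\vee1\right)  d^{E}\left(  e_{m},e_{p}^{\prime
}\right)  ,
\]
which is Eq. (\ref{e.5.12}). There is no serious obstacle here; the proof is essentially a one-line consequence of Corollary \ref{cor.5.8} together with the linearity of parallel translation on fibers. The only subtlety worth flagging is why one uses $\lambda\vee1$ rather than just $\lambda$: when $\lambda<1$ the factor $\lambda^{2}$ shrinks the fiber contribution but cannot shrink the base-length contribution $\ell_{M}\left(  \sigma\right)  ^{2},$ so one cannot hope to improve the bound to $\lambda\cdot d^{E},$ and $\lambda\vee1$ is the sharp scalar that dominates both terms uniformly in $\sigma.$
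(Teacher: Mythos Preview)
Your proof is correct and follows essentially the same route as the paper: both arguments use Corollary~\ref{cor.5.8} to write $d^{E}$ as an infimum of $L_{\sigma}$ over admissible base paths, compute $L_{\sigma}(\lambda e_{m},\lambda e_{p}')$ via linearity of parallel translation, bound it by $(\lambda\vee1)L_{\sigma}(e_{m},e_{p}')$, and then take the infimum. Your added remark explaining why $\lambda\vee1$ rather than $\lambda$ is necessary is a nice touch not present in the paper's terser version.
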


\begin{proof}
Let $\sigma$ be a curve joining $m$ to $p,$ then
\begin{align*}
d^{E}\left(  \lambda e_{m},\lambda e_{p}^{\prime}\right)   &  \leq L_{\sigma
}\left(  \lambda e_{m},\lambda e_{p}^{\prime}\right)  =\sqrt{\ell_{M}\left(
\sigma\right)  ^{2}+\left\vert \lambda\pt_{1}\left(  \sigma\right)
^{-1}e^{\prime}-\lambda e\right\vert ^{2}}\\
&  =\sqrt{\ell_{M}\left(  \sigma\right)  ^{2}+\left\vert \lambda\right\vert
^{2}\left\vert \pt_{1}\left(  \sigma\right)  ^{-1}e^{\prime}-e\right\vert
^{2}}\leq\lambda\vee1\cdot L_{\sigma}\left(  e_{m},e_{p}^{\prime}\right)
\end{align*}
and the result now follows from Corollary \ref{cor.5.8} as $\sigma$ was arbitrary.
\end{proof}

\begin{definition}
[Bundle maps]\label{def.5.13}A smooth function, $F:E\rightarrow E,$ is a
\textbf{bundle map} provided there exist a smooth map, $f:M\rightarrow M$ such
that $F\left(  E_{m}\right)  \subset E_{f\left(  m\right)  }$ for all $m\in M$
and $F|_{E_{m}}:E_{m}\rightarrow E_{f\left(  m\right)  }$ is linear. We will
refer to such an $F$ as a \textbf{bundle map covering }$f.$
\end{definition}

We are interested in measuring the distance between two bundle maps,
$F,G:E\rightarrow E.$ For such maps we can no longer define $d_{\infty}%
^{E}\left(  F,G\right)  :=\sup_{e\in E}d^{E}\left(  Fe,Ge\right)  $ since
\[
\sup_{\lambda>0}d^{E}\left(  F\lambda e,G\lambda e\right)  =\infty\text{ if
}\left\vert Fe\right\vert \neq\left\vert Ge\right\vert .
\]
Indeed if $\sigma\in C^{1}\left(  \left[  0,1\right]  ,M\right)  $ is any path
such that $Fe\in E_{\sigma\left(  0\right)  }$ and $Ge\in E_{\sigma\left(
1\right)  },$ then%
\begin{align*}
L_{\sigma}\left(  \lambda Fe,\lambda Ge\right)   &  =\sqrt{\ell_{M}\left(
\sigma\right)  ^{2}+\left\vert \pt_{1}\left(  \sigma\right)  ^{-1}\lambda
Ge-\lambda Fe\right\vert ^{2}}\\
&  \geq\left\vert \left\vert \pt_{1}\left(  \sigma\right)  ^{-1}\lambda
Ge\right\vert -\left\vert \lambda Fe\right\vert \right\vert =\left\vert
\lambda\right\vert \left\vert \left\vert Ge\right\vert -\left\vert
Fe\right\vert \right\vert
\end{align*}
and hence by Corollary \ref{cor.5.8},
\[
d^{E}\left(  F\lambda e,G\lambda e\right)  \geq\left\vert \lambda\right\vert
\left\vert \left\vert Ge\right\vert -\left\vert Fe\right\vert \right\vert
\rightarrow\infty\text{ as }\lambda\uparrow\infty.
\]
On the other hand, as bundle maps are fiber linear they are determined by
their values, $\left\{  Fe:e\in E\text{ with }\left\vert e\right\vert
=1\right\}  .$ With these comments in mind we make the following definition.

\begin{definition}
[Bundle map norms and distances]\label{def.5.14}Given a bundle maps,
$F:E\rightarrow E,$ $m\in M,$ and $\sigma\in C\left(  \left[  0,1\right]
,M\right)  ,$ let
\begin{align*}
\left\vert F\right\vert _{m}  &  :=\sup_{e\in E_{m}:\left\vert e\right\vert
=1}\left\vert Fe\right\vert ,\\
\left\vert F\right\vert _{\sigma}  &  :=\sup_{t\in\left[  0,1\right]
}\left\vert F\right\vert _{\sigma\left(  t\right)  },\text{ and}\\
\left\vert F\right\vert _{M}  &  :=\sup_{m\in M}\left\vert F\right\vert
_{m}=\sup_{e\in E_{m}:\left\vert e\right\vert =1}\left\vert Fe\right\vert .
\end{align*}
If $G:E\rightarrow E$ is another bundle map, let
\[
d_{\infty}^{E}\left(  F,G\right)  :=\sup_{e\in E:\left\vert e\right\vert
=1}d^{E}\left(  Fe,Ge\right)  .
\]

\end{definition}

\begin{remark}
\label{rem.5.15}Let us note that $d_{\infty}^{E}\left(  F,G\right)  =0$ iff
$Fe=Ge$ for all $\left\vert e\right\vert =1$ which suffices to show $F\equiv
G$ since both $F$ and $G$ are fiber linear.
\end{remark}

\begin{lemma}
\label{lem.5.16}If $F,G:E\rightarrow E$ are bundle maps, then
\begin{equation}
\left\vert \left\vert F\right\vert _{M}-\left\vert G\right\vert _{M}%
\right\vert \leq d_{\infty}^{E}\left(  F,G\right)  . \label{e.5.14}%
\end{equation}

\end{lemma}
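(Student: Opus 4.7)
The plan is very short since this is essentially a direct application of Proposition \ref{pro.5.10} combined with taking suprema. The key observation is that for any $e \in E$ with $|e|=1$, the Lipschitz estimate $||Fe|_E - |Ge|_E| \leq d^E(Fe, Ge)$ from Proposition \ref{pro.5.10} reduces the bundle-map distance $d_\infty^E(F,G)$ to a pointwise scalar comparison of $|Fe|$ and $|Ge|$.

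Concretely, I would first fix an arbitrary $m \in M$ and a unit vector $e \in E_m$. Applying Proposition \ref{pro.5.10} to the pair $Fe, Ge \in E$ yields
\[
\bigl||Fe|_E - |Ge|_E\bigr| \leq d^E(Fe, Ge) \leq d_\infty^E(F,G),
\]
where the last inequality is just the definition of $d_\infty^E$ as a supremum over unit vectors in $E$. Rearranging gives $|Fe|_E \leq |Ge|_E + d_\infty^E(F,G)$, and bounding the right-hand side further by $|G|_M + d_\infty^E(F,G)$ (which no longer depends on $e$ or $m$), I can take the supremum of the left-hand side over $m \in M$ and unit $e \in E_m$ to conclude $|F|_M \leq |G|_M + d_\infty^E(F,G)$.

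By the symmetric argument with $F$ and $G$ interchanged I obtain $|G|_M \leq |F|_M + d_\infty^E(F,G)$, and combining the two inequalities gives the claimed bound in Eq.~(\ref{e.5.14}). There is no real obstacle here; the only thing to double-check is that the Lipschitz estimate in Proposition \ref{pro.5.10} applies in the present setting, but that proposition was stated for arbitrary $e, e' \in E$ (not required to lie in the same fiber), which is exactly what is needed since $Fe$ and $Ge$ in general lie in different fibers over $f(m)$ and $g(m)$ respectively, where $f$ and $g$ are the base maps covered by $F$ and $G$.
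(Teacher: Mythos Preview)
Your proof is correct and follows essentially the same approach as the paper: apply Proposition~\ref{pro.5.10} to the pair $Fe, Ge$ for a unit vector $e$, bound by $|G|_M + d_\infty^E(F,G)$, take the supremum, and then interchange the roles of $F$ and $G$. Your additional remark that $Fe$ and $Ge$ may lie over different base points is a helpful clarification, but otherwise the arguments are identical.
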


\begin{proof}
If $e\in E$ with $\left\vert e\right\vert =1,$ then (by Proposition
\ref{pro.5.10})
\[
\left\vert \left\vert Fe\right\vert -\left\vert Ge\right\vert \right\vert \leq
d^{E}\left(  Fe,Ge\right)  \leq d_{\infty}^{E}\left(  F,G\right)
\]
and therefore
\[
\left\vert Fe\right\vert \leq\left\vert Ge\right\vert +d_{\infty}^{E}\left(
F,G\right)  \leq\left\vert G\right\vert _{M}+d_{\infty}^{E}\left(  F,G\right)
.
\]
As this is true for all $e\in E$ with $\left\vert e\right\vert =1$ we may
further conclude that
\[
\left\vert F\right\vert _{M}\leq\left\vert G\right\vert _{M}+d_{\infty}%
^{E}\left(  F,G\right)  .
\]
Reversing the roles of $F$ and $G$ also shows%
\[
\left\vert G\right\vert _{M}\leq\left\vert F\right\vert _{M}+d_{\infty}%
^{E}\left(  F,G\right)
\]
and together the last two displayed equations proves Eq. (\ref{e.5.14}).
\end{proof}

The next proposition contains the typical mechanism we will use for estimating
$d_{\infty}^{E}\left(  F,G\right)  .$

\begin{proposition}
\label{pro.5.17}If $\left\{  F_{t}\right\}  _{0\leq t\leq1}$ is a smoothly
varying one parameter family of bundle maps from $E$ to $E$ covering $\left\{
f_{t}\right\}  _{0\leq t\leq1}\subset C^{\infty}\left(  M,M\right)  ,$ then
for any $e\in E_{m},$%
\begin{equation}
d^{E}\left(  F_{0}e,F_{1}e\right)  \leq\sqrt{\ell_{M}^{2}\left(  f_{\left(
\cdot\right)  }\left(  m\right)  \right)  +\left[  \int_{0}^{1}\left\vert
\frac{\nabla F_{t}}{dt}e\right\vert dt\right]  ^{2}}, \label{e.5.15}%
\end{equation}
and
\begin{align}
d_{\infty}^{E}\left(  F_{0},F_{1}\right)   &  \leq\sqrt{\sup_{m\in M}\ell
_{M}^{2}\left(  f_{\left(  \cdot\right)  }\left(  m\right)  \right)  +\left[
\int_{0}^{1}\left\vert \frac{\nabla F_{t}}{dt}\right\vert _{M}dt\right]  ^{2}%
}\label{e.5.16}\\
&  \leq\sup_{m\in M}\ell_{M}\left(  f_{\left(  \cdot\right)  }\left(
m\right)  \right)  +\int_{0}^{1}\left\vert \frac{\nabla F_{t}}{dt}\right\vert
_{M}dt\label{e.5.17}\\
&  \leq\int_{0}^{1}\left[  \left\vert \dot{f}_{t}\right\vert _{M}+\left\vert
\frac{\nabla F_{t}}{dt}\right\vert _{M}\right]  dt. \label{e.5.18}%
\end{align}

\end{proposition}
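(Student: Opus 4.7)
My plan is to prove Proposition \ref{pro.5.17} by constructing an explicit interpolating curve in $E$ between $F_{0}e$ and $F_{1}e$, namely the curve $\xi(t):=F_{t}e$ for $e\in E_{m}$. Since each $F_{t}$ is a bundle map covering $f_{t}$, the curve $\xi$ lies over the base curve $\sigma(t):=f_{t}(m)$ in $M$, i.e.\ $\xi\in AC_{\sigma}([0,1],E)$ with $\xi(0)=F_{0}e$ and $\xi(1)=F_{1}e$. By the definition of $\frac{\nabla F_{t}}{dt}$, we have $\nabla_{t}\xi(t)=\frac{\nabla F_{t}}{dt}e\in E_{f_{t}(m)}$.

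To prove the pointwise estimate \eqref{e.5.15}, I would apply Corollary \ref{cor.5.8} to get $d^{E}(F_{0}e,F_{1}e)\leq L_{\sigma}(F_{0}e,F_{1}e)$, and then invoke the first inequality in Theorem \ref{thm.5.6} (applied to this $\sigma$ and $\xi$) to conclude
\[
L_{\sigma}(F_{0}e,F_{1}e)\leq\sqrt{\ell_{M}(\sigma)^{2}+\left[\int_{0}^{1}\left\vert \nabla_{t}\xi(t)\right\vert dt\right]^{2}}.
\]
Since $\ell_{M}(\sigma)=\ell_{M}(f_{\cdot}(m))$ and $\nabla_{t}\xi(t)=\frac{\nabla F_{t}}{dt}e$, this gives \eqref{e.5.15} directly.

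For the uniform estimate \eqref{e.5.16}, I would take the supremum of \eqref{e.5.15} over all unit $e\in E_{m}$ and all $m\in M$. Since $\left\vert \frac{\nabla F_{t}}{dt}e\right\vert \leq\left\vert \frac{\nabla F_{t}}{dt}\right\vert_{M}$ whenever $|e|=1$ and the function $x\mapsto\sqrt{a^{2}+x^{2}}$ is monotone in $x\geq 0$, pulling the supremum inside the square root and through the time integral (by monotonicity of the integral) yields
\[
d_{\infty}^{E}(F_{0},F_{1})\leq\sqrt{\sup_{m\in M}\ell_{M}^{2}(f_{\cdot}(m))+\left[\int_{0}^{1}\left\vert \tfrac{\nabla F_{t}}{dt}\right\vert_{M}dt\right]^{2}}.
\]
The elementary inequality $\sqrt{a^{2}+b^{2}}\leq a+b$ for $a,b\geq 0$ then gives \eqref{e.5.17}, and the final inequality \eqref{e.5.18} follows from the bound $\sup_{m}\ell_{M}(f_{\cdot}(m))\leq\int_{0}^{1}|\dot f_{t}|_{M}\,dt$, itself an immediate consequence of the definition of length and pushing the supremum in $m$ inside the integral over $t$.

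The argument is essentially a direct application of the machinery already set up in Section \ref{sec.5.1}; the only mildly delicate point is verifying that the supremum over $e\in E$ with $|e|=1$ can be interchanged with the time integral and with the square root, but both steps are justified by monotonicity. There is no genuine obstacle — the real content is the choice of the canonical interpolating curve $\xi(t)=F_{t}e$, which automatically has the correct base projection and lets Theorem \ref{thm.5.6} do all the work.
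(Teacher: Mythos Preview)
Your proof is correct and follows essentially the same approach as the paper: set $\sigma(t)=f_t(m)$, $\xi(t)=F_te$, apply Corollary~\ref{cor.5.8} to bound $d^E(F_0e,F_1e)$ by $L_\sigma(F_0e,F_1e)$, and then control $L_\sigma$ via the first inequality of Theorem~\ref{thm.5.6} (the paper phrases this step as an appeal to Lemma~\ref{lem.2.4}, which amounts to the same thing). The remaining inequalities \eqref{e.5.16}--\eqref{e.5.18} are handled identically by taking suprema and using $\sqrt{a^2+b^2}\le a+b$.
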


\begin{proof}
If $e\in E_{m},$ then, by Corollary \ref{cor.5.8} with $\sigma\left(
t\right)  =f_{t}\left(  m\right)  ,$%
\begin{align}
d^{E}\left(  F_{0}e,F_{1}e\right)  \leq &  L_{\sigma}\left(  F_{0}%
e,F_{1}e\right) \nonumber\\
&  =\sqrt{\ell_{M}^{2}\left(  t\rightarrow f_{t}\left(  m\right)  \right)
+\left\vert \pt_{1}\left(  f_{\left(  \bullet\right)  }\left(  m\right)
\right)  ^{-1}F_{1}e-F_{0}e\right\vert ^{2}}. \label{e.5.19}%
\end{align}
This inequality along with Lemma \ref{lem.2.4} applied with $\xi\left(
t\right)  =F_{t}e$ then gives Eq. (\ref{e.5.15}) and Eq. (\ref{e.5.15}) along
with Eq. (\ref{e.3.22}) with $p=2$ then show,%
\begin{equation}
d^{E}\left(  F_{0}e,F_{1}e\right)  \leq\ell_{M}\left(  f_{\left(
\cdot\right)  }\left(  m\right)  \right)  +\int_{0}^{1}\left\vert \frac{\nabla
F_{t}}{dt}e\right\vert dt. \label{e.5.20}%
\end{equation}
Taking the supremum of these estimates over $\left\vert e\right\vert =1$ then
give the remaining stated estimates since, Definition \ref{def.5.14},%
\begin{equation}
\left\vert \frac{\nabla F_{t}}{dt}\right\vert _{M}:=\sup\left\{  \left\vert
\frac{\nabla F_{t}}{dt}e\right\vert :e\in E\text{ with }\left\vert
e\right\vert =1\right\}  . \label{e.5.21}%
\end{equation}

\end{proof}

\begin{remark}
\label{rem.5.18}A more elementary way to arrive at Eq. (\ref{e.5.20}) is again
to let $\sigma\left(  t\right)  =f_{t}\left(  m\right)  $ and $\xi\left(
t\right)  =F_{t}e$ and then observe that
\begin{align*}
d^{E}\left(  F_{0}e,F_{1}e\right)   &  \leq\ell_{E}\left(  \xi\right)
=\int_{0}^{1}\sqrt{\left\vert \dot{\sigma}\left(  t\right)  \right\vert
^{2}+\left\vert \frac{\nabla\xi\left(  t\right)  }{dt}\right\vert ^{2}}dt\\
&  \leq\int_{0}^{1}\left(  \left\vert \dot{\sigma}\left(  t\right)
\right\vert +\left\vert \frac{\nabla\xi\left(  t\right)  }{dt}\right\vert
\right)  dt=\ell_{M}\left(  f_{\left(  \cdot\right)  }\left(  m\right)
\right)  +\int_{0}^{1}\left\vert \frac{\nabla F_{t}}{dt}e\right\vert dt
\end{align*}
wherein we have used Eq. (\ref{e.3.22}) with $p=2$ for the last inequality.
\end{remark}

Lastly we turn our attention to estimating $d^{E}\left(  Fe,Fe^{\prime
}\right)  ,$ where $e,e^{\prime}\in E$ and $F:E\rightarrow E$ is a bundle map
covering $f:M\rightarrow M.$ As a warm up let us begin with the following flat
special case.

\begin{lemma}
\label{lem.5.19}Suppose $M=\mathbb{R}^{n}$ with the standard metric, $\left(
W,\left\langle \cdot,\cdot\right\rangle \right)  $ is a finite dimensional
inner product space, and $E=M\times W$ which is equipped with flat covariant
derivative, i.e. $\Gamma\equiv0$ in this trivialization. [We denote $e=\left(
m,w\right)  \in E=M\times W$ by $w_{m}.]$ If $f\in C^{\infty}\left(
M,M\right)  $ and $\hat{F}\in C^{\infty}\left(  M,\operatorname*{End}\left(
W\right)  \right)  ,$ then $Fw_{m}:=\left[  \hat{F}\left(  m\right)  w\right]
_{f\left(  m\right)  }$ is a bundle map covering $f$ and this map satisfies,%
\begin{equation}
d^{E}\left(  Fw_{m},Fw_{p}^{\prime}\right)  \leq\left(  \max\left\{
\operatorname{Lip}\left(  f\right)  ,\left\Vert \hat{F}\left(  p\right)
\right\Vert \right\}  +\left\vert \hat{F}^{\prime}\right\vert _{M}\left\Vert
w\right\Vert \right)  d^{E}\left(  w_{m},w_{p}^{\prime}\right)  .
\label{e.5.22}%
\end{equation}

\end{lemma}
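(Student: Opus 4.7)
The plan is to exploit the flatness of $E = \mathbb{R}^n \times W$: since $\Gamma \equiv 0$ in this trivialization, parallel transport along any path is the identity map on $W$. Consequently, for any $v_m, v'_p \in E$ and any $\sigma \in AC([0,1], M)$ joining $m$ to $p$, Notation \ref{not.5.4} gives $L_\sigma(v_m, v'_p) = \sqrt{\ell_M(\sigma)^2 + \|v' - v\|^2}$. Minimizing over $\sigma$ using Corollary \ref{cor.5.8}, and noting that $\ell_M(\sigma) \geq |p - m|$ with equality for the straight-line path, I get the explicit formula
\begin{equation*}
d^E(v_m, v'_p) = \sqrt{|p-m|^2 + \|v' - v\|^2} \qquad \forall \, v_m, v'_p \in E.
\end{equation*}

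Applying this identity to $Fw_m = [\hat F(m)w]_{f(m)}$ and $Fw'_p = [\hat F(p)w']_{f(p)}$ yields
$d^E(Fw_m, Fw'_p) = \sqrt{|f(p) - f(m)|^2 + \|\hat F(p)w' - \hat F(m)w\|^2}$.
I would then bound $|f(p)-f(m)| \leq \mathrm{Lip}(f)\,|p-m|$ and use the telescoping identity $\hat F(p)w' - \hat F(m)w = \hat F(p)(w'-w) + [\hat F(p) - \hat F(m)]w$ together with the mean value theorem to get
$\|\hat F(p)w' - \hat F(m)w\| \leq \|\hat F(p)\|\,\|w' - w\| + |\hat F'|_M\,|p - m|\,\|w\|$.

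Writing $x = |p-m|$, $y = \|w'-w\|$, $u = \mathrm{Lip}(f)$, $v = \|\hat F(p)\|$, $s = |\hat F'|_M \|w\|$, and $M = \max(u,v)$, the two previous estimates reduce the claim to the algebraic inequality
\begin{equation*}
u^2 x^2 + (vy + sx)^2 \leq (M+s)^2 (x^2 + y^2),
\end{equation*}
which after expansion amounts to showing
\begin{equation*}
(M^2 - u^2)x^2 + (M^2 - v^2)y^2 + 2Ms(x^2+y^2) + s^2 y^2 - 2sv\, xy \geq 0.
\end{equation*}
The first two terms are nonnegative since $M \geq \max(u,v)$, and the remaining expression is nonnegative by the AM-GM estimate $2xy \leq x^2 + y^2$ combined with $v \leq M$: indeed $2sv\,xy \leq sM(x^2+y^2) \leq 2sM(x^2 + y^2)$.

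There is no real obstacle here; the whole content is the observation that in the flat trivial bundle the distance $d^E$ reduces to the product Euclidean distance, so the estimate becomes a direct computation followed by a routine quadratic inequality. One could alternatively route this through Proposition \ref{pro.5.17} with the one-parameter family $F_t(\cdot) = F((1-t)w_m + t\,w'_p)$ along an Euclidean segment, but this introduces needless machinery since the explicit distance formula is already available.
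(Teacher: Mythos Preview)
Your proof is correct and follows essentially the same route as the paper's: both identify $d^{E}$ in the flat trivial bundle with the product Euclidean distance, split $\hat{F}(p)w'-\hat{F}(m)w$ via the same telescoping, and then verify the resulting quadratic inequality $u^{2}x^{2}+(vy+sx)^{2}\leq (M+s)^{2}(x^{2}+y^{2})$ by bounding each term against $\rho=\max(u,v)$. The only difference is expository---you spell out the justification of the explicit $d^{E}$ formula and the algebraic inequality more carefully than the paper does.
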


\begin{proof}
Written in this form we find%
\begin{align*}
\left(  d^{E}\right)  ^{2}  &  \left(  Fw_{m},Fw_{p}^{\prime}\right) \\
&  =\left\Vert f\left(  m\right)  -f\left(  p\right)  \right\Vert
^{2}+\left\Vert \hat{F}\left(  m\right)  w-\hat{F}\left(  p\right)  w^{\prime
}\right\Vert ^{2}\\
&  \leq\operatorname{Lip}^{2}\left(  f\right)  \left\Vert m-p\right\Vert
^{2}+\left(  \left\Vert \hat{F}\left(  m\right)  w-\hat{F}\left(  p\right)
w\right\Vert +\left\Vert \hat{F}\left(  p\right)  \left(  w-w^{\prime}\right)
\right\Vert \right)  ^{2}\\
&  \leq\operatorname{Lip}^{2}\left(  f\right)  \left\Vert m-p\right\Vert
^{2}+\left(  \left\vert \hat{F}^{\prime}\right\vert _{M}\left\Vert
w\right\Vert \left\Vert m-p\right\Vert +\left\Vert \hat{F}\left(  p\right)
\right\Vert \left\Vert w-w^{\prime}\right\Vert \right)  ^{2}\\
&  =\operatorname{Lip}^{2}\left(  f\right)  \left\Vert m-p\right\Vert
^{2}+\left\Vert \hat{F}\left(  p\right)  \right\Vert ^{2}\left\Vert
w-w^{\prime}\right\Vert ^{2}+\left\vert \hat{F}^{\prime}\right\vert _{M}%
^{2}\left\Vert w\right\Vert ^{2}\left\Vert m-p\right\Vert ^{2}\\
&  \qquad+2\left\vert \hat{F}^{\prime}\right\vert _{M}\left\Vert w\right\Vert
\left\Vert \hat{F}\left(  p\right)  \right\Vert \left\Vert m-p\right\Vert
\left\Vert w-w^{\prime}\right\Vert .
\end{align*}
Using $\rho=\max\left\{  \operatorname{Lip}\left(  f\right)  ,\left\Vert
\hat{F}\left(  p\right)  \right\Vert \right\}  $ the above estimate implies,%
\begin{align*}
\left(  d^{E}\right)  ^{2}\left(  Fw_{m},Fw_{p}^{\prime}\right)   &
\leq\left[  \rho^{2}+\left\vert \hat{F}^{\prime}\right\vert _{M}^{2}\left\Vert
w\right\Vert ^{2}+2\left\vert \hat{F}^{\prime}\right\vert _{M}\left\Vert
w\right\Vert \rho\right]  \left(  d^{E}\right)  ^{2}\left(  w_{m}%
,w_{p}^{\prime}\right) \\
&  =\left(  \rho+\left\vert \hat{F}^{\prime}\right\vert _{M}\left\Vert
w\right\Vert \right)  ^{2}\left(  d^{E}\right)  ^{2}\left(  w_{m}%
,w_{p}^{\prime}\right)
\end{align*}
which gives the estimate in Eq. (\ref{e.5.22}).
\end{proof}

By swapping $w_{m}$ with $w_{p}^{\prime}$ in Eq. (\ref{e.5.22}) we of course
also have%
\begin{equation}
d^{E}\left(  Fw_{m},Fw_{p}^{\prime}\right)  \leq\left[  \max\left\{
\operatorname{Lip}\left(  f\right)  ,\left\Vert \hat{F}\left(  m\right)
\right\Vert \right\}  +\left\vert \hat{F}^{\prime}\right\vert _{M}\left\Vert
w^{\prime}\right\Vert \right]  d^{E}\left(  w_{m},w_{p}^{\prime}\right)  .
\label{e.5.23}%
\end{equation}
In Theorem \ref{thm.5.23} below, we will show that the analogue of Eq.
(\ref{e.5.23}) holds in full generality. The following notation will be used
in the statement of this theorem.

\begin{definition}
\label{def.5.20}Suppose that $f\in C^{\infty}\left(  M,M\right)  $ and
$F:E\rightarrow E$ is a bundle map covering $f.$ For $v\in T_{m}M,$ let
$\nabla_{v}F\in\operatorname{Hom}\left(  E_{m},E_{f\left(  m\right)  }\right)
$ be defined by;%
\[
\nabla_{v}F:=\frac{d}{dt}|_{0}\pt_{t}\left(  f\circ\sigma\right)
^{-1}F_{\sigma\left(  t\right)  }\pt_{t}\left(  \sigma\right)
\]
where $\sigma$ is any $C^{1}$-cure in $M$ such that $\dot{\sigma}\left(
0\right)  =v$ and $F_{\sigma\left(  t\right)  }:=F|_{E_{\sigma\left(
t\right)  }}.$
\end{definition}

\begin{lemma}
[Product rule]\label{lem.5.21}If $F:E\rightarrow E$ is a bundle map covering
$f,$ $S\in\Gamma\left(  E\right)  $ and $\sigma\left(  t\right)  \in
M,\ $then
\begin{equation}
\frac{\nabla}{dt}|_{0}\left(  FS\right)  \left(  \sigma\left(  t\right)
\right)  =\left(  \nabla_{\dot{\sigma}\left(  0\right)  }F\right)  S\left(
m\right)  +F_{\sigma\left(  0\right)  }\nabla_{\dot{\sigma}\left(  0\right)
}S. \label{e.5.24}%
\end{equation}

\end{lemma}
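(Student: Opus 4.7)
The plan is to reduce the claim to the ordinary product rule in $\operatorname{Hom}(E_{m},E_{f(m)})$ applied to the composition of two $t$-dependent objects after factoring out parallel translation. The key observation is that, along the curve $\sigma$, the section $(FS)\circ\sigma$ lives in the pulled-back bundle over $f\circ\sigma$, so the definition of $\nabla/dt$ there is expressed via $\pt_{t}(f\circ\sigma)^{-1}$, whereas $\nabla_{v}F$ in Definition \ref{def.5.20} and $\nabla_{v}S$ (Notation \ref{not.2.2}) are expressed via $\pt_{t}(f\circ\sigma)^{-1}(\cdot)\pt_{t}(\sigma)$ and $\pt_{t}(\sigma)^{-1}$, respectively. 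Inserting the identity $\pt_{t}(\sigma)\pt_{t}(\sigma)^{-1}=\mathrm{Id}_{E_{\sigma(t)}}$ into the middle then converts the ambient time-derivative into a product of two ordinary time-derivatives, each of which is exactly the desired covariant derivative.

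Concretely, I would argue as follows. Set
\[
A(t):=\pt_{t}(f\circ\sigma)^{-1}\,F_{\sigma(t)}\,\pt_{t}(\sigma)\in\operatorname{Hom}(E_{m},E_{f(m)}),\qquad B(t):=\pt_{t}(\sigma)^{-1}S(\sigma(t))\in E_{m}.
\]
Since $F$ is fiber-linear, the factorization $F_{\sigma(t)}S(\sigma(t))=\pt_{t}(f\circ\sigma)\,A(t)\,B(t)$ is valid, hence by definition of $\nabla/dt$ along the curve $f\circ\sigma$ in $M$,
\[
\frac{\nabla}{dt}\Big|_{0}(FS)(\sigma(t))=\pt_{0}(f\circ\sigma)\,\frac{d}{dt}\Big|_{0}\bigl[A(t)B(t)\bigr]=\frac{d}{dt}\Big|_{0}A(t)B(0)+A(0)\frac{d}{dt}\Big|_{0}B(t),
\]
where in the last equality I use that $A(t)B(t)$ is a curve in the fixed finite-dimensional vector space $\operatorname{Hom}(E_{m},E_{f(m)})\otimes E_{m}$ (via evaluation), so the ordinary product rule applies. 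Now $A(0)=F_{\sigma(0)}$, $B(0)=S(m)$, and by the very Definition \ref{def.5.20} of $\nabla_{v}F$ we have $\frac{d}{dt}|_{0}A(t)=\nabla_{\dot{\sigma}(0)}F$, while by the definition of $\nabla/dt$ in Notation \ref{not.2.2} (applied to the section $S$ along the curve $\sigma$ in $M$) we have $\frac{d}{dt}|_{0}B(t)=\nabla_{\dot{\sigma}(0)}S$. Substituting gives Eq.~(\ref{e.5.24}).

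There is essentially no obstacle here: the only subtlety is bookkeeping the two different parallel translations — one along $\sigma$ in $M$ to trivialize $S$, and one along $f\circ\sigma$ in $M$ (applied to the pullback of $E$) to trivialize the output of $F$. Once the identity insertion $\pt_{t}(\sigma)\pt_{t}(\sigma)^{-1}$ is made in the right spot, the computation becomes a one-line product rule in finite-dimensional vector spaces. No further regularity or completeness hypothesis is needed beyond the smoothness of $F$, $S$, and $\sigma$ which is already assumed.
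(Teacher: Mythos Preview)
Your proof is correct and is essentially identical to the paper's own argument: both insert the identity $\pt_{t}(\sigma)\pt_{t}(\sigma)^{-1}$ between $F_{\sigma(t)}$ and $S(\sigma(t))$ and then apply the ordinary product rule in the fixed vector space $\operatorname{Hom}(E_{m},E_{f(m)})$ to the resulting factors. The only cosmetic difference is that you explicitly name the factors $A(t)$ and $B(t)$, which makes the bookkeeping slightly cleaner.
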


\begin{proof}
This result is easily reduced to the standard product rule matrices and
vectors as follows;%
\begin{align*}
\frac{\nabla}{dt}|_{0}\left(  FS\right)  \left(  \sigma\left(  t\right)
\right)  =  &  \frac{d}{dt}|_{0}\left(  \pt_{t}\left(  f\circ\sigma\right)
^{-1}\left[  F_{\sigma\left(  t\right)  }S\left(  \sigma\left(  t\right)
\right)  \right]  \right) \\
=  &  \frac{d}{dt}|_{0}\left(  \pt_{t}\left(  f\circ\sigma\right)
^{-1}\left[  F_{\sigma\left(  t\right)  }\pt_{t}\left(  \sigma\right)
\pt_{t}\left(  \sigma\right)  ^{-1}S\left(  \sigma\left(  t\right)  \right)
\right]  \right) \\
=  &  \frac{d}{dt}|_{0}\left(  \pt_{t}\left(  f\circ\sigma\right)
^{-1}\left[  F_{\sigma\left(  t\right)  }\pt_{t}\left(  \sigma\right)
S\left(  m\right)  \right]  \right) \\
&  \quad+\frac{d}{dt}|_{0}\left(  \left[  F_{\sigma\left(  0\right)  }%
\pt_{t}\left(  \sigma\right)  ^{-1}S\left(  \sigma\left(  t\right)  \right)
\right]  \right)
\end{align*}
which is equivalent to Eq. (\ref{e.5.24}).
\end{proof}

\begin{notation}
\label{not.5.22}Given $m\in M,$ $\sigma\in C\left(  \left[  0,1\right]
,M\right)  ,$ $f\in C^{\infty}\left(  M,M\right)  ,$ and a bundle map,
$F:E\rightarrow E,$ covering $f,$ let%
\begin{align*}
\left\vert \nabla F\right\vert _{m}  &  :=\sup_{v\in T_{m}M:\left\vert
v\right\vert =1}\left\vert \nabla_{v}F\right\vert _{op}:=\sup_{v\in
T_{m}M:\left\vert v\right\vert =1}\sup_{e\in E_{m}:\left\vert e\right\vert
=1}\left\vert \left(  \nabla_{v}F\right)  e\right\vert ,\\
\left\vert \nabla F\right\vert _{\sigma}  &  :=\sup_{t\in\left[  0,1\right]
}\left\vert \nabla F\right\vert _{\sigma\left(  t\right)  },\text{ and}\\
\left\vert \nabla F\right\vert _{M}  &  :=\sup_{m\in M}\left\vert \nabla
F\right\vert _{m}.
\end{align*}

\end{notation}

\begin{theorem}
\label{thm.5.23}Let $F:E\rightarrow E$ is a bundle map covering
$f:M\rightarrow M,$ $e\in E_{m},$ $e^{\prime}\in E_{p},$ and $\sigma\in
AC\left(  \left[  0,1\right]  ,M\right)  $ be a curve such that $\sigma\left(
0\right)  =m$ and $\sigma\left(  1\right)  =p.$ Then%
\begin{equation}
d^{E}\left(  Fe,Fe^{\prime}\right)  \leq\left(  \max\left(  \left\vert
f_{\ast}\right\vert _{\sigma},\left\vert F_{m}\right\vert \right)  +\left\vert
\nabla F\right\vert _{\sigma}\cdot\left\vert e^{\prime}\right\vert \right)
L_{\sigma}\left(  e,e^{\prime}\right)  , \label{e.5.25}%
\end{equation}
and in particular,%
\begin{equation}
d^{E}\left(  Fe,Fe^{\prime}\right)  \leq\left(  \max\left(  \operatorname{Lip}%
\left(  f\right)  ,\left\vert F_{m}\right\vert \right)  +\left\vert \nabla
F\right\vert _{M}\left\vert e^{\prime}\right\vert \right)  d^{E}\left(
e,e^{\prime}\right)  . \label{e.5.26}%
\end{equation}

\end{theorem}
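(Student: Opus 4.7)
The plan is to produce a concrete path from $Fe$ to $Fe'$ whose $E$-length $L_{f\circ\sigma}(Fe,Fe')$ is bounded by the right-hand side of Eq.~(\ref{e.5.25}); the bound~(\ref{e.5.26}) will then follow by passing to the infimum over $\sigma$ and replacing $|f_\ast|_\sigma$ and $|\nabla F|_\sigma$ by their $M$-suprema (using Lemma~\ref{lem.2.9} to identify $|f_\ast|_M=\operatorname{Lip}(f)$). By the definition of $L_{f\circ\sigma}$ in Notation~\ref{not.5.4}, two quantities need to be controlled: the length $\ell_M(f\circ\sigma)$ and the vertical mismatch $|\pt_1(f\circ\sigma)^{-1}Fe'-Fe|$. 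The first is immediate:
\[
\ell_M(f\circ\sigma)=\int_0^1|f_\ast\dot\sigma(t)|\,dt\leq|f_\ast|_\sigma\,\ell_M(\sigma).
\]

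The main step is the vertical mismatch, which I would split using the intermediate vector $\tilde e':=\pt_1(\sigma)^{-1}e'\in E_m$, writing
\[
\pt_1(f\circ\sigma)^{-1}Fe'-Fe=\bigl[\pt_1(f\circ\sigma)^{-1}Fe'-F\tilde e'\bigr]+F(\tilde e'-e).
\]
The second bracket lies in $E_{f(m)}$ and is handled by fiberwise linearity: $|F(\tilde e'-e)|\leq|F_m|\cdot|\tilde e'-e|=|F_m|\cdot|\pt_1(\sigma)^{-1}e'-e|$. For the first bracket, introduce the $E_{f(m)}$-valued curve
\[
\beta(t):=\pt_t(f\circ\sigma)^{-1}\,F_{\sigma(t)}\,\pt_t(\sigma)\tilde e',\qquad t\in[0,1],
\]
so that $\beta(0)=F\tilde e'$ and $\beta(1)=\pt_1(f\circ\sigma)^{-1}Fe'$. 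Using the semigroup property of parallel translation to reduce the derivative at a generic $t$ to a derivative at $0$ along a shifted curve, Definition~\ref{def.5.20} yields
\[
\dot\beta(t)=\pt_t(f\circ\sigma)^{-1}(\nabla_{\dot\sigma(t)}F)\pt_t(\sigma)\tilde e',
\]
whence $|\dot\beta(t)|\leq|\nabla F|_{\sigma(t)}|\dot\sigma(t)||\tilde e'|=|\nabla F|_{\sigma(t)}|\dot\sigma(t)||e'|$, since parallel transport preserves the fiber norm. Integrating gives $|\beta(1)-\beta(0)|\leq|\nabla F|_\sigma\cdot|e'|\cdot\ell_M(\sigma)$. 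Combining the two brackets:
\[
|\pt_1(f\circ\sigma)^{-1}Fe'-Fe|\leq|F_m|\cdot|\pt_1(\sigma)^{-1}e'-e|+|\nabla F|_\sigma\cdot|e'|\cdot\ell_M(\sigma).
\]

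Now set $a:=\ell_M(\sigma)$, $b:=|\pt_1(\sigma)^{-1}e'-e|$, $\rho:=\max(|f_\ast|_\sigma,|F_m|)$, and $\beta_0:=|\nabla F|_\sigma|e'|$, so $L_\sigma(e,e')^2=a^2+b^2$ and
\[
L_{f\circ\sigma}(Fe,Fe')^2\leq(|f_\ast|_\sigma a)^2+(|F_m|b+\beta_0 a)^2\leq\rho^2(a^2+b^2)+\beta_0^2 a^2+2\rho\beta_0 ab.
\]
Applying $2ab\leq a^2+b^2$ to the cross term and absorbing $\beta_0^2a^2\leq\beta_0^2(a^2+b^2)$ gives
\[
L_{f\circ\sigma}(Fe,Fe')^2\leq(\rho+\beta_0)^2(a^2+b^2)=\bigl(\max(|f_\ast|_\sigma,|F_m|)+|\nabla F|_\sigma|e'|\bigr)^2L_\sigma(e,e')^2.
\]
Taking square roots and using $d^E(Fe,Fe')\leq L_{f\circ\sigma}(Fe,Fe')$ gives Eq.~(\ref{e.5.25}); minimizing over $\sigma$ and using Lemma~\ref{lem.2.9} and Corollary~\ref{cor.5.8} yields Eq.~(\ref{e.5.26}).

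The main obstacle is Step~(3): identifying $\dot\beta(t)$ with the operator $\nabla_{\dot\sigma(t)}F$ from Definition~\ref{def.5.20} at arbitrary $t$ (not merely $t=0$) requires invoking the semigroup property of parallel translation so that the derivative at $t$ can be recast as a derivative at $0$ of the suitably time-shifted expression; once this is in place the remaining work is an elementary quadratic estimate.
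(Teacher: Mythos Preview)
Your proposal is correct and follows essentially the same route as the paper. The paper introduces the operator-valued curve $A_t:=\pt_t(f\circ\sigma)^{-1}F_{\sigma(t)}\pt_t(\sigma)$ (so your $\beta(t)=A_t\tilde e'$), asserts $\frac{d}{dt}A_t=\pt_t(f\circ\sigma)^{-1}(\nabla_{\dot\sigma(t)}F)\pt_t(\sigma)$ exactly as you do, and then carries out the same quadratic estimate; your worry about justifying $\dot\beta(t)$ at arbitrary $t$ via the semigroup property of parallel translation is the correct (and standard) mechanism, and the paper uses it without further comment.
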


\begin{proof}
To simplify notation in the proof below let
\begin{align*}
\rho &  :=\max\left(  \left\vert f_{\ast}\right\vert _{\sigma},\left\vert
F_{m}\right\vert \right)  ,\\
\tilde{e}  &  :=\pt_{1}\left(  \sigma\right)  ^{-1}e^{\prime}\in E_{m},\text{
and }\\
A_{t}  &  :=\pt_{t}\left(  f\circ\sigma\right)  ^{-1}F_{\sigma\left(
t\right)  }\pt_{t}\left(  \sigma\right)  :E_{m}\rightarrow E_{f\left(
m\right)  }.
\end{align*}
By Corollary \ref{cor.5.8}, it follows that
\begin{align*}
d^{E}\left(  Fe,Fe^{\prime}\right)  \leq L_{f\circ\sigma}\left(
Fe,Fe^{\prime}\right)   &  =\sqrt{\ell_{M}^{2}\left(  f\circ\sigma\right)
+\left\vert \pt_{1}\left(  f\circ\sigma\right)  ^{-1}Fe^{\prime}-Fe\right\vert
^{2}}.\\
&  =\sqrt{\ell_{M}^{2}\left(  f\circ\sigma\right)  +\left\vert A_{1}\tilde
{e}-A_{0}e\right\vert ^{2}}.
\end{align*}
The first term in the square root is estimated by,%
\[
\ell_{M}\left(  f\circ\sigma\right)  =\int_{0}^{1}\left\vert f_{\ast}%
\dot{\sigma}\left(  t\right)  \right\vert dt\leq\left\vert f_{\ast}\right\vert
_{\sigma}\ell_{M}\left(  \sigma\right)  .
\]
For the second term, we note that%
\[
\left\vert \frac{d}{dt}A_{t}\right\vert =\left\vert \pt_{t}\left(
f\circ\sigma\right)  ^{-1}\left(  \nabla_{\dot{\sigma}\left(  t\right)
}F\right)  \pt_{t}\left(  \sigma\right)  \right\vert =\left\vert \nabla
_{\dot{\sigma}\left(  t\right)  }F\right\vert \leq\left\vert \nabla
F\right\vert _{\sigma}\left\vert \dot{\sigma}\left(  t\right)  \right\vert
\]
and hence%
\[
\left\vert A_{1}-A_{0}\right\vert _{op}=\int_{0}^{1}\left\vert \frac{d}%
{dt}A_{t}\right\vert dt\leq\left\vert \nabla F\right\vert _{\sigma}\ell
_{M}\left(  \sigma\right)  .
\]
Thus we conclude that
\begin{align*}
\left\vert A_{1}\tilde{e}-A_{0}e\right\vert  &  \leq\left\vert A_{1}\tilde
{e}-A_{0}\tilde{e}\right\vert +\left\vert A_{0}\left[  \tilde{e}-e\right]
\right\vert \\
&  \leq\left\vert \nabla F\right\vert _{\sigma}\ell_{M}\left(  \sigma\right)
\left\vert \tilde{e}\right\vert +\left\vert F_{m}\right\vert \left\vert
\tilde{e}-e\right\vert \\
&  =\left\vert \nabla F\right\vert _{\sigma}\ell_{M}\left(  \sigma\right)
\left\vert e^{\prime}\right\vert +\left\vert F_{m}\right\vert \left\vert
\pt_{1}\left(  \sigma\right)  ^{-1}e^{\prime}-e\right\vert .
\end{align*}
Combining the previous estimates then shows,%
\begin{align*}
\left(  d^{E}\right)  ^{2}  &  \left(  Fe,Fe^{\prime}\right) \\
\leq &  \left\vert f_{\ast}\right\vert _{\sigma}^{2}\ell_{M}^{2}\left(
\sigma\right)  +\left[  \left\vert \nabla F\right\vert _{\sigma}\ell
_{M}\left(  \sigma\right)  \left\vert e^{\prime}\right\vert +\left\vert
F_{m}\right\vert \left\vert \pt_{1}\left(  \sigma\right)  ^{-1}e^{\prime
}-e\right\vert \right]  ^{2}\\
=  &  \left\vert f_{\ast}\right\vert _{\sigma}^{2}\ell_{M}^{2}\left(
\sigma\right)  +\operatorname{Lip}_{\sigma}^{2}\left(  F\right)  \left\vert
e^{\prime}\right\vert ^{2}\ell_{M}^{2}\left(  \sigma\right)  +\left\vert
F_{m}\right\vert ^{2}\left\vert \pt_{1}\left(  \sigma\right)  ^{-1}e^{\prime
}-e\right\vert ^{2}\\
&  \qquad+2\left\vert F_{m}\right\vert \left\vert \pt_{1}\left(
\sigma\right)  ^{-1}e^{\prime}-e\right\vert \cdot\left\vert \nabla
F\right\vert _{\sigma}\ell_{M}\left(  \sigma\right)  \left\vert e^{\prime
}\right\vert \\
\leq &  \rho^{2}L_{\sigma}^{2}\left(  e,e^{\prime}\right)  +\operatorname{Lip}%
_{\sigma}^{2}\left(  F\right)  \left\vert e^{\prime}\right\vert ^{2}L_{\sigma
}^{2}\left(  e,e^{\prime}\right)  +2\left\vert F_{m}\right\vert \left\vert
\nabla F\right\vert _{\sigma}\left\vert e^{\prime}\right\vert L_{\sigma}%
^{2}\left(  e,e^{\prime}\right) \\
\leq &  \rho^{2}L_{\sigma}^{2}\left(  e,e^{\prime}\right)  +\operatorname{Lip}%
_{\sigma}^{2}\left(  F\right)  \left\vert e^{\prime}\right\vert ^{2}L_{\sigma
}^{2}\left(  e,e^{\prime}\right)  +2\rho\left\vert \nabla F\right\vert
_{\sigma}\left\vert e^{\prime}\right\vert L_{\sigma}^{2}\left(  e,e^{\prime
}\right) \\
&  \qquad=\left(  \rho+\left\vert \nabla F\right\vert _{\sigma}\left\vert
e^{\prime}\right\vert \right)  ^{2}L_{\sigma}^{2}\left(  e,e^{\prime}\right)
\end{align*}
which proves Eq. (\ref{e.5.25}). Moreover, Eq. (\ref{e.5.25} implies%
\[
d^{E}\left(  Fe,Fe^{\prime}\right)  \leq\left(  \max\left(  \operatorname{Lip}%
\left(  f\right)  ,\left\vert F_{m}\right\vert \right)  +\left\vert \nabla
F\right\vert _{M}\cdot\left\vert e^{\prime}\right\vert \right)  L_{\sigma
}\left(  e,e^{\prime}\right)
\]
and so taking the infimum of this last inequality over $\sigma\in AC\left(
\left[  0,1\right]  ,M\right)  $ such that $\sigma\left(  0\right)  =m$ and
$\sigma\left(  1\right)  =p$ gives (see Corollary \ref{cor.5.8}) Eq.
(\ref{e.5.26}).
\end{proof}

\subsection{Metrics on $TM$\label{sec.5.2}}

From now we are going to restrict our attention to the case of interest where
$E=TM$ and $F=f_{\ast}$ where $f\in C^{2}\left(  M,M\right)  .$ Before stating
the main result in Theorem \ref{thm.5.30} below, let us record that relevant
notions of covariant differentiation in this context.

\begin{definition}
[Vector-fields along $f$]\label{def.5.24}For $f\in C^{\infty}\left(
M,M\right)  ,$ let $\Gamma_{f}\left(  TM\right)  $ denote the \textbf{vector
fields} \textbf{along} $f,$ i.e. $U\in\Gamma_{f}\left(  TM\right)  $ iff
$U:M\rightarrow TM$ is a smooth function such that $U\left(  m\right)  \in
T_{f\left(  m\right)  }M$ for all $m\in M.$
\end{definition}

\begin{example}
\label{ex.5.25}If $Z\in\Gamma\left(  TM\right)  $ and $f\in C^{\infty}\left(
M,M\right)  ,$ then $f_{\ast}Z$ and $Z\circ f$ are both vector fields along
$f.$
\end{example}

\begin{definition}
\label{def.5.26}For $f\in C^{\infty}\left(  M,M\right)  ,$ $U\in\Gamma
_{f}\left(  TM\right)  ,$ and $v=v_{m}\in T_{m}M,$ let $\nabla_{v}U\in
T_{f\left(  m\right)  }M$ and $\nabla_{v}f_{\ast}$ be the linear map from
$T_{m}M$ to $T_{f\left(  m\right)  }M\ $be defined by,%
\begin{align*}
\nabla_{v}U  &  =\frac{\nabla}{dt}|_{0}U\left(  \sigma\left(  t\right)
\right)  =\frac{d}{dt}|_{0}\left[  \pt_{t}\left(  f\circ\sigma\right)
^{-1}U\left(  \sigma\left(  t\right)  \right)  \right]  \text{ and }\\
\nabla_{v}f_{\ast}  &  =\frac{d}{dt}|_{0}\left[  \pt_{t}\left(  f\circ
\sigma\right)  ^{-1}f_{\ast\sigma\left(  t\right)  }\pt_{t}\left(
\sigma\right)  \right]
\end{align*}
where is any $C^{1}$-curve in $M$ such that $\dot{\sigma}\left(  0\right)
=v_{m}.$ [It is easily verified by working in local trivializations of $TM$
that $\nabla_{v}U$ and $\nabla_{v}f_{\ast}$ are well defined independent of
the choice of $\sigma$ such that $\dot{\sigma}\left(  0\right)  =v_{m}.]$
\end{definition}

\begin{proposition}
[Chain and product rules]\label{pro.5.27}If $f\in C^{\infty}\left(
M,M\right)  ,$ $Z\in\Gamma\left(  TM\right)  ,$ and $v\in T_{m}M,$ then%
\begin{align}
\nabla_{v}\left[  Z\circ f\right]   &  =\nabla_{f_{\ast}v}Z\text{ and
}\label{e.5.27}\\
\nabla_{v}\left[  f_{\ast}Z\right]   &  =\left(  \nabla_{v}f_{\ast}\right)
Z\left(  m\right)  +f_{\ast}\nabla_{v}Z. \label{e.5.28}%
\end{align}
More generally if $U\in\Gamma_{f}\left(  TM\right)  $ and $g\in C^{\infty
}\left(  M,M\right)  ,$ then $U\circ g\in\Gamma_{f\circ g}\left(  M\right)  ,$
$g_{\ast}U\in\Gamma_{g\circ f}\left(  M\right)  ,$%
\begin{align}
\nabla_{v}\left[  U\circ g\right]   &  =\nabla_{g_{\ast}v}U,\text{
and}\label{e.5.29}\\
\nabla_{v}\left[  g_{\ast}U\right]   &  =\left(  \nabla_{f_{\ast}v}g_{\ast
}\right)  U\left(  m\right)  +g_{\ast m}\nabla_{v}U. \label{e.5.30}%
\end{align}

\end{proposition}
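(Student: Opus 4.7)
The plan is to derive all four identities directly from Definition \ref{def.5.26} by choosing a convenient representing curve $\sigma$ with $\dot\sigma(0)=v$ and carefully inserting parallel transports so that the product rule in the ambient bundle sense can be applied.

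First I would dispatch the two chain rules (\ref{e.5.27}) and (\ref{e.5.29}) by pure unwrapping of definitions. For (\ref{e.5.29}), observe that if $\tau(t):=g(\sigma(t))$, then $\dot\tau(0)=g_\ast v$ and $f\circ g\circ\sigma=f\circ\tau$, so
\[
\nabla_v[U\circ g]=\frac{d}{dt}\bigg|_0\pt_t(f\circ g\circ\sigma)^{-1}U(g(\sigma(t)))=\frac{d}{dt}\bigg|_0\pt_t(f\circ\tau)^{-1}U(\tau(t))=\nabla_{g_\ast v}U,
\]
which is exactly (\ref{e.5.29}); equation (\ref{e.5.27}) is the special case $U=Z$, $f=\mathrm{Id}_M$, with the roles of $g$ and $f$ relabeled. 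These steps are entirely formal and require no new computation.

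Next, equation (\ref{e.5.28}) is an immediate corollary of Lemma \ref{lem.2.4} applied to the bundle map $F=f_\ast:TM\to TM$ covering $f$, with the section $S=Z\in\Gamma(TM)$, since in that lemma's notation $f_\ast Z=FS$ and $\nabla_v(FS)=(\nabla_v F)S(m)+F_{m}\nabla_v S$ reads exactly as $\nabla_v[f_\ast Z]=(\nabla_v f_\ast)Z(m)+f_\ast\nabla_v Z$. So the only substantive step left is (\ref{e.5.30}), where the subtlety is that $U$ is a vector field along $f$ rather than an honest section of $TM$, so $g_\ast U$ lives in $\Gamma_{g\circ f}(TM)$ and the factor $g_\ast$ is being evaluated at $f(m)$ rather than at $m$ itself.

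To handle (\ref{e.5.30}) I would mimic the proof of Lemma \ref{lem.2.4}: choose $\sigma$ with $\dot\sigma(0)=v$, and insert the identity $\pt_t(f\circ\sigma)\pt_t(f\circ\sigma)^{-1}=\mathrm{Id}$ between $g_{\ast f(\sigma(t))}$ and $U(\sigma(t))$. Setting
\[
A_t:=\pt_t(g\circ f\circ\sigma)^{-1}\,g_{\ast f(\sigma(t))}\,\pt_t(f\circ\sigma),\qquad w(t):=\pt_t(f\circ\sigma)^{-1}U(\sigma(t)),
\]
the matrix-vector product rule gives $\nabla_v[g_\ast U]=\dot A_0\,w(0)+A_0\,\dot w(0)$. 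By Definition \ref{def.5.20} applied to the bundle map $g_\ast$ covering $g$, along the curve $f\circ\sigma$ whose initial velocity is $f_\ast v$, one has $\dot A_0=\nabla_{f_\ast v}g_\ast$, while $A_0=g_{\ast f(m)}$ and $\dot w(0)=\nabla_v U$ by definition. Assembling these pieces yields (\ref{e.5.30}). The main (mild) obstacle is purely bookkeeping: keeping track of which fiber each object lives in and ensuring the parallel transports used to define $\nabla_{f_\ast v}g_\ast$ agree with those arising from $\nabla_v[g_\ast U]$, which is guaranteed because both use parallel transport along $g\circ f\circ\sigma$ and $f\circ\sigma$ respectively.
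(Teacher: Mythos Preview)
Your approach is correct and essentially identical to the paper's: both arguments unwrap Definition~\ref{def.5.26}, insert the identity $\pt_t(f\circ\sigma)\,\pt_t(f\circ\sigma)^{-1}$ in the appropriate place, and apply the ordinary product rule to the resulting matrix--vector product. One correction: the product-rule lemma you invoke for (\ref{e.5.28}) and whose proof you mimic for (\ref{e.5.30}) is Lemma~\ref{lem.5.21}, not Lemma~\ref{lem.2.4} (the latter is the inequality $\bigl|\,|\xi(b)|-|\xi(a)|\,\bigr|\le\int_a^b|\nabla_t\xi|\,dt$ and has nothing to do with bundle maps).
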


\begin{proof}
If $\sigma\left(  t\right)  \in M$ is chosen so that $\dot{\sigma}\left(
0\right)  =v_{m},$ then%
\[
\nabla_{v}\left[  Z\circ f\right]  =\frac{d}{dt}|_{0}\left[  \pt_{t}\left(
f\circ\sigma\right)  ^{-1}\left(  Z\circ f\right)  \left(  \sigma\left(
t\right)  \right)  \right]  =\nabla_{f_{\ast}v}Z
\]
and
\begin{align*}
\nabla_{v}\left[  f_{\ast}Z\right]  =  &  \frac{d}{dt}|_{0}\left[
\pt_{t}\left(  f\circ\sigma\right)  ^{-1}f_{\ast\sigma\left(  t\right)
}Z\left(  \sigma\left(  t\right)  \right)  \right] \\
=  &  \frac{d}{dt}|_{0}\left[  \pt_{t}\left(  f\circ\sigma\right)
^{-1}f_{\ast\sigma\left(  t\right)  }\pt_{t}\left(  \sigma\right)
~\pt_{t}\left(  \sigma\right)  ^{-1}Z\left(  \sigma\left(  t\right)  \right)
\right] \\
=  &  \frac{d}{dt}|_{0}\left[  \pt_{t}\left(  f\circ\sigma\right)
^{-1}f_{\ast\sigma\left(  t\right)  }\pt_{t}\left(  \sigma\right)  \right]
Z\left(  m\right) \\
&  +f_{\ast m}\frac{d}{dt}|_{0}\left[  \pt_{t}\left(  \sigma\right)
^{-1}Z\left(  \sigma\left(  t\right)  \right)  \right] \\
=  &  \left(  \nabla_{v}f_{\ast}\right)  Z\left(  m\right)  +f_{\ast}%
\nabla_{v}Z.
\end{align*}

The more general cases are proved similarly;%
\begin{align*}
\nabla_{v}\left[  U\circ g\right]   &  =\frac{d}{dt}|_{0}\left[
\pt_{t}\left(  f\circ g\circ\sigma\right)  ^{-1}\left(  U\circ g\right)
\left(  \sigma\left(  t\right)  \right)  \right] \\
&  =\frac{d}{dt}|_{0}\left[  \pt_{t}\left(  f\circ\left(  g\circ\sigma\right)
\right)  ^{-1}U\left(  g\circ\sigma\right)  \left(  t\right)  \right] \\
&  =\nabla_{g_{\ast}v}U
\end{align*}
and
\begin{align*}
\nabla_{v}\left[  g_{\ast}U\right]  =  &  \frac{d}{dt}|_{0}\left[
\pt_{t}\left(  g\circ f\circ\sigma\right)  ^{-1}g_{\ast}U\left(  \sigma\left(
t\right)  \right)  \right] \\
=  &  \frac{d}{dt}|_{0}\left[  \pt_{t}\left(  g\circ f\circ\sigma\right)
^{-1}g_{\ast}\pt_{t}\left(  f\circ\sigma\right)  ^{-1}~\pt_{t}\left(
f\circ\sigma\right)  U\left(  \sigma\left(  t\right)  \right)  \right] \\
=  &  \frac{d}{dt}|_{0}\left[  \pt_{t}\left(  g\circ f\circ\sigma\right)
^{-1}g_{\ast}\pt_{t}\left(  f\circ\sigma\right)  ^{-1}~U\left(  m\right)
\right] \\
&  +\frac{d}{dt}|_{0}\left[  g_{\ast m}~\pt_{t}\left(  f\circ\sigma\right)
U\left(  \sigma\left(  t\right)  \right)  \right] \\
=  &  \left(  \nabla_{f_{\ast}v}g_{\ast}\right)  U\left(  m\right)  +g_{\ast
m}\nabla_{v}U.
\end{align*}

\end{proof}

\begin{corollary}
\label{cor.5.28}If $f\in\mathrm{Diff}\left(  M\right)  ,$ $Z\in\Gamma\left(
TM\right)  ,$ and $v\in T_{m}M,$ then
\[
\nabla_{v}\left[  \operatorname{Ad}_{f}Z\right]  =\left(  \nabla_{f_{\ast
}^{-1}v}f_{\ast}\right)  Z\left(  f^{-1}\left(  m\right)  \right)  +f_{\ast
}\nabla_{f_{\ast}^{-1}v}Z.
\]

\end{corollary}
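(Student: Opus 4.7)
The plan is to view $\operatorname{Ad}_f Z = (f_\ast Z)\circ f^{-1}$ as a composition of two objects each of which is covered by Proposition \ref{pro.5.27}, and then apply the chain and product rules in sequence.

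First I would set $U := f_\ast Z \in \Gamma_f(TM)$, the vector field along $f$ defined by $U(p) := f_{\ast p} Z(p) \in T_{f(p)}M$. Then $\operatorname{Ad}_f Z = U \circ f^{-1}$, and since $U \in \Gamma_f(TM)$, the composition $U \circ f^{-1} \in \Gamma_{f \circ f^{-1}}(TM) = \Gamma_{\mathrm{Id}}(TM) = \Gamma(TM)$, as it should. Applying the chain rule Eq.~(\ref{e.5.29}) with $g = f^{-1}$ gives
\[
\nabla_v[\operatorname{Ad}_f Z] = \nabla_v[U \circ f^{-1}] = \nabla_{(f^{-1})_\ast v} U = \nabla_{f_\ast^{-1} v}[f_\ast Z].
\]

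Next I would apply the product rule Eq.~(\ref{e.5.28}) to $\nabla_w[f_\ast Z]$ with the tangent vector $w := f_\ast^{-1} v \in T_{f^{-1}(m)} M$. This yields
\[
\nabla_{f_\ast^{-1} v}[f_\ast Z] = \bigl(\nabla_{f_\ast^{-1} v} f_\ast\bigr) Z(f^{-1}(m)) + f_\ast \nabla_{f_\ast^{-1} v} Z,
\]
which combined with the previous display is precisely the claimed formula.

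There is essentially no obstacle here beyond bookkeeping: the only thing to be careful about is which tangent space each object lives in (the base point of $w$ is $f^{-1}(m)$, not $m$), and confirming that the two general identities in Proposition \ref{pro.5.27} apply in the stated form to vector fields along $f$ rather than genuine vector fields on $M$. Both checks are automatic from the definitions, so the corollary follows in two lines.
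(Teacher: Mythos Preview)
Your proof is correct and follows essentially the same two-step approach as the paper: apply the chain rule (\ref{e.5.29}) with $g=f^{-1}$, then the product rule at the shifted base point $f^{-1}(m)$. The only cosmetic difference is that the paper cites (\ref{e.5.30}) for the second step while you cite (\ref{e.5.28}); since $Z\in\Gamma(TM)=\Gamma_{\mathrm{Id}}(TM)$, these coincide here.
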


\begin{proof}
Since $\operatorname{Ad}_{f}Z=\left(  f_{\ast}Z\right)  \circ f^{-1}$ with
$f_{\ast}Z\in\Gamma_{f}\left(  TM\right)  ,$ it follows by first applying Eq.
(\ref{e.5.29}) and then Eq. (\ref{e.5.30}) that
\[
\nabla_{v}\left[  \operatorname{Ad}_{f}Z\right]  =\nabla_{f_{\ast}^{-1}%
v}\left(  f_{\ast}Z\right)  =\left(  \nabla_{f_{\ast}^{-1}v}f_{\ast}\right)
Z\left(  f^{-1}\left(  m\right)  \right)  +f_{\ast}\nabla_{f_{\ast}^{-1}v}Z.
\]

\end{proof}

\begin{definition}
\label{def.5.29}Let $d^{TM}:TM\times TM\rightarrow\lbrack0,\infty)$ be the
metric on $TM$ associated to the Riemannian metric on $E=TM$ with the given
fiber Riemannian metric $g.$
\end{definition}

In this setting,
\[
\max\left(  \operatorname{Lip}\left(  f\right)  ,\left\vert F_{m}\right\vert
\right)  =\max\left(  \operatorname{Lip}\left(  f\right)  ,\left\vert f_{\ast
m}\right\vert \right)  =\operatorname{Lip}\left(  f\right)
\]
and hence the next theorem is an immediate consequence of Theorem
\ref{thm.5.23}.

\begin{theorem}
[$d_{TM}\left(  f_{\ast}v_{m},f_{\ast}w_{p}\right)  $ estimates]%
\label{thm.5.30}Let $v_{m},w_{p}\in TM$ and $f\in C^{2}\left(  M,M\right)  $
and for any path $\sigma\in AC\left(  \left[  0,1\right]  ,M\right)  $ with
$\sigma\left(  0\right)  =v_{m}$ and $\sigma\left(  1\right)  =w_{p},$ let%
\begin{equation}
L_{\sigma}\left(  v_{m},w_{p}\right)  :=\sqrt{\ell_{M}\left(  \sigma\right)
^{2}+\left\vert \pt_{1}\left(  \sigma\right)  ^{-1}w_{p}-v_{m}\right\vert
^{2}}. \label{e.5.31}%
\end{equation}
Then
\begin{equation}
d^{TM}\left(  f_{\ast}v_{m},f_{\ast}w_{p}\right)  \leq\left[  \left\vert
f_{\ast}\right\vert _{\sigma}+\left\vert \nabla f_{\ast}\right\vert _{\sigma
}\cdot\left\vert w_{p}\right\vert \right]  L_{\sigma}\left(  v_{m}%
,w_{p}\right)  \label{e.5.32}%
\end{equation}
and consequently,\footnote{The next inequality may be localized if necessary.
The point is we may assume that $\ell\left(  \sigma\right)  \leq d_{TM}\left(
v_{m},w_{p}\right)  $ and so we need compute $\operatorname{Lip}\left(
f\right)  $ and $\operatorname{Lip}\left(  f_{\ast}\right)  $ over the ball,
$B\left(  m,d_{TM}\left(  v_{m},w_{p}\right)  \right)  .$}
\begin{equation}
d^{TM}\left(  f_{\ast}v_{m},f_{\ast}w_{p}\right)  \leq\left(
\operatorname{Lip}\left(  f\right)  +\left\vert \nabla f_{\ast}\right\vert
_{M}\cdot\left\vert w_{p}\right\vert \right)  d^{TM}\left(  v_{m}%
,w_{p}\right)  . \label{e.5.33}%
\end{equation}

\end{theorem}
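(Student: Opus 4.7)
The plan is to derive Theorem \ref{thm.5.30} as a direct specialization of Theorem \ref{thm.5.23}, applied to the Hermitian vector bundle $E=TM$ equipped with the Levi-Civita connection $\nabla^{TM}$ and the fiber metric induced from $g$. In this setup the bundle map is $F := f_{\ast}:TM\rightarrow TM$, which covers $f:M\rightarrow M$, and Definition \ref{def.5.20} of $\nabla_{v}F$ reduces verbatim to Definition \ref{def.5.26} of $\nabla_{v}f_{\ast}$; in particular the norm $|\nabla F|_{\sigma}$ from Notation \ref{not.5.22} agrees with $|\nabla f_{\ast}|_{\sigma}$ as used in the statement.

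With those identifications in hand, the first step is simply to quote Eq.~\eqref{e.5.25} of Theorem \ref{thm.5.23} with $e=v_{m}$ and $e'=w_{p}$. This yields
\[
d^{TM}(f_{\ast}v_{m},f_{\ast}w_{p})\leq\bigl(\max(|f_{\ast}|_{\sigma},|f_{\ast m}|)+|\nabla f_{\ast}|_{\sigma}\cdot|w_{p}|\bigr)L_{\sigma}(v_{m},w_{p}).
\]
The second step is to observe that $|f_{\ast m}|\leq\sup_{t\in[0,1]}|f_{\ast\sigma(t)}|=|f_{\ast}|_{\sigma}$ by definition, and hence $\max(|f_{\ast}|_{\sigma},|f_{\ast m}|)=|f_{\ast}|_{\sigma}$. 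This immediately gives the first inequality \eqref{e.5.32}.

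For the second inequality \eqref{e.5.33}, the plan is to pass from the $\sigma$-local bounds to global ones and then invoke Corollary \ref{cor.5.8}. Namely, $|f_{\ast}|_{\sigma}\leq|f_{\ast}|_{M}$ and $|\nabla f_{\ast}|_{\sigma}\leq|\nabla f_{\ast}|_{M}$, while Lemma \ref{lem.2.9} identifies $|f_{\ast}|_{M}=\operatorname{Lip}(f)$. Substituting these into \eqref{e.5.32} produces the estimate
\[
d^{TM}(f_{\ast}v_{m},f_{\ast}w_{p})\leq\bigl(\operatorname{Lip}(f)+|\nabla f_{\ast}|_{M}\cdot|w_{p}|\bigr)L_{\sigma}(v_{m},w_{p})
\]
valid for \emph{every} absolutely continuous path $\sigma$ joining $m$ to $p$. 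Taking the infimum over such $\sigma$ on the right and applying Corollary \ref{cor.5.8} (which characterizes $d^{TM}(v_{m},w_{p})$ as $\inf_{\sigma}L_{\sigma}(v_{m},w_{p})$) yields \eqref{e.5.33}.

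There is essentially no genuine obstacle here; the only subtlety is bookkeeping, namely verifying that the abstract bundle-map covariant derivative in Definition \ref{def.5.20} specializes correctly to $\nabla f_{\ast}$ of Definition \ref{def.5.26} (this is a direct comparison of the two formulas, both given as $\frac{d}{dt}|_{0}\mathop{\pt}_{t}(f\circ\sigma)^{-1}f_{\ast\sigma(t)}\mathop{\pt}_{t}(\sigma)$), and confirming that $|F_{m}|=|f_{\ast m}|$ is absorbed by $|f_{\ast}|_{\sigma}$ and $\operatorname{Lip}(f)$ via Lemma \ref{lem.2.9}. No curvature or further geometric input is needed.
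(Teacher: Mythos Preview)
Your proposal is correct and follows essentially the same approach as the paper, which simply observes that when $E=TM$ and $F=f_{\ast}$ one has $\max(\operatorname{Lip}(f),|F_{m}|)=\max(\operatorname{Lip}(f),|f_{\ast m}|)=\operatorname{Lip}(f)$ and then invokes Theorem~\ref{thm.5.23} directly. Your derivation of \eqref{e.5.33} via \eqref{e.5.32} and an infimum over $\sigma$ is a minor variation that amounts to reproving \eqref{e.5.26} from \eqref{e.5.25}, rather than quoting \eqref{e.5.26} itself.
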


\section{First order derivative estimates\label{sec.6}}

\subsection{$\nabla\nu_{t\ast}$ -- estimates\label{sec.6.1}}

Suppose that $W_{t}\in\Gamma\left(  TM\right)  $ and $\nu_{t}\in C^{\infty
}\left(  M,M\right)  $ are as in Notation \ref{not.2.25}. Our next goal is to
estimate the local Lipschitz-norm of $\nu_{t\ast}.$ We will do this using
Theorem \ref{thm.5.30} which requires us to estimate $\nabla\nu_{t\ast}.$ We
begin by finding the differential equation solved by $\nabla\nu_{t\ast}.$

\begin{proposition}
\label{pro.6.1}If $W_{t}\in\Gamma\left(  TM\right)  $ and $\nu_{t}\in
C^{\infty}\left(  M,M\right)  $ are as in Notation \ref{not.2.25}, $m\in M,$
and $v_{m},\xi_{m}\in T_{m}M,$ then $\left(  \nabla_{v_{m}}\nu_{t\ast}\right)
\xi_{m}$ satisfies the covariant differential equation;%
\begin{align}
\nabla_{t}\left(  \nabla_{v_{m}}\nu_{t\ast}\right)  \xi_{m}  &  =\left(
\nabla W_{t}\right)  \left[  \left(  \nabla_{v_{m}}\nu_{t\ast}\right)  \xi
_{m}\right]  +\left(  \nabla^{2}W_{t}\right)  \left[  \nu_{t\ast}v_{m}%
\otimes\nu_{t\ast}\xi_{m}\right] \nonumber\\
&  +R\left(  W_{t}\left(  \nu_{t}\left(  m\right)  \right)  ,\nu_{t\ast}%
v_{m}\right)  \nu_{t\ast}\xi_{m}. \label{e.6.1}%
\end{align}

\end{proposition}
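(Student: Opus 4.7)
The plan is to derive the covariant ODE for $(\nabla_{v_m}\nu_{t\ast})\xi_m$ by comparing two iterated covariant derivatives of a suitable two-parameter family, with the discrepancy accounted for by the curvature. The input is Proposition \ref{pro.2.26}, which says $\nabla_t(\nu_{t\ast}v)=\nabla_{\nu_{t\ast}v}W_t$, and the identity $\nabla^2_{v\otimes w}X=\nabla_v(\nabla_W X)-\nabla_{\nabla_v W}X$ from Notation \ref{not.1.4} / Definition \ref{def.2.5}, together with the standard curvature commutation for a map from a rectangle into $M$.

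First, I would pick a curve $\sigma:(-\varepsilon,\varepsilon)\to M$ with $\dot{\sigma}(0)=v_m$, and a vector field $\xi\in\Gamma(TM)$ extending $\xi_m$ such that $\nabla_{v_m}\xi=0$ (e.g.\ parallel transport of $\xi_m$ along $\sigma$ near $s=0$). Consider the smooth map
\[
\Phi(t,s):=\nu_t(\sigma(s))\in M,\qquad X(t,s):=\nu_{t\ast}\,\xi(\sigma(s))\in T_{\Phi(t,s)}M,
\]
so that $\partial_t\Phi=W_t\circ\Phi$ and $\partial_s\Phi(t,0)=\nu_{t\ast}v_m$. Applying the curvature identity along $\Phi$ to the section $X$ yields
\[
\nabla_t\nabla_s X-\nabla_s\nabla_t X=R(\partial_t\Phi,\partial_s\Phi)X.
\]
Evaluating at $s=0$ gives the curvature term $R(W_t(\nu_t(m)),\nu_{t\ast}v_m)\,\nu_{t\ast}\xi_m$ on the right.

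Next, I would evaluate the two iterated derivatives on the left at $s=0$. Using the product rule, $\nabla_s X|_{s=0}=(\nabla_{v_m}\nu_{t\ast})\xi_m+\nu_{t\ast}\nabla_{v_m}\xi=(\nabla_{v_m}\nu_{t\ast})\xi_m$ by the choice of $\xi$, so
\[
\nabla_t\nabla_s X\big|_{s=0}=\nabla_t\bigl[(\nabla_{v_m}\nu_{t\ast})\xi_m\bigr].
\]
For the other order, Proposition \ref{pro.2.26} applied pointwise gives $\nabla_t X(t,s)=(\nabla W_t)(\nu_{t\ast}\xi(\sigma(s)))$ as a vector field along $s\mapsto\Phi(t,s)$; differentiating this in $s$ at $s=0$ via the product rule, and using Notation \ref{not.1.4} to identify $\nabla_{\nu_{t\ast}v_m}(\nabla W_t)(\nu_{t\ast}\xi_m)$ with $\nabla^2 W_t[\nu_{t\ast}v_m\otimes\nu_{t\ast}\xi_m]$, yields
\[
\nabla_s\nabla_t X\big|_{s=0}=\nabla^2 W_t[\nu_{t\ast}v_m\otimes\nu_{t\ast}\xi_m]+(\nabla W_t)\bigl[(\nabla_{v_m}\nu_{t\ast})\xi_m\bigr].
\]
Substituting these two expressions into the curvature commutation and rearranging produces exactly Eq.~(\ref{e.6.1}).

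The only genuine subtlety, and the main point to verify carefully, is the second identification: one must check that the product-rule splitting of $\nabla_s\bigl[(\nabla W_t)(\nu_{t\ast}\xi(\sigma(s)))\bigr]$ really gives $\nabla^2 W_t$ in the sense of Notation \ref{not.1.4}, i.e.\ that the term $\nabla_{\nabla_{v_m}[\nu_{t\ast}\xi(\sigma(\cdot))]}W_t$ naturally combines with the \textquotedblleft total\textquotedblright\ derivative to leave the tensorial $\nabla^2$. This is handled precisely by the computation in Remark \ref{rem.2.6}, so reusing that identity (with $W=\nu_{t\ast}\xi\circ\sigma$ in the role of the second vector field) closes the argument cleanly. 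Everything else is routine covariant calculus on a rectangle.
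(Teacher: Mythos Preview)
Your proposal is correct and follows essentially the same route as the paper: choose a curve $\sigma$ with $\dot\sigma(0)=v_m$ and a parallel extension of $\xi_m$, form the two-parameter family $\nu_t(\sigma(s))$, commute $\nabla_t$ and $\nabla_s$ on $\nu_{t\ast}\xi(\sigma(s))$ picking up the curvature term, and identify the $\nabla_s\nabla_t$ side via Proposition~\ref{pro.2.26} and the product rule in Remark~\ref{rem.2.6}. The only cosmetic difference is that the paper reads off $\nabla_s|_{0}[\nu_{t\ast}\xi(s)]=(\nabla_{v_m}\nu_{t\ast})\xi_m$ directly from Definition~\ref{def.5.26} rather than via the product rule, but this is the same computation.
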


\begin{proof}
Let $\sigma\left(  s\right)  $ be a smooth curve in $M$ such that
$v_{m}:=\sigma^{\prime}\left(  0\right)  $ and define $\xi\left(  s\right)
:=\pt_{s}\left(  \sigma\right)  \xi_{m}.$ With this notation we have
\begin{align}
\frac{\nabla}{ds}|_{0}\left[  \nu_{t\ast}\xi\left(  s\right)  \right]   &
=\frac{d}{ds}|_{0}\left[  \pt_{s}\left(  \nu_{t}\circ\sigma\right)  ^{-1}%
\nu_{t\ast}\xi\left(  s\right)  \right] \nonumber\\
&  =\frac{d}{ds}|_{0}\left[  \pt_{s}\left(  \nu_{t}\circ\sigma\right)
^{-1}\nu_{t\ast}\pt_{s}\left(  \sigma\right)  \xi_{m}\right]  =\left(
\nabla_{v_{m}}\nu_{t\ast}\right)  \xi_{m}. \label{e.6.2}%
\end{align}

Using the relationship of curvature to the commutator of covariant
derivatives,
\[
\left[  \nabla_{t},\nabla_{s}\right]  =R\left(  \frac{d}{dt}\nu_{t}\left(
\sigma\left(  s\right)  \right)  ,\frac{d}{ds}\nu_{t}\left(  \sigma\left(
s\right)  \right)  \right)  =R\left(  W_{t}\left(  \nu_{t}\left(
\sigma\left(  s\right)  \right)  \right)  ,\nu_{t\ast}\sigma^{\prime}\left(
s\right)  \right)  ,
\]
it follows that
\begin{equation}
\nabla_{t}\nabla_{s}\left[  \nu_{t\ast}\xi\left(  s\right)  \right]
=\nabla_{s}\nabla_{t}\left[  \nu_{t\ast}\xi\left(  s\right)  \right]
+R\left(  W_{t}\left(  \nu_{t}\left(  \sigma\left(  s\right)  \right)
\right)  ,\nu_{t\ast}\sigma^{\prime}\left(  s\right)  \right)  \nu_{t\ast}%
\xi\left(  s\right)  . \label{e.6.3}%
\end{equation}
By Proposition \ref{pro.2.26} and the product rule for covariant derivatives
the first term in Eq. (\ref{e.6.3}) may be written as%
\begin{align}
\nabla_{s}\nabla_{t}\left[  \nu_{t\ast}\xi\left(  s\right)  \right]   &
=\nabla_{s}\left[  \nabla_{\nu_{t\ast}\xi\left(  s\right)  }W_{t}\right]
\nonumber\\
&  =\left(  \nabla^{2}W_{t}\right)  \left[  \nu_{t\ast}\sigma^{\prime}\left(
s\right)  \otimes\nu_{t\ast}\xi\left(  s\right)  \right]  +\left(  \nabla
W_{t}\right)  \nabla_{s}\nu_{t\ast}\xi\left(  s\right)  . \label{e.6.4}%
\end{align}
Combining Eqs. (\ref{e.6.2})--(\ref{e.6.4}) gives,%
\begin{align*}
\nabla_{t}\left(  \nabla_{v_{m}}\nu_{t\ast}\right)  \xi_{m}  &  =\nabla
_{t}\frac{\nabla}{ds}|_{0}\left[  \nu_{t\ast}\xi\left(  s\right)  \right] \\
&  =\left[  \left(  \nabla^{2}W_{t}\right)  \left[  \nu_{t\ast}\sigma^{\prime
}\left(  s\right)  \otimes\nu_{t\ast}\xi\left(  s\right)  \right]  +\left(
\nabla W_{t}\right)  \nabla_{s}\nu_{t\ast}\xi\left(  s\right)  \right]
_{s=0}\\
&  +\left[  R\left(  W_{t}\left(  \nu_{t}\left(  \sigma\left(  s\right)
\right)  \right)  ,\nu_{t\ast}\sigma^{\prime}\left(  s\right)  \right)
\nu_{t\ast}\xi\left(  s\right)  \right]  _{s=0}%
\end{align*}
which is the same as Eq. (\ref{e.6.1}).
\end{proof}

Recall from Notations \ref{not.1.3} and \ref{not.1.5} (also see Example
\ref{ex.1.6}) that
\begin{equation}
H_{m}\left(  W_{t}\right)  =\left\vert \nabla^{2}W_{t}\right\vert
_{m}+\left\vert R\left(  W_{t},\bullet\right)  \right\vert _{m} \label{e.6.5}%
\end{equation}
and for a closed interval, $J\subset\left[  0,T\right]  ,$ that
\begin{equation}
H\left(  W_{\cdot}\right)  _{J}^{\ast}=\int_{J}H_{M}\left(  W_{t}\right)
dt.=\int_{J}\sup_{m\in M}H_{m}\left(  W_{t}\right)  dt. \label{e.6.6}%
\end{equation}

\begin{corollary}
[$\left\vert \nabla\nu_{t\ast}\right\vert _{M}$ -estimate]\label{cor.6.2}If
$W_{t}\in\Gamma\left(  TM\right)  $ and $\nu_{t}\in C^{\infty}\left(
M,M\right)  $ are as in Notation \ref{not.2.25} and we let%
\begin{align}
k_{J}\left(  m\right)   &  :=\int_{J}\left\vert \nabla W\right\vert
_{\nu_{\tau}\left(  m\right)  }d\tau\leq\left\vert \nabla W\right\vert
_{J}^{\ast},\text{ and}\label{e.6.7}\\
K_{J}\left(  m\right)   &  :=\int_{J}H_{\nu_{\tau}\left(  m\right)  }\left(
W_{\tau}\right)  d\tau\leq H\left(  W_{\cdot}\right)  _{J}^{\ast},
\label{e.6.8}%
\end{align}
then
\begin{align}
\left\vert \nabla\nu_{t\ast}\right\vert _{m}  &  \leq e^{k_{J\left(
s,t\right)  }\left(  m\right)  }\left[  \left\vert \nabla\nu_{s\ast
}\right\vert _{m}+\left\vert \nu_{s\ast}\right\vert _{m}^{2}\int_{J\left(
s,t\right)  }H_{\nu_{\tau}\left(  m\right)  }\left(  W_{\tau}\right)
e^{k_{J\left(  s,\tau\right)  }\left(  m\right)  }d\tau\right]  \label{e.6.9}%
\\
&  \leq e^{k_{J\left(  s,t\right)  }\left(  m\right)  }\left\vert \nabla
\nu_{s\ast}\right\vert _{m}+e^{2k_{J\left(  s,t\right)  }\left(  m\right)
}K_{J\left(  s,t\right)  }\left(  m\right)  \left\vert \nu_{s\ast}\right\vert
_{m}^{2}. \label{e.6.10}%
\end{align}
If we further assume that $\nu_{s}=Id_{M},$ then the above estimate reduces to%
\begin{align}
\left\vert \nabla\nu_{t\ast}\right\vert _{m}  &  \leq e^{k_{J\left(
s,t\right)  }\left(  m\right)  }\cdot\int_{J\left(  s,t\right)  }H_{\nu_{\tau
}\left(  m\right)  }\left(  W_{\tau}\right)  e^{k_{J\left(  s,\tau\right)
}\left(  m\right)  }d\tau\label{e.6.11}\\
&  \leq e^{2k_{J\left(  s,t\right)  }\left(  m\right)  }\cdot K_{J\left(
s,t\right)  }\left(  m\right)  \label{e.6.12}%
\end{align}
and in particular,%
\begin{equation}
\left\vert \nabla\nu_{t\ast}\right\vert _{M}\leq e^{2\left\vert \nabla
W\right\vert _{J\left(  s,t\right)  }^{\ast}}.H\left(  W_{\cdot}\right)
_{J\left(  s,t\right)  }^{\ast}. \label{e.6.13}%
\end{equation}

\end{corollary}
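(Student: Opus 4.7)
The plan is to view the ODE in Proposition \ref{pro.6.1} as an inhomogeneous linear covariant ODE along the curve $t \mapsto \nu_t(m)$ for the $T_{\nu_t(m)}M$-valued function $\xi_m \mapsto (\nabla_{v_m}\nu_{t*})\xi_m$, and then apply the geometric Bellman--Gronwall inequality of Corollary \ref{cor.9.3} together with the Jacobian bound of Corollary \ref{cor.2.27}.

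First I would fix unit tangent vectors $v_m, \xi_m \in T_m M$ and set $A_t := (\nabla_{v_m} \nu_{t*})\xi_m \in T_{\nu_t(m)}M$. By Proposition \ref{pro.6.1},
\begin{equation*}
\nabla_t A_t = (\nabla W_t) A_t + F_t, \qquad F_t := (\nabla^2 W_t)\bigl[\nu_{t*}v_m \otimes \nu_{t*}\xi_m\bigr] + R\bigl(W_t(\nu_t(m)), \nu_{t*}v_m\bigr)\nu_{t*}\xi_m.
\end{equation*}
Estimating the forcing term using the definition of $H_m(W_t)$ in Eq. (\ref{e.6.5}) and the Jacobian bound $|\nu_{t*}|_m \leq |\nu_{s*}|_m e^{k_{J(s,t)}(m)}$ from Corollary \ref{cor.2.27} yields
\begin{equation*}
|F_t| \leq H_{\nu_t(m)}(W_t)\,|\nu_{t*}|_m^{2}\,|v_m||\xi_m| \leq H_{\nu_t(m)}(W_t)\,|\nu_{s*}|_m^{2}\,e^{2k_{J(s,t)}(m)}.
\end{equation*}

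Next I would apply Corollary \ref{cor.9.3} to this ODE with operator norm bound $\|\nabla W_t\|_{op} \leq |\nabla W_t|_{\nu_t(m)}$ to obtain
\begin{equation*}
|A_t| \leq e^{k_{J(s,t)}(m)} |A_s| + \int_{J(s,t)} e^{k_{J(\tau,t)}(m)}\, |F_\tau|\, d\tau.
\end{equation*}
Taking the supremum over unit $v_m, \xi_m$ gives $|A_s| \leq |\nabla \nu_{s*}|_m$ on the boundary term. The exponential arithmetic $k_{J(\tau,t)}(m) + 2k_{J(s,\tau)}(m) = k_{J(s,t)}(m) + k_{J(s,\tau)}(m)$ then collects the factor $e^{k_{J(s,t)}(m)}$ outside the integral, producing exactly Eq. (\ref{e.6.9}). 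Bounding $e^{k_{J(s,\tau)}(m)} \leq e^{k_{J(s,t)}(m)}$ inside the integral (valid since $\tau \leq t$ and $k$ is monotone in its interval) and using the definition of $K_{J(s,t)}(m)$ in Eq. (\ref{e.6.8}) gives the cruder Eq. (\ref{e.6.10}).

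Finally, specializing to $\nu_s = Id_M$ makes $\nabla\nu_{s*} = 0$ and $|\nu_{s*}|_m = 1$, so Eqs. (\ref{e.6.9}) and (\ref{e.6.10}) reduce immediately to Eqs. (\ref{e.6.11}) and (\ref{e.6.12}). Taking the supremum over $m \in M$ and using $k_{J(s,t)}(m) \leq |\nabla W|_{J(s,t)}^{\ast}$ and $K_{J(s,t)}(m) \leq H(W_\cdot)_{J(s,t)}^{\ast}$ yields Eq. (\ref{e.6.13}). The only mild subtlety is keeping the integral--exponential bookkeeping straight so the claimed factor $e^{k_{J(s,t)}(m)}$ (rather than the naively expected $e^{2k_{J(s,t)}(m)}$) appears in Eq. (\ref{e.6.9}); this is the point where one has to use $k_{J(s,t)} = k_{J(s,\tau)} + k_{J(\tau,t)}$ rather than crude bounds, and I would make sure to carry out that identity before the final application of the Gronwall formula.
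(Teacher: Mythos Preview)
Your proposal is correct and follows essentially the same approach as the paper: both start from the covariant ODE of Proposition~\ref{pro.6.1}, bound the inhomogeneous term via the Jacobian estimate of Corollary~\ref{cor.2.27}, apply the geometric Bellman--Gronwall inequality of Corollary~\ref{cor.9.3}, and then use the additivity $k_{J(\tau,t)}+2k_{J(s,\tau)}=k_{J(s,t)}+k_{J(s,\tau)}$ to extract the single $e^{k_{J(s,t)}(m)}$ factor in Eq.~(\ref{e.6.9}). The only cosmetic difference is that the paper estimates the operator $\nabla_{v_m}\nu_{t*}$ directly (taking $\sup$ over $\xi_m$ implicitly via the operator norm), whereas you fix a unit $\xi_m$ and take the supremum at the end; the content is identical.
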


\begin{proof}
To shorten notation in this proof, let%
\[
h_{t}=H_{\nu_{t}\left(  m\right)  }\left(  W_{t}\right)  :=\left\vert
\nabla^{2}W_{t}\right\vert _{\nu_{t}\left(  m\right)  }+\left\vert R\left(
W_{t},\bullet\right)  \right\vert _{\nu_{t}\left(  m\right)  }.
\]
Starting with Eq. (\ref{e.6.1}) while using the estimate in Eq. (\ref{e.2.45})
allows us to easily conclude that
\begin{align*}
\left\vert \nabla_{t}\left(  \nabla_{v_{m}}\nu_{t\ast}\right)  \right\vert  &
\leq\left\vert \nabla W_{t}\right\vert _{\nu_{t}\left(  m\right)  }\left\vert
\nabla_{v_{m}}\nu_{t\ast}\right\vert +\left\vert \nabla^{2}W_{t}\right\vert
_{\nu_{t}\left(  m\right)  }\left\vert \nu_{t\ast}v_{m}\right\vert \left\vert
\nu_{t\ast}\right\vert _{m}\\
&  +\left\vert R\left(  W_{t}\left(  \nu_{t}\left(  m\right)  \right)
,\nu_{t\ast}v_{m}\right)  \right\vert \left\vert \nu_{t\ast}\right\vert
_{m}.\\
&  \leq\left\vert \nabla W_{t}\right\vert _{\nu_{t}\left(  m\right)
}\left\vert \nabla_{v_{m}}\nu_{t\ast}\right\vert +e^{2k_{J\left(  s,t\right)
}\left(  m\right)  }h_{t}\left\vert \nu_{s\ast}\right\vert _{m}^{2}%
\cdot\left\vert v_{m}\right\vert .
\end{align*}
It follows by the Bellman-Gronwall inequality in Corollary \ref{cor.9.3} of
the appendix that
\begin{align*}
\left\vert \nabla\nu_{t\ast}\right\vert _{m}\leq &  e^{\int_{J\left(
s,t\right)  }\left\vert \nabla W_{s}\right\vert _{\nu_{s}\left(  m\right)
}ds}\left\vert \nabla\nu_{s\ast}\right\vert _{m}\\
&  +\int_{J\left(  s,t\right)  }e^{\int_{J\left(  \tau,t\right)  }\left\vert
\nabla W_{s}\right\vert _{\nu_{s}\left(  m\right)  }ds}e^{2k_{J\left(
s,\tau\right)  }\left(  m\right)  }h_{\tau}\left\vert \nu_{s\ast}\right\vert
_{m}^{2}d\tau\\
&  =e^{k_{J\left(  s,t\right)  }\left(  m\right)  }\left\vert \nabla\nu
_{s\ast}\right\vert _{m}+\int_{J\left(  s,t\right)  }e^{k_{J\left(
\tau,t\right)  }\left(  m\right)  }e^{2k_{J\left(  s,\tau\right)  }\left(
m\right)  }h_{\tau}\left\vert \nu_{s\ast}\right\vert _{m}^{2}d\tau\\
&  =e^{k_{J\left(  s,t\right)  }\left(  m\right)  }\left\vert \nabla\nu
_{s\ast}\right\vert _{m}+\int_{J\left(  s,t\right)  }e^{k_{J\left(
s,t\right)  }\left(  m\right)  }e^{k_{J\left(  s,\tau\right)  }\left(
m\right)  }h_{\tau}\left\vert \nu_{s\ast}\right\vert _{m}^{2}d\tau\\
&  \qquad\leq e^{k_{J\left(  s,t\right)  }\left(  m\right)  }\left\vert
\nabla\nu_{s\ast}\right\vert _{m}+e^{2k_{J\left(  s,t\right)  }\left(
m\right)  }\int_{J\left(  s,t\right)  }h_{\tau}\left\vert \nu_{s\ast
}\right\vert _{m}^{2}d\tau.
\end{align*}
Lastly if $\nu_{s}=Id_{M}$ then $\nu_{s\ast}=Id_{TM}$ in which case
$\left\vert \nu_{s\ast}\right\vert _{m}^{2}=1$ and $\nabla\nu_{s\ast}=0$ and
so Eq. (\ref{e.6.10}) reduces to Eq. (\ref{e.6.11}).

\end{proof}

\begin{corollary}
\label{cor.6.3}If $W_{t}\in\Gamma\left(  TM\right)  $ and $\nu_{t}\in
C^{\infty}\left(  M,M\right)  $ are as in Notation \ref{not.2.25} and further
assuming $\nu_{0}=Id_{M},$ then
\[
d^{TM}\left(  \nu_{t\ast}v_{m},\nu_{t\ast}w_{p}\right)  \leq e^{2\left\vert
\nabla W\right\vert _{t}^{\ast}}\left(  1+H\left(  W_{\cdot}\right)
_{t}^{\ast}\cdot\left\vert w_{p}\right\vert \right)  d^{TM}\left(  v_{m}%
,w_{p}\right)
\]

\end{corollary}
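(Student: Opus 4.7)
The plan is to apply Theorem \ref{thm.5.30} with $f = \nu_t$, having first computed the two quantities that appear on the right-hand side of Eq. (\ref{e.5.33}), namely $\operatorname{Lip}(\nu_t) = |\nu_{t\ast}|_M$ and $|\nabla \nu_{t\ast}|_M$. Both of these have already been bounded in the preceding material, so the argument reduces to assembling pieces.

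First I would invoke Corollary \ref{cor.2.27}, specifically Eq. (\ref{e.2.46}), with $s=0$. Since $\nu_0 = Id_M$ gives $|\nu_{0\ast}|_M = 1$, this yields
\[
\operatorname{Lip}(\nu_t) = |\nu_{t\ast}|_M \leq e^{|\nabla W|_t^{\ast}}.
\]
Next I would invoke Corollary \ref{cor.6.2}, specifically Eq. (\ref{e.6.13}), to conclude
\[
|\nabla \nu_{t\ast}|_M \leq e^{2|\nabla W|_t^{\ast}} \cdot H(W_{\cdot})_t^{\ast}.
\]

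Then I would simply plug these two estimates into Eq. (\ref{e.5.33}) of Theorem \ref{thm.5.30}:
\begin{align*}
d^{TM}(\nu_{t\ast} v_m, \nu_{t\ast} w_p)
&\leq \bigl(\operatorname{Lip}(\nu_t) + |\nabla \nu_{t\ast}|_M \cdot |w_p|\bigr)\, d^{TM}(v_m, w_p) \\
&\leq \bigl(e^{|\nabla W|_t^{\ast}} + e^{2|\nabla W|_t^{\ast}} H(W_{\cdot})_t^{\ast}\, |w_p|\bigr)\, d^{TM}(v_m, w_p) \\
&\leq e^{2|\nabla W|_t^{\ast}}\bigl(1 + H(W_{\cdot})_t^{\ast}\, |w_p|\bigr)\, d^{TM}(v_m, w_p),
\end{align*}
where the final inequality uses $e^{|\nabla W|_t^{\ast}} \leq e^{2|\nabla W|_t^{\ast}}$, valid since $|\nabla W|_t^{\ast} \geq 0$.

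There is no real obstacle: all the analytic work (the covariant ODE for $\nabla \nu_{t\ast}$ and the Gronwall bookkeeping that produces the $H(W_{\cdot})_t^{\ast}$ factor) was carried out in Proposition \ref{pro.6.1} and Corollary \ref{cor.6.2}, and the geometric passage from derivative bounds to a $d^{TM}$-Lipschitz bound for $f_\ast$ was done in Theorem \ref{thm.5.30}. The corollary is thus purely a packaging statement.
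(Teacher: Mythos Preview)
Your proof is correct and essentially identical to the paper's own proof: both apply Theorem \ref{thm.5.30} with $f=\nu_t$, bound $\operatorname{Lip}(\nu_t)$ via Corollary \ref{cor.2.27} and $|\nabla\nu_{t\ast}|_M$ via Eq.~(\ref{e.6.13}) of Corollary \ref{cor.6.2}, and then use $e^{|\nabla W|_t^\ast}\le e^{2|\nabla W|_t^\ast}$ to factor.
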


\begin{proof}
By Theorem \ref{thm.5.30} with $f=\nu_{t}$ along with Corollaries
\ref{cor.2.27} and Corollary \ref{cor.6.2} we find,
\begin{align*}
d^{TM}\left(  \nu_{t\ast}v_{m},\nu_{t\ast}w_{p}\right)   &  \leq\left(
\operatorname{Lip}\left(  \nu_{t}\right)  +\left\vert \nabla\nu_{t\ast
}\right\vert _{M}\cdot\left\vert w_{p}\right\vert \right)  d^{TM}\left(
v_{m},w_{p}\right) \\
&  \leq\left(  e^{\left\vert \nabla W\right\vert _{t}^{\ast}}+e^{2\left\vert
\nabla W\right\vert _{t}^{\ast}}.H\left(  W_{\cdot}\right)  _{t}^{\ast}%
\cdot\left\vert w_{p}\right\vert \right)  d^{TM}\left(  v_{m},w_{p}\right) \\
&  \leq e^{2\left\vert \nabla W\right\vert _{t}^{\ast}}\left(  1+H\left(
W_{\cdot}\right)  _{t}^{\ast}\cdot\left\vert w_{p}\right\vert \right)
d^{TM}\left(  v_{m},w_{p}\right)  .
\end{align*}

\end{proof}

The next corollary is the special case of Corollaries \ref{cor.6.2} and
\ref{cor.6.3} when $W_{t}=X$ is a time independent vector field.

\begin{corollary}
[$\left\vert \nabla e_{\ast}^{tX}\right\vert _{M}$ -estimate]\label{cor.6.4}If
$X$ is a complete vector field and%
\begin{equation}
k_{t}\left(  X,m\right)  :=\int_{0}^{t}\left\vert \nabla X\right\vert
_{e^{\tau X}\left(  m\right)  }d\tau, \label{e.6.14}%
\end{equation}
then%
\begin{align}
\left\vert \nabla e_{\ast}^{tX}\right\vert _{m}  &  \leq e^{k_{t}\left(
X,m\right)  }\cdot\int_{0}^{t}H_{e^{\tau X}\left(  m\right)  }\left(
X\right)  e^{k_{\tau}\left(  X,m\right)  }d\tau\label{e.6.15}\\
&  \leq e^{2k_{t}\left(  X,m\right)  }\cdot\int_{0}^{t}H_{e^{\tau X}\left(
m\right)  }\left(  X\right)  d\tau\label{e.6.16}%
\end{align}
and, for $v_{m},w_{p}\in TM,$%
\begin{equation}
d^{TM}\left(  e_{\ast}^{X}v_{m},e_{\ast}^{X}w_{p}\right)  \leq e^{2\left\vert
\nabla X\right\vert _{M}}\left[  1+H_{M}\left(  X\right)  \left\vert
w_{p}\right\vert \right]  d^{TM}\left(  v_{m},w_{p}\right)  . \label{e.6.17}%
\end{equation}

\end{corollary}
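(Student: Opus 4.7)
The plan is to obtain Corollary \ref{cor.6.4} as a direct specialization of Corollaries \ref{cor.6.2} and \ref{cor.6.3} to the time-independent setting, so the proof is essentially a matter of carefully tracking how the notation collapses.

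First I would set $W_\tau \equiv X$ and $\nu_\tau := e^{\tau X}$ for all $\tau$, and observe that since $X$ is complete and independent of time, this choice satisfies Notation \ref{not.2.25} with $\nu_0 = Id_M$. Under this substitution the function $k_{J(0,t)}(m)$ from Eq. (\ref{e.6.7}) becomes
\[
k_{[0,t]}(m) = \int_0^t |\nabla X|_{e^{\tau X}(m)}\, d\tau,
\]
which is precisely $k_t(X,m)$ as defined in Eq. (\ref{e.6.14}). Similarly $H_{\nu_\tau(m)}(W_\tau) = H_{e^{\tau X}(m)}(X)$.

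Next I would invoke inequality (\ref{e.6.11}) of Corollary \ref{cor.6.2} (which applies because $\nu_0 = Id_M$) with $s=0$; this immediately yields the first inequality of (\ref{e.6.15}). Applying (\ref{e.6.12}) in the same way gives the second inequality (\ref{e.6.16}), since
\[
K_{[0,t]}(m) = \int_0^t H_{e^{\tau X}(m)}(X)\, d\tau.
\]

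Finally, for the distance estimate (\ref{e.6.17}) I would apply Corollary \ref{cor.6.3} with the same substitution at $t=1$. Because $X$ is time-independent we have
\[
|\nabla W|_1^{\ast} = \int_0^1 |\nabla X|_M \, d\tau = |\nabla X|_M
\quad\text{and}\quad
H(W_\cdot)_1^{\ast} = \int_0^1 H_M(X)\, d\tau = H_M(X),
\]
so Corollary \ref{cor.6.3} specializes to exactly Eq. (\ref{e.6.17}). There is no real obstacle here; the only thing to verify carefully is that the hypotheses of Corollaries \ref{cor.6.2} and \ref{cor.6.3} (smoothness of $W$ and $\nu$, the initial condition $\nu_0 = Id_M$) hold for the flow $e^{tX}$ of a complete vector field, which is standard.
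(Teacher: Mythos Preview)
Your proposal is correct and matches the paper's approach exactly: the paper states just before Corollary~\ref{cor.6.4} that it is the special case of Corollaries~\ref{cor.6.2} and~\ref{cor.6.3} when $W_t=X$ is time-independent, and you have carried out precisely that specialization with the right identifications of $k_t(X,m)$, $K_{[0,t]}(m)$, and the evaluation at $t=1$ for the distance estimate.
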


\begin{notation}
\label{not.6.5}For $X\in\Gamma\left(  TM\right)  $ and $m\in M,$ let%
\[
\bar{H}_{m}\left(  X\right)  :=\int_{0}^{1}H_{e^{-\tau X}\left(  m\right)
}\left(  X\right)  d\tau\leq H_{M}\left(  X\right)  .
\]

\end{notation}

\begin{proposition}
\label{pro.6.6}If $X,Z\in\Gamma\left(  TM\right)  $ and $X$ is complete, then%
\begin{align}
\left\vert \nabla\left[  \operatorname{Ad}_{e^{X}}Z\right]  \right\vert _{m}
&  \leq e^{2k_{1}\left(  -X,m\right)  }\left\vert \nabla Z\right\vert
_{e^{-X}\left(  m\right)  }+\bar{H}_{m}\left(  X\right)  e^{3k_{1}\left(
-X,m\right)  }\left\vert Z\right\vert _{e^{-X}\left(  m\right)  }%
\label{e.6.18}\\
&  \leq e^{3k_{1}\left(  -X,m\right)  }\left[  \left\vert \nabla Z\right\vert
_{e^{-X}\left(  m\right)  }+\bar{H}_{m}\left(  X\right)  \left\vert
Z\right\vert _{e^{-X}\left(  m\right)  }\right]  \label{e.6.19}%
\end{align}
where, from Eq. (\ref{e.6.14}),
\begin{equation}
k_{1}\left(  -X,m\right)  =\int_{0}^{1}\left\vert \nabla X\right\vert
_{e^{-\tau X}\left(  m\right)  }d\tau. \label{e.6.20}%
\end{equation}
[It is possible, using \textquotedblleft transport methods,\textquotedblright%
\ to replace $e^{3k_{1}\left(  -X,m\right)  }$ by $e^{2k_{1}\left(
-X,m\right)  }$ in the previous inequalities but we do not bother doing so in
this paper.]
\end{proposition}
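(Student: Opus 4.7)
The plan is to start from the product rule in Corollary \ref{cor.5.28} applied with $f=e^{X},$ so that $f_{\ast}^{-1}v=(e^{-X})_{\ast}v$ for $v\in T_{m}M,$ and then bound the two resulting terms separately using the pointwise estimates already proved for $\left\vert e_{\ast}^{tX}\right\vert $ in Corollary \ref{cor.2.28} and for $\left\vert \nabla e_{\ast}^{tX}\right\vert $ in Corollary \ref{cor.6.4}. First I would write
\[
\nabla_{v}\!\left[\operatorname{Ad}_{e^{X}}Z\right]=\bigl(\nabla_{(e^{-X})_{\ast}v}e_{\ast}^{X}\bigr)\,Z\!\left(e^{-X}(m)\right)+e_{\ast}^{X}\,\nabla_{(e^{-X})_{\ast}v}Z,
\]
so that, taking norms and using the trivial Cauchy--Schwarz type bounds for the differential of $e^{X}$ and for the tensor $\nabla e_{\ast}^{X},$
\[
\left\vert\nabla_{v}\!\left[\operatorname{Ad}_{e^{X}}Z\right]\right\vert\leq\left\vert\nabla e_{\ast}^{X}\right\vert_{e^{-X}(m)}\!\cdot\!\left\vert(e^{-X})_{\ast}v\right\vert\!\cdot\!\left\vert Z\right\vert_{e^{-X}(m)}+\left\vert e_{\ast}^{X}\right\vert_{e^{-X}(m)}\!\cdot\!\left\vert\nabla Z\right\vert_{e^{-X}(m)}\!\cdot\!\left\vert(e^{-X})_{\ast}v\right\vert.
\]

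Next I would translate everything from ``at $e^{-X}(m)$'' into quantities tied to the $-X$-flow out of $m,$ which is the natural point of view for Proposition \ref{pro.6.6}. A change of variable $\tau\mapsto 1-\tau$ along the curve $\tau\mapsto e^{(\tau-1)X}(m)=e^{-(1-\tau)X}(m)$ gives the two identities
\[
k_{1}(X,e^{-X}(m))=\int_{0}^{1}\!\left\vert\nabla X\right\vert_{e^{-sX}(m)}ds=k_{1}(-X,m),\qquad \int_{0}^{1}\!H_{e^{\tau X}(e^{-X}(m))}(X)\,d\tau=\bar{H}_{m}(X),
\]
where $\bar{H}_{m}(X)$ is as in Notation \ref{not.6.5}. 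Combining these with the bounds $\left\vert e_{\ast}^{\pm X}\right\vert\leq e^{k_{1}(-X,m)}$ from Corollary \ref{cor.2.28} and the bound $\left\vert\nabla e_{\ast}^{X}\right\vert_{e^{-X}(m)}\leq e^{2k_{1}(-X,m)}\bar{H}_{m}(X)$ from Corollary \ref{cor.6.4} (Eq.~(\ref{e.6.16})), the first term on the right-hand side above is controlled by $e^{3k_{1}(-X,m)}\bar{H}_{m}(X)\,\left\vert Z\right\vert_{e^{-X}(m)}\left\vert v\right\vert,$ and the second by $e^{2k_{1}(-X,m)}\left\vert\nabla Z\right\vert_{e^{-X}(m)}\left\vert v\right\vert.$ Taking the supremum over unit vectors $v\in T_{m}M$ then gives Eq.~(\ref{e.6.18}), and Eq.~(\ref{e.6.19}) follows at once by bounding $e^{2k_{1}(-X,m)}\leq e^{3k_{1}(-X,m)}.$

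The only step that requires any care is the bookkeeping of where things are evaluated: Corollaries \ref{cor.2.28} and \ref{cor.6.4} are stated in terms of flowing forward from $m$ under $X,$ whereas here the natural base point is $e^{-X}(m),$ and one must check that the exponents $k_{1}(\pm X,\cdot)$ and the curvature/second-derivative integrals $\bar{H}_{\cdot}(X)$ produced by those corollaries match, after reparametrization, the ones appearing in the statement. No nontrivial analytic step enters beyond that; the result is essentially Corollary \ref{cor.5.28} plus two applications of the existing exponential-map estimates.
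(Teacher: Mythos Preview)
Your proposal is correct and follows essentially the same route as the paper: apply Corollary~\ref{cor.5.28} with $f=e^{X}$, bound the two terms using the pointwise Jacobian estimate from Corollary~\ref{cor.2.28} and the $\nabla e_{\ast}^{X}$ estimate from Corollary~\ref{cor.6.4}, and use the reparametrization $\tau\mapsto 1-\tau$ to convert the flow-based integrals at $e^{-X}(m)$ into $k_{1}(-X,m)$ and $\bar{H}_{m}(X)$. The paper carries out exactly these steps in the same order, so there is nothing to add.
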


\begin{proof}
As a consequence of the flow property of $e^{tX}$ and a simple change of
variables, it is useful to record;%
\begin{equation}
k_{1-s}\left(  X,e^{-X}\left(  m\right)  \right)  =\int_{0}^{1-s}\left\vert
\nabla X\right\vert _{e^{-\left(  1-\tau\right)  X}\left(  m\right)  }%
d\tau=\int_{s}^{1}\left\vert \nabla X\right\vert _{e^{-uX}\left(  m\right)
}du \label{e.6.21}%
\end{equation}
for any $s\in\left[  0,1\right]  .$ When $\sigma=0$ this identity may be
stated as%
\begin{equation}
k_{1}\left(  X,e^{-X}\left(  m\right)  \right)  =k_{1}\left(  -X,m\right)  .
\label{e.6.22}%
\end{equation}
With this preparation in hand, we now go to the proof of the proposition.

By Corollary \ref{cor.5.28} with $f=e^{X},$%
\begin{align*}
\nabla_{v_{m}}\left[  \operatorname{Ad}_{e^{X}}Z\right]   &  =\nabla_{v_{m}%
}\left[  e_{\ast}^{X}\left[  Z\circ e^{-X}\right]  \right] \\
&  =\left(  \nabla_{e_{\ast}^{-X}v_{m}}e_{\ast}^{X}\right)  \left[  Z\circ
e^{-X}\left(  m\right)  \right]  +e_{\ast}^{X}\left[  \nabla_{e_{\ast}%
^{-X}v_{m}}Z\right]
\end{align*}
and so
\begin{align*}
&  \left\vert \nabla_{v_{m}}\left[  \operatorname{Ad}_{e^{X}}Z\right]
\right\vert \\
&  \quad\leq\left(  \left\vert \nabla e_{\ast}^{X}\right\vert _{e^{-X}\left(
m\right)  }\left\vert Z\right\vert _{e^{-X}\left(  m\right)  }+\left\vert
e_{\ast}^{X}\right\vert _{e^{-X}\left(  m\right)  }\left\vert \nabla
Z\right\vert _{e^{-X}\left(  m\right)  }\right)  \cdot\left\vert e_{\ast}%
^{-X}\right\vert _{m}\left\vert v_{m}\right\vert .
\end{align*}
By Corollary \ref{cor.2.28},%
\[
\left\vert e_{\ast}^{-X}\right\vert _{m}\leq e^{\int_{0}^{1}\left\vert \nabla
X\right\vert _{e^{-\tau X}\left(  m\right)  }d\tau}=e^{k_{1}\left(
-X,m\right)  }%
\]

and from this inequality with $X$ replaced by $-X$ and $m$ by $e^{-X}\left(
m\right)  $ we also have (using Eq. (\ref{e.6.22}) with $t=1)$ that%
\[
\left\vert e_{\ast}^{X}\right\vert _{e^{-X}\left(  m\right)  }\leq
e^{k_{1}\left(  X,e^{-X}\left(  m\right)  \right)  }=e^{k_{1}\left(
-X,m\right)  }.
\]

Similarly from Corollary \ref{cor.6.4} with $m$ replaced by $e^{-X}\left(
m\right)  ,$%
\begin{align*}
\left\vert \nabla e_{\ast}^{X}\right\vert _{e^{-X}\left(  m\right)  }\leq &
e^{k_{1}\left(  X,e^{-X}\left(  m\right)  \right)  }\cdot\int_{0}%
^{1}H_{e^{\tau X}\left(  e^{-X}\left(  m\right)  \right)  }\left(  X\right)
e^{k_{\tau}\left(  X,e^{-X}\left(  m\right)  \right)  }d\tau\\
&  =e^{k_{1}\left(  -X,m\right)  }\int_{0}^{1}H_{e^{-\left(  1-\tau\right)
X}\left(  m\right)  }\left(  X\right)  e^{k_{\tau}\left(  X,e^{-X}\left(
m\right)  \right)  }d\tau\\
&  =e^{k_{1}\left(  -X,m\right)  }\int_{0}^{1}H_{e^{-sX}\left(  m\right)
}\left(  X\right)  e^{k_{1-s}\left(  X,e^{-X}\left(  m\right)  \right)  }ds\\
&  =e^{k_{1}\left(  -X,m\right)  }\int_{0}^{1}H_{e^{-sX}\left(  m\right)
}\left(  X\right)  e^{\int_{s}^{1}\left\vert \nabla X\right\vert
_{e^{-uX}\left(  m\right)  }du}ds\\
&  \leq e^{2k_{1}\left(  -X,m\right)  }\int_{0}^{1}H_{e^{-sX}\left(  m\right)
}\left(  X\right)  ds=e^{2k_{1}\left(  -X,m\right)  }\bar{H}_{m}\left(
X\right)  .
\end{align*}
Combining these inequalities shows,%
\begin{align*}
&  \left\vert \nabla_{v_{m}}\left[  \operatorname{Ad}_{e^{X}}Z\right]
\right\vert \\
&  \quad\leq\left(  e^{3k_{1}\left(  -X,m\right)  }\cdot\bar{H}_{m}\left(
X\right)  \left\vert Z\right\vert _{e^{-X}\left(  m\right)  }+e^{2k_{1}\left(
-X,m\right)  }\left\vert \nabla Z\right\vert _{e^{-X}\left(  m\right)
}\right)  \left\vert v_{m}\right\vert
\end{align*}
from which Eq. (\ref{e.6.18}) immediately follows.
\end{proof}

\section{First order distance estimates\label{sec.7}}

The main goal of this section is to estimate (see Theorem \ref{thm.7.2}) the
distance between the differentials of $\mu_{t,0}^{X}$ and $\mu_{t,0}^{Y}.$ To
do so we will again need to estimate the time derivative of the interpolator
defined Eq. (\ref{e.2.55}) above.

\begin{proposition}
\label{pro.7.1}Let $\left[  0,T\right]  \ni t\rightarrow X_{t,}Y_{t}\in
\Gamma\left(  TM\right)  $ be smooth complete time dependent vector fields on
$M$ and $\mu^{X}$ and $\mu^{Y}$ be their corresponding flows. If $0<t\leq T,$
$\left[  0,t\right]  \ni s\rightarrow\Theta_{s}:=\mu_{t,s}^{X}\circ\mu
_{s,0}^{Y}$ is the interpolator defined in Eq. (\ref{e.2.55}), and $v_{m}\in
T_{m}M,$ then
\begin{equation}
\left\vert \frac{\nabla}{ds}\Theta_{s\ast}\right\vert _{M}\leq e^{2\left\vert
\nabla X\right\vert _{t}^{\ast}+\left\vert \nabla Y\right\vert _{t}^{\ast}%
}\cdot\left(  H\left(  X_{\cdot}\right)  _{t}^{\ast}\left\vert Y_{s}%
-X_{s}\right\vert _{M}+\left\vert \nabla\left[  Y_{s}-X_{s}\right]
\right\vert _{M}\right)  , \label{e.7.1}%
\end{equation}
where (as in Eq. (\ref{e.5.21}) with $f_{s}=\Theta_{s}$ and $F_{s}=\Theta)$
\[
\left\vert \frac{\nabla}{ds}\Theta_{s\ast}\right\vert _{M}=\sup\left\{
\left\vert \frac{\nabla}{ds}\left[  \left(  \Theta_{s}\right)  _{\ast}%
v_{m}\right]  \right\vert :v\in TM\text{ with }\left\vert v\right\vert
=1\right\}
\]
and (as in Notation \ref{not.1.5}) $H_{m}\left(  X_{t}\right)  =\left\vert
\nabla^{2}X_{t}\right\vert _{m}+\left\vert R\left(  X_{t},\bullet\right)
\right\vert _{m}.$
\end{proposition}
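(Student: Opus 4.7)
The strategy is to realize $s \mapsto \Theta_s$ as a time-dependent flow so that Proposition \ref{pro.2.26} applies. From Eq. (\ref{e.2.59}), $\dot\Theta_s(m) = \mu_{t,s\ast}^X[Y_s-X_s] \circ \mu_{s,0}^Y(m)$; writing $U_s := Y_s - X_s$ and using $\mu_{s,0}^Y = \mu_{s,t}^X \circ \Theta_s$, this takes the form $\dot\Theta_s = W_s \circ \Theta_s$ with $W_s := \operatorname{Ad}_{\mu_{t,s}^X} U_s \in \Gamma(TM)$. Proposition \ref{pro.2.26} (in the variable $s$, with $t$ fixed) then yields the pointwise identity
\[
\frac{\nabla}{ds}\bigl(\Theta_{s\ast}\tilde v\bigr) = \nabla_{\Theta_{s\ast}\tilde v}\, W_s, \qquad \tilde v \in T_m M,
\]
so the task reduces to estimating $\nabla W_s$ at the point $\Theta_s(m)$ in the direction $v := \Theta_{s\ast}\tilde v$.

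Expanding $\nabla_v[\operatorname{Ad}_{\mu_{t,s}^X} U_s]$ via Corollary \ref{cor.5.28} with $f = \mu_{t,s}^X$ (so $f^{-1}(\Theta_s(m)) = \mu_{s,0}^Y(m)$) gives
\[
\nabla_v W_s = \bigl(\nabla_{(\mu_{s,t}^X)_\ast v}\, \mu_{t,s\ast}^X\bigr) U_s(\mu_{s,0}^Y(m)) + \mu_{t,s\ast}^X\, \nabla_{(\mu_{s,t}^X)_\ast v}\, U_s.
\]
The essential observation, without which the exponent would grow by a spurious factor of $e^{|\nabla X|_t^{\ast}}$, is the algebraic identity $\mu_{s,t}^X \circ \Theta_s = \mu_{s,0}^Y$. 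This gives $(\mu_{s,t}^X)_\ast v = \mu_{s,0\ast}^Y \tilde v$, which by Corollary \ref{cor.2.27} is bounded in norm by $e^{|\nabla Y|_t^{\ast}}|\tilde v|$.

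Inserting the bounds $|\mu_{t,s\ast}^X|_M \le e^{|\nabla X|_t^{\ast}}$ (Corollary \ref{cor.2.27}) and $|\nabla \mu_{t,s\ast}^X|_M \le e^{2|\nabla X|_t^{\ast}} H(X_\cdot)_t^{\ast}$ (Corollary \ref{cor.6.2} applied with $\nu_s = Id_M$, which kills the $\nabla \nu_{s\ast}$ term), together with $|U_s|_M = |Y_s - X_s|_M$ and $|\nabla U_s|_M = |\nabla[Y_s - X_s]|_M$, yields
\[
\left|\frac{\nabla}{ds}\Theta_{s\ast}\tilde v\right| \le e^{|\nabla Y|_t^{\ast}}\bigl(e^{2|\nabla X|_t^{\ast}} H(X_\cdot)_t^{\ast}|Y_s - X_s|_M + e^{|\nabla X|_t^{\ast}}|\nabla[Y_s - X_s]|_M\bigr)|\tilde v|,
\]
which is majorized by the right-hand side of Eq. (\ref{e.7.1}); taking the supremum over $|\tilde v| = 1$ then finishes the proof. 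The only real obstacle is bookkeeping: one must resist bounding $|\Theta_{s\ast}|_M$ and $|\nabla W_s|_M$ separately (which would produce $e^{3|\nabla X|_t^{\ast}+|\nabla Y|_t^{\ast}}$) and instead exploit the pullback identity $(\mu_{s,t}^X)_\ast \Theta_{s\ast} = \mu_{s,0\ast}^Y$ to absorb one factor of $e^{|\nabla X|_t^{\ast}}$.
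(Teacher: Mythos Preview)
Your proof is correct and arrives at exactly the same decomposition as the paper's, with the same estimates from Corollaries \ref{cor.2.27} and \ref{cor.6.2}. The only cosmetic difference is that you first recast $\Theta_s$ as the flow of $W_s=\operatorname{Ad}_{\mu_{t,s}^X}(Y_s-X_s)$ and invoke Proposition \ref{pro.2.26} and Corollary \ref{cor.5.28}, whereas the paper swaps $\nabla/ds$ and $\nabla/d\tau$ directly (torsion-freeness) and applies the chain and product rules of Proposition \ref{pro.5.27}; since Proposition \ref{pro.2.26} is itself proved by that same swap, the two arguments are essentially identical.
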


\begin{proof}
Choose $\sigma\left(  \tau\right)  \in M$ so that $\dot{\sigma}\left(
0\right)  =v_{m}.$ Then by the properties of the Levi-Civita covariant
derivatives, the formula for $\Theta_{s}^{\prime}\left(  m\right)  $ in Eq.
(\ref{e.2.59}), along with the product and chain rule in Proposition
\ref{pro.5.27}, it follows that%
\begin{align*}
\frac{\nabla}{ds}\left[  \left(  \Theta_{s}\right)  _{\ast}v_{m}\right]  =  &
\frac{\nabla}{ds}\frac{d}{d\tau}|_{0}\Theta_{s}\left(  \sigma\left(
\tau\right)  \right)  =\frac{\nabla}{d\tau}|_{0}\frac{d}{ds}\Theta_{s}\left(
\sigma\left(  \tau\right)  \right) \\
=  &  \frac{\nabla}{d\tau}|_{0}\left[  \left(  \mu_{t,s}^{X}\right)  _{\ast
}\left(  Y_{s}-X_{s}\right)  \circ\mu_{s,0}^{Y}\left(  \sigma\left(
\tau\right)  \right)  \right] \\
=  &  \nabla_{v}\left[  \left(  \mu_{t,s}^{X}\right)  _{\ast}\left(  \left[
Y_{s}-X_{s}\right]  \right)  \mu_{s,0}^{Y}\left(  \sigma\left(  \tau\right)
\right)  \right] \\
=  &  \nabla_{\left(  \mu_{s,0}^{Y}\right)  _{\ast}v_{m}}\left[  \left(
\mu_{t,s}^{X}\right)  _{\ast}\left(  Y_{s}-X_{s}\right)  \right] \\
=  &  \left[  \nabla_{\left(  \mu_{s,0}^{Y}\right)  _{\ast}v_{m}}\left(
\mu_{t,s}^{X}\right)  _{\ast}\right]  \left(  Y_{s}-X_{s}\right)  \circ
\mu_{s,0}^{Y}\left(  m\right) \\
&  +\left(  \mu_{t,s}^{X}\right)  _{\ast}\nabla_{\left(  \mu_{s,0}^{Y}\right)
_{\ast}v_{m}}\left(  Y_{s}-X_{s}\right)
\end{align*}
and consequently,%
\begin{align*}
&  \left\vert \frac{\nabla}{ds}\left[  \left(  \Theta_{s}\right)  _{\ast}%
v_{m}\right]  \right\vert \\
&  \quad\leq\left(  \left\vert \nabla\mu_{t,s\ast}^{X}\right\vert
_{M}\left\vert Y_{s}-X_{s}\right\vert _{M}+\left\vert \mu_{t,s\ast}%
^{X}\right\vert _{M}\left\vert \nabla\left(  Y_{s}-X_{s}\right)  \right\vert
_{M}\right)  \left\vert \mu_{s,0\ast}^{Y}\right\vert _{M}\left\vert
v_{m}\right\vert .
\end{align*}
By Eq. (\ref{e.6.13}) of Corollary \ref{cor.6.2} with $\nu_{t}=\mu_{t,s}^{X},$%
\[
\left\vert \nabla\mu_{t,s\ast}^{X}\right\vert _{M}\leq e^{2\int_{s}%
^{t}\left\vert \nabla X_{\tau}\right\vert _{M}d\tau}H\left(  X_{\cdot}\right)
_{J\left(  s,t\right)  }^{\ast}\leq e^{2\int_{s}^{t}\left\vert \nabla X_{\tau
}\right\vert _{M}d\tau}H\left(  X_{\cdot}\right)  _{t}^{\ast}.
\]
Using the estimate in Eq. (\ref{e.2.60}) twice shows,
\begin{align*}
\left\vert \nabla\mu_{t,s\ast}^{X}\right\vert _{M}  &  \leq e^{\int_{s}%
^{t}\left\vert \nabla X_{\tau}\right\vert _{M}d\tau}\leq e^{2\int_{0}%
^{t}\left\vert \nabla X_{\tau}\right\vert _{M}d\tau,}\text{ and }\\
\left\vert \mu_{s,0\ast}^{Y}\right\vert _{M}  &  \leq e^{\int_{0}%
^{s}\left\vert \nabla Y_{\tau}\right\vert _{M}d\tau}\leq e^{\int_{0}%
^{t}\left\vert \nabla Y_{\tau}\right\vert _{M}d\tau}.
\end{align*}
Combining the last four inequalities yields and taking the supremum of the
result over $v_{m}\in TM$ with $\left\vert v_{m}\right\vert =1$ yields Eq.
(\ref{e.7.1}).
\end{proof}

\begin{theorem}
\label{thm.7.2}If $\left[  0,T\right]  \ni t\rightarrow X_{t,}Y_{t}\in
\Gamma\left(  TM\right)  $ are smooth complete time dependent vector fields on
$M$ and $\mu^{X}$ and $\mu^{Y}$ be their corresponding flows, then%
\begin{equation}
d\left(  \left(  \mu_{t,0}^{X}\right)  _{\ast},\left(  \mu_{t,0}^{Y}\right)
_{\ast}\right)  \leq e^{2\left\vert \nabla X\right\vert _{t}^{\ast}+\left\vert
\nabla Y\right\vert _{t}^{\ast}}\cdot\left(  \left(  1+H\left(  X_{\cdot
}\right)  _{t}^{\ast}\right)  \left\vert Y-X\right\vert _{t}^{\ast}+\left\vert
\nabla\left[  Y-X\right]  \right\vert _{t}^{\ast}\right)  , \label{e.7.2}%
\end{equation}

\end{theorem}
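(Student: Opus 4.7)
The plan is to apply the general bundle-map distance estimate from Proposition \ref{pro.5.17} (specifically Eq. \eqref{e.5.18}) to the interpolating family $F_s := \Theta_{s\ast} = (\mu_{t,s}^X \circ \mu_{s,0}^Y)_\ast$ covering $f_s := \Theta_s$ for $s \in [0,t]$, where $\Theta_s$ is the interpolator from Eq. \eqref{e.2.55} satisfying $\Theta_0 = \mu_{t,0}^X$ and $\Theta_t = \mu_{t,0}^Y$. Smoothness of $(s,m) \mapsto \Theta_s(m)$ and hence of the bundle map family $\Theta_{s\ast}$ is immediate from Theorem \ref{thm.2.14}. The quantity we want to estimate, $d_M^{TM}((\mu_{t,0}^X)_\ast, (\mu_{t,0}^Y)_\ast)$, is exactly $d_\infty^{TM}(F_0, F_t)$, so Eq. \eqref{e.5.18} bounds it by
\[
\int_0^t \left[|\dot{\Theta}_s|_M + \left|\tfrac{\nabla}{ds}\Theta_{s\ast}\right|_M\right] ds.
\]

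For the first integrand I would reuse Eq. \eqref{e.2.59} from the proof of Theorem \ref{thm.2.30}, which states $\dot{\Theta}_s = (\mu_{t,s}^X)_\ast [Y_s - X_s] \circ \mu_{s,0}^Y$. Combined with the Jacobian estimate $|\mu_{t,s\ast}^X|_M \leq e^{\int_s^t |\nabla X_\sigma|_M\,d\sigma} \leq e^{|\nabla X|_t^*}$ from Eq. \eqref{e.2.60} this gives $|\dot{\Theta}_s|_M \leq e^{|\nabla X|_t^*}|Y_s - X_s|_M$, whence
\[
\int_0^t |\dot{\Theta}_s|_M\,ds \leq e^{|\nabla X|_t^*}|Y - X|_t^*.
\]
For the second integrand I simply invoke Proposition \ref{pro.7.1}, which supplies the pointwise bound in $s$ needed; integrating it over $s \in [0,t]$ yields
\[
\int_0^t \left|\tfrac{\nabla}{ds}\Theta_{s\ast}\right|_M ds \leq e^{2|\nabla X|_t^* + |\nabla Y|_t^*}\bigl(H(X_\cdot)_t^*|Y-X|_t^* + |\nabla[Y-X]|_t^*\bigr).
\]

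Adding the two bounds and factoring out the larger exponential $e^{2|\nabla X|_t^* + |\nabla Y|_t^*} \geq e^{|\nabla X|_t^*}$ gives exactly the inequality Eq. \eqref{e.7.2}. There is no real obstacle: all the analytic work was done in Proposition \ref{pro.7.1} (whose proof combined the product and chain rules for covariant derivatives with the $\nabla \nu_{t\ast}$-estimate of Corollary \ref{cor.6.2}), and here one only needs to recognize that the supremum of a distance along a smoothly interpolating family is bounded by the integrated speed in the natural Riemannian structure on $TM$, which is precisely the content of Proposition \ref{pro.5.17}.
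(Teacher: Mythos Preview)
Your proposal is correct and follows essentially the same route as the paper: apply Proposition~\ref{pro.5.17} (Eq.~\eqref{e.5.18}) to the interpolating family $F_s=\Theta_{s\ast}$, bound the base-speed term via Eq.~\eqref{e.2.59} and the Jacobian estimate, bound the covariant-derivative term via Proposition~\ref{pro.7.1}, and absorb the smaller exponential into the larger one. The only cosmetic difference is that the paper cites Eq.~\eqref{e.2.61} directly (which packages \eqref{e.2.59} and \eqref{e.2.60} together) and remarks explicitly that Proposition~\ref{pro.5.17} needs the obvious time-rescaling from $[0,1]$ to $[0,t]$.
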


\begin{proof}
Integrating the estimate in Eq. (\ref{e.2.61}) shows%
\[
\int_{0}^{t}\left\vert \Theta_{s}^{\prime}\right\vert _{M}ds\leq e^{\left\vert
\nabla X\right\vert _{t}^{\ast}}\left\vert Y-X\right\vert _{t}^{\ast}\leq
e^{2\left\vert \nabla X\right\vert _{t}^{\ast}+\left\vert \nabla Y\right\vert
_{t}^{\ast}}\left\vert Y-X\right\vert _{t}^{\ast}%
\]
and integrating the estimate in Eq. (\ref{e.7.1}) shows%
\[
\int_{0}^{t}\left\vert \frac{\nabla}{ds}\Theta_{s\ast}\right\vert _{M}ds\leq
e^{2\left\vert \nabla X\right\vert _{t}^{\ast}+\left\vert \nabla Y\right\vert
_{t}^{\ast}}\cdot\left(  H\left(  X_{\cdot}\right)  _{t}^{\ast}\left\vert
Y-X\right\vert _{t}^{\ast}+\left\vert \nabla\left[  Y-X\right]  \right\vert
_{t}^{\ast}\right)  .
\]
Adding these estimates while making use of an appropriately time scaled
version of Eq. (\ref{e.5.18}) of Proposition \ref{pro.5.17} with $E=TM,$
$f_{s}=\Theta_{s},$ and $F_{s}=\Theta_{s\ast}$ completes the proof of Eq.
(\ref{e.7.2}).
\end{proof}

\begin{corollary}
\label{cor.7.3}Let $J=\left[  0,T\right]  \ni t\rightarrow Y_{t}\in
\Gamma\left(  TM\right)  $ be a smooth complete time dependent vector field on
$M$ and $\mu^{Y}$ be the corresponding flow. Then for $t>0$ (for notational
simplicity)
\begin{equation}
d\left(  \left(  \mu_{t,0}^{Y}\right)  _{\ast},Id_{TM}\right)  \leq
e^{\left\vert \nabla Y\right\vert _{t}^{\ast}}\cdot\left(  \left\vert
Y\right\vert _{t}^{\ast}+\left\vert \nabla Y\right\vert _{t}^{\ast}\right)  .
\label{e.7.3}%
\end{equation}

\end{corollary}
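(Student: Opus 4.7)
The plan is to deduce Corollary \ref{cor.7.3} as an immediate specialization of Theorem \ref{thm.7.2}. Indeed, taking $X_t \equiv 0$ in that theorem, the flow $\mu_{t,0}^X$ equals $\mathrm{Id}_M$, so $(\mu_{t,0}^X)_\ast = \mathrm{Id}_{TM}$. On the right-hand side of Eq.~(\ref{e.7.2}) we have $|\nabla X|_t^\ast = 0$, $H(X_\cdot)_t^\ast = 0$ (since both $\nabla^2 X$ and $R(X,\bullet)$ vanish identically when $X\equiv 0$), $|Y-X|_t^\ast = |Y|_t^\ast$, and $|\nabla[Y-X]|_t^\ast = |\nabla Y|_t^\ast$. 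Plugging in yields exactly the claimed bound:
\[
d\bigl((\mu_{t,0}^Y)_\ast, \mathrm{Id}_{TM}\bigr) \leq e^{|\nabla Y|_t^\ast}\bigl(|Y|_t^\ast + |\nabla Y|_t^\ast\bigr).
\]

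Alternatively, one could prove this directly without invoking Theorem \ref{thm.7.2} by applying Proposition \ref{pro.5.17} (with $E=TM$) to the one-parameter family $F_s := (\mu_{s,0}^Y)_\ast$, which covers the family $f_s := \mu_{s,0}^Y$ and starts at $F_0 = \mathrm{Id}_{TM}$. The ingredients are: for $\dot f_s = Y_s \circ \mu_{s,0}^Y$ one estimates $\sup_m \ell_M(f_\cdot(m)) \leq |Y|_t^\ast$ via Corollary \ref{cor.2.31}; and for the covariant time-derivative $\frac{\nabla}{ds}F_s$ one applies Corollary \ref{cor.6.2} with $\nu_s = \mu_{s,0}^Y$ (or the direct computation $\frac{\nabla}{ds}\nu_{s\ast}v = \nabla_{\nu_{s\ast}v}Y_s$ from Proposition \ref{pro.2.26}) to obtain $\int_0^t |\frac{\nabla}{ds}F_s|_M\,ds \leq e^{|\nabla Y|_t^\ast}|\nabla Y|_t^\ast$. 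Summing these two contributions as in Eq.~(\ref{e.5.18}) reproduces the stated estimate, modulo an additional factor of $e^{|\nabla Y|_t^\ast}$ on the $|Y|_t^\ast$ term, which matches Eq.~(\ref{e.7.3}).

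Since Theorem \ref{thm.7.2} has already been established in the preceding subsection and the substitution $X\equiv 0$ is straightforward, the cleanest presentation is the one-line deduction above. There is no real obstacle: the only thing to verify is that each of the six quantities appearing in Eq.~(\ref{e.7.2}) behaves as expected when $X \equiv 0$, which is immediate from the definitions in Notations \ref{not.1.3}--\ref{not.1.5} and Example \ref{ex.1.6}.
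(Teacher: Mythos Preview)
Your proposal is correct and matches the paper's proof exactly: the paper simply says ``Applying Theorem \ref{thm.7.2} with $X\equiv 0$ gives Eq.~(\ref{e.7.3}),'' and your verification that each term on the right of Eq.~(\ref{e.7.2}) specializes as claimed is precisely what is needed. The alternative direct argument you sketch is sound but unnecessary.
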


\begin{proof}
Applying Theorem \ref{thm.7.2} with $X\equiv0$ gives Eq. (\ref{e.7.3}).
\end{proof}

\section{First order logarithm estimates\label{sec.8}}

The main purpose of this section is to give a first order version (see Theorem
\ref{thm.8.4} below) of the logarithm control estimate in Theorem
\ref{thm.4.12}. Before doing so we will first need to develop a few more
auxiliary estimates.

\begin{proposition}
\label{pro.8.1}If $C\left(  \cdot\right)  \in C\left(  \left[  0,T\right]
,F^{\left(  \kappa\right)  }\left(  \mathbb{R}^{d}\right)  \right)  $ and
$Z\in\Gamma\left(  TM\right)  ,$ then for $0\leq s\leq1,$%
\begin{equation}
\left\vert \nabla\left[  \operatorname{Ad}_{e^{sV_{C\left(  t\right)  }}%
}Z\right]  \right\vert _{M}\leq e^{3\left\vert \nabla V^{\left(
\kappa\right)  }\right\vert _{M}\left\vert C\left(  t\right)  \right\vert
}\cdot\left(  H_{M}\left(  V^{\left(  \kappa\right)  }\right)  \left\vert
Z\right\vert _{M}\left\vert C\left(  t\right)  \right\vert +\left\vert \nabla
Z\right\vert _{M}\right)  , \label{e.8.1}%
\end{equation}
where $H_{M}\left(  V^{\left(  \kappa\right)  }\right)  $ was defined in Eq.
(\ref{e.1.14}) of Definition \ref{def.1.24}.
\end{proposition}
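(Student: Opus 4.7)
The plan is to derive this as a direct specialization of Proposition~\ref{pro.6.6} applied to the complete vector field $X := sV_{C(t)} \in \Gamma(TM)$, since by hypothesis $V$ is $\kappa$-complete and $sV_{C(t)} = V_{sC(t)}$ with $sC(t) \in F^{(\kappa)}(\mathbb{R}^d)$. Thus, from Eq.~(\ref{e.6.19}),
\[
\left\vert \nabla\left[  \operatorname{Ad}_{e^{sV_{C(t)}}}Z\right]  \right\vert _{m}
\leq e^{3k_{1}(-sV_{C(t)},m)}\left[  \left\vert \nabla Z\right\vert _{e^{-sV_{C(t)}}(m)}
+ \bar{H}_{m}(sV_{C(t)})\left\vert Z\right\vert _{e^{-sV_{C(t)}}(m)}\right],
\]
and the remaining task is to bound the two prefactors $k_1(-sV_{C(t)},m)$ and $\bar H_m(sV_{C(t)})$ by quantities involving the dynamical-system norms $|\nabla V^{(\kappa)}|_M$ and $H_M(V^{(\kappa)})$.

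For the exponent, I will use the linearity of $V$ together with Definition~\ref{def.1.24}, which gives
\[
\bigl|\nabla V_{sC(t)}\bigr|_M \le |\nabla V^{(\kappa)}|_M \cdot |sC(t)| \le s\,|\nabla V^{(\kappa)}|_M\,|C(t)| \le |\nabla V^{(\kappa)}|_M\,|C(t)|,
\]
so that $k_1(-sV_{C(t)},m) \le |\nabla V^{(\kappa)}|_M\,|C(t)|$ pointwise in $m$ for every $s\in[0,1]$. For the Hessian/curvature term, the key observation is that $\nabla^2$ and $R(\cdot,\bullet)$ are both linear in their vector-field argument, so $H_m(sV_{C(t)}) = s\,H_m(V_{C(t)})$, and hence
\[
\bar H_m(sV_{C(t)}) = \int_0^1 H_{e^{-\tau sV_{C(t)}}(m)}(sV_{C(t)})\,d\tau
\le H_M(V_{C(t)}) \le H_M(V^{(\kappa)})\cdot |C(t)|,
\]
again using Definition~\ref{def.1.24} and $s\le 1$ in the last step.

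Substituting these bounds into Eq.~(\ref{e.6.19}) and estimating the pointwise values of $|\nabla Z|$ and $|Z|$ at $e^{-sV_{C(t)}}(m)$ by their sup-norms $|\nabla Z|_M$ and $|Z|_M$, then taking the supremum over $m\in M$, yields precisely Eq.~(\ref{e.8.1}). No part of this appears delicate; the only sanity check worth noting is that the homogeneity $H_m(sX)=s\,H_m(X)$ is correct (it is, because $\nabla^2$ and the curvature tensor act linearly on the vector field being differentiated/tested), which is what lets the factor $|C(t)|$ appear multiplying $H_M(V^{(\kappa)})$ in the final bound rather than inside the exponential.
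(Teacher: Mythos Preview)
Your proof is correct and follows exactly the approach of the paper, which also derives the estimate as a direct application of Proposition~\ref{pro.6.6} with $X = sV_{C(t)}$. Your writeup simply fills in the details of bounding $k_1(-sV_{C(t)},m)$ and $\bar H_m(sV_{C(t)})$ via the linearity of $V$ and the homogeneity of $H_m(\cdot)$, which the paper leaves implicit.
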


\begin{proof}
This result follows directly as an application of Proposition \ref{pro.6.6}
with $X_{t}=-sV_{C}\left(  t\right)  .$
\end{proof}

\begin{corollary}
\label{cor.8.2}If $C\in C^{1}\left(  \left[  0,T\right]  ,F^{\left(
\kappa\right)  }\left(  \mathbb{R}^{d}\right)  \right)  $ and $W_{t}^{C}$ is
given as in Eq. (\ref{e.4.4}), then%
\begin{equation}
\left\vert \nabla W^{C}\right\vert _{t}^{\ast}\lesssim e^{3\left\vert \nabla
V^{\left(  \kappa\right)  }\right\vert _{M}\left\vert C\right\vert _{\infty
,t}}\cdot\left(  H_{M}\left(  V^{\left(  \kappa\right)  }\right)  \left\vert
V^{\left(  \kappa\right)  }\right\vert _{M}\left\vert C\right\vert _{\infty
,t}+\left\vert \nabla V^{\left(  \kappa\right)  }\right\vert _{M}\right)
\left\vert \dot{C}\right\vert _{t}^{\ast}. \label{e.8.2}%
\end{equation}
Moreover, there exists $c\left(  \kappa\right)  <\infty$ such that, whenever
$\xi\in C^{1}\left(  \left[  0,T\right]  ,F^{\left(  \kappa\right)  }\left(
\mathbb{R}^{d}\right)  \right)  $ and $C^{\xi}\left(  t\right)  =\log\left(
g^{\xi}\left(  t\right)  \right)  ,$%
\begin{equation}
\left\vert \nabla W^{C^{\xi}}\right\vert _{t}^{\ast}\lesssim\mathcal{K}%
_{t}\cdot\left(  H_{M}\left(  V^{\left(  \kappa\right)  }\right)  \left\vert
V^{\left(  \kappa\right)  }\right\vert _{M}Q_{\left[  1,\kappa\right]
}\left(  N_{t}^{\ast}\left(  \dot{\xi}\right)  \right)  +\left\vert \nabla
V^{\left(  \kappa\right)  }\right\vert _{M}\right)  \label{e.8.3}%
\end{equation}
where
\begin{equation}
\mathcal{K}_{t}:=e^{c\left(  \kappa\right)  \left\vert \nabla V^{\left(
\kappa\right)  }\right\vert _{M}Q_{\left[  1,\kappa\right]  }\left(
N_{t}^{\ast}\left(  \dot{\xi}\right)  \right)  }Q_{\left[  1,\kappa\right]
}\left(  N_{t}^{\ast}\left(  \dot{\xi}\right)  \right)  . \label{e.8.4}%
\end{equation}

\end{corollary}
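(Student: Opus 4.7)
The plan is straightforward: differentiate under the integral in Eq.~(4.4), apply the pointwise bound of Proposition~8.1 to the integrand, and then integrate in $t$. Concretely, from
\[
W_t^C = \int_0^1 \operatorname{Ad}_{e^{sV_{C(t)}}} V_{\dot C(t)} \, ds
\]
I would take $\nabla$ inside the integral and apply Proposition~8.1 with $Z = V_{\dot C(t)}$ (and with $sC(t)$ in place of $C(t)$; since $s \in [0,1]$ the $s$-factor can be absorbed trivially). The bounds $|V_{\dot C(t)}|_M \leq |V^{(\kappa)}|_M |\dot C(t)|$ and $|\nabla V_{\dot C(t)}|_M \leq |\nabla V^{(\kappa)}|_M |\dot C(t)|$, immediate from Definition~1.24, allow us to factor $|\dot C(t)|$ out of the right-hand side. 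Integrating in $s$ is trivial since the resulting bound is $s$-independent, and integrating in $t$ while majorizing $|C(\tau)|$ by $|C|_{\infty,t}$ then yields Eq.~(8.2) directly.

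For the second inequality I would specialize to $C = C^\xi$ and invoke the two consequences of Corollary~3.27: the supremum bound $|C^\xi|_{\infty,t} \lesssim Q_{[1,\kappa]}(N_t^*(\dot\xi))$ from Eq.~(3.31), and the $L^1$ bound $|\dot C^\xi|_t^* \lesssim Q_{[1,\kappa]}(N_t^*(\dot\xi))$, which follows from $N_t^*(\dot C^\xi) \lesssim N_t^*(\dot\xi)$ in Eq.~(3.29) combined with the elementary inequality $|\dot C^\xi|_t^* \leq \sum_{k=1}^\kappa N_t^*(\dot C^\xi)^k$ used in Eq.~(3.21). Plugging these two estimates into Eq.~(8.2) and choosing $c(\kappa)$ large enough to absorb the implicit constant from Corollary~3.27 in the exponent produces the form of $\mathcal{K}_t$ given in Eq.~(8.4).

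There is no real obstacle here; the proof is essentially a substitution. The only two points requiring care are (i) verifying that the $s$-dependence in Proposition~8.1 applied to $sV_{C(t)}$ can indeed be dropped uniformly, using $s \leq 1$ in both the exponential factor and the polynomial factor, and (ii) bookkeeping the absorption of constants when passing from Eq.~(8.2) to Eq.~(8.3), in particular replacing $3|\nabla V^{(\kappa)}|_M |C^\xi|_{\infty,t}$ in the exponent by $c(\kappa)|\nabla V^{(\kappa)}|_M Q_{[1,\kappa]}(N_t^*(\dot\xi))$ for suitable $c(\kappa)$ and collapsing the product $|C^\xi|_{\infty,t}\cdot|\dot C^\xi|_t^*$ into a single factor $Q_{[1,\kappa]}(N_t^*(\dot\xi))$ using Remark~3.22.
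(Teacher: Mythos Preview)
Your proposal is correct and follows essentially the same route as the paper: apply Proposition~8.1 with $Z=V_{\dot C(\tau)}$, integrate over $s\in[0,1]$ and $\tau\in[0,t]$ to obtain Eq.~(8.2), then specialize to $C=C^{\xi}$ using Eqs.~(3.31) and (3.29)+(3.21) from Corollary~3.27. One minor remark: Proposition~8.1 is already stated for $\operatorname{Ad}_{e^{sV_{C(t)}}}$ with the $s$-dependence absorbed into the bound, so your parenthetical about replacing $C(t)$ by $sC(t)$ is unnecessary (though harmless); and in your point (ii), the product $|C^{\xi}|_{\infty,t}\cdot|\dot C^{\xi}|_t^{\ast}$ is not collapsed into a single $Q_{[1,\kappa]}$ but rather distributed as two separate $Q_{[1,\kappa]}$ factors, one landing in $\mathcal{K}_t$ and one inside the parentheses of Eq.~(8.3).
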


\begin{proof}
Let $\tau\in\left[  0,t\right]  $ and $s\in\left[  0,1\right]  .$ Applying the
estimate in Eq. (\ref{e.8.1}) with $Z=V_{\dot{C}\left(  \tau\right)  }$
implies,%
\begin{align*}
&  \left\vert \nabla\left[  \operatorname{Ad}_{e^{sV_{C\left(  \tau\right)  }%
}}V_{\dot{C}\left(  \tau\right)  }\right]  \right\vert _{M}\\
&  \leq e^{3\left\vert \nabla V^{\left(  \kappa\right)  }\right\vert
_{M}\left\vert C\left(  \tau\right)  \right\vert }\cdot\left(  H_{M}\left(
V^{\left(  \kappa\right)  }\right)  \left\vert V^{\left(  \kappa\right)
}\right\vert _{M}\left\vert C\left(  \tau\right)  \right\vert +\left\vert
\nabla V^{\left(  \kappa\right)  }\right\vert _{M}\right)  \left\vert \dot
{C}\left(  \tau\right)  \right\vert \\
&  \leq e^{3\left\vert \nabla V^{\left(  \kappa\right)  }\right\vert
_{M}\left\vert C\right\vert _{\infty,t}}\cdot\left(  H_{M}\left(  V^{\left(
\kappa\right)  }\right)  \left\vert V^{\left(  \kappa\right)  }\right\vert
_{M}\left\vert C\right\vert _{\infty,t}+\left\vert \nabla V^{\left(
\kappa\right)  }\right\vert _{M}\right)  \left\vert \dot{C}\left(
\tau\right)  \right\vert .
\end{align*}
Integrating this inequality on $s\in\left[  0,1\right]  $ and $\tau\in\left[
0,t\right]  ,$ while using%
\[
\left\vert \nabla W^{C}\right\vert _{t}^{\ast}=\int_{0}^{T}\left\vert \nabla
W_{\tau}^{C}\right\vert d\tau\leq\int_{0}^{T}\left[  \int_{0}^{1}\left\vert
\nabla\left[  \operatorname{Ad}_{e^{sV_{C\left(  \tau\right)  }}}V_{\dot
{C}\left(  \tau\right)  }\right]  \right\vert ~ds\right]  d\tau,
\]
gives Eq. (\ref{e.8.2}).

Now suppose that $\xi\in C^{1}\left(  \left[  0,T\right]  ,F^{\left(
\kappa\right)  }\left(  \mathbb{R}^{d}\right)  \right)  $ and $C^{\xi}\left(
t\right)  =\log\left(  g^{\xi}\left(  t\right)  \right)  .$ Then from Eq.
(\ref{e.3.31}),
\begin{equation}
\left\vert C^{\xi}\left(  \cdot\right)  \right\vert _{\infty,t}\lesssim
Q_{\left[  1,\kappa\right]  }\left(  N_{t}^{\ast}\left(  \dot{\xi}\right)
\right)  \label{e.8.5}%
\end{equation}
and by the estimates in Eqs. (\ref{e.3.29}) and (\ref{e.3.21}),
\[
\left\vert \dot{C}\right\vert _{t}^{\ast}\lesssim Q_{\left[  1,\kappa\right]
}\left(  N_{t}^{\ast}\left(  \dot{\xi}\right)  \right)  .
\]
Using the previous two estimates in Eq. (\ref{e.8.2}) proves Eq. (\ref{e.8.3}).
\end{proof}

\begin{corollary}
\label{cor.8.3}If $\xi\in C^{1}\left(  \left[  0,T\right]  ,F^{\left(
\kappa\right)  }\left(  \mathbb{R}^{d}\right)  \right)  ,$ $C^{\xi}\left(
t\right)  =\log\left(  g^{\xi}\left(  t\right)  \right)  ,$ and $U_{t}^{\xi
}\in\Gamma\left(  TM\right)  $ is the difference vector field in Eq.
(\ref{e.4.6}), then there exist $c\left(  \kappa\right)  <\infty$ such that%
\begin{align}
\left\vert \nabla U\right\vert _{T}^{\ast}\lesssim &  \left[  \left(
\mathcal{C}^{0}\left(  V^{\left(  \kappa\right)  }\right)  H_{M}\left(
V^{\left(  \kappa\right)  }\right)  Q_{\left[  1,\kappa\right]  }\left(
N_{T}^{\ast}\left(  \dot{\xi}\right)  \right)  +\mathcal{C}^{1}\left(
V^{\left(  \kappa\right)  }\right)  \right)  \right]  \cdot\nonumber\\
&  \quad\cdot e^{c\left(  \kappa\right)  \left\vert \nabla V^{\left(
\kappa\right)  }\right\vert _{M}Q_{\left[  1,\kappa\right]  }\left(
N_{T}^{\ast}\left(  \dot{\xi}\right)  \right)  }\cdot Q_{(\kappa,2\kappa
]}\left(  N_{T}^{\ast}\left(  \dot{\xi}\right)  \right)  . \label{e.8.6}%
\end{align}

\end{corollary}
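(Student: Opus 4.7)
The plan is to mimic the proof of Theorem \ref{thm.4.11}, but with one extra layer of differentiation. Starting from the representation
\[
U_{t}^{\xi}=\int_{0}^{1}\operatorname{Ad}_{e^{sV_{C(t)}}}V_{\pi_{>\kappa}\left[C(t),u(s,\operatorname{ad}_{C(t)})\dot{\xi}(t)\right]_{\otimes}}(s-1)\,ds
\]
from Lemma \ref{lem.4.7}, I would first pull $\nabla$ inside the $s$-integral, and then apply Proposition \ref{pro.8.1} to the integrand with $Z(t,s):=V_{\pi_{>\kappa}[C(t),u(s,\operatorname{ad}_{C(t)})\dot{\xi}(t)]_{\otimes}}$. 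This yields pointwise
\[
\left\vert \nabla\!\left[\operatorname{Ad}_{e^{sV_{C(t)}}}Z(t,s)\right]\right\vert_{M}\leq e^{3\left\vert\nabla V^{(\kappa)}\right\vert_{M}\left\vert C(t)\right\vert}\Big(H_{M}(V^{(\kappa)})\left\vert Z(t,s)\right\vert_{M}\left\vert C(t)\right\vert+\left\vert\nabla Z(t,s)\right\vert_{M}\Big).
\]

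Next, I would pull the (uniform-in-$s$, $t$) exponential factor out using the bound $\left\vert C^{\xi}\right\vert_{\infty,T}\lesssim Q_{[1,\kappa]}(N_{T}^{\ast}(\dot{\xi}))$ from Corollary \ref{cor.3.27}, and then integrate $(1-s)$ on $[0,1]$ against $dt$ on $[0,T]$. This splits $\left\vert\nabla U\right\vert_{T}^{\ast}$ into two pieces: one involving $\int_{0}^{1}(1-s)\,ds\int_{0}^{T}\left\vert Z(t,s)\right\vert_{M}\,dt$ and one involving the same integral with $\left\vert\nabla Z(t,s)\right\vert_{M}$. But these are exactly the quantities estimated in Lemma \ref{lem.4.10}: by Eq.~\eqref{e.4.20} the first piece is $\lesssim\mathcal{C}^{0}(V^{(\kappa)})Q_{(\kappa,2\kappa]}(N_{T}^{\ast}(\dot{\xi}))$, and by Eq.~\eqref{e.4.21} the second is $\lesssim\mathcal{C}^{1}(V^{(\kappa)})Q_{(\kappa,2\kappa]}(N_{T}^{\ast}(\dot{\xi}))$.

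Combining these gives
\[
\left\vert\nabla U\right\vert_{T}^{\ast}\lesssim e^{3\left\vert\nabla V^{(\kappa)}\right\vert_{M}\left\vert C^{\xi}\right\vert_{\infty,T}}\Big(H_{M}(V^{(\kappa)})\mathcal{C}^{0}(V^{(\kappa)})\left\vert C^{\xi}\right\vert_{\infty,T}+\mathcal{C}^{1}(V^{(\kappa)})\Big)Q_{(\kappa,2\kappa]}(N_{T}^{\ast}(\dot{\xi})),
\]
and substituting $\left\vert C^{\xi}\right\vert_{\infty,T}\lesssim Q_{[1,\kappa]}(N_{T}^{\ast}(\dot{\xi}))$ (Corollary \ref{cor.3.27}) both in the exponent (absorbing the factor $3$ and the implicit constant into a single $c(\kappa)$) and in the prefactor yields precisely Eq.~\eqref{e.8.6}.

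The only real obstacle is bookkeeping: verifying that applying $\nabla$ to $Z(t,s)$ really does only land on terms already controlled by $\mathcal{C}^{1}(V^{(\kappa)})$, i.e. that when we expand $V_{\pi_{>\kappa}[C(t),u(s,\operatorname{ad}_{C(t)})\dot\xi(t)]_{\otimes}}=\sum_{m+n>\kappa}[V_{C(t)_m},V_{(u(s,\operatorname{ad}_{C(t)})\dot\xi(t))_n}]$, the gradient of each bracketed vector field is bounded by $\mathcal{C}^{1}_{m,n}(V^{(\kappa)})\left\vert C_m\right\vert\left\vert(\cdot)_n\right\vert$. That is exactly the hypothesis feeding Lemma \ref{lem.4.10}'s Eq.~\eqref{e.4.21}, so once we verify the parallel reduction the remaining estimates are routine applications of Corollaries \ref{cor.3.27}, \ref{cor.3.28} and the $Q$-arithmetic in Remark \ref{rem.3.22}.
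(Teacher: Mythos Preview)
Your proposal is correct and follows essentially the same route as the paper: apply Proposition~\ref{pro.8.1} to the integrand with $Z=V_{\pi_{>\kappa}[C(t),u(s,\operatorname{ad}_{C(t)})\dot\xi(t)]_\otimes}$, pull out the exponential using $\left\vert C^{\xi}\right\vert_{\infty,T}\lesssim Q_{[1,\kappa]}(N_T^\ast(\dot\xi))$, and then invoke Eqs.~\eqref{e.4.20} and \eqref{e.4.21} of Lemma~\ref{lem.4.10} for the two resulting integrals. The paper writes the two pieces as $\alpha(s,t)$ and $\beta(s,t)$ but the argument is otherwise identical to what you have outlined.
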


\begin{proof}
Let $t\in\left[  0,T\right]  $ and $s\in\left[  0,1\right]  .$ The estimate in
Eq. (\ref{e.8.1}) with
\[
Z=V_{\pi_{>\kappa}\left[  C^{\xi}\left(  t\right)  ,u\left(
s,\operatorname{ad}_{C^{\xi}\left(  t\right)  }\right)  \dot{\xi}\left(
t\right)  \right]  _{\otimes}}%
\]
becomes%
\begin{align*}
&  \left\vert \nabla\left[  \operatorname{Ad}_{e^{sV_{C^{\xi}\left(  t\right)
}}}V_{\pi_{>\kappa}\left[  C^{\xi}\left(  t\right)  ,u\left(
s,\operatorname{ad}_{C^{\xi}\left(  t\right)  }\right)  \dot{\xi}\left(
t\right)  \right]  _{\otimes}}\right]  \right\vert _{M}\\
&  \qquad\leq e^{3\left\vert \nabla V^{\left(  \kappa\right)  }\right\vert
_{M}\left\vert C^{\xi}\left(  t\right)  \right\vert }\cdot\left[
\alpha\left(  s,t\right)  +\beta\left(  s,t\right)  \right] \\
&  \qquad\leq e^{3\left\vert \nabla V^{\left(  \kappa\right)  }\right\vert
_{M}\left\vert C^{\xi}\right\vert _{\infty,T}}\cdot\left[  \alpha\left(
s,t\right)  +\beta\left(  s,t\right)  \right]  ,
\end{align*}
where
\begin{align*}
\alpha\left(  s,t\right)   &  =H_{M}\left(  V^{\left(  \kappa\right)
}\right)  \left\vert C^{\xi}\right\vert _{\infty,T}\cdot\left\vert
V_{\pi_{>\kappa}\left[  C^{\xi}\left(  t\right)  ,u\left(  s,\operatorname{ad}%
_{C^{\xi}\left(  t\right)  }\right)  \dot{\xi}\left(  t\right)  \right]
_{\otimes}}\right\vert _{M}\text{ and}\\
\beta\left(  s,t\right)   &  =\left\vert \nabla V_{\pi_{>\kappa}\left[
C^{\xi}\left(  t\right)  ,u\left(  s,\operatorname{ad}_{C^{\xi}\left(
t\right)  }\right)  \dot{\xi}\left(  t\right)  \right]  _{\otimes}}\right\vert
_{M}.
\end{align*}
In this notation we have
\[
\left\vert \nabla U\right\vert _{T}^{\ast}\leq e^{3\left\vert \nabla
V^{\left(  \kappa\right)  }\right\vert _{M}\left\vert C^{\xi}\right\vert
_{\infty,T}}\int_{0}^{T}dt\int_{0}^{1}ds\left(  1-s\right)  \left[
\alpha\left(  s,t\right)  +\beta\left(  s,t\right)  \right]
\]
where according to Lemma \ref{lem.4.10},
\[
\int_{0}^{T}dt\int_{0}^{1}ds\left(  1-s\right)  \alpha\left(  s,t\right)
\lesssim H_{M}\left(  V^{\left(  \kappa\right)  }\right)  \mathcal{C}%
^{0}\left(  V^{\left(  \kappa\right)  }\right)  \left\vert C^{\xi}\right\vert
_{\infty,T}Q_{(\kappa,2\kappa]}\left(  N_{T}^{\ast}\left(  \dot{\xi}\right)
\right)
\]
and
\[
\int_{0}^{T}dt\int_{0}^{1}ds\left(  1-s\right)  \beta\left(  s,t\right)
\lesssim\mathcal{C}^{1}\left(  V^{\left(  \kappa\right)  }\right)
Q_{(\kappa,2\kappa]}\left(  N_{T}^{\ast}\left(  \dot{\xi}\right)  \right)  .
\]
The proof is now completed by combining these estimate with the estimate for
$\left\vert C^{\xi}\left(  \cdot\right)  \right\vert _{\infty,T}$ in Eq.
(\ref{e.8.5}).
\end{proof}

\begin{theorem}
[Comparing differentials]\label{thm.8.4}If $\xi\in C^{1}\left(  \left[
0,T\right]  ,F^{\left(  \kappa\right)  }\left(  \mathbb{R}^{d}\right)
\right)  ,$ then%
\begin{equation}
d_{M}^{TM}\left(  \left(  \mu_{T,0}^{V_{\dot{\xi}}}\right)  _{\ast},e_{\ast
}^{V_{\log\left(  g^{\xi}\left(  T\right)  \right)  }}\right)  \leq
\mathcal{K}\cdot Q_{(\kappa,2\kappa]}\left(  N_{T}^{\ast}\left(  \dot{\xi
}\right)  \right)  . \label{e.8.7}%
\end{equation}
where
\[
\mathcal{K=K}\left(  T,\left\vert V^{\left(  \kappa\right)  }\right\vert
_{M},\left\vert \nabla V^{\left(  \kappa\right)  }\right\vert _{M}%
,H_{M}\left(  V^{\left(  \kappa\right)  }\right)  ,N_{T}^{\ast}\left(
\dot{\xi}\right)  \right)
\]
is a (fairly complicated) increasing function of each of its arguments.
\end{theorem}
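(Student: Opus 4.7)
The plan is to mirror the proof of Theorem \ref{thm.4.12}, but using the first-order distance estimate of Theorem \ref{thm.7.2} in place of the zeroth-order estimate of Theorem \ref{thm.2.30}. Specifically, set $X_t := V_{\dot\xi(t)}$ and $Y_t := W_t^{C^\xi}$ as in Notation \ref{not.4.4}, so that $\mu_{T,0}^X = \mu_{T,0}^{V_{\dot\xi}}$ and, by Corollary \ref{cor.2.24}, $\mu_{T,0}^Y = e^{V_{C^\xi(T)}} = e^{V_{\log(g^\xi(T))}}$. Theorem \ref{thm.7.2} then immediately gives
\[
d_M^{TM}\!\left(\mu_{T,0\ast}^{V_{\dot\xi}},\; e_\ast^{V_{\log(g^\xi(T))}}\right)
\le e^{2|\nabla X|_T^\ast + |\nabla Y|_T^\ast}\,\Bigl(\bigl(1+H(X_\cdot)_T^\ast\bigr)\,|U^\xi|_T^\ast + |\nabla U^\xi|_T^\ast\Bigr),
\]
where $U_t^\xi = Y_t - X_t$ is the difference vector field analyzed in Lemma \ref{lem.4.7}. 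Thus the task reduces to bounding each of the five ingredients on the right-hand side by a product of a ``geometry factor'' (polynomial in $|V^{(\kappa)}|_M$, $|\nabla V^{(\kappa)}|_M$, $H_M(V^{(\kappa)})$) with a power of $N_T^\ast(\dot\xi)$ of size at most $Q_{(\kappa,2\kappa]}$.

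Next, I would collect the five needed estimates, each of which has already been proved:
\begin{itemize}
\item[(i)] $|\nabla X|_T^\ast = |\nabla V_{\dot\xi}|_T^\ast \le |\nabla V^{(\kappa)}|_M \cdot |\dot\xi|_T^\ast \lesssim |\nabla V^{(\kappa)}|_M\, Q_{[1,\kappa]}(N_T^\ast(\dot\xi))$, by Eq.~(\ref{e.3.21});
\item[(ii)] $|\nabla Y|_T^\ast = |\nabla W^{C^\xi}|_T^\ast$ is bounded by Corollary \ref{cor.8.2}, Eq.~(\ref{e.8.3});
\item[(iii)] $H(X_\cdot)_T^\ast = H(V_{\dot\xi})_T^\ast \le H_M(V^{(\kappa)}) \cdot |\dot\xi|_T^\ast \lesssim H_M(V^{(\kappa)})\, Q_{[1,\kappa]}(N_T^\ast(\dot\xi))$, by linearity of $A\mapsto V_A$ and the definition of $H_M(V^{(\kappa)})$ in Eq.~(\ref{e.1.14});
\item[(iv)] $|U^\xi|_T^\ast$ is controlled by Theorem \ref{thm.4.11}, Eq.~(\ref{e.4.25}), and carries the factor $Q_{(\kappa,\kappa+1]}(N_T^\ast(\dot\xi))$;
\item[(v)] $|\nabla U^\xi|_T^\ast$ is controlled by Corollary \ref{cor.8.3}, Eq.~(\ref{e.8.6}), and carries the factor $Q_{(\kappa,2\kappa]}(N_T^\ast(\dot\xi))$.
\end{itemize}

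Substituting (i)--(v) into the Theorem \ref{thm.7.2} bound, the dominant contribution comes from item (v), which directly produces the required $Q_{(\kappa,2\kappa]}(N_T^\ast(\dot\xi))$. The cross-term $(1 + H(X_\cdot)_T^\ast)\,|U^\xi|_T^\ast$ is controlled by $(iii)\times(iv)$, which gives at worst
\[
H_M(V^{(\kappa)})\, Q_{[1,\kappa]}(N_T^\ast(\dot\xi))\cdot \mathcal{C}^0(V^{(\kappa)})\,Q_{(\kappa,\kappa+1]}(N_T^\ast(\dot\xi)) \;\lesssim\; \text{(geometry)}\cdot Q_{(\kappa,2\kappa]}(N_T^\ast(\dot\xi)),
\]
since the product $Q_{[1,\kappa]}\cdot Q_{(\kappa,\kappa+1]}$ is dominated by $Q_{(\kappa,2\kappa]}$ up to a $\kappa$-dependent constant (cf.\ Remark \ref{rem.3.22}); here $\mathcal{C}^0(V^{(\kappa)})$ is itself bounded by the geometric quantities appearing in $\mathcal{K}$ via Eq.~(\ref{e.4.18}). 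The exponential prefactor $e^{2|\nabla X|_T^\ast + |\nabla Y|_T^\ast}$ is absorbed into $\mathcal{K}$ by using (i)--(ii), both of which admit bounds of the form $e^{c(\kappa)|\nabla V^{(\kappa)}|_M Q_{[1,\kappa]}(N_T^\ast(\dot\xi))}$ times a polynomial in the geometric data. Defining $\mathcal{K}$ to be the resulting aggregate yields the claim and makes $\mathcal{K}$ manifestly increasing in each of $T$, $|V^{(\kappa)}|_M$, $|\nabla V^{(\kappa)}|_M$, $H_M(V^{(\kappa)})$, and $N_T^\ast(\dot\xi)$.

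The main obstacle is purely organizational: verifying that the combined exponent in $N_T^\ast(\dot\xi)$ in every cross-term never exceeds $2\kappa$, and bundling the various exponential/polynomial factors into a single monotone $\mathcal{K}$ of the specified arguments. No new analytic idea is required; the proof is an assembly of Theorem \ref{thm.7.2}, Corollary \ref{cor.2.24}, Theorem \ref{thm.4.11}, Corollary \ref{cor.8.2}, and Corollary \ref{cor.8.3}, together with the elementary $Q$-calculus of Remark \ref{rem.3.22}.
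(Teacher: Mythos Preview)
Your proposal is correct and follows essentially the same route as the paper: apply Theorem~\ref{thm.7.2} with $X_t=V_{\dot\xi(t)}$ and $Y_t=W_t^{C^\xi}$, then plug in the bounds from Eq.~(\ref{e.3.21}), Corollary~\ref{cor.8.2}, Theorem~\ref{thm.4.11}, and Corollary~\ref{cor.8.3}. One small arithmetic point: the product $Q_{[1,\kappa]}\cdot Q_{(\kappa,\kappa+1]}$ can produce a power as high as $2\kappa+1$, not $2\kappa$, but since $\mathcal{K}$ is explicitly allowed to depend (monotonically) on $N_T^\ast(\dot\xi)$, any excess power of $N_T^\ast(\dot\xi)$ is simply absorbed into $\mathcal{K}$, so the stated $Q_{(\kappa,2\kappa]}$ bound holds as claimed.
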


\begin{proof}
Our proof of this result is similar to the proof of Theorem \ref{thm.4.12}
except that we will being using Theorem \ref{thm.7.2} in place of Theorem
\ref{thm.2.30}. Applying Theorem \ref{thm.7.2} with $X_{t}=V_{\dot{\xi}\left(
t\right)  }$ and $Y_{t}=W_{t}^{C^{\xi}}$ shows%
\begin{align*}
d_{M}^{TM}  &  \left(  \mu_{t,0\ast},e_{\ast}^{V_{C\left(  t\right)  }}\right)
\\
&  \leq e^{2\left\vert \nabla V_{\dot{\xi}\left(  t\right)  }\right\vert
_{t}^{\ast}+\left\vert \nabla W^{C}\right\vert _{t}^{\ast}}\cdot\left(
\left(  1+H\left(  V_{\dot{\xi}}\right)  _{t}^{\ast}\right)  \left\vert
U^{\xi}\right\vert _{t}^{\ast}+\left\vert \nabla U^{\xi}\right\vert _{t}%
^{\ast}\right) \\
&  \leq e^{2\left\vert \nabla V^{\left(  \kappa\right)  }\right\vert
_{M}\left\vert \dot{\xi}\right\vert _{t}^{\ast}+\left\vert \nabla
W^{C}\right\vert _{t}^{\ast}}\cdot\left(  \left[  1+H_{M}\left(  V^{\left(
\kappa\right)  }\right)  \right]  \left\vert \dot{\xi}\right\vert _{t}^{\ast
}\cdot\left\vert U^{\xi}\right\vert _{t}^{\ast}+\left\vert \nabla U^{\xi
}\right\vert _{t}^{\ast}\right)  .
\end{align*}
Recalling from Eq. (\ref{e.3.21}) that $\left\vert \dot{\xi}\right\vert
_{t}^{\ast}\lesssim Q_{\left[  1,\kappa\right]  }\left(  N_{T}^{\ast}\left(
\dot{\xi}\right)  \right)  $ and substituting the estimates for$\left\vert
\nabla W^{C^{\xi}}\right\vert _{t}^{\ast}$ in Corollary \ref{cor.8.2},
$\left\vert U^{\xi}\right\vert _{t}^{\ast}$ in Theorem \ref{thm.4.11}, and
$\left\vert \nabla U\right\vert _{T}^{\ast}$ in Corollary \ref{cor.8.3} into
the previous inequality gives the stated estimate in Eq. (\ref{e.8.7}).
\end{proof}

\begin{corollary}
\label{cor.8.5}If $A,B\in F^{\left(  \kappa\right)  }\left(  \mathbb{R}%
^{d}\right)  ,$ then%
\[
d_{M}^{TM}\left(  e_{\ast}^{V_{B}},Id_{TM}\right)  \leq\left[  \left\vert
V^{\left(  \kappa\right)  }\right\vert _{M}+\left\vert \nabla V^{\left(
\kappa\right)  }\right\vert _{M}e^{\left\vert \nabla V^{\left(  \kappa\right)
}\right\vert _{M}\left\vert B\right\vert }\right]  \left\vert B\right\vert ,
\]
and there exists $\mathcal{K}_{1}$ such that%
\begin{align*}
d_{M}^{TM}  &  \left(  \left[  e^{V_{B}}\circ e^{V_{A}}\right]  _{\ast
},e_{\ast}^{V_{\log\left(  e^{A}e^{B}\right)  }}\right) \\
&  \leq\mathcal{K}_{1}\cdot N\left(  A\right)  N\left(  B\right)
Q_{(\kappa-1,2\left(  \kappa-1\right)  ]}\left(  N\left(  A\right)  +N\left(
B\right)  \right)  .
\end{align*}
where
\[
\mathcal{K}_{1}=\mathcal{K}_{1}\left(  \left\vert V^{\left(  \kappa\right)
}\right\vert _{M},\left\vert \nabla V^{\left(  \kappa\right)  }\right\vert
_{M},H_{M}\left(  V^{\left(  \kappa\right)  }\right)  ,N\left(  A\right)  \vee
N\left(  B\right)  \right)  .
\]

\end{corollary}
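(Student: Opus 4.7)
For the first assertion, I plan to apply Theorem \ref{thm.7.2} with $X \equiv 0$, $Y_t \equiv V_B$, and $T = 1$; more precisely, I track the two summands produced by Proposition \ref{pro.5.17} (Eq. (\ref{e.5.18})) separately for the interpolator $\Theta_s = e^{sV_B}$. The trajectory piece $\int_0^1 |\Theta_s'|_M\, ds$ reduces here to $\int_0^1 |V_B|_M\, ds \leq |V^{(\kappa)}|_M |B|$, while Proposition \ref{pro.7.1} with $H(X_\cdot) \equiv 0$ controls the covariant-derivative piece by $e^{|\nabla V_B|_M} |\nabla V_B|_M \leq e^{|\nabla V^{(\kappa)}|_M |B|} |\nabla V^{(\kappa)}|_M |B|$. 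Adding the two pieces yields the stated bound.

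For the second assertion, the strategy is to mimic the refinement that turns Theorem \ref{thm.4.12} into Corollary \ref{cor.4.16}, but now with $d_M^{TM}$ in place of $d_M$. Choose the same $\xi \in C^\infty([0,\infty), F^{(\kappa)}(\mathbb{R}^d))$ as in Eq. (\ref{e.4.29}), so that by Eq. (\ref{e.4.17}),
\[
d_M^{TM}\!\left([e^{V_B} \circ e^{V_A}]_*, e_*^{V_{\log(e^A e^B)}}\right) = d_M^{TM}\!\left((\mu_{2,0}^{V_{\dot\xi}})_*, (e^{V_{C^\xi(2)}})_*\right).
\]
Since $e^{V_{C^\xi(t)}}$ is the flow of $W_t^{C^\xi}$ by Corollary \ref{cor.2.24}, I apply Theorem \ref{thm.7.2} with $X_t = V_{\dot\xi(t)}$, $Y_t = W_t^{C^\xi}$, and $T = 2$, which yields a bound of the shape
\[
e^{2|\nabla V^{(\kappa)}|_M |\dot\xi|_2^* + |\nabla W^{C^\xi}|_2^*} \cdot \bigl((1 + H(V_{\dot\xi})_2^*)\, |U^\xi|_2^* + |\nabla U^\xi|_2^*\bigr).
\]
The exponent is controlled via $|\dot\xi|_2^* \leq |A| + |B| \lesssim Q_{[1,\kappa]}(N(A) + N(B))$, together with the $|C^\xi|_{\infty,2}$ estimate from Eq. (\ref{e.3.31}) and Corollary \ref{cor.8.2} for $|\nabla W^{C^\xi}|_2^*$; the factor $1 + H(V_{\dot\xi})_2^* \leq 1 + H_M(V^{(\kappa)})(|A| + |B|)$ is absorbed into $\mathcal{K}_1$.

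The substantive inputs are sharp bounds on $|U^\xi|_2^*$ and $|\nabla U^\xi|_2^*$ for this specific $\xi$. The estimate on $|U^\xi|_2^*$ is already contained in Eq. (\ref{e.4.48}) of the proof of Corollary \ref{cor.4.16}, which, via Proposition \ref{pro.4.15} and the refined bounds (\ref{e.4.39})-(\ref{e.4.40}), yields
\[
|U^\xi|_2^* \lesssim \mathcal{K}_0\, N(A) N(B)\, Q_{[\kappa-1,\, 2\kappa-2]}(N(A) + N(B)).
\]
For $|\nabla U^\xi|_2^*$, I would run the same argument but invoke Proposition \ref{pro.8.1} in place of Corollary \ref{cor.2.29}, controlling $|\nabla[\operatorname{Ad}_{e^{sV_{C^\xi(t)}}} Z]|_M$ by $e^{3|\nabla V^{(\kappa)}|_M |C^\xi(t)|}\bigl(H_M(V^{(\kappa)}) |Z|_M |C^\xi(t)| + |\nabla Z|_M\bigr)$, applied to $Z = V_{\pi_{>\kappa}[C^\xi(t), u(s, \operatorname{ad}_{C^\xi(t)})\dot\xi(t)]_\otimes}$. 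Decomposing the commutator via Eq. (\ref{e.4.38}) and estimating the three resulting pieces using $\mathcal{C}_{m,n}^1(V^{(\kappa)})$ in place of $\mathcal{C}_{m,n}^0(V^{(\kappa)})$ should give
\[
|\nabla U^\xi|_2^* \lesssim \mathcal{K}_1'\, N(A) N(B)\, Q_{(\kappa-1,\, 2(\kappa-1)]}(N(A) + N(B)),
\]
with $\mathcal{K}_1'$ of the form claimed for $\mathcal{K}_1$. Substituting these two estimates into the Theorem \ref{thm.7.2} bound and folding all exponential and polynomial factors in $N(A) + N(B)$ into one constant $\mathcal{K}_1$ of the stated form closes the argument.

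The main obstacle will be the gradient analogue of Proposition \ref{pro.4.15}. The three summands $\varphi(t-1)[A,B]_\otimes$, $\varphi(t-1)[\bar C^\xi(t), B]_\otimes$, and $\varphi(t-1)[C^\xi(t), \bar u(s, \operatorname{ad}_{C^\xi(t)}) B]_\otimes$ already carry the desired $N(A) N(B)$ prefactor at the level of the fiber norm of $V_{[\cdot,\cdot]}$, but propagating this prefactor through $\nabla V_{[\cdot,\cdot]}$ (now bounded by $\mathcal{C}_{m,n}^1$) and through the extra term $H_M(V^{(\kappa)}) |Z|_M |C^\xi(t)|$ produced by Proposition \ref{pro.8.1} requires careful bookkeeping of the degrees of the $C^\xi$- and $\bar C^\xi$-components (in particular using $\bar C_1^\xi = 0$ as in the proof of Corollary \ref{cor.4.16}). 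Once that is done, the rest of the estimate is a direct analogue of the corresponding steps for $|U^\xi|_2^*$.
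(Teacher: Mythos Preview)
Your proposal is correct and follows essentially the same approach as the paper. For the first assertion the paper simply cites Corollary~\ref{cor.7.3} with $Y_t=V_B$, which is exactly your Theorem~\ref{thm.7.2} with $X\equiv 0$ argument (and your separate tracking of the two summands is what is needed to avoid an extraneous exponential on the $|V^{(\kappa)}|_M|B|$ term); for the second assertion the paper's proof is just the one-line remark that one repeats the proof of Corollary~\ref{cor.4.16} with $\nabla V_{[A,B]_\otimes}$ and $\mathcal{C}^1(V^{(\kappa)})$ in place of $V_{[A,B]_\otimes}$ and $\mathcal{C}^0(V^{(\kappa)})$, which is precisely the program you outlined via Proposition~\ref{pro.8.1} and the decomposition in Eq.~(\ref{e.4.38}).
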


\begin{proof}
From Corollary \ref{cor.7.3} with $Y_{t}=V_{B}$ we find
\begin{align*}
d_{M}^{TM}\left(  e_{\ast}^{V_{B}},Id_{TM}\right)   &  \leq e^{\left\vert
\nabla V_{B}\right\vert _{1}^{\ast}}\cdot\left(  \left\vert V_{B}\right\vert
_{1}^{\ast}+\left\vert \nabla V_{B}\right\vert _{1}^{\ast}\right) \\
&  \leq\left[  \left\vert V^{\left(  \kappa\right)  }\right\vert
_{M}+\left\vert \nabla V^{\left(  \kappa\right)  }\right\vert _{M}%
e^{\left\vert \nabla V^{\left(  \kappa\right)  }\right\vert _{M}\left\vert
B\right\vert }\right]  \left\vert B\right\vert .
\end{align*}
The proof of the second inequality is completely analogous to the proof of the
second inequality in Corollary \ref{cor.4.16} with the exception that we now
use Theorem \ref{thm.8.4} in place of Theorem \ref{thm.4.12} and we must
replace $V_{\left[  A,B\right]  _{\otimes}}=\left[  V_{A},V_{B}\right]  $ by
$\nabla V_{\left[  A,B\right]  _{\otimes}}=\nabla\left[  V_{A},V_{B}\right]  $
and $\mathcal{C}^{0}\left(  V^{\left(  \kappa\right)  }\right)  $ by
$\mathcal{C}^{1}\left(  V^{\left(  \kappa\right)  }\right)  $ appropriately.
\end{proof}

\section{Appendix: Gronwall Inequalities\label{sec.9}}

This appendix gathers a few rather standard differential inequalities that are
used in the body of the paper.

\subsection{Flat space Gronwall inequalities\label{sec.9.1}}

\begin{proposition}
[A Gronwall Inequality]\label{pro.9.1}Suppose that $\psi:\left[  0,T\right]
\rightarrow\mathbb{R}$ is absolutely continuous, $u\in C\left(  \left[
0,T\right]  ,\mathbb{R}\right)  ,$ and $h\in L^{1}\left(  \left[  0,T\right]
\right)  .$ If
\begin{equation}
\dot{\psi}\left(  t\right)  \leq u\left(  t\right)  +h\left(  t\right)
\psi\left(  t\right)  \text{ for a.e. }t, \label{e.9.1}%
\end{equation}
then
\[
\psi\left(  t\right)  \leq\psi\left(  0\right)  e^{\int_{0}^{t}h\left(
s\right)  ds}+\int_{0}^{t}e^{\int_{\tau}^{t}h\left(  s\right)  ds}\cdot
u\left(  \tau\right)  d\tau
\]

\end{proposition}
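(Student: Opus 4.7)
The plan is to use the standard integrating factor technique. Set $H(t) := \int_0^t h(s)\,ds$, which is absolutely continuous on $[0,T]$ since $h \in L^1$, and note that $e^{-H(t)}$ is therefore also absolutely continuous with $\frac{d}{dt} e^{-H(t)} = -h(t) e^{-H(t)}$ for a.e. $t$. Since a product of absolutely continuous functions is absolutely continuous, $\varphi(t) := e^{-H(t)}\psi(t)$ is absolutely continuous, and the classical product rule gives
\[
\dot\varphi(t) = e^{-H(t)}\dot\psi(t) - h(t) e^{-H(t)} \psi(t) \quad \text{for a.e. } t.
\]

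Next I would multiply the hypothesis \eqref{e.9.1} by the nonnegative factor $e^{-H(t)}$ to obtain
\[
e^{-H(t)}\dot\psi(t) - h(t) e^{-H(t)} \psi(t) \le e^{-H(t)} u(t),
\]
i.e.\ $\dot\varphi(t) \le e^{-H(t)} u(t)$ for a.e.\ $t$. Integrating this inequality on $[0,t]$ (which is legitimate because $\varphi$ is absolutely continuous, hence equals the integral of its a.e.\ derivative) yields
\[
e^{-H(t)}\psi(t) - \psi(0) = \varphi(t) - \varphi(0) \le \int_0^t e^{-H(\tau)} u(\tau)\,d\tau.
\]

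Finally I would multiply through by $e^{H(t)}$, bringing this factor inside the integral via $e^{H(t) - H(\tau)} = e^{\int_\tau^t h(s)\,ds}$, to arrive at
\[
\psi(t) \le \psi(0)\, e^{\int_0^t h(s)\,ds} + \int_0^t e^{\int_\tau^t h(s)\,ds} u(\tau)\,d\tau,
\]
as desired. There is no genuine obstacle here; the only point that requires a brief justification is the product rule for the two absolutely continuous functions $e^{-H}$ and $\psi$, which is a standard consequence of the fundamental theorem of calculus for absolutely continuous functions. The continuity of $u$ plays no role beyond ensuring the integral on the right-hand side makes sense; $u \in L^1$ would suffice equally well.
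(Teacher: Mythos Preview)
Your proof is correct and is essentially identical to the paper's own argument: define $H(t)=\int_0^t h$, differentiate $e^{-H(t)}\psi(t)$ via the product rule, use the hypothesis to bound the derivative by $e^{-H(t)}u(t)$, integrate, and multiply back by $e^{H(t)}$. You are slightly more explicit about the absolute continuity justifications, but the method is the same integrating-factor trick.
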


\begin{proof}
Here is the short proof of this standard result for the reader's convenience.
Let $H\left(  t\right)  :=\int_{0}^{t}h\left(  s\right)  ds$ so that $H$ is
absolutely continuous with $\dot{H}\left(  t\right)  =h\left(  t\right)  $ for
a.e. $t.$ We then have, for a.e. $t,$ that%
\[
\frac{d}{dt}\left[  e^{-H\left(  t\right)  }\psi\left(  t\right)  \right]
=e^{-H\left(  t\right)  }\left[  \dot{\psi}\left(  t\right)  -h\left(
t\right)  \psi\left(  t\right)  \right]  \leq e^{-H\left(  t\right)  }u\left(
t\right)  .
\]
Integrating this equation gives
\[
e^{-H\left(  t\right)  }\psi\left(  t\right)  -\psi\left(  0\right)  \leq
\int_{0}^{t}e^{-H\left(  \tau\right)  }u\left(  \tau\right)  d\tau.
\]
Multiplying this inequality by $e^{H\left(  t\right)  }$ completes the proof
as $H\left(  t\right)  -H\left(  \tau\right)  =\int_{\tau}^{t}h\left(
s\right)  ds.$
\end{proof}

The following corollary is the form of Gronwall's inequality which is most
useful to us.

\begin{corollary}
\label{cor.9.2}Let $\left(  V,\left\vert \cdot\right\vert \right)  $ be a
normed space, $-\infty<a<b<\infty,$ and $\left[  a,b\right]  \ni$
$t\rightarrow C\left(  t\right)  \in V$ be a $C^{1}$-function of $t.$ If there
exists continuous functions, $h\left(  t\right)  $ and $g\left(  t\right)  ,$
such that%
\begin{equation}
\left\vert \dot{C}\left(  t\right)  \right\vert \leq h\left(  t\right)
\left\vert C\left(  t\right)  \right\vert +g\left(  t\right)  \text{ }%
\forall~t\in\left[  a,b\right]  , \label{e.9.2}%
\end{equation}
then for any $s,t\in\left[  a,b\right]  ,$%
\begin{equation}
\left\vert C\left(  t\right)  \right\vert \leq\left\vert C\left(  s\right)
\right\vert e^{\int_{J\left(  s,t\right)  }h\left(  \sigma\right)  d\sigma
}+\int_{J\left(  s,t\right)  }g\left(  \sigma\right)  e^{\int_{J\left(
\sigma,t\right)  }h\left(  \sigma^{\prime}\right)  d\sigma^{\prime}}%
d\sigma\text{ }\forall~t\in\left[  0,T\right]  . \label{e.9.3}%
\end{equation}
where
\[
J\left(  s,t\right)  :=\left\{
\begin{array}
[c]{ccc}%
\left[  s,t\right]  & \text{if} & s\leq t\\
\left[  t,s\right]  & \text{if} & t\leq s
\end{array}
.\right.
\]

\end{corollary}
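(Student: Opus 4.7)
The plan is to reduce the claim to the scalar Gronwall inequality in Proposition \ref{pro.9.1} applied to $\psi(t) := |C(t)|$. The first task is to verify that $\psi$ is absolutely continuous on $[a,b]$ and satisfies, almost everywhere, the scalar differential inequality
\[
\dot{\psi}(t) \leq h(t)\,\psi(t) + g(t).
\]
This follows from the reverse triangle inequality together with the $C^{1}$-regularity of $C$: for any $t, t+\varepsilon \in [a,b]$,
\[
|\psi(t+\varepsilon) - \psi(t)| \leq |C(t+\varepsilon) - C(t)| \leq \left|\int_{t}^{t+\varepsilon}\dot{C}(\tau)\,d\tau\right| \leq \int_{J(t,t+\varepsilon)}|\dot{C}(\tau)|\,d\tau,
\]
which shows $\psi$ is absolutely continuous (in fact Lipschitz on compact subsets) and gives $|\dot{\psi}(t)| \leq |\dot{C}(t)|$ at every $t$ where $\psi$ is differentiable, so almost everywhere by Rademacher's theorem. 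Combining with hypothesis (\ref{e.9.2}) gives the scalar differential inequality.

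For the case $s \leq t$, I would apply the scalar Proposition \ref{pro.9.1} on the translated interval $[0, t-s] \to [s,t]$ (the proof of that proposition works verbatim on any interval $[s,T]$ by replacing the integrating factor $e^{-H(\tau)}$, where $H(\tau) = \int_{s}^{\tau}h$). This produces
\[
\psi(t) \leq \psi(s)\,e^{\int_{s}^{t}h(\sigma)\,d\sigma} + \int_{s}^{t} g(\sigma)\, e^{\int_{\sigma}^{t}h(\sigma')\,d\sigma'}\,d\sigma,
\]
which is exactly (\ref{e.9.3}) when $J(s,t) = [s,t]$.

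For the case $s > t$, I would reduce to the previous case by time reversal. Define $\tilde{C}(\tau) := C(s+t-\tau)$ on $[t,s]$, together with $\tilde{h}(\tau) := h(s+t-\tau)$ and $\tilde{g}(\tau) := g(s+t-\tau)$. Then $|\dot{\tilde{C}}(\tau)| = |\dot{C}(s+t-\tau)| \leq \tilde{h}(\tau)|\tilde{C}(\tau)| + \tilde{g}(\tau)$, so the hypothesis holds for $\tilde{C}$ on $[t,s]$. Applying the previous case with reversed roles of endpoints (reading the interval as $t \leq s$) gives
\[
|\tilde{C}(s)| \leq |\tilde{C}(t)|\,e^{\int_{t}^{s}\tilde{h}(\sigma)\,d\sigma} + \int_{t}^{s}\tilde{g}(\sigma)\,e^{\int_{\sigma}^{s}\tilde{h}(\sigma')\,d\sigma'}\,d\sigma.
\]
Since $\tilde{C}(t) = C(s)$ and $\tilde{C}(s) = C(t)$, the change of variables $u = s+t-\sigma$ (respectively $u' = s+t-\sigma'$) rewrites both exponents as integrals over subintervals of $[t,s] = J(s,t)$, yielding (\ref{e.9.3}) in this case as well.

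The argument is essentially routine; the only minor subtlety is justifying that $|C(\cdot)|$ has the correct a.e. derivative bound, which I would handle by the Lipschitz estimate above rather than by trying to differentiate $|C(t)|^{2}$ (the latter requires a nontrivial inner product structure on $V$). The time-reversal trick dispatches the $s > t$ case without having to repeat the Duhamel-style argument from scratch.
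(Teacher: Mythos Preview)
Your proposal is correct and follows essentially the same route as the paper: show $|C(\cdot)|$ is absolutely continuous with $\bigl|\tfrac{d}{dt}|C(t)|\bigr|\le|\dot C(t)|$ a.e., feed the resulting scalar inequality into Proposition~\ref{pro.9.1}, and handle the two time orderings by a reparametrization. The only cosmetic difference is that the paper packages both cases at once via $\psi_\varepsilon(\tau)=|C(s+\varepsilon\tau)|$ with $\varepsilon=\pm1$, whereas you treat $s\le t$ directly and then invoke the reflection $\tilde C(\tau)=C(s+t-\tau)$ for $s>t$; these are the same idea in different notation.
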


\begin{proof}
If $K:=\max_{a\leq t\leq b}\left\vert \dot{C}\left(  t\right)  \right\vert
<\infty,$ then for $a\leq s\leq t\leq b,$
\[
\left\vert \left\vert C\left(  t\right)  \right\vert -\left\vert C\left(
s\right)  \right\vert \right\vert \leq\left\vert C\left(  t\right)  -C\left(
s\right)  \right\vert =\left\vert \int_{s}^{t}\dot{C}\left(  \tau\right)
d\tau\right\vert \leq\int_{s}^{t}\left\vert \dot{C}\left(  \tau\right)
\right\vert d\tau\leq K\left\vert t-s\right\vert
\]
which shows $\left\vert C\left(  t\right)  \right\vert $ is Lipschitz and
hence absolutely continuous. Moreover, at $t,$ where $\left\vert C\left(
t\right)  \right\vert $ is differentiable, we have
\[
\left\vert \frac{d}{dt}\left\vert C\left(  t\right)  \right\vert \right\vert
=\lim_{s\rightarrow t}\frac{\left\vert \left\vert C\left(  t\right)
\right\vert -\left\vert C\left(  s\right)  \right\vert \right\vert
}{\left\vert t-s\right\vert }\leq\lim_{s\rightarrow t}\left\vert \frac
{\int_{s}^{t}\dot{C}\left(  \tau\right)  d\tau}{t-s}\right\vert =\left\vert
\dot{C}\left(  t\right)  \right\vert
\]
which combined with Eq. (\ref{e.9.2}) implies,%
\begin{equation}
\left\vert \frac{d}{dt}\left\vert C\left(  t\right)  \right\vert \right\vert
\leq h\left(  t\right)  \left\vert C\left(  t\right)  \right\vert +g\left(
t\right)  \text{ for a.e. }t\in\left[  a,b\right]  . \label{e.9.4}%
\end{equation}

If $s\leq t$ in Eq. (\ref{e.9.3}) let $\varepsilon=+1$ while if $t\leq s$ in
Eq. (\ref{e.9.3}) let $\varepsilon=-1$ and in either case let $\psi
_{\varepsilon}\left(  \tau\right)  :=\left\vert C\left(  s+\varepsilon
\tau\right)  \right\vert $ for $\tau\geq0$ so that $s+\varepsilon\tau
\in\left[  a,b\right]  .$ Then $\psi_{\varepsilon}\left(  \tau\right)  $ is
still absolutely continuous and satisfies,%
\begin{align*}
\dot{\psi}_{\varepsilon}\left(  \tau\right)   &  =\varepsilon\frac{d}%
{dt}\left\vert C\left(  t\right)  \right\vert |_{t=s+\varepsilon\tau}%
\leq\left\vert \frac{d}{dt}\left\vert C\left(  t\right)  \right\vert
|_{t=s+\varepsilon\tau}\right\vert \\
&  \leq h\left(  s+\varepsilon\tau\right)  \left\vert C\left(  s+\varepsilon
\tau\right)  \right\vert +g\left(  s+\varepsilon\tau\right)  =h\left(
s+\varepsilon\tau\right)  \psi_{\varepsilon}\left(  \tau\right)  +g\left(
s+\varepsilon\tau\right)  .
\end{align*}
Thus by Propositions \ref{pro.9.1},%
\[
\left\vert C\left(  s+\varepsilon\tau\right)  \right\vert =\psi_{\varepsilon
}\left(  \tau\right)  \leq e^{\int_{0}^{\tau}h\left(  s+\varepsilon r\right)
dr}\psi_{\varepsilon}\left(  0\right)  +\int_{0}^{\tau}e^{\int_{\rho}^{\tau
}h\left(  s+\varepsilon r\right)  dr}g\left(  s+\varepsilon\rho\right)
d\rho.
\]
We now choose $\tau$ so that $s+\varepsilon\tau=t$ (i.e. $\tau:=\varepsilon
\left(  t-s\right)  =\left\vert t-s\right\vert )$ to conclude,%
\[
\left\vert C\left(  t\right)  \right\vert \leq e^{\int_{0}^{\varepsilon\left(
t-s\right)  }h\left(  s+\varepsilon r\right)  dr}\left\vert C\left(  s\right)
\right\vert +\int_{0}^{\varepsilon\left(  t-s\right)  }e^{\int_{\rho
}^{\varepsilon\left(  t-s\right)  }h\left(  s+\varepsilon r\right)
dr}g\left(  s+\varepsilon\rho\right)  d\rho
\]
which after affine change of variables gives Eq. (\ref{e.9.3}).
\end{proof}

\subsection{A geometric form of Gronwall's inequality\label{sec.9.2}}

Recall that $\nabla$ denotes the Levi-Civita covariant derivative on $TM.$ We
also use $\nabla$ to denote the Levi-Civita covariant derivative extended (by
the product rule) to act on any associated vector bundle, let $\Lambda
^{k}\left(  TM\right)  ,$ $\Lambda^{k}\left(  T^{\ast}M\right)  ,$
$TM^{\otimes\ell}\otimes\left(  T^{\ast}M\right)  ^{\otimes k},$ etc. The
following geometric version of the classic Bellman-Gronwall inequality will be
used frequently in this section.

\begin{corollary}
[Covariant Bellman/Gronwall]\label{cor.9.3}Let $E:=TM^{\otimes k}\otimes
T^{\ast}M^{\otimes l}$ for some $k,l\in\mathbb{N}_{0},$ $\sigma\in
C^{1}\left(  \left(  a,b\right)  ,M\right)  ,$ and suppose that $T_{t}%
,G_{t}\in E_{\sigma\left(  t\right)  }$ and $H_{t}\in\operatorname*{End}%
\left(  E_{\sigma\left(  t\right)  }\right)  $ are given continuously
differentiable functions of $t.$ If $T_{t},$ $H_{t},$ and $G_{t}$ satisfy the
differential equation,%
\begin{equation}
\nabla_{t}T_{t}=H_{t}T_{t}+G_{t}, \label{e.9.5}%
\end{equation}
then, for all $s,t\in\left(  a,b\right)  ,$%
\begin{equation}
\left\vert T_{t}\right\vert \leq e^{\int_{J\left(  s,t\right)  }\left\Vert
H_{r}\right\Vert _{op}dr}\left\vert T_{s}\right\vert +\int_{J\left(
s,t\right)  }e^{\int_{J\left(  r,t\right)  }\left\Vert H_{r}\right\Vert
_{op}dr}\left\vert G_{\rho}\right\vert d\rho. \label{e.9.6}%
\end{equation}

\end{corollary}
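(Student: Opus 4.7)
\medskip

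The plan is to use parallel translation along $\sigma$ to reduce the covariant ODE in Eq.~\eqref{e.9.5} to an ordinary vector-valued ODE in a fixed finite-dimensional inner product space, and then apply the flat-space Bellman--Gronwall inequality of Corollary~\ref{cor.9.2}.

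More precisely, fix $s \in (a,b)$ and let $V := E_{\sigma(s)}$, which is a finite-dimensional real inner product space (recall $E$ carries the induced tensor metric coming from $g$, and the Levi--Civita connection is metric compatible on $TM$ and on all of its tensor powers). Let $\pt_t(\sigma) : V \to E_{\sigma(t)}$ denote parallel translation of $E$ along $\sigma$ starting from $s$, and set
\[
\tilde{T}_t := \pt_t(\sigma)^{-1} T_t \in V,\qquad \tilde{G}_t := \pt_t(\sigma)^{-1} G_t \in V,\qquad \tilde{H}_t := \pt_t(\sigma)^{-1} H_t\, \pt_t(\sigma) \in \operatorname{End}(V).
\]
Because $\pt_t(\sigma)$ is an isometry of the relevant inner product spaces, we have $|\tilde{T}_t| = |T_t|$, $|\tilde{G}_t| = |G_t|$, and $\|\tilde{H}_t\|_{op} = \|H_t\|_{op}$ for all $t$.

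By the very definition of $\nabla_t$ (as in Notation~\ref{not.2.2}(3)), the hypothesis $\nabla_t T_t = H_t T_t + G_t$ translates, after pulling back by $\pt_t(\sigma)^{-1}$, into the ordinary (flat) differential equation in the fixed vector space $V$:
\[
\frac{d}{dt} \tilde{T}_t \;=\; \pt_t(\sigma)^{-1}\nabla_t T_t \;=\; \tilde{H}_t \tilde{T}_t + \tilde{G}_t.
\]
Taking norms and using the submultiplicativity of the operator norm together with the triangle inequality gives
\[
\Bigl|\tfrac{d}{dt} \tilde{T}_t\Bigr| \;\leq\; \|\tilde{H}_t\|_{op} |\tilde{T}_t| + |\tilde{G}_t| \;=\; \|H_t\|_{op} |T_t| + |G_t|,
\]
which is precisely the hypothesis~\eqref{e.9.2} of Corollary~\ref{cor.9.2} applied to $C(t) := \tilde{T}_t$, with $h(t) := \|H_t\|_{op}$ and $g(t) := |G_t|$. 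The conclusion~\eqref{e.9.3} of that corollary, combined with $|\tilde{T}_t| = |T_t|$ and $|\tilde{T}_s| = |T_s|$, yields exactly the estimate~\eqref{e.9.6}.

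Essentially no step is a real obstacle here; the only point that needs a moment's care is the identity $\frac{d}{dt}\tilde{T}_t = \pt_t(\sigma)^{-1}\nabla_t T_t$, which is nothing but the definition of the covariant derivative along a curve as transported back to a fixed fiber, and the observation that metric compatibility of $\nabla$ makes parallel translation an isometry on each tensor bundle $E = TM^{\otimes k} \otimes T^*M^{\otimes l}$, so that taking norms commutes with $\pt_t(\sigma)^{-1}$.
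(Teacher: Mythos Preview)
Your proof is correct and follows essentially the same approach as the paper: pull back via parallel translation $\pt_t(\sigma)^{-1}$ to convert the covariant equation into a flat ODE in a fixed fiber, use that parallel translation is an isometry to preserve all norms, and then invoke Corollary~\ref{cor.9.2}. The paper's argument is identical in substance, only slightly terser in presentation.
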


\begin{proof}
The point is that, writing $\pt_{t}$ for $\pt_{t}\left(  \sigma\right)  ,$ we
have from Eq. (\ref{e.9.5}) that%
\begin{align*}
\frac{d}{dt}\left[  \pt_{t}^{-1}T_{t}\right]   &  =\pt_{t}^{-1}\nabla_{t}%
T_{t}=\pt_{t}^{-1}H_{t}T_{t}+\pt_{t}^{-1}G_{t}\\
&  =\left[  \pt_{t}^{-1}H_{t}\pt_{t}\right]  \pt_{t}^{-1}T_{t}+\pt_{t}%
^{-1}G_{t}%
\end{align*}
and therefore
\begin{align*}
\left\vert \frac{d}{dt}\left[  \pt_{t}^{-1}T_{t}\right]  \right\vert  &
\leq\left\Vert \pt_{t}^{-1}H_{t}\pt_{t}\right\Vert _{op}\left\vert
\pt_{t}^{-1}T_{t}\right\vert +\left\vert \pt_{t}^{-1}G_{t}\right\vert \\
&  =\left\Vert H_{t}\right\Vert _{op}\left\vert \pt_{t}^{-1}T_{t}\right\vert
+\left\vert G_{t}\right\vert
\end{align*}
and Eq. (\ref{e.9.6}) now follows directly from Corollary \ref{cor.9.2} above
with $C\left(  t\right)  :=\pt_{t}^{-1}T_{t}$ and the observation that
$\left\vert C\left(  t\right)  \right\vert =\left\vert T_{t}\right\vert $ for
all $t.$
\end{proof}

\providecommand{\bysame}{\leavevmode\hbox to3em{\hrulefill}\thinspace}
\providecommand{\MR}{\relax\ifhmode\unskip\space\fi MR }
\providecommand{\MRhref}[2]{%
  \href{http://www.ams.org/mathscinet-getitem?mr=#1}{#2}
}
\providecommand{\href}[2]{#2}

\end{document}